\documentclass[12pt,reqno]{amsart}

\usepackage[usenames,dvipsnames]{color}
\usepackage{amsmath,amssymb,amsthm,graphicx,mathrsfs,url,latexsym,enumerate}
\usepackage{color}
\usepackage{a4wide}
\usepackage{ifthen}
\usepackage{verbatim}
\usepackage{fancyhdr}
\usepackage{url}
\usepackage{bbm}
\usepackage[sans]{dsfont}
\usepackage{MnSymbol}

\usepackage{bm}

\usepackage{amsbsy}
\usepackage[colorlinks=true,linkcolor=red,citecolor=green]{hyperref}
\usepackage{wrapfig}
\usepackage{tikz}
\usepackage{geometry}

\definecolor{bleu}{RGB}{27,88,145}
\definecolor{mauve}{RGB}{138,20,79}

\renewcommand{\Re}{\operatorname{Re}}

\newcommand{\supp}{\operatorname{supp}}

\def\<{\langle}
\def\>{\rangle}

\newcommand{\be}{\begin{equation}}
\newcommand{\ee}{\end{equation}}
\newcommand{\bes}{\begin{equation*}}
\newcommand{\ees}{\end{equation*}}

\newcommand{\dom}{\operatorname{dom}}
\newcommand{\SigD}{\Sigma_{\rm D}}
\newcommand{\SigN}{\Sigma_{\rm N}}

\newcommand{\HsD}{H^1_{0,\SigD}(\mathscr D)}

\newcommand{\ve}{\varepsilon}

\def\Re {{\rm \,Re\;}}

\def\dim {{\rm \; dim  \;}}

\numberwithin{equation}{section}
\numberwithin{figure}{section}

\newtheorem*{theorem*}{Theorem}
\newtheorem*{definition*}{Definition}
\newtheorem*{assumption*}{Assumption}

\newtheorem{theorem}{Theorem}
\newtheorem{problem}{Problem}

\newtheorem{definition}{Definition}[section]

\newtheorem{lemma}[definition]{Lemma}
\newtheorem{proposition}[definition]{Proposition}
\newtheorem{remark}[definition]{Remark}

\newcommand{\ass}[1]{\hyperref[ass.#1]{\rm\textbf{[#1]}}}

\def\Lr{{\mathscr L}}

  \title[Entropic metastability and quasi-stationary distributions]{Entropic metastability for drifted reflected Brownian motion: quasi-stationary distributions, regularity, and the  exit position distribution}

\author[L. Carillo]{Louis Carillo}
\address{L. Carillo, CERMICS, CNRS, ENPC, Institut Polytechnique de Paris, Marne-la-Vallée, France. MATHERIALS project-team, Inria Paris, France.}
\email{louis.carillo@enpc.fr.}

\author[L. Delande]{Loïs Delande}
\address{L. Delande,  Laboratoire AGM - Analyse, Géométrie, Modélisation, CY Cergy Paris Université.}
\email{lois.delande@cyu.fr}

\author[B. Nectoux]{Boris Nectoux}
\address{B. Nectoux, Laboratoire de Mathématiques Blaise Pascal, Université Clermont Auvergne.}
\email{boris.nectoux@uca.fr}

\begin{document} 
\maketitle

\begin{abstract}Motivated by entropic metastability and the narrow escape problem, we investigate quasi-stationary distributions for reflected, drifted elliptic diffusions under mixed absorbing-reflecting boundary conditions on a bounded domain. We address two problems, which are solved completely independently of each other. First, using purely probabilistic tools, we prove the existence and uniqueness of a quasi-stationary distribution under minimal regularity assumptions on the coefficients (the case of an unbounded domain is also analyzed). Second, using spectral theory and the theory of strongly continuous semigroups on $L^p$ spaces, we study the analytical properties of the quasi-stationary distribution. We relate it to the principal eigenfunction of the adjoint of the associated (non-symmetric) infinitesimal generator with mixed absorbing-reflecting boundary conditions, and we establish its regularity up to the interface between the two boundary parts. We also fully characterize the law of the exit point when the process is initially distributed according to its quasi-stationary distribution. In particular, we work on domains in which the absorbing (Dirichlet) and reflecting (Neumann) parts of the boundary meet tangentially, without forming any non-trivial angle, a configuration that raises genuine regularity difficulties for the solutions of the associated mixed boundary value problem.
  \medskip

\noindent
\textit{Key words}. Quasi-stationary distribution, reflected Brownian motion, mixed
Dirichlet-Neumann boundary conditions, entropic barriers, narrow escape problem, principal eigenvalue, positive irreducible semigroups.
\medskip

\noindent
\textit{2020 Mathematics Subject Classification}. 60J60, 35P15 (Primary); 60J50, 35J25, 47D06, 47B65, 35P05 (Secondary)
\end{abstract}


 \section{Setting}
 
 \subsection{Purpose of this work}
 
  \subsubsection{Entropic metastability}
  \label{sec.NE}

Entropic metastability occurs when a stochastic process is trapped not by energetic potential wells, but by geometric bottlenecks that severely constrain access to exit routes. Unlike classic metastability driven by energetic barriers, entropic barriers constitute a fundamental paradigm in molecular dynamics and physical chemistry, where spatial confinement governs the long-time behavior \cite{ZhouZwanzig1991,Zwanzig1992,LeSt16}. In $2$ and $3$ dimensional biological media, a key manifestation of entropic trapping is the narrow escape problem \cite{HoSc04, HoSc14, HoSc15}, which models cellular receptors temporarily confined within membrane corrals. This geometric framework forms the precise entropic metastable setting addressed in this work.

In the narrow escape problem, the corral is modeled as a bounded domain $\mathscr{D} \subset \mathbb{R}^d$ ($d=2,3$)\footnote{Let us mention right away that we work in an arbitrary dimension $d$ strictly larger than $1$  throughout this work.}. The receptor follows a Brownian motion reflected on the boundary $\partial \mathscr{D}$, except at $N$ small disjoint escape windows $\Sigma_i \subset \partial \mathscr{D}$ where it is absorbed. While existing literature extensively investigates the asymptotic regime as the window sizes vanish ($|\Sigma_i| \to 0$) \cite{HoSc04, AmKaKaLe12, NuTzTz21, Fa24, CaNoLe26}, this work focuses on entropic metastability for fixed escape windows $\Sigma_i$.

 To analyze this system at fixed window sizes, we deploy the framework of quasi-stationary distributions.  
Successfully applied to energetic metastability \cite{DiLeLeNe16, DiLeLeNe20, lelievre2025eyring}, the quasi-stationary approach exploits the property that a process reaches local equilibrium long before escaping. This motivates the investigation of the exit event when the process is initially distributed according to its quasi-stationary distribution \cite{DiLeLeNe16}. This framework allows the probabilistic properties of the exit event to be rigorously analyzed using partial differential equations and spectral theory \cite{BN2017,DiLeLeNe19}, once the quasi-stationary distribution is explicitly characterized in terms of the principal eigenfunction of the generator in a suitable $L^p$ space, under appropriate boundary conditions.
 Extending this spectral machinery to entropic barriers requires establishing the existence and regularity of the principal eigenvector of the generator under mixed absorbing-reflecting boundary conditions. To lay the theoretical foundations for this approach, we address two core questions (Problems \ref{Pb.1} and \ref{Pb.2} below). Before formulating these problems and stating our main results, we specify the geometric setup and stochastic model describing entropic confinement, which generalizes the narrow escape framework to arbitrary dimensions.

\subsubsection{Reflected Brownian motion with boundary absorption}
Let $(\Omega, \mathcal{F}, (\mathcal{F}_t)_{t \ge 0}, \mathbb{P})$ be a filtered probability space satisfying the usual conditions, and let $(B_t)_{t \ge 0}$ denote a standard $d$-dimensional Brownian motion ($d \ge 2$). Throughout this work (except in the appendix), $\mathscr{D} \subset \mathbb{R}^d$ is assumed to be a bounded domain\footnote{Namely nonempty, open, and connected.} with at least $C^3$  boundary. Denote by $\mathbf n_i(z)$    
  the   inward-pointing unit normal vector    of $\mathscr D$ at $z\in \partial \mathscr D$.  
          We also denote by $\mathbf 1_A$ the indicator function of a set $A\subset \mathbb R^d$.  
          Let $b: \mathbb{R}^d \to \mathbb{R}^d$ and $\sigma: \mathbb{R}^d \to \mathbb{R}^{d \times d}$ be measurable functions such that $\sigma(x)$ is an invertible matrix for every $x \in \mathbb{R}^d$ (precise assumptions will be specified later).
For $x\in \bar{\mathscr D}$,  we consider    the solution $(X_t,L_t)_{t\ge 0}$   to the  non reversible (normal) reflecting stochastic differential equations over $\bar{\mathscr D}$ drifted by $b$:
\begin{equation}\label{eq.LR}
 dX_t=  b(X_t)dt+ \sigma(X_t)dB_t + d L_t,
\end{equation}  
  where:   
  \begin{enumerate}
  \item[$\mathfrak 1$.]  $X_t\in \bar{\mathscr D}$ for all $t\ge 0$.
    \item[$\mathfrak 2$.] A.s. $(X_t)_{t\ge 0}$ and $(L_t)_{t\ge 0}$ are continuous, and $(L_t)_{t\ge 0}$ has locally bounded variations.  
  \item[$\mathfrak 3$.]  $L(0)=0$ and for all $t\ge 0$: 
\begin{equation}\label{eq.L=}
|L|_t= \int_0^t \mathbf 1_{X_s\in \partial \mathscr D} d|L|_s \text{ and }   L_t= \int_0^t \mathbf n_i(X_s) d|L|_s,
\end{equation}  
  \end{enumerate}
  where $|L|_t$ denotes the total variation of $L_t$ by time $t$. 

  Let $\Sigma$ be  a subset   of $\partial \mathscr D$. Set 
\begin{equation}\label{eq.Oo}
\mathscr O:= \bar{\mathscr D}\setminus \bar{\Sigma} \ \ \text{ and } \ \  \tau_{\mathscr O}= \inf\{t\ge 0, X_t\notin \mathscr O\}.
\end{equation} 
 Note that $\tau_{\mathscr O}= \inf\{t\ge 0, X_t\in \bar{\Sigma}\}$ and  that $\mathscr O$ is an open subset of $\bar{\mathscr D}$, hence it is a Polish space.   
In the narrow escape problem (Section \ref{sec.NE}), $\Sigma=\cup_{i=1}^N  {\Sigma}_i$. 
 For  $x\in \mathscr O$, we define the killed (outside $\mathscr O$) semigroup 
 \begin{equation}\label{eq:PtO}
     P_t^{\mathscr O}f(x)= \mathbb E_x[f( X_t) \mathbf 1_{t<\tau_{\mathscr O}}],
 \end{equation}
 for every bounded and measurable function $f:\mathscr O\to \mathbb R$. 
We recall the definition of a quasi-stationary distribution adapted to the present setting (see e.g.~\cite{collet2012quasi,meleard2012quasi}).

\begin{definition}\label{QSD}
A quasi-stationary distribution   for the reflected diffusion
$(X_t)_{t\ge 0}$ with absorbing boundary part $\bar \Sigma$ is a probability
measure $\rho$ on $\mathscr O$ such that, for every
$t>0$ and every Borel set $\mathcal A\subset \mathscr O$,
$$
\rho (\mathcal A)
=
\mathbb P_{\rho}
[
X_t\in\mathcal A \, |\, 
t<\tau_{\mathscr O}].
$$
\end{definition}
Roughly speaking,  in this setting, a quasi-stationary distribution  is the invariant probability measure of the process $(X_t)_{t \ge 0}$ conditioned on not having exited the domain $\mathscr{O}$ by time $t$, for all $t \ge 0$.

\subsubsection{Objectives and related literature}
 
The primary objective of this work is to establish the theoretical foundations of the quasi-stationary distribution approach for entropic metastability in the extended narrow escape setting. As detailed previously in Section~\ref{sec.NE}, achieving this requires addressing two core questions regarding this metastable entropic confinement in arbitrary dimensions $d \ge 2$:
 
\begin{problem}\label{Pb.1}
    \textit{(Existence and uniqueness)} Does a quasi-stationary distribution  exist for the reflected drifted diffusion
$(X_t)_{t\ge 0}$ with absorbing boundary part $\bar \Sigma$  (see Definition \ref{QSD})?
\end{problem}

\begin{problem}\label{Pb.2}
    \textit{(Spectral, analytical, and exit properties)} If a quasi-stationary distribution exists, what are its analytical and spectral properties, and can we explicitly characterize the  exit position distributions for the process initiated from the quasi-stationary distribution?
\end{problem}

Problem~1 is solved in Theorem~\ref{th.1}, which is the first main result of this work, where we establish the existence and uniqueness of the quasi-stationary distribution under minimal assumptions. Problem~2 is   addressed in Theorem~\ref{th.2}, the second and last main result of this work, which provides a complete spectral characterization and describes the exit dynamics for the process initiated from the quasi-stationary distribution.
 Let us also mention that the proof of Theorem~\ref{th.2} provides the existence of a quasi-stationary distribution using tools independent of those employed in Theorem~\ref{th.1}.

Existence and uniqueness of a quasi-stationary distribution   in narrow tubes, a related entropic metastable setting, were recently established in \cite{Hsu25} (see also the recent work \cite{Hsu25b}), utilizing an approach similar to the criteria introduced in \cite{champagnat2017general}.   Our approach to the first question is   different. Indeed, as detailed below, we address Problem \ref{Pb.1} by applying a recent general criterion for the existence and uniqueness of quasi-stationary distributions established in \cite{guillinqsd}, see Section \ref{sec.Gu}. 
Other criteria for the existence and uniqueness of a quasi-stationary distribution can be found in \cite{champagnat2017general,ferre2,CloezHarris,del2023stability}. 
It should also be noted that Problem \ref{Pb.1} was studied in parallel and independently in~\cite{CaLeNo26},   under the assumptions $b \equiv 0$ (no drift) and $\sigma \equiv \sqrt 2\,  \mathrm{Id}_{\mathbb{R}^d}$ (see \eqref{eq.LR}). In that setting the infinitesimal generator $\mathscr L$ of $(X_t)_{t\ge 0}$ (see \eqref{eq.Lo}) with mixed boundary conditions  is self-adjoint, and the approach developed there relies crucially on a spectral decomposition along a complete orthonormal system of eigenfunctions spanning the whole space $L^2(\mathscr D)$. This technique is no longer available when $b \not\equiv 0$, since $\mathscr L$ then fails to be symmetric. For this reason, the approach we develop here to address Problem \ref{Pb.2} is necessarily different and more involved, see Section \ref{sec.St2} for more details. It is worth emphasizing that our first main result Theorem~\ref{th.1} meaningfully expands the scope of~\cite{CaLeNo26}.  Indeed, Theorem~\ref{th.1}, apart from requiring minimal regularity assumptions on both $b$ and the diffusion matrix $\sigma$, imposes no regularity conditions on the absorbing set $\Sigma \subset \partial \mathscr{D}$ (no regularity is required on its boundary $\partial\Sigma$). We also mention that the techniques we use to solve Problem \ref{Pb.1} allow to deal with unbounded domains, see the results in Appendix~\ref{app.unbounded} (Theorem \ref{th.unbounded}).




\subsubsection{Notations}
Let $\mathscr E$ a Polish space. 
The set $\mathcal B( \mathscr E)$ is the Borel $\sigma$-algebra of $ \mathscr E$, and $b\mathcal B( {\mathscr E})$ is the  space of all bounded and Borel measurable functions $f: {\mathscr E} \to  \mathbb R$ equipped with the sup-norm
 $$\Vert f \Vert_\infty=\sup_{ x\in  {\mathscr E} }\vert f( x)\vert.$$
 If $W:\mathscr E\to [1,+\infty)$ is a continuous function, 
we denote by  $b_{W}\mathcal B(\mathscr E)$   the Banach space of all $\mathcal B(\mathscr E)$-measurable functions on $\mathscr E$ with norm $\|f\|_{b_{W}\mathcal B(\mathscr E)}:=\sup_{{x}\in \mathscr E} {|f({x})|}/{W({x})}< +\infty$.
The space of probability measures over $\mathscr E$ is denoted by $\mathscr P(\mathscr E)$. We also set $\mathscr P_{W}(\mathscr E)=\{\nu\in  \mathscr P(\mathscr E) \text{ s.t. } \nu(W)< +\infty\}$. 
 The set  $ C_b({\mathscr E})$ denotes the space of bounded continuous real-valued functions over ${\mathscr E}$.
 Given an initial distribution $\nu$ on ${\mathscr E}$, we write $\mathbb P_\nu(\cdot)=\int_{{\mathscr E}} \mathbb P_{x}(\cdot) \nu(d{x}) $. 
 The space $C([0,T],\mathscr E)$ of $\mathscr E$-valued continuous paths   defined on $[0,T]$ is equipped with the uniform topology.  
The space $\mathscr P(C([0,T], \mathscr E))$ of probability measures on  $C([0,T],\mathscr E)$ is 
equipped with the weak convergence topology.  
The set $\mathscr B(x,r):=\{y\in \mathbb R^d, |y-x|<r\}$ denotes the open ball of radius $r>0$ centered at $x\in \mathbb R^d$.    We  introduce the operator $\mathscr L$ defined by, for sufficiently smooth function $\phi:\bar{\mathscr D}\to \mathbb R$, 
\begin{equation}\label{eq.Lo}
\mathscr L\phi= \frac 12\text{Tr}(\sigma\sigma^*{\rm Hess}\, \phi \, ) +  b\cdot \nabla \phi.
\end{equation}
 We use the standard notation $W^{s,p}(\mathscr D)$ for the Sobolev spaces over $\mathscr D$, with $W^{0,p}(\mathscr D)=L^p(\mathscr D)$ and $W^{s,2}(\mathscr D)=H^s(\mathscr D)$. The inner product in $L^2(\mathscr D)$ is denoted by $\langle\cdot,\cdot\rangle_{L^2(\mathscr D)}$. We denote by $\mathbf n_e=-\mathbf n_i$ is the outward-pointing unit normal to $\mathscr D$.
We also denote the  spectral radius  of a bounded linear operator $T$ on a Banach space by $\mathsf r_{sp}(T)$ which we recall  is the supremum of the absolute values of the elements in its spectrum and satisfies by the  Gelfand's formula, $\mathsf r_{sp}(T) = \lim_{n \to \infty} \Vert T^n\Vert^{1/n}$.

\subsection{Problem \ref{Pb.1}: Existence and uniqueness of a quasi-stationary distribution}

We address the existence and uniqueness of a quasi-stationary distribution (Problem \ref{Pb.1}) in two  low-regularity setting. 

\subsubsection{Lipschitz coefficients $b$ and $\sigma$} 
The first setting we consider is the following: 
  \begin{enumerate}
\item [{\rm\textbf{[A1]}}\phantomsection\label{ass.A1}] The vector field $b: \bar{\mathscr D}\to \mathbb R^d$  and the diffusion coefficient $\sigma :\bar{\mathscr D}\to \mathbb R^{d\times d}$ are    Lipschitz  functions.
 \end{enumerate} 
   We will also work in this section under  the assumption that $\sigma$ is elliptic in the sense that: 
 \begin{enumerate}
\item [{\rm\textbf{[A2]}}\phantomsection\label{ass.A2}] For all $z\in \bar{\mathscr D}$, $\sigma\sigma^*(z)>0$. 
 \end{enumerate} 
Assumption \ass{A2} is equivalent to requiring that $\sigma(z)$ is invertible for every $z\in\bar{\mathscr D}$.
 
 When  Assumption \ass{A1} below is satisfied,  for every $X_0=x\in \bar{\mathscr D}$, existence and uniqueness of \eqref{eq.LR} are   guaranteed by~\cite[Theorem 3.1]{lionsRSDE}. We denote this process by  $(X_t(x))_{t\ge 0}$ or $X(x)$ (the solution to \eqref{eq.LR} when $X_0=x\in \bar{\mathscr D}$) which  is a strong Markov process~\cite{lionsRSDE}.

 \subsubsection{The case when $b$ is just continuous} 
 \label{sec.weak}
We consider another setting where the drift coefficient $b$ is merely continuous, relaxing the global Lipschitz continuity assumed in {\rm\textbf{[A1]}}, though we   require the diffusion $\sigma$ to a multiple of $\mathrm{Id}_{\mathbb R^d}$. Namely, we introduce the following assumption
  \begin{enumerate}
\item [{\rm\textbf{[B]}}\phantomsection\label{ass.B}]  $\sigma\equiv \sqrt 2\, \mathrm{Id}_{\mathbb R^d}$ and $b:\bar{\mathscr D}\to \mathbb R^d$ is  continuous. 
 \end{enumerate}
  Note that under \ass{B}, weak existence and weak uniqueness hold for \eqref{eq.LR}.      Weak existence of  \eqref{eq.LR} is ensured by~\cite[Theorem 3.2]{lionsRSDE}.
By \cite[Theorem 1]{KangRamanan2017},  \eqref{eq.LR}  is weakly well posed if and only if the
corresponding submartingale problem is well posed. The latter holds by the results of 
\cite[Section 5]{StroockVaradhan1971} and the solutions  form a strong Markov process. Thus,   under \ass{B}, \eqref{eq.LR}   has a unique weak solution.

  The restriction to $\sigma\equiv \sqrt2\,\mathrm{Id}_{\mathbb R^d}$ in \ass{B} comes from our approach. Under \ass{B}, a Girsanov change of measure reduces the strong Feller property of $(X_t)_{t\ge0}$ to that of the reflected Brownian motion. For the latter, we use two known facts: its transition density is continuous on $\bar{\mathscr D}\times\bar{\mathscr D}$~\cite{bass1991some}, and it satisfies the Skorokhod equation~\cite{Hsu2}, which identifies it with the solution of \eqref{eq.LR} when $b\equiv0$. For a general $\sigma$, the Dirichlet form $\int_{\mathscr D} \sigma\sigma^*\nabla\varphi\cdot\nabla\psi\,dx$ on $H^1(\mathscr D)$ corresponds to a reflection along the conormal direction $\sigma\sigma^*\mathbf n_i$, whereas \eqref{eq.LR} reflects along $\mathbf n_i$. These directions coincide when $\sigma\sigma^*$ is a multiple of the identity. Otherwise, the process is an obliquely reflected diffusion, which is in general not symmetric, and the above results are not available in the same form. This already happens for a constant $\sigma$, since the linear change of variables $x\mapsto\sqrt2\,\sigma^{-1}x$ turns the normal reflection into an oblique one. We refer to Section~\ref{sec.ex1} for more details.

Furthermore, working under the two different (and overlapping) sets of hypotheses \ass{A1}-\ass{A2} and \ass{B} adds a valuable methodological richness, as it yields entirely independent proofs of the intermediate results used to establish Theorem~\ref{th.1}.

\subsubsection{Existence and uniqueness of a quasi-stationary distribution}

 The last assumption we consider concerns   the absorbing set $\bar \Sigma$. We naturally  assume that $\bar \Sigma$ is attainable from at least one point, namely:
  \begin{enumerate}
\item [{\rm\textbf{[C]}}\phantomsection\label{ass.C}]  $\exists x\in \mathscr O$ such that $\mathbb P_x[\exists t>0, X_t \in \bar  \Sigma]>0$. 
 \end{enumerate} 
 Assumption \ass{C} is an attainability condition for the solution of the reflected stochastic differential equation. If it does not  hold, then we are concerned with stationary distribution since in this case one has a.s.  $\tau_{\mathscr O}(x)=+\infty$ for all $x\in \mathscr O$.  
 In some simple cases, for instance when $\Sigma$ has non empty interior, it is easy to prove that \ass{C} is satisfied, see Remark~\ref{re.sigma} below.   The first main result of this work is the existence and uniqueness of a quasi-stationary distribution in the narrow escape problem.

\begin{theorem}\label{th.1}
Assume  \ass{A1}-\ass{A2} and  let  $\mathscr D$ be a $C^3$ bounded domain of $\mathbb R^d$, $d\ge 2$  (or assume \ass{B} and let $\mathscr D$ be  a $C^4$ bounded domain of $\mathbb R^d$). Let $\Sigma\subset\partial\mathscr D$ and assume \ass{C}.
  Then,   there exists a unique quasi-stationary distribution  $\bm{\rho}$  in  $\mathscr P(\mathscr O)$  for    the process $(X_t)_{t\ge 0}$ defined in \eqref{eq.LR} with absorbing boundary part $\bar \Sigma$. Moreover:
\begin{enumerate} 
  \item[$\mathfrak 1$.] 
   For all $t>0$, $P^{\mathscr O}_t:b\mathcal B(\mathscr O)\to b\mathcal B(\mathscr O)$ (see \eqref{eq:PtO}) is compact and there exists 
   $$\eta>0$$
      such that $\mathsf r_{sp}(P^{\mathscr O}_t|_{b\mathcal B(\mathscr O)} ) =e^{-\eta t}$, for all $t>0$. 
   Furthermore,  $\bm{\rho}    P^{\mathscr O}_t =e^{-\eta t}\bm{\rho}$, for all $t\ge 0$, and $\bm{\rho} (U)>0$ for all nonempty open subsets $U$ of $ \mathscr O$. In addition,  there is a unique   function $\bm{\varphi} \in C_b ( \mathscr O)$   such that  $\bm{\rho}  (\bm{\varphi} )=1$ and for all $t>0$, $
      P^{\mathscr O}_t \bm{\varphi}= e^{-\eta  t} \bm{\varphi}  \text{ on } \mathscr O,$.  
  Moreover, $\bm{\varphi}>0$   everywhere on $\mathscr O$.
 \item[$\mathfrak 2$] There exist   $c_1>0$, and   $c_2\ge 1$ s.t.  for all  $t>0$ and all   $\nu\in \mathscr P ( \mathscr O)$:
 $$
 \sup_{A\in\mathcal B(\mathscr O)}\big |  \mathbb P_\nu[X_t\in A|t<\tau_{\mathscr O}]-\bm{\rho}   (A)\big |\le c_2 \, e^{-c_1 t} \frac{\nu(\mathbf 1 )}{\nu(\bm{\varphi})}.
 $$
  \item[$\mathfrak 3$.] For all $x\in \mathscr O$, $\mathbb P_x[\tau_{\mathscr O}< +\infty]=1$. 
\end{enumerate} 
 \end{theorem} 
 
 Let us emphasize that $\Sigma$ can be any arbitrary subset of $\partial \mathscr{D}$. In particular,   no regularity assumption is imposed on   its relative boundary $\partial\Sigma$.
For $X_0$ initially distributed according to $\bm{\rho}$, it is well-known that   the exit time $\tau_{\mathscr O}$   is exponentially  distributed with  parameter $\eta$  (the  absorbing rate) and   is independent of the exit point  $X_{\tau_{\mathscr O}}$ (cf. \cite{collet2012quasi}). The law of $X_{\tau_{\mathscr O}}$ under this initial distribution is analyzed in the next section (see Theorem~\ref{th.2}).  We are also able to treat unbounded domains $\mathscr D$, see Appendix~\ref{app.unbounded} (Theorem \ref{th.unbounded}).

  \begin{remark}
  The proof method of Theorem \ref{th.1} is quite versatile and adapts to other open subsets $\mathscr O$ of $\bar{\mathscr D}$ satisfying appropriate connectivity and reachability properties, see \ass{C4} and \ass{C5} below and the   proofs  of    Propositions \ref{pr.C4} and \ref{pr.C5}.  In addition, when $\mathscr D$ is convex, the regularity assumptions on $\partial\mathscr D$ can be relaxed, as reflected stochastic differential equations are well-posed on convex domains even without boundary smoothness (see~\cite{lionsRSDE,dupuisSDE,marin2004some,saisho,cepa1998problame,pardouxB,Hino}).
  \end{remark}

  
  \subsubsection{On the proof of Theorem \ref{th.1}}
  \label{sec.ex1}
Our approach relies on and builds upon the criteria  developed in \cite{guillinqsd},  see \ass{C1}$\to$\ass{C5} below.
In contrast to standard non-reflected stochastic differential equations,  widely treated in the quasi-stationary distribution  literature~\cite{pinsky1985convergence,gong1988killed,le2012mathematical,kolb2012quasilimiting,champagnat2017general,champagnat2018criteria}, our framework must account for the fact that the process \eqref{eq.LR} is reflected over $\partial \mathscr{D}$, which results in an uncountable set of boundary hitting times. This gives rise to several specific technical challenges, which we detail below.

We establish the strong Feller property (this is property \ass{C1})  in two different ways depending on the regularity of $b$: 
\begin{enumerate}
\item[$\mathfrak 1$.] (Theorem~\ref{th.SF}) With a coupling argument when $b$ is Lipschitz (\ass{A1}).  To this end, we must estimate the behavior of the coupling time with respect to the distance between the two initial points, which is achieved using It\=o's formula together with a Girsanov transformation to absorb the coupling drift into the Doléans-Dade exponential. Note that $L_t$ arises directly in It\=o's formula, precise estimates on this boundary term are therefore required throughout the proofs. The argument of Theorem~\ref{th.SF} also applies when $\mathscr D$ is convex and possibly unbounded. In that case, \eqref{eq.a} holds with $C_{\partial\mathscr D}=0$  (see Step~1 of the proof of Theorem~\ref{th.unbounded}).
\smallskip
 
\item[$\mathfrak 2$.]  (Theorem~\ref{th.SF2}) When $b$ is merely continuous (\ass{B}), we employ a Girsanov transformation alongside a limiting procedure based on a refined convergence result under uniform integrability from~\cite{liming1999uniqueness}.  The Girsanov approach is particularly effective when information on the reflected Brownian motion is available. In this regard, we use that its transition density is continuous on $\bar{\mathscr D}\times\bar{\mathscr D}$~\cite{bass1991some}, hence bounded for each fixed $t>0$, which yields the required uniform integrability, and that it satisfies the Skorokhod equation~\cite{Hsu2}, which identifies it with the solution of \eqref{eq.LR} when $b\equiv0$. This is where we need $\sigma\equiv \sqrt 2\, \mathrm{Id}_{\mathbb R^d}$.
 
\end{enumerate} 

The Girsanov transformation is also used to prove the weak continuity of the sample paths (namely, property \ass{C2}) when $b$ is merely continuous. In contrast, when $b$ is Lipschitz, direct It\=o calculus together with precise estimates on $L_t$ allow us to bound the supremum   distance (in time) between $(X_t(x))_{t\ge 0}$ and $(X_t(y))_{t\ge 0}$. Of course, when $\sigma\equiv \sqrt 2\, \mathrm{Id}_{\mathbb R^d}$,  the proof for continuous $b$ subsumes the Lipschitz case, however, both proofs possess their own interest and merit separate presentations.  

 Assumption \ass{C3} (existence of a Lyapunov function) is automatically satisfied with $W\equiv1$, since the state space $\bar{\mathscr D}$ is compact. To prove the Feller property of the killed semigroup $(P_t^{\mathscr O})_{t\ge0}$ (this is property \ass{C4}), we combine the strong Feller property of $(P_t)_{t\ge0}$ (defined in \eqref{eq:Pt})with the fact that, uniformly with respect to the initial point, the process cannot leave a small ball in a short time, namely $\lim_{t\to0^+}\sup_{x\in\bar{\mathscr D}}\mathbb P_x[\tau_{\mathscr B(x,\delta)}\le t]=0$.   We give three different proofs of this estimate, each having their own interest. The first two rely on moment bounds for $\sup_{s\le t}|X_s(x)-x|$ and on the uniform convergence $\sup_{x}\mathbb E_x[|L|_t^2]\to0$ as $t\to0^+$ (obtained via Dini's theorem). The third one uses a test function whose normal derivative vanishes on $\partial\mathscr D$, so that the local-time term disappears from It\=o's formula altogether.  Assumption \ass{C4} is established in Proposition~\ref{pr.C4}, which in fact shows that $P_t^{\mathscr O}$ is strong Feller for every $t>0$.

To prove   irreducibility of the killed process (this is property \ass{C5}), we have to care about the fact that  a trajectory starting on $\partial\mathscr{D}$ is instantaneously and infinitely often pushed back into the interior for arbitrarily small times $t>0$. This is the purpose of Proposition \ref{pr.C5}, which has the advantage of also applying to unbounded domains $\mathscr{D}$.   
To establish that the process starting from $x \in \partial \mathscr{D}$ reaches a given target ball inside $\mathscr{O}$ at a fixed time $t>0$  with positive probability while staying in $\mathscr{O}$, we exploit the fact that it cannot spend significant time on the boundary. Specifically, we use the property that the set $B(x)=\{t \ge 0, X_t(x) \in \partial \mathscr{D}\}$ has Lebesgue measure zero almost surely, alongside the Markov property.
    
We also mention that some of the intermediate results derived along the way are of independent interest.



 \subsection{Problem \ref{Pb.2}: Analytical properties of the quasi-stationary distribution and exit position distribution}
 
 The aim of this section is to identify the quasi-stationary distribution from Theorem~\ref{th.1} with an eigenfunction of a realization of (the adjoint of) {$-\mathscr L$} subject to mixed boundary conditions, and to study its regularity up to the boundary.  
 
 \subsubsection{Main result}
 
To state our second main result, we introduce the (unbounded) operator $A_p$, defined as the realization of $-\Lr$ (recall that $\Lr$ is the infinitesimal generator of \eqref{eq.LR}) on $L^p(\mathscr{D})$, with null Dirichlet boundary conditions on $\SigD:=\bar\Sigma$ and null Neumann boundary conditions on $\SigN:=\partial\mathscr D\setminus\SigD$. To this end, let $\HsD$ denote the closure in $H^1(\mathscr D)$ of $C_c^\infty(\bar{\mathscr D}\setminus \bar{\Sigma})$, i.e.\ of the smooth functions on $\bar{\mathscr D}$ vanishing in a neighborhood of $\bar\Sigma$. For $p\in (1,+\infty)$, set:
\begin{itemize}
  \item[$\mathfrak 1$.] If $p \in [2,+\infty)$,
    $$\dom(A_p) =\Big  \{ u   \in \HsD \cap L^p(\mathscr D),\ \mathscr L u \in L^p(\mathscr D)  \text{ and }  \partial_{\mathbf n_e} u |_{\SigN} = 0\Big  \}, \qquad A_p u = -\mathscr L u,$$
    where $\mathscr L u$ is understood in the sense of distributions and $\partial_{\mathbf n_e} u |_{\SigN} = 0$ in $H^{-1/2}(\SigN)$.
    \smallskip
  \item[$\mathfrak 2$.] If $p \in (1,2)$, $A_p$ is the closure of $A_2$ in $L^p(\mathscr D)$.
\end{itemize}
By construction (see Section~\ref{sec.H1}), the operators $A_p$ are closed and densely defined in $L^p(\mathscr D)$, and $-A_p$ generates a $C_0$-semigroup on $L^p(\mathscr D)$ (see Proposition~\ref{pr.extensionS}).  We refer to Section~\ref{sec.H1} for a precise definition of and more material on these operators. 

 The second main result of this work is the following.

 
 \begin{theorem}\label{th.2}
Let $\mathscr D\subset\mathbb R^d$ be a $C^\infty$ bounded domain of $\mathbb R^d$, $d\ge2$, and $\Sigma$ be a nonempty $C^3$ open subset of $\partial\mathscr D$. Assume that $\sigma\equiv\sqrt2\,\mathrm{Id}_{\mathbb R^d}$ and that $b:\bar{\mathscr D}\to\mathbb R^d$ is $C^\infty$.
Then, for every $p\in(1,+\infty)$,  the absorbing rate $\eta>0$ (given by Theorem~\ref{th.1}) is an eigenvalue of  $A_p$ and also an eigenvalue of  the dual operator $A_{p}^*$ of $A_p$,  whose domain $\dom(A_p^*)$ is precisely described in Propositions~\ref{prop:domAstar-L2} and  \ref{pr.extensionS}. The associated dual eigenfunction $\mathbf v$ satisfies
$$
\mathbf v\in\Big(\bigcap_{p\in(1,+\infty)}\dom(A_{p}^*)\Big)\cap\Big(\bigcap_{\varepsilon>0}C^{0,1/2-\varepsilon}(\bar{\mathscr D})\Big)\cap C^\infty(\bar{\mathscr D}\setminus\partial\Sigma),
$$
with
$$
\mathbf v>0 \text{ in } \mathscr D,\qquad \mathbf v|_{\SigD}=0,\qquad\text{and}\qquad A_{p}^*\mathbf v=\eta\mathbf v\quad\text{for every } p\in(1,+\infty).
$$
For every $p\in(1,+\infty)$, $A_p^*$ has compact resolvent, is quasi-sectorial, and $\eta$ is dominant: for every eigenvalue $\mu\in\sigma(A_p^*)$,
$$
\mathrm{Re}(\mu)\ge\eta.
$$
In addition, the eigenvalue
  $\eta$ is algebraically simple   for $A_p^*$ (i.e. its associated Riesz projection   has rank one, see Proposition \ref{pr.simple}). 
    Moreover,
$$
\theta_{\mathbf v}:=-\partial_{\mathbf n_e}\mathbf v\big/\big(\eta\textstyle\int_{\mathscr D}\mathbf v\big)\ \in\ L^q(\partial\mathscr D)\qquad\text{for some } q>1.
$$
Finally, the unique quasi-stationary distribution $\bm{\rho}\in\mathscr P(\mathscr O)$ given by Theorem~\ref{th.1}, for the process $(X_t)_{t\ge0}$ killed when exiting $\bar{\mathscr D}$ through $\SigD:=\bar\Sigma$, is absolutely continuous with respect to the Lebesgue measure $dx$, with density proportional to $\mathbf v$:
$$
\bm{\rho}(dx) = \frac{\mathbf v(x)}{\int_{\mathscr D}\mathbf v}\,dx.
$$
Furthermore, the law $\mu_{\bm\rho}$ of $X_{\tau_{\mathscr O}}\in\SigD$ under $\mathbb P_{\bm\rho}$ has density $\theta_{\mathbf v}$ over $\SigD$.
\end{theorem}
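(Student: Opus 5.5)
The proof splits into three stages: (i) spectral analysis of $A_p$ and $A_p^*$ on $L^p(\mathscr D)$, producing a positive principal eigenpair; (ii) regularity of the dual eigenfunction near the interface $\partial\Sigma$; (iii) identification of the eigenpair with the quasi-stationary distribution of Theorem~\ref{th.1} and computation of the exit law.

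\textbf{Stage (i).} Work first in $L^2$ with the non-symmetric sesquilinear form
$$a(u,w)=\int_{\mathscr D}\nabla u\cdot\nabla \bar w-\int_{\mathscr D}(b\cdot\nabla u)\bar w \quad\text{on } \HsD .$$
This form is closed, sectorial and continuous, and G\r{a}rding's inequality holds. The associated operator is $A_2$; it is quasi-sectorial and, by Rellich's theorem on the bounded smooth domain, has compact resolvent. The generated semigroup is positive, by the Beurling--Deny criteria ($u\mapsto u^+$ and $u\mapsto u\wedge1$ leave $\HsD$ invariant). It is $L^\infty$-contractive up to a factor $e^{ct}$, so it extrapolates consistently to all $L^p$, $p\in(1,\infty)$, which is Proposition~\ref{pr.extensionS}. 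By ultracontractivity the spectrum is independent of $p$.

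Irreducibility follows from connectedness of $\mathscr D$ and the form-domain ideal characterization. The Kre\u{\i}n--Rutman theorem, or de Pagter's theorem for positive irreducible eventually compact semigroups, then gives:
\begin{itemize}
\item the spectral bound $s(-A_p)=-\eta'$ is an algebraically simple eigenvalue, with strictly positive eigenvectors for both $A_p$ and $A_p^*$;
\item every other eigenvalue $\mu$ satisfies $\mathrm{Re}\,\mu>\eta'$.
\end{itemize}
Algebraic simplicity for $A_p^*$ follows from that for $A_p$ by duality of Riesz projections.

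\textbf{Stage (ii).} Regularity of the adjoint eigenfunction $\mathbf v$. Formally, $A_2^*$ acts as
$$A_2^*\mathbf v=-\Delta\mathbf v+\operatorname{div}(b\mathbf v),$$
with Dirichlet conditions on $\SigD$ and the Robin-type condition $\partial_{\mathbf n_e}\mathbf v-(b\cdot\mathbf n_e)\mathbf v=0$ on $\SigN$.
\begin{itemize}
\item Away from $\partial\Sigma$: local elliptic regularity with smooth coefficients and smooth boundary, bootstrapped, gives $C^\infty(\bar{\mathscr D}\setminus\partial\Sigma)$.
\item Near $\partial\Sigma$: flatten the boundary with a $C^3$ chart that straightens both $\partial\mathscr D$ and the $(d-2)$-dimensional interface. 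The model problem is then the half-space with Dirichlet data on $\{x_{d-1}<0\}$ and Neumann data on $\{x_{d-1}>0\}$. Its singular solutions behave like $r^{1/2}\sin(\theta/2)$, which is exactly the $C^{0,1/2-\varepsilon}$ threshold.
\end{itemize}

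I expect this tangential Dirichlet--Neumann junction to be the main obstacle. I would first try to obtain $\mathbf v\in W^{1,p}$ for every $p<4$ using Savar\'e's Besov regularity, $\mathbf v\in B^{3/2}_{2,\infty}$, or Gröger's $W^{1,p}$ theory for mixed problems. Morrey's embedding, iterated with the $L^\infty$ bounds coming from ultracontractivity, then yields $C^{0,1/2-\varepsilon}$.

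The same integrability handles the boundary flux. The normal derivative $\partial_{\mathbf n_e}\mathbf v$ on $\SigD$ behaves like $r^{-1/2}$ near $\partial\Sigma$, and this lies in $L^q(\partial\mathscr D)$ for $q<2$. Hence $\theta_{\mathbf v}\in L^q$, with the trace defined by a Green formula in the $W^{1,p}$ setting.

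\textbf{Stage (iii).} Identification with the quasi-stationary distribution.
\begin{enumerate}
\item Show that the probabilistic killed semigroup $P_t^{\mathscr O}$ coincides on $C_b\cap L^p$ with $e^{-tA_p}$. Take $f$ smooth, solve the mixed parabolic problem, apply It\^o's formula to $u(t-s,X_s)$ up to $\tau_{\mathscr O}$, and note that the $dL$ term vanishes on $\SigN$ by the Neumann condition. A mollification near $\partial\Sigma$ is needed because $u$ fails to be $C^2$ there.
\item Consequently $\mathsf r_{sp}$ in $b\mathcal B$ matches the $L^p$ spectral bound, so $\eta'=\eta$. The measure $\mathbf v\,dx$ satisfies
$$\int P_t^{\mathscr O}f\,\mathbf v=e^{-\eta t}\int f\,\mathbf v,$$
so its normalization is a quasi-stationary distribution. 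By the uniqueness in Theorem~\ref{th.1}, it equals $\bm\rho$.
\item For the exit law, take $g$ on $\SigD$ and let $h(x)=\mathbb E_x[g(X_{\tau_{\mathscr O}})]$, which solves $\mathscr Lh=0$ with $h=g$ on $\SigD$ and Neumann data on $\SigN$. Since $\tau_{\mathscr O}$ is exponential with parameter $\eta$ under $\mathbb P_{\bm\rho}$, Green's formula gives
$$\mathbb E_{\bm\rho}[g(X_{\tau_{\mathscr O}})]=\frac{1}{\int\mathbf v}\int_{\mathscr D} h\,\mathbf v=\frac{1}{\eta\int\mathbf v}\int_{\mathscr D}h\,A^*\mathbf v=\frac{1}{\eta\int\mathbf v}\int_{\SigD} g\,(-\partial_{\mathbf n_e}\mathbf v),$$
where all the boundary terms on $\SigN$ cancel. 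This integration by parts is justified by the regularity from stage (ii) together with an approximation of $g$.
\end{enumerate}
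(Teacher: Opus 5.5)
Your Stage (i) is sound and close to the paper's form, positivity, irreducibility and Kre\u{\i}n--Rutman argument. The peripheral-spectrum theory for positive irreducible holomorphic semigroups even gives strict dominance. The gaps are in Stage (ii), and they propagate to Stage (iii).

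\textbf{The H\"older exponent only works in the plane.} Morrey's embedding $W^{1,p}\hookrightarrow C^{0,1-d/p}$ requires $p>d$. Near the interface, however, $W^{1,p}$-regularity is capped at $p<4$ in every dimension: the transversal profile $r^{1/2}\sin(\theta/2)$, with $r$ the distance to the codimension-two set $\partial\Sigma$, has $|\nabla\mathbf v|\sim r^{-1/2}$. Consequently your route gives $C^{0,1/2-\varepsilon}$ only for $d=2$, at best $C^{0,1/4-\varepsilon}$ for $d=3$, and not even continuity for $d\ge4$. Your sources are also weaker than you state: Savar\'e's $H^{3/2-\varepsilon}$ embeds into $W^{1,p}$ only for $p<2d/(d-1)$, and Gr\"oger gives only some $p>2$. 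Bootstrapping with $L^\infty$ bounds on the right-hand side cannot help, because the cap comes from the switch of boundary condition, not from the data.

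The missing idea is that the solution is smooth \emph{along} $\partial\Sigma$, so the singularity is genuinely two-dimensional. Shamir's estimate encodes exactly this: $w\in H^{s,p}$ near $\partial\Sigma$ for every $p>4$ and $s<1/2+2/p$. Letting $p\to\infty$ then gives $C^{0,1/2-\varepsilon}$ in every dimension. The paper uses the same estimate to get $\dom(A_p)\hookrightarrow C(\bar{\mathscr D})$ for large $p$, hence joint continuity of $e^{-tA_p}f$ on $\mathbb R_+\times\bar{\mathscr D}$. That continuity is what allows It\=o's formula to be stopped at the hitting time $\tau_r$ of an $r$-neighbourhood of $\SigD$, where everything is smooth, before letting $r\to0$. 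Your ``mollification near $\partial\Sigma$'' would instead require controlling drift and local-time error terms near the interface, which you do not do. For $d\ge4$, your Stage (ii) does not even supply continuity of the semigroup at $\SigD$, which any such limit needs. The same continuity is needed to identify $\mathbb E_x[g(X_{\tau_{\mathscr O}})]$ with the solution of the mixed problem.

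\textbf{The boundary flux is not controlled.} $W^{1,p}$-regularity together with Green's formula defines $\partial_{\mathbf n_e}\mathbf v$ only as a distribution of negative order on $\partial\mathscr D$, in $H^{-1/2}$ or $W^{-1/p,p}$. In that setting $\nabla\mathbf v$ has no trace at all. Saying it ``behaves like $r^{-1/2}$'' presupposes a singular expansion whose remainder has an $L^q$ gradient trace, and you do not establish this. Since the final claim, that $\mu_{\bm\rho}$ has a density, rests on $\partial_{\mathbf n_e}\mathbf v$ being an integrable function, a genuine estimate is required.

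The paper proceeds as follows. It subtracts $w\in H^2(\mathscr D)\cap H^1_0(\mathscr D)$ solving $\Delta w=\Delta\mathbf v$, so that $\nabla w|_{\partial\mathscr D}\in L^2$. This leaves a harmonic $\zeta=\mathbf v-w$ with $\zeta|_{\SigD}=0$ and $\partial_{\mathbf n_e}\zeta|_{\SigN}\in L^2(\SigN)$. It then applies the Ott--Brown nontangential maximal function bound $(\nabla\zeta)^\star\in L^p(\partial\mathscr D)$ for some $p>1$. Alternatively, you would have to prove $W^{2,q}$-regularity near $\partial\Sigma$ for some $q>1$, so that $\nabla\mathbf v$ has an $L^q$ trace.
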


\noindent
\textbf{Note}. We will also prove that the right eigenfunction $\bm{\varphi}$ of $P_t^{\mathscr O}$, see Theorem \ref{th.1},  is (up to a normalizing constant) equal to $\mathbf u$, the function given  by Theorem  \ref{th.3}, see indeed \eqref{eq.Uu}. In particular, $\bm{\varphi}\in H^1(\mathscr D)$ admits a Hölder continuous extension up to the boundary of $\mathscr D$ which vanishes on $\bar \Sigma$, and satisfies the homogeneous Neumann boundary condition over $\partial \mathscr D\setminus \bar \Sigma$ in the strong sense. 

Let us also mention that Theorem~\ref{th.2} can be readily extended to the case where $\sigma$ is $C^\infty$ and uniformly elliptic within $\bar{\mathscr D}$.   
  We assume that $\Sigma$ is a relatively open $C^3$ subset of $\partial \mathscr D$, as we will use the results of~\cite{Shamir1968,Savare1997} in the proof of Theorem~\ref{th.2}.
Under the assumptions of Theorem \ref{th.2}, the density $\mathbf{v}$ of $\boldsymbol{\rho}$ exhibits higher regularity than $H^1$. Specifically, $\mathbf{v} \in W^{1,q}$ for some $q > 2$  \cite{groger1989aw,Savare1997}. 
We also emphasize that we assume $\mathscr{D}$ to be $C^\infty$.  This allows us to highlight that the central analytical difficulty in mixed boundary value problems arises not from domain geometry, but from the junction between the Dirichlet and Neumann regions, where derivatives of solutions to Poisson equations inherently lose regularity, even if the data are smooth\footnote{A classic illustration of this phenomenon is provided by the function $u(z) = \mathrm{Re}(\sqrt{z})$ on the upper half-plane $\mathbb{H} = \{z \in \mathbb{C} : \mathrm{Im}(z) > 0\}$. While $u$ is harmonic in $\mathbb{H}$ and smooth up to the boundary away from the origin, it satisfies a mixed boundary condition on $\mathbb{R}$: a homogeneous Neumann condition on $\{x > 0\}$ and a homogeneous Dirichlet condition on $\{x < 0\}$. The transition at the origin causes $\nabla u(x) \sim \vert x\vert ^{-1/2}\notin L^2_{loc}(\mathbb R)$ to blow up, illustrating the fundamental loss of derivative regularity at the Dirichlet-Neumann interface.}.

  \subsubsection{On the proof of Theorem \ref{th.2}}
  \label{sec.St2}
  
  Recall that $\SigD:= \bar{\Sigma}$ and $\SigN:=\partial\mathscr D\setminus\bar\Sigma$
denote respectively the absorbing (Dirichlet) and reflecting (Neumann)
parts of $\partial\mathscr D$. Since $b\not\equiv0$, the natural
sesquilinear form associated with $-\mathscr L$ on
the space of functions vanishing on $\SigD$ is
non-symmetric. We are therefore outside the scope of self-adjoint spectral theory, and
we instead realize $-\mathscr L$ as a quasi-m-sectorial operator $A_2$ on
$L^2(\mathscr D)$ (Proposition~\ref{pr.domA2}), together with its
adjoint $A_2^*$, which carries the Robin-type  condition $\partial_{\mathbf n_e}v-(b\cdot\mathbf n_e)\,v=0$ on $\SigN$
(Proposition~\ref{prop:domAstar-L2}). Gaussian upper bounds for the heat kernel of $A_2$ \cite[Theorem 5.4]{OuhabazJOT} then allow us to
extend the semigroups generated by $-A_2$ and $-A_2^*$ to $C_0$-semigroups on $L^p(\mathscr D)$, with generators $-A_p$ for $p\in(1,+\infty)$ and $-A_p^*$ (acting on $L^{p'}(\mathscr D)$) for $p\in(1,+\infty)$ (Proposition~\ref{pr.extensionS}).

The core difficulty is that $\SigD$ and $\SigN$ meet along
$\partial\SigD$, where the boundary condition switches type. Even
for the simplest model problem, the gradient of the explicit solution generically blows up at
the interface, however smooth $\SigD,\SigN$ are. This
caps elliptic regularity below $H^2$: only the sharp mixed-problem
estimate of Savaré \cite{Savare1997}, giving $u\in H^{3/2-\varepsilon}(\mathscr D)$
for every $\varepsilon>0$, is available, and it threatens two things we
need later: continuity of eigenfunctions up to $\partial\SigD$, and
integrability of their normal derivative there.

Thanks to this $H^{3/2-\varepsilon}$ regularity together with the Rellich-Kondrachov embedding theorem, the resolvent of $A_p$ is compact  (compactness is first established for $p \ge 2$ close to $2$, and then extended using compactness interpolation    results on Calderón-Lions spaces).  We then show irreducibility of the resolvent of $A_p$ (no non-trivial closed ideal of $L^p(\mathscr D)$ is left invariant) and use  the Krein-Rutman theorem,  the infinite-dimensional
analogue of the Perron-Frobenius theorem for positive matrices,  to deduce 
a simple, positive, principal eigenvalue $\lambda$, with positive
eigenfunction $\mathbf u$ for $A_p$ and positive eigenfunction $\mathbf v$ for the dual
operator $A_p^*$ (Theorem~\ref{th.3}). 

 Since $\mathbf u$ and $\mathbf v$ belong to $H^1(\mathscr D)$ and solve homogeneous mixed boundary value problems (with smooth coefficients), Shamir's theorem~\cite{Shamir1968} directly yields their H\"older continuity on $\bar{\mathscr{D}}$, including at the interface $\partial\SigD$ despite the gradient blow-up there (Proposition~\ref{pr.conti}). In particular, $\mathbf u,\mathbf v\in L^q(\mathscr D)$ for every $q\in[1,+\infty]$, so that $\mathbf u\in\dom(A_p)$ and $\mathbf v\in\dom(A_p^*)$ for every $p\in(1,+\infty)$ (Proposition~\ref{pr:allp}). We then apply results from~\cite{OtBr13} on the maximal tangential gradient of solutions to mixed boundary value problems to obtain an $L^q(\partial \mathscr{D})$ bound on $\partial_{\mathbf{n}_e}\mathbf v$ for some $q>1$ (Theorem~\ref{th.Brown}).

Establishing the continuity of solutions to $e^{-t A_p} f$ up to the boundary $\overline{\mathscr{D}}$ is essential to rigorously justify the Feynman-Kac representation 
$$e^{-t A_p} f(x) = \mathbb{E}_x  [ f(X_t) \mathbf{1}_{t < \tau_{\mathscr{O}}} ]$$
 everywhere on $\bar{\mathscr{D}}$ (Proposition~\ref{pr.fin}). We then show, thanks to this formula, that  the normalized eigenfunction $\boldsymbol{\rho} = \mathbf{v} / \int_{\mathscr{D}} \mathbf{v}$ is uniquely identified as the quasi-stationary distribution, and the eigenvalue $\lambda$ is matched with the survival rate $\eta$ from Theorem~\ref{th.1}, which is shown to dominate the spectrum (Proposition~\ref{pr.dominance}). Furthermore, analyzing the regularity of the boundary value $x \mapsto \mathbb{E}_x \left[ h(X_{\tau_{\mathscr{O}}}) \right]$ entails solving an inhomogeneous mixed boundary value problem using the Fredholm alternative. This ultimately allow us to prove  that the exit distribution of $X_{\tau_{\mathscr{O}}}$ under $\boldsymbol{\rho}$ is represented on $\partial \mathscr{D}$ by the density $\theta_{\mathbf{v}} = -\partial_{\mathbf{n}_e} \mathbf{v} / \left( \lambda \int_{\mathscr{D}} \mathbf{v} \right)$ (Proposition~\ref{pr.exitlaw}).

 \section{Existence and uniqueness of a quasi-stationary distribution: proof of Theorem \ref{th.1}}
 
 \subsection{Criteria for the existence and uniqueness of a quasi-stationary distribution} 
 \label{sec.Gu}

 In this section we recall the result~\cite[Theorem 2.2]{guillinqsd} we use to prove Theorem \ref{th.1}.  For simplicity of exposition, we restrict our attention to processes with continuous sample paths, as this is the case for the process \eqref{eq.LR} considered here.
More precisely, let $(\mathscr X_t)_{t\ge 0}$ be a time homogeneous Markov process valued in a  Polish  space $\mathscr E$, with continuous sample  paths and satisfying the strong Markov property, which is defined on a filtered probability space.   
For  $x\in \mathscr E$, we define the  (non-killed) semigroup 
\begin{equation}\label{eq:Pt}
    P_tf(x)= \mathbb E_x[f(\mathscr X_t)], \ f\in b\mathcal B( {\mathscr E}).
\end{equation}
Define the following assumptions:

\begin{enumerate}
\item[\textbf{[C1]}\phantomsection\label{ass.C1}] There exists $t_0>0$ such that for each $t\ge t_0$, $P_t$ is strong Feller, i.e. $P_tf$ is continuous on $\mathscr E$ for any $f\in b\mathcal B(\mathscr E)$.

\item[ \textbf{[C2]}\phantomsection\label{ass.C2}]  For all $T>0$, $x\mapsto\mathbb P_{x}[X_{[0,T]}\in \cdot]$  (the law of $X_{[0,T]}:=(\mathscr X_t)_{t\in [0,T]}$) is continuous from $\mathscr E$ to  $\mathscr P(C([0,T], \mathscr E))$. 

\item[ \textbf{[C3]}\phantomsection\label{ass.C3}]  There exist a continuous  function $W : \mathscr E\mapsto[1,+\infty[$, belonging   to the extended domain $D_e(\mathfrak L)$ of the generator of $(\mathscr X_t)_{t\ge 0}$,   two sequences of positive constants $(r_n)_{n\in\mathbb N}$ and $(b_n)_{n\in\mathbb N}$ where $r_n\to +\infty$, and an increasing sequence of compact subsets $(K_n)_{n\in\mathbb N}$ of $\mathscr E$, such that
$$
-\mathfrak L W ({x}) \ge r_n W({x}) - b_n \mathbf 1_{K_n}({x}), \ q.e.
$$ 
\end{enumerate}
For a precise definition of the extended domain and the meaning of \textit{q.e.} (quasi-everywhere), we refer to \cite{guillinqsd,guillinqsd2} and references therein. A detailed treatment is unnecessary for our purposes since our domain $\bar{\mathscr{D}}$ is compact, which guarantees that assumption \textbf{[C3]} holds automatically (see \cite{guillinqsd}). 

Now let $\mathscr O$ be a non-empty open subset of $\mathscr E$. Consider the first exit time of $\mathscr O$, $$
\tau_{\mathscr O}:=\inf\{t\ge0, \mathscr X_t\notin \mathscr O\}.$$  
 For  $x\in \mathscr O$, we define as we did previously the killed (outside $\mathscr O$) semigroup $  P_t^{\mathscr O}f(x)= \mathbb E_x[f(\mathscr X_t) \mathbf 1_{t<\tau_{\mathscr O}}]$, 
 where    $f\in b\mathcal B( {\mathscr O})$.  
Assume the following on the killed process $(\mathscr X_t, 0\le  t< \tau_{\mathscr O})$:
 
\begin{enumerate}
\item[\textbf{[C4]}\phantomsection\label{ass.C4}] For $t\ge 0$, $P_t^{\mathscr O}$ is  Feller, i.e. if $f\in C_b(\mathscr O)$, so is $P_t^{\mathscr O}f$ over $\mathscr O$. 

\item[\textbf{[C5]}\phantomsection\label{ass.C5}] There exists $t_0>0$ such that for all $t\ge t_0$,  ${x}\in {\mathscr O}$, and $\mathscr O\subset U$ non-empty open,
$
P_t^{\mathscr O}({x},U)>0
$. In addition, there is some $x_0\in \mathscr O$ such that $\mathbb P_{x_0}[\tau_{\mathscr O}< +\infty]>0$.
\end{enumerate}

We refer to \cite[Remark 2.8]{guillinqsd2} for a discussion on these assumptions. 
Note that Definition~\ref{QSD} of a quasi-stationary distribution extends verbatim to this more general framework.
%
We now recall the result of~\cite{guillinqsd} we will use in this work.

 \begin{theorem}\label{thm:main}(\cite[Theorem 2.2]{guillinqsd})
Assume   {\rm\textbf{[C1]}}$\to${\rm\textbf{[C5]}}. 
  Then, for all $p>1$,   there exists a unique quasi-stationary distribution  $\bm{\rho}$ for    the process $(\mathscr X_t)_{t\ge 0}$ in $\mathscr P_{W^{1/p}}(\mathscr O)$. Moreover:
\begin{enumerate} 
  \item[$\mathfrak 1$.] 
   For all $t>0$, $P^{\mathscr O}_t:b_{W^{1/p}}\mathcal B(\mathscr O)\to b_{W^{1/p}}\mathcal B(\mathscr O)$ is compact and there exists 
   $$\eta>0$$
      such that $\mathsf r_{sp}(P^{\mathscr O}_t|_{b_{W^{1/p}}\mathcal B(\mathscr O)} ) =e^{-\eta t}$, for all $t>0$.  
   Furthermore,  $\bm{\rho}    P^{\mathscr O}_t =e^{-\eta t}\bm{\rho}$, for all $t\ge 0$, and $\bm{\rho} (U)>0$ for all nonempty open subsets $U$ of $ \mathscr O$. In addition,  there is a unique   function $\bm{\varphi} \in C_{W^{1/p}} ( \mathscr O)$   such that  $\bm{\rho}  (\bm{\varphi} )=1$ and $
      P^{\mathscr O}_t \bm{\varphi}= e^{-\eta  t} \bm{\varphi}  \text{ on } \mathscr O,$ for all $t\ge0$.  
  Moreover, $\bm{\varphi}>0$   everywhere on $\mathscr O$.
 \item[$\mathfrak 2$.] There exist   $c_1>0$ and   $c_2\ge 1$ such that for all  $t>0$ and all   $\nu\in \mathscr P_{W^{1/p}} ( \mathscr O)$:
 $$
 \sup_{A\in\mathcal B(\mathscr O)}\big |  \mathbb P_\nu[\mathscr X_t\in A|t<\tau_{\mathscr O}]-\bm{\rho}   (A)\big |\le c_2 \, e^{-c_1 t} \frac{\nu(W^{1/p} )}{\nu(\bm{\varphi})}.
 $$
  \item[$\mathfrak 3$.] For all $x\in \mathscr O$, $\mathbb P_x[\tau_{\mathscr O}< +\infty]=1$. 
\end{enumerate} 
 \end{theorem}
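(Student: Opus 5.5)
The statement is \cite[Theorem 2.2]{guillinqsd}, so in the paper it is simply invoked. If I had to prove it, I would follow the standard route: Perron--Frobenius theory for positive compact operators applied to the killed semigroup on the weighted space $b_{W^{1/p}}\mathcal B(\mathscr O)$. The first step is to show that $P^{\mathscr O}_t$ is strong Feller on $\mathscr O$ for $t$ large. I would write $P_t^{\mathscr O}f(x)=P_s(P_{t-s}^{\mathscr O}f)(x)-\mathbb E_x[\mathbf 1_{\tau_{\mathscr O}\le s}P^{\mathscr O}_{t-s}f(\mathscr X_s)]$ and then let $s\to0$. This uses \textbf{[C1]} for the first term. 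For the correction term, which is small uniformly near a fixed point, I would use \textbf{[C2]} (continuity of path laws) together with \textbf{[C4]}, since these control the probability of exiting quickly.

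The second step is compactness on $b_{W^{1/p}}\mathcal B(\mathscr O)$. A product of two strong Feller kernels is ultra-Feller, so its restriction to any compact $K_n$ maps bounded sets of $b\mathcal B$ into equicontinuous families, and Arzelà--Ascoli gives compactness there. For the tail I would use the Lyapunov condition \textbf{[C3]}: it gives $P_tW\le e^{-r_nt}W+C_n\mathbf 1$ up to localization. By Hölder, $P_tW^{1/p}\le (P_tW)^{1/p}$, so the part of $P_t^{\mathscr O}$ outside $K_n$ has norm $O(e^{-r_nt/p})$ on $b_{W^{1/p}}\mathcal B$. Since $r_n\to\infty$, $P^{\mathscr O}_t$ is a norm limit of compact operators, hence compact. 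I would then deduce that $\mathsf r_{sp}(P_t^{\mathscr O})=e^{-\eta t}$ with $\eta$ independent of $t$ from the semigroup property and Gelfand's formula.

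The third step uses irreducibility. By \textbf{[C5]}, $P^{\mathscr O}_{t}$ is positive and irreducible: it charges every open set from every point, so no nontrivial closed ideal is invariant. For a compact, positive, irreducible operator, the Krein--Rutman and de Pagter theorems give $\mathsf r_{sp}>0$, an algebraically simple eigenvalue equal to the spectral radius, a strictly positive eigenfunction $\bm\varphi$ (continuous by strong Feller), and a positive eigenmeasure $\bm\rho$ with full support. In continuous time the peripheral spectrum reduces to $\{e^{-\eta t}\}$. Indeed, a peripheral eigenvalue $e^{-\eta t+i\theta t}$ for all $t$ would contradict strong positivity of $P_t^{\mathscr O}$ for $t\ge t_0$, since by Wielandt-type arguments positivity of the kernel forces the eigenfunction modulus to be $\bm\varphi$ with constant phase. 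This yields a spectral gap, and writing $P_t^{\mathscr O}=e^{-\eta t}\bm\varphi\otimes\bm\rho+R_t$ with $\|R_t\|\le Ce^{-(\eta+c_1)t}$ gives the bound in item 2 after normalization by $\nu P_t^{\mathscr O}\mathbf 1\ge e^{-\eta t}\nu(\bm\varphi)/\|\bm\varphi\|$.

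The remaining claims follow from this decomposition. Uniqueness: any QSD $\nu\in\mathscr P_{W^{1/p}}$ satisfies $\nu P_t^{\mathscr O}=\lambda_t\nu$, and item 2 forces $\nu=\bm\rho$. Positivity $\eta>0$: if $\eta=0$, then $\mathbb P_{\bm\rho}[t<\tau_{\mathscr O}]=1$ for all $t$. But \textbf{[C5]} gives some $x_0$ with positive exit probability, and by strong Feller this holds on a neighborhood $U$ charged by $\bm\rho$, a contradiction. Item 3 follows from $\mathbb P_x[t<\tau_{\mathscr O}]=P_t^{\mathscr O}\mathbf 1(x)\le Ce^{-\eta t}W^{1/p}(x)\to0$. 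I expect the main obstacle to be the first two steps, namely the strong Feller property of the killed semigroup near $\partial\mathscr O$ and the Lyapunov truncation in the weighted norm. The spectral part is then standard Perron--Frobenius machinery.
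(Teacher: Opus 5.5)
The paper does not prove this statement. It is quoted from \cite{guillinqsd}, so your outline has to stand on its own. The architecture is the natural one (strong Feller, Lyapunov-controlled compactness, Krein--Rutman, spectral gap), but the compactness and Perron--Frobenius steps fail as written.

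\textbf{Compactness (your second step).} From $P_tW\le e^{-r_nt}W+C_n$ and H\"older you only get $P^{\mathscr O}_tW^{1/p}\le (e^{-r_nt}W+C_n)^{1/p}$. Since $C_n/W$ is not small on $K_n^c$, $\mathbf 1_{K_n^c}P^{\mathscr O}_t$ does not have norm $O(e^{-r_nt/p})$. In fact \textbf{[C3]} allows a bounded $W$, e.g.\ $d\mathscr X_t=-\mathscr X_t^3\,dt+dB_t$ on $\mathbb R$ with $W=2-(1+x^2)^{-1/2}$. Then this norm is comparable to $\sup_{x\notin K_n}\mathbb P_x[t<\tau_{\mathscr O}]$, which need not vanish.

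The bound $e^{-r_nt/p}W^{1/p}(x)$ holds for the paths that \emph{avoid} $K_n$ on $[0,t]$. With $T_n$ the hitting time of $K_n$, it follows from the supermartingale $e^{r_n(t\wedge T_n)}W(\mathscr X_{t\wedge T_n})$ and H\"older. The complementary piece $f\mapsto\mathbb E_x[\mathbf 1_{T_n\le t<\tau_{\mathscr O}}f(\mathscr X_t)]$ is not $\mathbf 1_{K_n}P^{\mathscr O}_t$. Its compactness needs the strong Markov property at $T_n$, with estimates uniform in the remaining time $t-T_n$, which can be arbitrarily small. That is the real content, and it is missing.

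Even for compact $\mathscr E$ and $W\equiv1$ (this paper's situation), Arzel\`a--Ascoli does not apply. The set $\mathscr O$ is not compact, and nothing controls $P^{\mathscr O}_tf$ near $\partial\mathscr O$ (Theorem \ref{th.1} allows an arbitrary $\Sigma$).

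Your first step has a related problem. Letting $s\to0$ in $P_s(P^{\mathscr O}_{t-s}f)$ needs $P_s$ strong Feller for arbitrarily small $s$, whereas \textbf{[C1]} only gives it for $s\ge t_0$. This is harmless in the paper's setting: Theorems \ref{th.SF} and \ref{th.SF2} give strong Feller for all $t>0$, and your argument is then exactly the one used in Proposition \ref{pr.C4}.

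For compact $\mathscr E$, one route avoids both strong Feller of the killed kernels and the boundary of $\mathscr O$. Since $0\le P^{\mathscr O}_t(x,\cdot)\le P_t(x,\cdot)$ and $\{P_t(x,\cdot)\}_{x\in\mathscr E}$ is compact in total variation ($P_t=P_{t/2}P_{t/2}$ is ultra-Feller), $P^{\mathscr O}_t$ is weakly compact. The Dunford--Pettis property of $b\mathcal B(\mathscr O)$ then makes $P^{\mathscr O}_{2t}$ compact.

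\textbf{Irreducibility (your third step).} $P^{\mathscr O}_t$ is not irreducible on $b_{W^{1/p}}\mathcal B(\mathscr O)$ in the Banach-lattice sense that de Pagter's theorem requires. For $y\in\mathscr O$, the set $J_y:=\{f:\ f=0\text{ on }\mathscr O\setminus\{y\}\}$ is a nonzero proper closed ideal. Moreover $P^{\mathscr O}_t(J_y)=\{0\}$ as soon as $P^{\mathscr O}_t(x,\{y\})=0$ for all $x$. This holds for the diffusions of this paper, where $X_t$ has a density. Charging open sets gives lattice irreducibility on $L^p(\mu)$ for positivity-improving operators (Proposition \ref{pr.irr}). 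An atomless kernel, however, is never irreducible on a space of bounded Borel functions.

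So this route yields neither $\mathsf r_{sp}>0$, nor simplicity, nor strict positivity of the eigenvectors. In addition, the Krein--Rutman dual eigenvector is a positive element of $(b_{W^{1/p}}\mathcal B(\mathscr O))^*$, i.e.\ a finitely additive set function. So $\bm\rho$ being a probability measure is not automatic.

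These points can be repaired by hand:
\begin{itemize}
\item For $\mathsf r_{sp}>0$, take $U$ open with compact closure in $\mathscr O$ and $t\ge t_0$. The map $x\mapsto P^{\mathscr O}_t(x,U)$ is lower semicontinuous by \textbf{[C4]} and positive by \textbf{[C5]}. Hence it is $\ge c>0$ on $\bar U$, and $(P^{\mathscr O}_t)^n\mathbf 1_U\ge c^n\mathbf 1_U$.
\item $\sigma$-additivity of $\bm\rho$ follows from compactness: if $A_k\downarrow\emptyset$, then $P^{\mathscr O}_t\mathbf 1_{A_k}\downarrow0$ pointwise, hence in norm.
\item Positivity of $\bm\varphi$ and uniqueness follow from strong Feller together with \textbf{[C5]}.
\end{itemize}
As written, however, your argument invokes a theorem whose hypothesis fails.
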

  \noindent
  
 Let us give some remarks on this theorem. First of all $\bm{\rho}$ and $\eta$ do not depend on $p>1$.  
Moreover, if  $W$ is bounded  over $\mathscr E$, the quasi-stationary distribution $\bm{\rho}$  inside $\mathscr O$ is unique. When $\mathscr E$ is moreover compact, as already explained above, Theorem~\ref{thm:main} is   valid taking $W=\mathbf 1_{\mathscr E}$. Let us also recall that it is well-known that $X_{\tau_{\mathscr O}}$ and $\tau_{\mathscr O}$ are $\mathbb P_{\bm{\rho}}$-independent, and $\mathbb P_{\bm{\rho}}[t<\tau_{\mathscr O}]=e^{-\eta  t}$ for all $t\ge 0$. 

%
 

 \subsection{Proof of Theorem \ref{th.1}}
 
 Recall that \textbf{[C3]}  is satisfied as the state space $\bar{\mathscr D}$ is  compact. The proof is divided into three steps and consists in showing first that  \textbf{[C1]} and \textbf{[C2]} are satisfied  for the process $(X_t)_{t\ge 0}$ (see \eqref{eq.LR}) on the state space $\mathscr E= \bar{\mathscr D}$, and \textbf{[C4]} and \textbf{[C5]} when $\mathscr O= \bar{\mathscr D}\setminus\bar \Sigma$.
  
We  start by recalling the admissible conditions we will need to perform our analysis (see  \eqref{eq.a} and \eqref{eq.b} below) and which hold when $\mathscr D$ is a bounded $C^3$  domain of $\mathbb R^d$~\cite{lionsRSDE}. First of all, there exists $C_{\partial \mathscr D}\ge 0$ such that:  
\begin{equation}\label{eq.a}
\text{For all $z\in\partial \mathscr D$ and all $y\in \bar{\mathscr D}$,
  $(y-z)\cdot \mathbf n_i(z)+C_{\partial \mathscr D}|y-z|^2\ge 0$.}
  \end{equation}
 When $\mathscr D$ is $C^3$,   the vector field $\mathbf n_i$  is $C^2$ in a neighborhood of $\partial \mathscr D$ in $\mathscr D$.  Let $\Theta(z)$ be a   $C^2_b$ extension of $-{\rm dist}(\cdot, \partial \mathscr D)$, so that  $\nabla \Theta(z)= \mathbf n_i(z)$ for all $z \in \partial \mathscr D$. Then: 
      \begin{equation}\label{eq.b}
\text{For all $z\in \partial \mathscr D$, $\nabla \Theta(z) \cdot \mathbf n_i(z) =1$.}
  \end{equation}  

\subsubsection{On Assumptions {\rm \textbf{[C1]}}, {\rm \textbf{[C2]}}, and {\rm \textbf{[C3]}}} 

We   proceed with a case-by-case analysis for the two settings introduced above ({\rm\textbf{[A1]}}-{\rm\textbf{[A2]}} or {\rm\textbf{[B]}}). 

\begin{theorem}\label{th.SF}
Assume  {\rm\textbf{[A1]}} and   {\rm\textbf{[A2]}}. Let  $\mathscr D$ be a $C^3$ bounded subdomain of $\mathbb R^d$. Then, for all $x\in \bar{\mathscr D}$ and $t>0$, $X_t(x)$ has  a density   $p_t(x,y)$ w.r.t. the Lebesgue  measure $dx\otimes dy$ over $\mathscr D\times \mathscr D$ which satisfies 
$$p_t(x,y) \in L_{\rm{loc}}^{(d+1)'}( \mathscr D\times  \mathscr D).$$  
In addition, for all $t>0$,  $P_t$ is strongly Feller.
\end{theorem}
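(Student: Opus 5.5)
We split the statement into two parts: the existence of a locally $L^{(d+1)'}$ density in the interior, and the strong Feller property. The intro already announces the strategy for the latter: a coupling argument, with a Girsanov transformation absorbing the coupling drift.

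\emph{Density part.} Fix $x\in\bar{\mathscr D}$ and $t>0$. The process $X(x)$ is a continuous semimartingale whose bounded-variation part $L$ only charges times at which $X\in\partial\mathscr D$. For a test function $\phi\in C_c^\infty((0,T)\times\mathscr D)$, It\=o's formula on $\phi(s,X_s)$ contains no local-time contribution, since $\nabla\phi$ vanishes near $\partial\mathscr D$. Hence the occupation measure $\mu(\phi)=\mathbb E\int_0^T\phi(s,X_s)\,ds$ solves, in the distributional sense on $(0,T)\times\mathscr D$, the forward equation $\partial_s\mu=\mathscr L^*\mu$ with bounded coefficients, with $\sigma\sigma^*$ uniformly elliptic on compacts by \ass{A2}. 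We then invoke the Krylov estimate for It\=o processes with nondegenerate diffusion, localized away from $\partial\mathscr D$:
$$
\mathbb E\int_0^T f(s,X_s)\mathbf 1_{X_s\in K}\,ds\le C\|f\|_{L^{d+1}((0,T)\times K)}.
$$
By duality this shows that the space-time density lies in $L^{(d+1)'}_{\rm loc}$. Integrating in the initial point $x$ (with Fubini, and measurability of $x\mapsto$ law, which follows from the Lipschitz flow) gives $p_t(x,y)\in L^{(d+1)'}_{\rm loc}(\mathscr D\times\mathscr D)$ after a standard time-slicing/Markov argument. If only the space-time integrated statement comes out, we would use the Markov property at a random intermediate time to pass to fixed $t$.

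\emph{Strong Feller.} Let $x,y\in\bar{\mathscr D}$ with $|x-y|=\delta$ small. We build a coupling $(X_s(x),Y_s)$ in which $Y$ solves
$$
dY_s=b(Y_s)ds+\sigma(Y_s)dB_s+\sigma(Y_s)\xi_s ds+dL^Y_s,\qquad Y_0=y.
$$
The control $\xi_s$ is of the form $\xi_s=-\kappa\,\sigma(Y_s)^{-1}(Y_s-X_s)/|Y_s-X_s|^{\alpha}$, with $\alpha\in(0,1)$, or a reflection-type coupling, chosen so that $Z=Y-X$ hits $0$ before time $t/2$ with probability $1-o(1)$ as $\delta\to0$. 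After hitting, we glue the two processes together.

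Applying It\=o's formula to $|Z_s|^2$, the boundary terms are $2Z\cdot dL^Y-2Z\cdot dL^X$. They are controlled via \eqref{eq.a}:
$$
(X-Y)\cdot\mathbf n_i(Y)\ge -C_{\partial\mathscr D}|X-Y|^2.
$$
This produces the multiplicative factor $\exp\big(C(|L^X|_t+|L^Y|_t)\big)$. Its exponential moments are finite uniformly in the starting point, which we prove using $\Theta$ and \eqref{eq.b}: It\=o applied to $\Theta(X)$ gives $|L|_t=\Theta(X_t)-\Theta(X_0)-\int(\cdots)$, bounded plus a martingale. Together with the Lipschitz bounds from \ass{A1}, this yields $\mathbb P[\tau_{\rm coup}>t]\to0$ as $\delta\to0$.

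Next, Girsanov with density $\mathcal E(-\int\xi\,dB)_t$ shows that under $\mathbb Q$, $Y$ has the law of $X(y)$. Then
$$
|P_tf(x)-P_tf(y)|\le\|f\|_\infty\big(2\,\mathbb P[\tau_{\rm coup}>t]+\mathbb E|\mathcal E_t-1|\big).
$$
The second term tends to $0$ because $\int_0^{\tau}|\xi|^2ds\to0$ in probability as $\delta\to0$, which follows from the same $|Z|^2$ estimate, for instance $\int|Z|^{2-2\alpha}\,ds$ being small.

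\emph{Main obstacle.} The delicate step is controlling the coupling time and the Girsanov energy simultaneously in the presence of the local-time terms $L^X,L^Y$. The choice of $\alpha$ must make $|Z|^{2-2\alpha}$ hit zero in finite time while keeping $\int|\xi|^2$ small, despite the factor $e^{C|L|}$. We would try first $\alpha=1/2$, working with $\rho_s=|Z_s|^{1/2}e^{-C(|L^X|_s+|L^Y|_s)}$ to absorb the boundary contribution into a supermartingale.
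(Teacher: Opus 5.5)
Your plan is sound and uses the same two ingredients as the paper. For the density, you use an interior forward-equation/Krylov bound; for the strong Feller property, you couple by change of measure (attracting drift, synchronous noise, Girsanov). The coupling, however, is implemented differently.

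\emph{The coupling drift.} The paper's extra drift has constant size $|x-y|^\alpha$, fixed by the initial distance. The Girsanov energy is then deterministically $\le C|x-y|^{2\alpha}T$, and one gets $c_0|x-y|^\alpha\,\mathbb E[t\wedge\tau]\le C|x-y|$. This yields the explicit modulus $C_T\|f\|_\infty(|x-y|^\alpha+|x-y|^{1-\alpha})$. Your drift has size $\kappa|Y_t-X_t|^{1-\alpha}$. For $\kappa$ large enough (depending on the Lipschitz constants and $\mathrm{diam}\,\mathscr D$), it gives $|Y-X|^\alpha$ a drift $\le -c\kappa$ before weighting, with Girsanov energy $\le C\kappa^2(t\wedge\tau)$. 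This works equally well.

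\emph{The boundary weight.} This is the more substantive difference: the paper uses $e^{-C_{\partial\mathscr D}[\Theta(X_t)+\Theta(Y_t)]}$ rather than your $e^{-C(|L^X|_t+|L^Y|_t)}$. By \eqref{eq.b}, $d\Theta(X_t)=\mathscr L\Theta(X_t)\,dt+\nabla\Theta\cdot\sigma(X_t)\,dB_t+d|L|_t$. So the paper's weight is yours multiplied by the exponential of a bounded-drift-plus-martingale term. This costs the many extra terms in \eqref{eq.ITO0-}. In exchange, the weight stays between the deterministic constants $c_0$ and $C_0$, so the size of the local times never enters. With your weight you must undo it at the end, e.g.\ by splitting on $\{|L^X|_t+|L^Y|_t\le K\}$. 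That requires tightness of these local times uniformly in the starting points, including for the drift-perturbed $Y$. The second-moment bound from It\=o's formula on $\Theta$, as in \eqref{eq.CVbb}, suffices for this.

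\emph{The density part.} The paper applies the analytic theorem on measure-valued solutions of the Fokker--Planck equation directly to the equation you derive. That theorem is local in $(0,T)\times\mathscr D$ and ignores the reflection. The probabilistic Krylov estimate you invoke instead is formulated for It\=o processes without a singular finite-variation part. For the reflected process it would need an excursion decomposition away from $\partial\mathscr D$.

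Both routes yield $L^{(d+1)'}_{\rm loc}$ in $(t,y)$ for each fixed $x$, which is what the paper's argument actually delivers. Your fallback of a random intermediate time does not by itself give a fixed-$t$ (or $(x,y)$) statement. In $P_t(x,\cdot)=\int P_{t-s}(x,dz)P_s(z,\cdot)$ the mixing law depends on $s$. A bound on $\int\|p_s(z,\cdot)\|_{L^q}^q\,ds$ for each fixed $z$ therefore cannot simply be integrated against it.
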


When $\mathscr D$ is convex (and possibly unbounded), the proof of Theorem~\ref{th.SF} carries over, and even simplifies, see Step~1 of the proof of Theorem~\ref{th.unbounded}.

\begin{remark}
Using the results of~\cite[Sections 6.4 and 6.5]{KrylovFP}, one deduces that more  local  regularity   can be obtained for the mapping $(t,y)\in (0,T)\times \mathscr D\mapsto p_t(x,y)$, $x\in \bar{\mathscr D}$. 
\end{remark}

\begin{remark}
One classical way to prove that $P_t$ is strongly Feller is to show that $p_t(x,y)$ is continuous w.r.t. $x\in \bar{\mathscr D}$. Let us mention that when $\mathscr D$ is bounded, $b \equiv 0$,  and $\sigma\equiv \sqrt{2}\,\operatorname{Id}_{\mathbb R^d}$, the continuity of $p_t(x,y)$ w.r.t. $x,y\in \bar{\mathscr D}$ is established in~\cite[Lemma 4.3]{bass1991some} (even for a Lipschitz domain $\mathscr D$), see also~\cite{Hsu2} and~\cite[Section 2 and Remark 2.4]{Rama}.  We also refer to~\cite{andres} for Bismut-Elworthy type formula on $P_tf$ for $f\in C_b(\bar{\mathscr D})$    when $  b$ is $ C^1$,  $\mathscr D$ is bounded,  and $\sigma\equiv \sqrt 2\, \text{Id}_{\mathbb R^d}$.  See also \cite{FukushimaTomisaki1996}. 
 \end{remark}
 
 \begin{proof} 
 Let us first show that for every $x\in \bar{\mathscr D}$ and $t>0$, $X_t(x)$ has  a density w.r.t the Lebesgue measure. The argument we use is standard for diffusion processes. Nonetheless, since $\sigma$ and $b$ are not smooth, we give the main arguments.  
Applying It\=o formula (recall \eqref{eq.Lo}), we have for all $x\in \bar{\mathscr D}$, $\psi \in C_c^\infty( \mathscr D)$, and $0\le s<t$, 
$$ \mathbb E_x[\psi(X_{t})]- \mathbb E_x[\psi(X_{s})] =    \int_s^t  \mathbb E_x\big[    (\mathscr L\psi)(X_u)  \big] du.$$ 
 Set 
 $P_t(x,dy)= \mathbb P_x[X_t\in dy]$ is the law of $X_t(x)$.
 We thus have  since $\mathscr L\psi=0$ on $\partial \mathscr D$,  for all $x\in \bar{\mathscr D}$, $\psi \in C_c^\infty( \mathscr D)$, and $0\le s<t$, 
 $$\int_{\mathscr{D}} \psi(y) \, P_t(x, dy) - \int_{\mathscr{D}} \psi(y) \, P_s(x, dy) = \int_s^t \left( \int_{\mathscr{D}} \mathscr{L}\psi(y) \, P_u(x, dy) \right) du.$$
 Hence, in the weak sense~\cite[Proposition 6.1.2]{KrylovFP}, for any fixed $x\in \bar{\mathscr D}$,  $\partial_t P_t(x,dy)= \mathscr L^*P_t(x,dy)$, where $ \mathscr L^*$ is the formal $L^2$-adjoint of $\mathscr L$.  Equivalently, setting $\mu(dy\,dt):=P_t(x,dy)\,dt$ on $(0,T)\times \mathscr D$, $\mu$ satisfies, for all $\varphi\in C_0^\infty((0,T)\times \mathscr D)$,
$$\int_{(0,T)\times \mathscr D} \big[\partial_t \varphi + a_{i,j}\partial_{x_i}\partial_{x_j}\varphi\big]\,d\mu = -\int_{(0,T)\times \mathscr D} b\cdot \nabla_x \varphi \, d\mu,$$
where $A = (a_{i,j})_{1\leq i,j\leq d} = \frac12\sigma\sigma^*$  (recall \eqref{eq.Lo}). By~\cite[Theorem 6.3.1]{KrylovFP},
$$(\det A(y))^{1/(d+1)}\,P_t(x,dy)= q_t(x,y)\,dy,$$ 
where $(t,y)\mapsto q_t(x,y)\in L_{\text{loc}}^{(d+1)'}((0,T)\times \mathscr D)$ and $j'=j/(j-1)$ for $j\in [1,+\infty)$. Up to absorbing the dimensional constant $2^{-d/(d+1)}$ into $q_t$,
$$|\text{det } \sigma (y)|^{2/(d+1)} P_t(x,dy)= q_t(x,y)\,dy.$$
Note that by {\rm\textbf{[A2]}} and since $\mathscr D$ is bounded and $\sigma$ is continuous, $0< \inf_{\bar{\mathscr D}} |\text{det } \sigma |\le \sup_{\bar{\mathscr D}} |\text{det } \sigma | < +\infty$, so that $y\mapsto |\text{det }\sigma(y)|^{2/(d+1)}$ is also bounded from above and below by positive constants on $\bar{\mathscr D}$.  This proves the first desired result.

We now turn to the proof of the strong Feller property of $P_t$ for $t>0$. We   use a coupling argument, which is inspired by the one made   in~\cite{zhang2009exponential} (see also~\cite{zhangM}). The difference with   the computations led in~\cite{zhangM} is that the  boundary terms are not linked with a maximal monotone operator. For that reason, some of our computations differ from those made in~\cite{zhangM} (see especially \eqref{eq.ITO0-}). 
Let $x,y\in \bar{\mathscr D}$ and $\alpha\in (0,1)$. 
Following the first step of the proof of~\cite[Proposition 3.5]{zhangM}, given $X_t = X_t(x)$ solution of \eqref{eq.LR}, we can consider a solution  $(Y_t(y), K_t(y))_{t\ge 0}$ of the following  transformed reflected stochastic differential equation:   
$$
 dY_t=  b(Y_t)dt + |x-y|^\alpha \frac{X_t-Y_t}{|X_t-Y_t|}\mathbf 1_{X_t\neq Y_t} \mathbf 1_{t<\tau}\,dt+ \sigma(Y_t)dB_t + d K_t ,
$$
  where:   
  \begin{enumerate}
  \item[$\mathfrak 1$.] $Y_0=y$, $Y_t\in \bar{\mathscr D}$ for all $t\ge 0$.
    \item[$\mathfrak 2$.] A.s. $(Y_t)_{t\ge 0}$ and $(K_t)_{t\ge 0}$ are continuous, and $(K_t)_{t\ge 0}$ has locally bounded variations.  
  \item[$\mathfrak 3$.]  $K(0)=0$ and for all $t\ge 0$: 
$$
|K|_t= \int_0^t \mathbf 1_{Y_s\in \partial \mathscr D} d|K|_s \text{ and }   K_t= \int_0^t \mathbf n_i(Y_s) d|K|_s,
$$  
  \end{enumerate}
  and where $\tau= \inf\{t\ge 0, X_t=Y_t\}$ is the coupling time.  
  Define 
  $$M(z,z')=|x-y|^\alpha\sigma^*(z')(\sigma\sigma^*)^{-1}(z') \frac{z-z'}{|z-z'|}.$$
By Assumptions \ass{A1} and \ass{A2}, $(\sigma\sigma^*)^{-1}(z)$ exists for all $z\in \bar{\mathscr D}$ and is continuous. 
Because $\mathscr D$ is bounded, $\sup_{z\in \bar{\mathscr D}}\Vert (\sigma\sigma^*)^{-1}(z)\Vert_{\rm{HS}}< +\infty$. Hence, there exists $C_\sigma>0$ such that $|M(z,z')|^2\le C_\sigma |x-y|^{2\alpha}$, for all $z,z'\in \bar{\mathscr D}$. 
Note also that $\sigma(z')M(z,z')=|x-y|^\alpha\frac{z-z'}{|z-z'|}$, so that, setting 
$$W_t:= B_t+ \int_0^{t\wedge \tau}M(X_s(x),Y_s(y))\,ds,\qquad t\ge 0,$$
the equation satisfied by $(Y_t)_{t\ge 0}$ rewrites $dY_t=b(Y_t)\,dt+\sigma(Y_t)\,dW_t+dK_t$.
Let $T>0$, consider
$$N_t := \int_0^{t \wedge \tau} M(X_s(x), Y_s(y)) \cdot dB_s, $$
so that $\langle N \rangle_t = \int_0^{t \wedge \tau} \vert M(X_s(x), Y_s(y))\vert ^2 \, ds$. We then define the following Doléans-Dade exponential
$$\mathscr{M}_T = \exp ( -N_T - \frac{1}{2} \langle N \rangle_T ).$$
Since the function $2M$ is bounded, $\mathscr M^\circ_T:=\exp(-2N_T-2\langle N\rangle_T)$ is itself a true martingale with mean $1$, and $\mathscr M_T^2= \mathscr M^\circ_T\,e^{\langle N\rangle_T}$. 
Bounding pointwise $\langle N\rangle_T\le C_\sigma|x-y|^{2\alpha}T$ gives $\mathbb E[\mathscr M_T^2]\le e^{C_\sigma T|x-y|^{2\alpha}}$. 
Therefore, the same computations as those led to get~\cite[Eqs. (11) and (12)]{zhangM} imply that 
  for all $T>0$, there exists $C_T>0$ independent of $x,y$ such that 
\begin{equation}\label{eq.Mt}
\mathbb E[|1-\mathscr M_T|]^2\le C_T|x-y|^{2\alpha}   \text{ and } \mathbb E[(1+\mathscr M_T)\mathbf 1_{T\le \tau}]\le \frac{C_T}{T} \mathbb E[(2T)\wedge \tau].
\end{equation}
By Girsanov's theorem, $(W_t,t\in [0,T])$ is a Brownian motion under $\mathscr M_T\mathbb P$. Since $dY_t=b(Y_t)\,dt+\sigma(Y_t)\,dW_t+dK_t$ and $Y_0=y$, by uniqueness in law of \eqref{eq.LR}, the law of $X_T(y)$ under $\mathbb P$ is the same as the law of $Y_T(y)$ under $\mathscr M_T\mathbb P$. Because moreover $Y_T(y)=X_T(x)$ when $T\ge \tau$, we deduce using also \eqref{eq.Mt} that for all $f\in b\mathcal B(\bar{\mathscr D})$, $x,y\in \bar{\mathscr D}$ and $T>0$, 
 \begin{align}
 \nonumber
 |P_Tf(x)-P_Tf(y)|&= \big|\mathbb E[ f(X_T(x))-\mathscr M_Tf(Y_T(y))]\big|\\
 \nonumber
 &\le |\mathbb E[ (1-\mathscr M_T)f(X_T(x))\mathbf 1_{T\ge \tau}]|+ |\mathbb E[ ( f(X_T(x)) -\mathscr M_Tf(Y_T(y))) \mathbf 1_{  \tau>T}]|\\
 \label{eq.Co=}
 &\le C_T \Vert f\Vert_{\infty}\big (|x-y|^{\alpha} + \frac{1}{T} \mathbb E[(2T)\wedge \tau]\big ).
   \end{align}
The goal is thus to prove that for all $t\ge 0$, $\mathbb E[t\wedge \tau]\to 0$ when $x\to y$.
For $x,y\in \bar{\mathscr D}$, we introduce the function 
 $$e_t(x,y)=e^{- C_{\partial \mathscr D}[ \Theta(X_t(x))+\Theta(Y_t(y))]} \  \text{ and }\  S_t(x,y)=e_t(x,y) \, |X_t(x)-Y_t(y)| , $$
 where   $C_{\partial \mathscr D}>0$ and the function $\Theta$ are as in \eqref{eq.a} and \eqref{eq.b}. Since $\Theta\in C^2_b(\bar{\mathscr D})$, one has $\|\Theta\|_\infty< +\infty$, so that
 \begin{equation}\label{eq.e-bound}
 0<c_0:=e^{-2C_{\partial \mathscr D}\|\Theta\|_\infty}\ \le\ e_t(x,y)\ \le\ e^{2C_{\partial \mathscr D}\|\Theta\|_\infty}=:C_0< \infty\qquad\text{for all }t\ge0,\ x,y\in\bar{\mathscr D}.
 \end{equation}
We will use both bounds repeatedly below. In the following we omit to write the dependency of the  solutions in the initial condition $x$ and $y$. Define, for $z\in \mathbb R^d$,  $\bar z= z/ |z|$. 
 Define also $a(Z_t)= |x-y|^\alpha \bar Z_t$, where $Z_t= X_t-Y_t$. Inspired by the computations led in~\cite{lionsRSDE}, performing It\=o formula on $e_t \, (|X_t -Y_t |^2+ \ve)^{1/2}$ and letting $\ve\to 0^+$, one deduces that 
   \begin{align}
 \nonumber
 S_{t\wedge \tau}-S_0  
   &= \int_0^{t\wedge \tau} e_s \,   \bar Z_s\cdot  (\sigma(X_s)- \sigma(Y_s))dB_s\\
  \nonumber
 &\quad +  \int_0^{t\wedge \tau} \frac{e_s}{2 |Z_s|}  \Big (2 Z_s \cdot( b(X_s)-b(Y_s)) + \Vert \sigma (X_s)-\sigma (Y_s)\Vert_{{\rm HS}}^2\, \Big )ds\\
  \nonumber
 &\quad -  \int_0^{t\wedge \tau} e_s \bar Z_s\cdot a(Z_s)ds - \int_0^{t\wedge \tau}   \frac{e_s}{2 |Z_s|}  \vert (\sigma(X_s)-\sigma(Y_s)) \bar Z_s \vert^2ds 
\\
  \nonumber
 &\quad +  \int_0^{t\wedge \tau} e_s   \bar Z_s \cdot \mathbf n_i(X_s)  d|L|_s-\int_0^{t\wedge \tau} e_s   \bar Z_s \cdot  \mathbf n_i(Y_s) d|K|_s\\
  \nonumber
 &\quad - C_{\partial \mathscr D} \int_0^{t\wedge \tau} S_s \,  \big ( \nabla \Theta(X_s )\cdot  \sigma(X_s) +  \nabla \Theta(Y_s )\cdot  \sigma(Y_s)  \big) dB_s\\
  \nonumber
 &\quad - C_{\partial \mathscr D} \int_0^{t\wedge \tau}  S_s \, \Big (  \nabla \Theta(X_s )\cdot  b(X_s) +  \nabla \Theta(Y_s )\cdot  \big(b(Y_s)+a(Z_s)\big)  \Big) ds\\
    \nonumber
 &\quad -  C_{\partial \mathscr D} \int_0^{t\wedge \tau}  S_s \,   \nabla \Theta(X_s )\cdot   \mathbf n_i(X_s) d|L|_s -  C_{\partial \mathscr D}  \int_0^{t\wedge \tau}  S_s   \nabla \Theta(Y_s )\cdot \mathbf n_i(Y_s)  d|K|_s\\
    \nonumber
     &\quad -  \frac{C_{\partial \mathscr D}}{2}\int_0^{t\wedge \tau}  S_s\, \text{Tr}(\sigma\sigma^* {\rm Hess}\, \Theta  \, (X_s) + \sigma\sigma^*{\rm Hess}\, \Theta  \,  (Y_s)  \big )ds  \\
         \nonumber
 &\quad +\frac{C_{\partial \mathscr D}^2}{2 }\int_0^{t\wedge \tau} S_s  \big(  \nabla \Theta(X_s) \sigma(X_s) + \nabla \Theta(Y_s) \sigma(Y_s)\big) \cdot    \big(  \sigma^*(X_s) \nabla \Theta(X_s)  + \sigma^*(Y_s)\nabla \Theta(Y_s)  \big) ds
 \\
 \label{eq.ITO0-}
 &\quad - C_{\partial \mathscr D} \int_0^{t\wedge \tau}  e_s \bar{Z_s}(\sigma(X_s)-\sigma(Y_s))\cdot (\sigma^*(X_s)\nabla \Theta(X_s)+\sigma^*(Y_s)\nabla \Theta(Y_s))\, ds.
 \end{align} 
 Considering the boundary term above involving $d|L|_s$, we have using in \eqref{eq.a} and \eqref{eq.b}, for $s\in [0,t\wedge \tau]$ and when $X_s\in \partial \mathscr D$ (recall \eqref{eq.L=}):
 \begin{align*}
  &  e_s   \bar Z_s \cdot \mathbf n_i(X_s)   -  C_{\partial \mathscr D}    S_s \,   \nabla \Theta(X_s )\cdot   \mathbf n_i(X_s)   \\
  &= - e_s|X_s-Y_s|^{-1}\Big( (Y_s-X_s) \cdot \mathbf n_i(X_s)  +  C_{\partial \mathscr D}  |X_s-Y_s|^2  \Big)\le 0,
 \end{align*} 
 and the same holds for the boundary terms involving $d|K|_s$; these four boundary terms can thus simply be discarded when passing to an upper bound on $S_{t\wedge\tau}$. Note also that, since $\bar Z_s\cdot a(Z_s)=|x-y|^\alpha$,
 $$ \int_0^{t\wedge \tau} e_s \bar Z_s\cdot a(Z_s)ds = |x-y|^\alpha\int_0^{t\wedge \tau} e_s\,ds\ \ge\ c_0\,|x-y|^\alpha (t\wedge \tau),$$
 using $e_s\ge c_0$, see \eqref{eq.e-bound}; hence $-\int_0^{t\wedge\tau}e_s\bar Z_s\cdot a(Z_s)ds\le -c_0|x-y|^\alpha(t\wedge\tau)$.
Recall that $b$ and $\sigma$ are Lipschitz over the bounded domain $\mathscr D$. 
In the following, $C>0$ is an unimportant constant, independent of $x$, $y$, $t\ge 0$ and $\tau$ (it may depend on $c_0,C_0$ and thus on $C_{\partial\mathscr D},\|\Theta\|_\infty$),  which can change from line to line. 
  From \eqref{eq.ITO0-}, discarding the nonpositive boundary terms and using \eqref{eq.e-bound} together with the Lipschitz bounds on $b,\sigma$ and the boundedness of $\Theta,\nabla\Theta,$ and ${\rm Hess}\,\Theta$ on $\bar{\mathscr D}$, we deduce that  there exists a function $F:\bar{\mathscr D}\times \bar{\mathscr D}\to \mathbb R^d$, bounded, such that for all $t\ge 0$:
  \begin{align*} 
 S_{t\wedge \tau} & \le - c_0|x-y|^\alpha (t\wedge \tau) + C\Big( \int_0^{t\wedge \tau} |X_s-Y_s| ds+  |x-y| + \int_0^{t\wedge \tau} F(X_s,Y_s) dB_s\Big)\\
  &\le - c_0|x-y|^\alpha (t\wedge \tau) + C\Big( \int_0^{t} |X_{s\wedge \tau}-Y_{s\wedge \tau}| ds+  |x-y| +  \int_0^{t } \mathbf 1_{s<\tau}F(X_s,Y_s) dB_s\Big),
   \end{align*} 
 where the $|x-y|$ term comes from $S_0=e_0|x-y|\le C_0|x-y|$ (see \eqref{eq.e-bound}), and where $\int_0^t\mathbf 1_{s<\tau}F(X_s,Y_s)\,dB_s$ is a true (mean-zero) martingale because $F$ is bounded and $X_s,Y_s$ live in the compact set $\bar{\mathscr D}$. Using $e_{t\wedge\tau}\ge c_0$ (see \eqref{eq.e-bound}) to bound $|X_{t\wedge\tau}-Y_{t\wedge\tau}|\le \frac{1}{c_0}S_{t\wedge\tau}$, and dropping the nonpositive term $-c_0|x-y|^\alpha(t\wedge\tau)$, and taking expectations, one has for all $t\ge 0$:
\begin{align*}
\mathbb E[|X_{t\wedge \tau}-Y_{t\wedge \tau}|]\le  C\Big( |x-y| +   \int_0^{t} \mathbb E\big[|X_{s\wedge \tau}-Y_{s\wedge \tau}|\big] ds\Big).
\end{align*}
By Grönwall's inequality, we have for all $t\ge 0$: 
$\mathbb E[  |X_{t\wedge \tau}-Y_{t\wedge \tau}|] \le C e^{Ct}\,  |x-y|$. Therefore, because we also have 
   $$c_0\,\mathbb E[|x-y|^\alpha (t\wedge \tau)]\le C\Big(|x-y|+ \mathbb E[|X_{t\wedge \tau}-Y_{t\wedge \tau}|]+  C \mathbb E\big [ \int_0^{t} |X_{s\wedge \tau}-Y_{s\wedge \tau}| ds\big ] \Big),$$
   we deduce that   for all $t\ge 0$, there exists $c=c(t)>0$, such that for all $x,y\in \bar{\mathscr D}$, $\mathbb E[t\wedge \tau]\le c |x-y|^{1-\alpha}$. Coming back to \eqref{eq.Co=}, this shows that for all $T> 0$, there exists $C_T>0$, such that for all $x,y\in \bar{\mathscr D}$ and $\alpha\in (0,1)$,
   $$|P_Tf(x)-P_Tf(y)|\le C_T \Vert f\Vert_{\infty}(|x-y|^{\alpha} +|x-y|^{1-\alpha}).$$
   This concludes the proof of the strong Feller property \textbf{[C1]} and thus of Theorem \ref{th.SF}.

\end{proof}

 

 \begin{lemma}\label{le.CT}
 Assume  \ass{A1}. Let  $\mathscr D$ be a $C^3$ bounded domain of $\mathbb R^d$.
 For all $T\ge 0$ and $p\ge 1$,  
 there exists $C>0$ such that for all $x,z\in \bar{\mathscr D}$:
 \begin{equation}\label{eq.Ex}
  \mathbb E[\sup_{t\in [0,T]}|X_t(x)-X_t(z)|^{2p}]\le C|x-z|^{2p}.
  \end{equation}
In particular, {\rm \textbf{[C2]}} is satisfied. 
\end{lemma}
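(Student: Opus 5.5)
The plan is to mimic the It\=o computation \eqref{eq.ITO0-}, but now for two solutions of the \emph{same} equation \eqref{eq.LR} driven by the same Brownian motion. There is no coupling drift and no stopping at $\tau$. Set $X_t=X_t(x)$, $Y_t=X_t(z)$ with local times $L,K$, and $Z_t=X_t-Y_t$. The same weight $e_t=e^{-C_{\partial\mathscr D}[\Theta(X_t)+\Theta(Y_t)]}$ is used, which is bounded above and below by \eqref{eq.e-bound}. Instead of $|Z_t|$, we work with
$$
R_t:=e_t^{2p}\,|Z_t|^{2p}=\big(e_t^2|Z_t|^2\big)^p .
$$
We first apply It\=o to $e_t^2|Z_t|^2$, which is smooth, so no $\varepsilon$-regularisation is needed. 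The boundary contributions on $\{X_s\in\partial\mathscr D\}$ are
$$
2e_s^2\Big(Z_s\cdot\mathbf n_i(X_s)-C_{\partial\mathscr D}|Z_s|^2\,\nabla\Theta(X_s)\cdot\mathbf n_i(X_s)\Big)\,d|L|_s
=-2e_s^2\Big((Y_s-X_s)\cdot\mathbf n_i(X_s)+C_{\partial\mathscr D}|Y_s-X_s|^2\Big)\,d|L|_s .
$$
This is $\le0$ by \eqref{eq.a} and \eqref{eq.b}, and the symmetric term in $d|K|_s$ is handled likewise. Composing with $r\mapsto r^p$ multiplies these terms by the nonnegative factor $p(e_s^2|Z_s|^2)^{p-1}$, so they keep their sign and can be discarded.

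The remaining terms are controlled with the Lipschitz bounds in \ass{A1} and the boundedness of $\Theta$, $\nabla\Theta$, ${\rm Hess}\,\Theta$, $b$, $\sigma$ on the compact set $\bar{\mathscr D}$. The $dt$-terms are bounded by $C|Z_s|^{2p}\le CR_s$. The martingale part has the form $\int_0^t R_s\,G_s\cdot dB_s$ with $G$ bounded, because $|\sigma(X_s)-\sigma(Y_s)|\le C|Z_s|$. This gives
$$
R_t\le R_0+C\int_0^t R_s\,ds+\int_0^t R_sG_s\cdot dB_s .
$$

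The supremum is then handled by a Burkholder--Davis--Gundy argument. BDG gives
$$
\mathbb E\Big[\sup_{s\le t}\Big|\int_0^s R_uG_u\cdot dB_u\Big|\Big]\le C\,\mathbb E\Big[\Big(\int_0^t R_u^2\,du\Big)^{1/2}\Big].
$$
We bound $\big(\int_0^t R_u^2\,du\big)^{1/2}\le\big(\sup_{u\le t}R_u\big)^{1/2}\big(\int_0^t R_u\,du\big)^{1/2}$. By Young's inequality this is at most $\frac12\sup_{u\le t}R_u+C\int_0^t R_u\,du$. Since $|Z|\le{\rm diam}\,\mathscr D$, every quantity is bounded and finite, so the $\frac12\sup$ term can be absorbed into the left-hand side. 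With $\phi(t):=\mathbb E[\sup_{s\le t}R_s]$ we get
$$
\phi(t)\le 2R_0+C\int_0^t\phi(s)\,ds ,
$$
and Gr\"onwall gives $\phi(T)\le Ce^{CT}|x-z|^{2p}$. Using $e_t\ge c_0$ yields \eqref{eq.Ex}. The case $p\ge1$ real is covered, since $r\mapsto r^p$ is $C^2$ on $[0,\infty)$ for $p\ge1$.

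To obtain \ass{C2}, fix $x_n\to x$ in $\bar{\mathscr D}$. By \eqref{eq.Ex}, $\sup_{[0,T]}|X_t(x_n)-X_t(x)|\to0$ in $L^2$, hence in probability, on the common probability space. Convergence in probability in $C([0,T],\bar{\mathscr D})$ implies weak convergence of the laws. Since the law of $X_{[0,T]}$ started at $x$ is well defined by pathwise uniqueness (\cite{lionsRSDE}), this gives continuity of $x\mapsto\mathbb P_x[X_{[0,T]}\in\cdot\,]$.

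The main obstacle is the sign of the local-time terms. Without the exponential weight, $Z_s\cdot\mathbf n_i(X_s)$ need not be nonpositive on a nonconvex domain. The weight $e^{-C_{\partial\mathscr D}\Theta}$, together with \eqref{eq.a} and \eqref{eq.b}, is exactly what makes the boundary contribution have a sign, as in the proof of Theorem~\ref{th.SF}. One must also check that the weight's own It\=o terms generate only contributions bounded by $R_s$, which holds because every factor multiplying them is bounded on $\bar{\mathscr D}$.
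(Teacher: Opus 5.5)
Your proposal is correct and is essentially the paper's proof. Your $R_t=e_t^{2p}|Z_t|^{2p}$ is exactly the paper's $\Psi_t$ (weight exponent $2pC_{\partial\mathscr D}$), and you use the same ingredients: the sign of the local-time terms via \eqref{eq.a}--\eqref{eq.b}, a bound on the martingale integrand by $CR_s$, then Burkholder--Davis--Gundy, Young absorption and Gr\"onwall.

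One small correction: $r\mapsto r^p$ is not $C^2$ at $0$ when $1<p<2$, so your stated justification for It\^o's formula fails in that range. Justify it instead by noting that $(x,y)\mapsto e^{-2pC_{\partial\mathscr D}[\Theta(x)+\Theta(y)]}|x-y|^{2p}$ is itself $C^2$ for $p\ge1$, or by regularizing with $(|Z_t|^2+\varepsilon)^p$ as the paper does.
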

 
\begin{proof}
 Fix $x,z\in\bar{\mathscr D}$ and $p\ge1$, and set $Z_t:=X_t(x)-X_t(z)$. Set
$$
\gamma^\star:=2p\,C_{\partial\mathscr D},\qquad
 e_t^\star(x,z):=e^{-\gamma^\star[\Theta(X_t(x))+\Theta(X_t(z))]},\qquad
\Psi_t:=e_t^\star\,|Z_t|^{2p},
$$
where $\Theta$ and $C_{\partial\mathscr D}$ are as in \eqref{eq.a}-\eqref{eq.b}. 
Set
$$
M_t^\star:=\int_0^t\Xi_s\,dB_s,\qquad \Xi_s:=2p|Z_s|^{2p-1}e_s^\star\,A_s^*\bar Z_s-\gamma^\star\Psi_s\,V_s,
$$
where $A_s:=\sigma(X_s(x))-\sigma(X_s(z))$ and $V_s:=\sigma^*(X_s(x))\nabla\Theta(X_s(x))+\sigma^*(X_s(z))\nabla\Theta(X_s(z))$. 
We have
\begin{equation}\label{eq.e-bound-p}
 0<c^\star_0:=e^{-4pC_{\partial\mathscr D}\|\Theta\|_\infty}\ \le\ e^\star_t(x,z)\ \le\ e^{4pC_{\partial\mathscr D}\|\Theta\|_\infty}=:C^\star_0< \infty,\qquad \forall t\ge0,\ x,z\in\bar{\mathscr D}.
\end{equation}
 Performing It\=o's formula on $e_t^\star \,(|Z_t|^2+\ve)^p$ and letting $\ve\to0^+$, one obtains with very similar computations as those made in the proof of Theorem \ref{th.SF}, 
\begin{equation}\label{eq.Psi-bound}
\Psi_t\ \le\ \Psi_0+C\int_0^t\Psi_s\,ds+M^\star_t, \ \ \forall t\ge 0,
\end{equation}
for some $C>0$ independent of $t\ge 0$, $(x,z)\in \mathscr D^{2}$. 
We have  $|\Xi_s|\le C\Psi_s$, and thus 
$$
\langle M^\star\rangle_t=\int_0^t|\Xi_s|^2\,ds\le C^2\int_0^t\Psi_s^2\,ds\le C^2\Big(\sup_{s\le t}\Psi_s\Big)\int_0^t\Psi_s\,ds.
$$
By the Burkholder-Davis-Gundy inequality  and Young's inequality 
$$ab\le \delta a^2+\frac{1}{4\delta}b^2,\ \ \text{ with }  a=(\sup_{s\le t}\Psi_s)^{1/2},\   b=\big(\int_0^t\Psi_s\,ds\big)^{1/2},$$
we deduce that
$$
\mathbb E\Big[\sup_{s\le t}|M^\star_s|\Big]\le C\,\mathbb E\big[\langle M^\star\rangle_t^{1/2}\big]
\le C\,\mathbb E\Big[\Big(\sup_{s\le t}\Psi_s\Big)^{1/2}\Big(\int_0^t\Psi_s\,ds\Big)^{1/2}\Big]
\le \tfrac12\,\mathbb E\Big[\sup_{s\le t}\Psi_s\Big]+C\,\mathbb E\Big[\int_0^t\Psi_s\,ds\Big].
$$
Note that a  standard localization by the stopping times $\tau_n:=\inf\{s\ge0:|Z_s|\ge n\}\wedge T$, followed by taking the limit $n\to+\infty$ and Fatou's lemma, justifies applying this to \eqref{eq.Psi-bound} even before knowing $h(t):=\mathbb E[\sup_{s\le t}\Psi_s]<+\infty$ a priori. Taking $\sup_{s\le t}$ in \eqref{eq.Psi-bound}, then expectations, and absorbing the $\tfrac12 h(t)$ term into the left-hand side, we obtain that:
$$
h(t)\le 2\Psi_0+C\int_0^t h(s)\,ds,\qquad t\in[0,T].
$$
 Grönwall's inequality yields $h(T)=\mathbb E\big[\sup_{t\in[0,T]}\Psi_t\big]\le 2\Psi_0\,e^{CT}$. Since $\Psi_0=e^\star_0|x-z|^{2p}\le C^\star_0|x-z|^{2p}$ by \eqref{eq.e-bound-p}, and since again by \eqref{eq.e-bound-p}, $|Z_t|^{2p}\le \Psi_t/c^\star_0$ for every $t$, we conclude
$$
\mathbb E\Big[\sup_{t\in[0,T]}|X_t(x)-X_t(z)|^{2p}\Big]\ \le\ \frac{1}{c^\star_0}\,\mathbb E\Big[\sup_{t\in[0,T]}\Psi_t\Big]\ \le\ \frac{2C^\star_0}{c^\star_0}\,e^{CT}\,|x-z|^{2p}=C_T\,|x-z|^{2p},
$$
which is exactly \eqref{eq.Ex}, with $C_T$ independent of $x,z\in \bar{\mathscr D}$.

\end{proof}



 \begin{theorem}\label{th.SF2}
 Assume \ass{B} and let   $\mathscr D$ be a $C^4$ bounded domain of $\mathbb R^d$. Then, for all  $t>0$ and  $x\in \bar{\mathscr D}$, $X_t(x)$ has  a density   $p_t(x,y)$  with respect to the Lebesgue measure $dy$. In addition,  $P_t$ is strongly Feller for $t>0$.
 \end{theorem}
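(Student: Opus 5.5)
We argue via Girsanov, following the description in Section~\ref{sec.ex1}. Under \ass{B}, the reference process is the reflected Brownian motion $(R_t)_{t\ge0}$ on $\bar{\mathscr D}$ with generator $\Delta$. By \cite{Hsu2}, it solves the Skorokhod problem $dR_t=\sqrt2\,dB_t+dL^R_t$, i.e.\ \eqref{eq.LR} with $b\equiv0$. By \cite[Lemma 4.3]{bass1991some}, its transition density $q_t(x,y)$ is continuous on $\bar{\mathscr D}\times\bar{\mathscr D}$ for each $t>0$. In particular $q_t$ is bounded, and $x\mapsto\mathbb E_x[g(R_t)]$ is continuous for every bounded measurable $g$.

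\textbf{Girsanov step.} Since $b$ is bounded on $\bar{\mathscr D}$, the exponential
$$\mathcal E_t=\exp\Big(\tfrac1{\sqrt2}\int_0^t b(R_s)\cdot dB_s-\tfrac14\int_0^t|b(R_s)|^2ds\Big)$$
is a true martingale (Novikov), with $\mathbb E_x[\mathcal E_t^r]\le C_{r,t}$ uniformly in $x$ for every $r>1$. Under $\mathcal E_t\mathbb P_x$, the process $R$ solves \eqref{eq.LR} weakly with drift $b$. By weak uniqueness under \ass{B}, this gives
$$P_tf(x)=\mathbb E_x[f(R_t)\mathcal E_t].$$
The density claim follows at once: if $|A|=0$, Hölder's inequality gives $\mathbb P_x[X_t\in A]\le \mathbb P_x[R_t\in A]^{1/2}\,\mathbb E_x[\mathcal E_t^2]^{1/2}=0$.

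\textbf{Strong Feller step.} Fix $f$ bounded measurable, $t>0$ and $\delta\in(0,t)$. Split $\mathcal E_t=\mathcal E_{\delta}\cdot\mathcal E_{\delta,t}$, where $\mathcal E_{\delta,t}$ is the increment over $[\delta,t]$. The Markov property of $R$ gives
$$P_tf(x)=\mathbb E_x[\mathcal E_\delta\, g_\delta(R_\delta)],\qquad g_\delta(z):=\mathbb E_z\big[f(R_{t-\delta})\mathcal E_{t-\delta}\big]=P_{t-\delta}f(z),$$
so $\|g_\delta\|_\infty\le\|f\|_\infty$. We then estimate
$$|P_tf(x)-\mathbb E_x[g_\delta(R_\delta)]|\le\|f\|_\infty\,\mathbb E_x|\mathcal E_\delta-1|\le \|f\|_\infty\big(e^{\|b\|_\infty^2\delta/2}-1\big)^{1/2},$$
using $\mathbb E|\mathcal E_\delta-1|^2=\mathbb E[\mathcal E_\delta^2]-1$ together with the uniform bound on $\mathbb E[\mathcal E_\delta^2]$. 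The right-hand side tends to $0$ as $\delta\to0$, uniformly in $x$.

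Each map $x\mapsto\mathbb E_x[g_\delta(R_\delta)]=\int q_\delta(x,y)g_\delta(y)\,dy$ is continuous. Indeed, $q_\delta$ is continuous on the compact set $\bar{\mathscr D}\times\bar{\mathscr D}$, hence uniformly continuous, and dominated convergence applies. So $P_tf$ is a uniform limit of continuous functions, hence continuous.

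This route replaces the uniform-integrability limiting argument of \cite{liming1999uniqueness}. If one prefers to avoid splitting the exponential, the alternative is to approximate $b$ by Lipschitz $b_n$ and pass to the limit via uniform integrability of $\mathcal E_t^{(n)}$, using boundedness of $q_t$.

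\textbf{Main obstacle.} The delicate point is the identification. One must make sure that the reflected Brownian motion of \cite{bass1991some}, built from the Dirichlet form on $H^1(\mathscr D)$, coincides in law with the Skorokhod solution driven by $\sqrt2 B$; this is where $C^4$ smoothness and \cite{Hsu2} enter. One must also check that Girsanov transforms the Skorokhod equation into \eqref{eq.LR} with the same local time term. This holds because $L^R$ is a pathwise functional of $R$ and is unaffected by the change of measure. Weak uniqueness then transfers the identity $P_tf(x)=\mathbb E_x[f(R_t)\mathcal E_t]$ to every $x\in\bar{\mathscr D}$, including boundary points.
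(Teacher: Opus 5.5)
Your Girsanov part (Novikov, the Skorokhod constraints being pathwise and hence preserved under the change of measure, weak uniqueness, absolute continuity of $X_t(x)$) is the same as Step~2 of the paper up to \eqref{eq.Gi}. Your strong Feller step, however, takes a genuinely different route.

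The paper fixes $t$ and a sequence $x_n\to x$ and proves two convergences. First, $f(Z_t(x_n))\to f(Z_t(x))$ in probability for bounded measurable $f$; this needs the flow estimate \eqref{eq.Ex} with $b\equiv0$ and the uniform-integrability lemma of \cite{liming1999uniqueness}. Second, $m_t(x_n)\to m_t(x)$ in $L^1$, via the It\=o isometry, continuity of $b$ and a Scheff\'e-type argument.

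You instead write $P_tf=P_\delta(P_{t-\delta}f)$ and apply Girsanov only on $[0,\delta]$. The bound $\mathbb E|\mathcal E_\delta-1|\le(e^{\|b\|_\infty^2\delta/2}-1)^{1/2}$, uniform in the starting point, then exhibits $P_tf$ as a uniform limit of the continuous functions $x\mapsto\int q_\delta(x,y)P_{t-\delta}f(y)\,dy$. Splitting the exponential and invoking the Markov property of $R$ is unnecessary here: the semigroup property of $P$ plus Girsanov at time $\delta$ gives the same identity.

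Your route is shorter and quantitative, uses only $\|b\|_\infty$ in this step, and needs nothing from the reference process beyond its strong Feller property. The paper's pointwise route is heavier, but it produces the $L^1$-continuity of $x\mapsto m_T(x)$ and the convergence in probability of path functionals. The paper reuses both to prove \ass{C2} in Lemma~\ref{le.CT2}, which your argument does not provide.

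The one step you assert rather than prove is the identification at \emph{every} starting point. Weak uniqueness identifies the Dirichlet-form process under $\mathbb P^W_x$ with the solution $Z(x)$ of \eqref{eq.LR} with $b\equiv0$ only at those $x$ where the decomposition of \cite{Hsu2} is known to hold. The paper uses that decomposition only for a.e.\ $x$. It then upgrades to every $x\in\bar{\mathscr D}$ by observing that, for continuous $f$, both $\mathbb E[f(Z_t(x))]$ and $\int p^\star_t(x,y)f(y)\,dy$ are continuous in $x$, the former by \eqref{eq.Ex}.

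You need exactly this, boundary points included, both for the density claim and for the continuity of $x\mapsto\mathbb E[g_\delta(Z_\delta(x))]$. There are two ways to close it:
\begin{itemize}
\item insert the paper's continuity argument; or
\item note that for $b\equiv0$, $\sigma\equiv\sqrt2\,\mathrm{Id}_{\mathbb R^d}$ the reference process satisfies \ass{A1}--\ass{A2}, so Theorem~\ref{th.SF} gives its strong Feller property at every point directly. Absolute continuity at every $x$ then follows from a.e.\ absolute continuity together with continuity of $x\mapsto\mathbb P[Z_t(x)\in A]$.
\end{itemize}
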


 \begin{proof}
The proof is divided into two steps and is based on a Girsanov formula, which will also be used later to prove \ass{C2} under \ass{B} (see the proof of Lemma~\ref{le.CT2}).

\underline{Step 1}. The case $b\equiv 0$: the reflected Brownian motion.

Let $x\in \bar{\mathscr D}$ and let $(Z_t(x), J_t(x))_{t\ge 0}$ be the solution of $dZ_t=\sqrt 2\, dB_t + dJ_t$, i.e.:
\begin{enumerate}
\item[$\mathfrak 1$.] $Z_0=x$, $Z_t\in \bar{\mathscr D}$ for all $t\ge 0$.
\item[$\mathfrak 2$.] A.s. $(Z_t)_{t\ge 0}$ and $(J_t)_{t\ge 0}$ are continuous, and $(J_t)_{t\ge 0}$ has locally bounded variations.
\item[$\mathfrak 3$.] $J_0=0$ and for all $t\ge 0$, $|J|_t= \int_0^t \mathbf 1_{Z_s\in \partial \mathscr D}\, d|J|_s$, $J_t= \int_0^t \mathbf n_i(Z_s)\, d|J|_s$.
\end{enumerate}
Let $(W_t)_{t\ge0}$ be the reflecting Brownian motion on $\bar{\mathscr D}$, i.e. the diffusion associated with the regular Dirichlet form $\mathcal E(u,v)=\int_{\mathscr D}\nabla u\cdot\nabla v\,dy$ on $H^1(\mathscr D)$, and denote by $\mathbb P^W_x$ its law when started at $x$. By \cite[Theorem~3.1 and Lemma~4.3]{bass1991some}, $W$ has a transition density $p^\star_t(x,y)$ with respect to $dy$ which is continuous on $(0,\infty)\times\bar{\mathscr D}\times\bar{\mathscr D}$ (see also~\cite[Section 2]{Rama}). Moreover, by \cite[Theorem~1]{Hsu2}, $W$ is a semimartingale with Skorokhod decomposition
$$
W_t=W_0+\sqrt2\,\beta_t+\int_0^t \mathbf n_i(W_s)\,d\ell_s ,
$$
where $(\beta)_{t\geq0}$ is a standard Brownian motion and $(\ell_t)_{t\geq0}$ is a continuous nondecreasing process, with $\ell_0=0$, which increases only on $\{s\ge 0, \ W_s\in\partial\mathscr D\}$. Hence, for a.e. $x\in\bar{\mathscr D}$, under $\mathbb P^W_x$, the pair $(W_t,\int_0^t\mathbf n_i(W_s)\,d\ell_s)_{t\ge0}$ satisfies $\mathfrak 1$--$\mathfrak 3$ (with $|J|=\ell$ and $B=\beta$). By uniqueness in law for \eqref{eq.LR} (see the discussion in Section \ref{sec.weak}), the law of $(Z_t(x))_{t\ge0}$ is the law of $(W_t)_{t\ge 0}$ under $\mathbb P^W_x$. We thus get, for a.e. $x\in\bar{\mathscr D}$, every $t>0$ and every $f\in C(\bar{\mathscr D})$:
\begin{equation}\label{eq.densZ}
\mathbb E[f(Z_t(x))]=\int_{\mathscr D}p^\star_t(x,y)\,f(y)\,dy .
\end{equation}
Both sides of \eqref{eq.densZ} are continuous in $x\in\bar{\mathscr D}$: the left-hand side by \eqref{eq.Ex} (applied with $b\equiv0$) and the uniform continuity of $f$, and the right-hand side by the uniform continuity of $p^\star_t$ on the compact set $\bar{\mathscr D}\times\bar{\mathscr D}$. Hence \eqref{eq.densZ} holds for every $x\in\bar{\mathscr D}$ and every $f\in C(\bar{\mathscr D})$. Since two finite Borel measures on $\bar{\mathscr D}$ which coincide on $C(\bar{\mathscr D})$ are equal, \eqref{eq.densZ} then holds for every $f\in b\mathcal B(\bar{\mathscr D})$. In conclusion, for all $t>0$ and $x\in\bar{\mathscr D}$, $Z_t(x)$ has the density $p^\star_t(x,\cdot)$ with respect to $dy$, and, again by continuity of $p^\star_t$ on the compact set $\bar{\mathscr D}\times\bar{\mathscr D}$,
\begin{equation}\label{eq.bbo}
C(t):=\sup_{x,y\in\bar{\mathscr D}}p^\star_t(x,y)<+\infty, \ \text{ for all } t>0 .
\end{equation}

\underline{Step 2}. Girsanov formula and conclusion.

Fix $x\in\bar{\mathscr D}$ and $T>0$, and set, for $t\in[0,T]$,
$$
B^*_t:= B_t- \frac1{\sqrt 2} \int_0^{t}b(Z_s(x))\,ds,
\quad
m_t(x):=\exp\Big( \frac1{\sqrt 2} \int_0^{t} b(Z_s(x))\cdot dB_s- \frac 14\int_0^{t}|b(Z_s(x))|^2\,ds \Big).
$$
Since $b$ is continuous on the compact set $\bar{\mathscr D}$, it is bounded. Hence Novikov's criterion applies: $(m_t(x))_{t\in[0,T]}$ is a martingale with $\mathbb E[m_T(x)]=1$, and, by Girsanov's theorem, $(B^*_t)_{t\in[0,T]}$ is a Brownian motion under $m_T(x)\,\mathbb P$. Since
$$
Z_t(x)=x+\int_0^t b(Z_s(x))\,ds+\sqrt2\, B^*_t+J_t(x),\quad t\in[0,T],
$$
and since $\mathfrak 1$--$\mathfrak 3$ are pathwise properties (hence preserved under the equivalent probability $m_T(x)\,\mathbb P$), the process $(Z(x),J(x))_{t\ge 0}$ is, under $m_T(x)\,\mathbb P$, a weak solution of \eqref{eq.LR} on $[0,T]$ started at $x$. By weak uniqueness for \eqref{eq.LR} under \ass{B}, the law of $(Z_t(x))_{t\in[0,T]}$ under $m_T(x)\,\mathbb P$ is the law of $(X_t(x))_{t\in[0,T]}$ under $\mathbb P$. Therefore, for every bounded measurable $F:C([0,T],\bar{\mathscr D})\to\mathbb R$,
\begin{equation}\label{eq.Gi}
\mathbb E\big[F(X_{[0,T]}(x))\big]=\mathbb E\big[F(Z_{[0,T]}(x))\,m_T(x)\big].
\end{equation}
In particular,  $X_t(x)$ has a density with respect to $dy$, for all $t>0$ and $x\in\bar{\mathscr D}$.

We now prove that $P_t$ is strongly Feller. Fix $t\in(0,T]$, $f\in b\mathcal B(\bar{\mathscr D})$, and let $x_n\to x$ in $\bar{\mathscr D}$.

\underline{Step 2a}  By \eqref{eq.Ex} (with $b\equiv0$), $Z_t(x_n)\to Z_t(x)$ in probability. By Step 1, the law of $Z_t(x_n)$ is $p^\star_t(x_n,y)\,dy$, and by \eqref{eq.bbo} the densities $(p^\star_t(x_n,\cdot))_n$ are bounded by $C(t)$, hence uniformly integrable on $(\mathscr D,dy)$. Then, $f(Z_t(x_n))\to f(Z_t(x))$ in probability follows from \cite[Lemma~3.2]{liming1999uniqueness}.

\underline{Step 2b} $m_t(x_n)\to m_t(x)$ in $L^1$. Write 
$$m_t(x)=\exp\big(\frac1{\sqrt2}I_t(x)-\frac14 \<I\>_t(x)\big),$$ with $I_t(x):=\int_0^t b(Z_s(x))\cdot dB_s$ and $\<I\>_t(x):=\int_0^t|b(Z_s(x))|^2\,ds$. By It\=o's isometry,
$$
\mathbb E\big[|I_t(x_n)-I_t(x)|^2\big]=\int_0^t \mathbb E\big[|b(Z_s(x_n))-b(Z_s(x))|^2\big]\,ds .
$$
For each $s\in[0,t]$, $Z_s(x_n)\to Z_s(x)$ in probability by \eqref{eq.Ex}, so that $b(Z_s(x_n))-b(Z_s(x))\to0$ in probability by continuity of $b$. Since this quantity is bounded by $2\|b\|_\infty$, two applications of the dominated convergence theorem (in $\omega$, then in $s$) give $I_t(x_n)\to I_t(x)$ in $L^2$. Similarly, $$\mathbb E|\<I\>_t(x_n)-\<I\>_t(x)|\le 2\|b\|_\infty\int_0^t\mathbb E|b(Z_s(x_n))-b(Z_s(x))|\,ds\to0.$$ By the continuous mapping theorem, $m_t(x_n)\to m_t(x)$ in probability. Finally, since $m_t(x_n)\ge 0$, we have $0\le (m_t(x)-m_t(x_n))^+\le m_t(x)\in L^1$, and $(m_t(x)-m_t(x_n))^+\to0$ in probability. Hence,   $\mathbb E[(m_t(x)-m_t(x_n))^+]\to0$. Since $|a|=2a^+-a$ for $a\in\mathbb R$ and $\mathbb E[m_t(x_n)]=\mathbb E[m_t(x)]=1$, we deduce
\begin{equation}\label{eq.mL1}
\mathbb E\big[|m_t(x_n)-m_t(x)|\big]=2\,\mathbb E\big[(m_t(x)-m_t(x_n))^+\big]\xrightarrow[n\to\infty]{}0 .
\end{equation}

\underline{Step 2c} Conclusion. By \eqref{eq.Gi} with $T=t$ and $F(\omega)=f(\omega_t)$,
$$
|P_tf(x_n)-P_tf(x)|\le \|f\|_\infty\,\mathbb E\big[|m_t(x_n)-m_t(x)|\big]+\mathbb E\big[\,|f(Z_t(x_n))-f(Z_t(x))|\,m_t(x)\big].
$$
The first term tends to $0$ by \eqref{eq.mL1}. In the second term, the integrand tends to $0$ in probability by \emph{(a)} and is dominated by $2\|f\|_\infty\, m_t(x)\in L^1$, so it tends to $0$. Hence $P_tf\in C(\bar{\mathscr D})$ for every $f\in b\mathcal B(\bar{\mathscr D})$, i.e. $P_t$ is strongly Feller for every $t>0$.

\end{proof}

 \begin{lemma}\label{le.CT2}
 Assume \ass{B} and let $\mathscr D$ be  a $C^4$ bounded subdomain of $\mathbb R^d$, $d\ge 2$. 
  Then,  \ass{C2} is satisfied. 
\end{lemma}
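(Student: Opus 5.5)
We prove \ass{C2} by transferring the path-level continuity of the reflected Brownian motion $Z(x)$ (the case $b\equiv0$) to $X(x)$ through the Girsanov identity \eqref{eq.Gi}. Fix $T>0$ and $x_n\to x$ in $\bar{\mathscr D}$. By the Portmanteau theorem, it suffices to show that $\mathbb E[F(X_{[0,T]}(x_n))]\to\mathbb E[F(X_{[0,T]}(x))]$ for every bounded Lipschitz (hence bounded uniformly continuous) $F:C([0,T],\bar{\mathscr D})\to\mathbb R$.

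By \eqref{eq.Gi},
$$
\big|\mathbb E[F(X_{[0,T]}(x_n))]-\mathbb E[F(X_{[0,T]}(x))]\big|\le \|F\|_\infty\,\mathbb E\big[|m_T(x_n)-m_T(x)|\big]+\mathbb E\big[|F(Z_{[0,T]}(x_n))-F(Z_{[0,T]}(x))|\,m_T(x)\big].
$$
The first term tends to $0$ by Step 2b of the proof of Theorem~\ref{th.SF2}, i.e.\ \eqref{eq.mL1} with $t=T$. That argument only used \eqref{eq.Ex} for $b\equiv0$, the continuity and boundedness of $b$, and $\mathbb E[m_T]=1$, so it applies verbatim.

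For the second term, we apply Lemma~\ref{le.CT} to the equation $dZ_t=\sqrt2\,dB_t+dJ_t$. Its coefficients $b\equiv0$ and $\sigma\equiv\sqrt2\,\mathrm{Id}_{\mathbb R^d}$ are Lipschitz, so \ass{A1} holds, and a $C^4$ domain is in particular $C^3$. This gives $\mathbb E[\sup_{t\le T}|Z_t(x_n)-Z_t(x)|^2]\le C|x_n-x|^2\to0$. Hence $Z_{[0,T]}(x_n)\to Z_{[0,T]}(x)$ in probability in the uniform topology, and since $F$ is uniformly continuous, $F(Z_{[0,T]}(x_n))\to F(Z_{[0,T]}(x))$ in probability. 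The integrand $|F(Z_{[0,T]}(x_n))-F(Z_{[0,T]}(x))|\,m_T(x)$ is then dominated by $2\|F\|_\infty m_T(x)\in L^1$ and tends to $0$ in probability. Since convergence in probability with an integrable dominating function is enough for dominated convergence, the second term tends to $0$.

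There is no real obstacle here. The only points to check are that \eqref{eq.Gi} holds for arbitrary bounded measurable path functionals, which was established in Theorem~\ref{th.SF2} via weak uniqueness, and that Lemma~\ref{le.CT} applies to the driftless reflected equation, whose strong solution is identified with $Z(x)$ by pathwise uniqueness. Unlike in Step 2a of Theorem~\ref{th.SF2}, no uniform integrability of densities is needed, because $F$ is continuous.
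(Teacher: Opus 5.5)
Your proof is correct and follows essentially the same route as the paper: the Girsanov identity \eqref{eq.Gi}, the $L^1$ convergence of $m_T(x_n)$ from Step 2b of Theorem~\ref{th.SF2}, and Lemma~\ref{le.CT} applied to $Z$, combined with dominated convergence under convergence in probability. The only differences are cosmetic. You restrict to bounded Lipschitz $F$ via Portmanteau, whereas the paper works with all $F\in C_b$ through the continuous mapping theorem, and the paper carries out the dominated-convergence step by an explicit truncation of $m_T(x)$.
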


\begin{proof}
Recall that by Step 2 in the proof of Theorem \ref{th.SF2}, we have  for every bounded measurable $F:C([0,T],\bar{\mathscr D})\to\mathbb R$,
\begin{equation}\label{eq.Gi-path}
\mathbb E\big[F(X_{[0,T]}(x))\big]=\mathbb E\big[F(Z_{[0,T]}(x))\,m_T(x)\big].
\end{equation}
By  Lemma~\ref{le.CT}, applied with $p=1$ to the process $(Z_t)_{t\ge 0}$, gives $C>0$ such that for all $x,x'\in\bar{\mathscr D}$:
$$
\mathbb E\Big[\sup_{t\in[0,T]}|Z_t(x)-Z_t(x')|^2\Big]\le C|x-x'|^2.
$$
In particular, if $x_n\to x$ in $\bar{\mathscr D}$,   $Z_{[0,T]}(x_n)\to Z_{[0,T]}(x)$ in probability in $C([0,T],\bar{\mathscr D})$, and hence, for every $F\in C_b(C([0,T],\bar{\mathscr D}))$,
\begin{equation}\label{eq.step1}
F(Z_{[0,T]}(x_n))\xrightarrow[n\to\infty]{\mathbb P}F(Z_{[0,T]}(x)).
\end{equation}
 On the other hand, recall that by Step 2 in the proof of Theorem \ref{th.SF2}, 
\begin{equation}\label{eq.step3}
m_T(x_n)\xrightarrow[n\to\infty]{L^1} m_T(x).
\end{equation}
We now conclude the proof of \ass{C2} when \ass{B} holds. 
Fix $F\in C_b(C([0,T],\bar{\mathscr D}))$. By \eqref{eq.Gi-path},
\begin{align*}
\mathbb E_{x_n}\big[F(X_{[0,T]})\big]-\mathbb E_x\big[F(X_{[0,T]})\big]
&=\underbrace{\mathbb E\big[F(Z_{[0,T]}(x_n))\big(m_T(x_n)-m_T(x)\big)\big]}_{\alpha_n}\\
&\quad +\underbrace{\mathbb E\big[\big(F(Z_{[0,T]}(x_n))-F(Z_{[0,T]}(x))\big)m_T(x)\big]}_{\beta_n}.
\end{align*}
We have  $|\alpha_n|\le \|F\|_\infty\,\mathbb E\big[|m_T(x_n)-m_T(x)|\big]\to 0$ by \eqref{eq.step3}. Let us now deal with $\beta_n$. Set $Y_n:=F(Z_{[0,T]}(x_n))-F(Z_{[0,T]}(x))$, so that $|Y_n|\le 2\|F\|_\infty$ and $Y_n\to 0$ in probability by \eqref{eq.step1}. Being bounded and converging in probability, $Y_n\to0$ in $L^1$ as well. Let $\varepsilon>0$. Since $m_T(x)\in L^1(\mathbb P)$, there exists $K_\varepsilon>0$ such that $\mathbb E [m_T(x)\mathbf 1_{m_T(x)>K_\varepsilon} ]<\varepsilon$. Then, we have for all $n\ge 0$:
$$
|\mathbb E[Y_n\,m_T(x)]|\le K_\varepsilon\,\mathbb E[|Y_n|]+2\|F\|_\infty\,\mathbb E\big[m_T(x)\mathbf 1_{m_T(x)>K_\varepsilon}\big]\le K_\varepsilon\,\mathbb E[|Y_n|]+2\|F\|_\infty\,\varepsilon.
$$
Now let $ N_\varepsilon\ge 0$ such that  for all $n\ge  N_\varepsilon$, $\mathbb E[|Y_n|]\le \varepsilon /(1+K_\varepsilon)$. Then, for all $n\ge  N_\varepsilon$, $|\beta_n|\le (1+2\|F\|_\infty)\varepsilon$. Hence,  $\beta_n\to 0$ as $n\to +\infty$.

In conclusion, we have proved that   $\mathbb P_{x_n}[X_{[0,T]}\in\cdot\,]\to \mathbb P_x[X_{[0,T]}\in\cdot\,]$ weakly in $\mathscr P(C([0,T],\bar{\mathscr D}))$. 
This proves \ass{C2}.

\end{proof}

%
%


\subsubsection{On Assumption \ass{C4}} 
\begin{proposition}\label{pr.C4}
Assume  \ass{A1}-\ass{A2} and  let  $\mathscr D$ be a $C^3$ bounded domain of $\mathbb R^d$ (or assume \ass{B} and let $\mathscr D$ be  a $C^4$ bounded domain of $\mathbb R^d$).
Then, for all $t\ge 0$, $P_t^{\mathscr O}$ is (strong) Feller (i.e. \ass{C4} is satisfied). 
\end{proposition}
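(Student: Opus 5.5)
The plan is to derive the strong Feller property of $P^{\mathscr O}_t$ from that of $P_t$ (Theorem~\ref{th.SF} or Theorem~\ref{th.SF2}), using the fact that the process cannot leave a small ball in a short time. The case $t=0$ is trivial, so fix $t>0$, $f\in b\mathcal B(\mathscr O)$ (extended by $0$ outside $\mathscr O$), and $x\in\mathscr O$. Since $\mathscr O$ is open in $\bar{\mathscr D}$, choose $\delta>0$ with $\overline{\mathscr B(x,2\delta)}\cap\bar{\mathscr D}\subset\mathscr O$. For $s\in(0,t)$, the Markov property at time $s$ gives, for every $y\in\mathscr O$,
$$
P^{\mathscr O}_tf(y)=\mathbb E_y\big[\mathbf 1_{s<\tau_{\mathscr O}}P^{\mathscr O}_{t-s}f(X_s)\big]=P_s\big(P^{\mathscr O}_{t-s}f\big)(y)-\mathbb E_y\big[\mathbf 1_{\tau_{\mathscr O}\le s}\,P^{\mathscr O}_{t-s}f(X_s)\big].
$$
The first term is continuous in $y$ on $\bar{\mathscr D}$, because $P_s$ is strong Feller for every $s>0$ and $P^{\mathscr O}_{t-s}f$ is bounded and measurable. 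For $y\in\mathscr B(x,\delta)$, the event $\{\tau_{\mathscr O}\le s\}$ forces the path to leave $\mathscr B(y,\delta)$ before time $s$. Hence the second term is bounded by $\|f\|_\infty\sup_{z\in\bar{\mathscr D}}\mathbb P_z[\tau_{\mathscr B(z,\delta)}\le s]$. Letting first $y\to x$ and then $s\to0^+$, we get $\limsup_{y\to x}|P^{\mathscr O}_tf(y)-P^{\mathscr O}_tf(x)|\le 2\|f\|_\infty\,\varepsilon(s)$, which tends to $0$, provided we prove
$$
\lim_{s\to0^+}\sup_{z\in\bar{\mathscr D}}\mathbb P_z\big[\tau_{\mathscr B(z,\delta)}\le s\big]=0 .
$$

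The main step is this uniform short-time estimate. Its difficulty is the reflection term $L$, which is unbounded a priori. Under \ass{A1}-\ass{A2}, I would first try a test function whose normal derivative vanishes on $\partial\mathscr D$. Fix $z$ and choose $\phi_z\in C^2(\bar{\mathscr D})$ with $\phi_z(z)=0$, $\phi_z\ge 1$ on $\bar{\mathscr D}\setminus\mathscr B(z,\delta)$, $\phi_z\ge0$, $\partial_{\mathbf n_i}\phi_z=0$ on $\partial\mathscr D$, and $\|\phi_z\|_{C^2}\le C_\delta$ uniformly in $z$. Such a function exists by flattening $\partial\mathscr D$ in normal coordinates near the boundary, using $\Theta$ and the $C^2$ regularity of $\mathbf n_i$, and taking $\phi_z$ to be a function of tangential and normal distances that is even in the normal variable. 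The uniformity in $z$ follows from compactness of $\partial\mathscr D$. It\^o's formula applied to \eqref{eq.LR} then contains no $d|L|$ term, and optional stopping at $s\wedge\tau_{\mathscr B(z,\delta)}$ gives
$$
\mathbb P_z[\tau_{\mathscr B(z,\delta)}\le s]\le \mathbb E_z\big[\phi_z(X_{s\wedge\tau})\big]\le s\,\sup_{\bar{\mathscr D}}|\mathscr L\phi_z|\le C_\delta\,s .
$$
The coefficients $b$ and $\sigma$ are bounded on $\bar{\mathscr D}$ under both \ass{A1} and \ass{B}, so this bound holds in both settings. For weak solutions under \ass{B}, It\^o's formula still applies to the semimartingale $X$.

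If the construction of $\phi_z$ proves delicate near points where the boundary curvature interacts with the ball, I would fall back on a moment argument. One shows $\sup_z\mathbb E_z[|L|_s^2]\to0$ as $s\to0^+$ by applying It\^o's formula to $\Theta(X_t)$ and using \eqref{eq.b}, which yields a monotone family of continuous functions of $z$ decreasing to $0$, and then concludes by Dini's theorem on the compact set $\bar{\mathscr D}$. Combined with BDG bounds on the martingale part, this gives $\sup_z\mathbb E_z[\sup_{r\le s}|X_r-z|^2]\to0$, and Markov's inequality finishes. Either route completes the proof, and it shows that $P^{\mathscr O}_t$ is in fact strong Feller on $\mathscr O$.
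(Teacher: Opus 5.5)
Your argument is correct and is essentially the paper's approach. Your main route (the Markov decomposition $P^{\mathscr O}_tf=P_s(P^{\mathscr O}_{t-s}f)-\mathbb E[\mathbf 1_{\tau_{\mathscr O}\le s}\cdots]$ combined with an It\^o argument for a test function whose normal derivative vanishes on $\partial\mathscr D$) is the paper's third proof, except that the paper needs only one such function adapted to the limit point $x$ rather than your family $\phi_z$ uniform in $z$; your fallback is a more direct variant of its first proof. Note that the Dini step of the fallback relies on continuity of $z\mapsto\mathbb E_z[|L|_s^2]$, which the paper obtains from Lemma~\ref{le.CT} only under \ass{A1}--\ass{A2}, so under \ass{B} it is your primary route that actually carries the proof.
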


 \begin{proof} 
   Since for $t>0$, $P_t$ is strongly Feller, it suffices to show (see the proof of~\cite[Theorem 2.2]{chung2001brownian} or the lines after \eqref{eq.Ea2} below) that for any $\delta>0$ (small enough but fixed),
\begin{equation}\label{eq.Ea}
 \lim_{t\to 0^+} \sup_{x\in \bar{\mathscr D}}\mathbb P_x[\tau_{\mathscr B(x,\delta)}\le t]=0,
 \end{equation}
 where $\tau_{\mathscr B(x,\delta)}$ is the first time the process $X_t(x)$ exits the open ball $\mathscr B(x,\delta)$ centered at $x$ and of radius $\delta>0$.
  We will give three different proofs of Proposition \ref{pr.C4}, each presenting its own interest. The first two proofs are concerned with the proof of \eqref{eq.Ea}, and the third one is dedicated to a slight variation of \eqref{eq.Ea}, see \eqref{eq.Ea2} below, which also implies \ass{C4}.
 
 \noindent
 \textbf{A. First proof of \eqref{eq.Ea}.} 
 Assume  \ass{A1} and \ass{A2}, and  let  $\mathscr D$ be a $C^3$ bounded domain of $\mathbb R^d$.
 The first proof we give of \eqref{eq.Ea} is inspired from the one made to prove Lemma 2.5 in~\cite{guillinqsd3}.  It is divided into two steps. 
 \medskip

 \underline{Step 1}. 
 We first prove that for all $\ve>0$, 
\begin{equation}\label{eq.CVP}
\sup_{ x\in \bar{\mathscr D}}\mathbb P [|X_t(  x)-  x|\ge \ve]\to 0 \text{ as } t\to 0^+.
\end{equation}

 \underline{Step 1a}.  In this step we prove that for all $T\ge 0$ and $p\ge 1$, 
 there exists $C_p>0$ such that: 
\begin{equation}\label{eq.CV-Lp}
\sup_{ x\in \bar{\mathscr D}}\mathbb E \Big[ \sup_{t\in [0,T]}|X_t(  x)-  x|^{2p}\Big]\le  C_p.
\end{equation}  
  We will use the It\=o formula. Inspired by the computations led in~\cite{lionsRSDE,chen2024exponential},    for $x\in \bar{\mathscr D}$, we introduce the function  (see \eqref{eq.a} and \eqref{eq.b})
 $$G_t(x)=e^{-  2p\,C_{\partial \mathscr D}  \Theta(X_t(x))} \  \text{ and }\  H_t(x)=G_t(x)\, |X_t(x)-x|^{2p}.$$
  Note that $H_0(x)=0$. We then have, performing It\=o's formula on $G_t(x)(|X_t(x)-x|^2+\ve)^p$ and letting $\ve\to0^+$ (writing $Y_t:=X_t(x)-x$, $\bar Y_t:=Y_t/|Y_t|$):
 \begin{align}
 \nonumber
 H_t(x) & = 2p\int_0^t |Y_s|^{2p-1}G_s(x)\,  \bar Y_s\cdot  \sigma(X_s(x))dB_s  +  2p\int_0^t |Y_s|^{2p-1}G_s(x)\,\bar Y_s\cdot b(X_s(x))\,ds\\
  \nonumber
 &\quad + p\int_0^t |Y_s|^{2p-2}G_s(x)\,\Vert \sigma (X_s(x))\Vert_{{\rm HS}}^2\, ds
 +2p(p-1)\int_0^t|Y_s|^{2p-2}G_s(x)\,|\sigma^*(X_s(x))\bar Y_s|^2\,ds\\
  \nonumber
 &\quad + 2p\int_0^t |Y_s|^{2p-1}G_s(x)\, \bar Y_s\cdot \mathbf n_i (X_s(x)) d|L(x)|_s -  2p\,C_{\partial \mathscr D} \int_0^t H_s(x) \nabla \Theta(X_s(x))\cdot  \sigma(X_s(x))dB_s\\
  \nonumber
 &\quad -  2p\,C_{\partial \mathscr D} \int_0^t H_s(x)\big (   \nabla \Theta(X_s(x)) \cdot b(X_s(x)) + \frac 12 \text{Tr}(\sigma\sigma^* {\rm Hess}\, \Theta  \,  (X_s(x)))\, \big )ds\\
    \nonumber
 &\quad -  2p\,C_{\partial \mathscr D} \int_0^t H_s(x) \nabla \Theta(X_s(x)) \cdot \mathbf n_i (X_s(x)) d|L(x)|_s + {2p^2C_{\partial \mathscr D}^2} \int_0^t H_s(x)\, | (\sigma^* \nabla \Theta) (X_s(x))|^2 ds\\
 \label{eq.ITO}
 &\quad -  {4p^2 C_{\partial \mathscr D}} \int_0^t |Y_s|^{2p-1}G_s(x)  \big((\sigma^* \nabla \Theta) (X_s(x))\cdot  \sigma^*(X_s(x))\bar Y_s\big)ds.
 \end{align} 
Using   \eqref{eq.a} and \eqref{eq.b}, it holds  when $X_s(x)\in \partial \mathscr D$, 
 \begin{align*}
 &2p|Y_s|^{2p-1} G_s(x)\, \bar Y_s\cdot \mathbf n_i (X_s(x)) -  2p\,C_{\partial \mathscr D}  H_s(x) \nabla \Theta(X_s(x)) \cdot \mathbf n_i (X_s(x)) \\
 &= 2p|Y_s|^{2p-1}G_s(x)\Big( \bar Y_s\cdot \mathbf n_i (X_s(x)) - C_{\partial \mathscr D} |X_s(x)-x|\Big) \le 0,
 \end{align*} 
 the last inequality following from \eqref{eq.a}. 
 Then, using classical computations (as we did in the proof of Lemma \ref{le.CT}) together with   Gronwall’s lemma,  we deduce \eqref{eq.CV-Lp}.
 \medskip

 \underline{Step 1b}.   In this step we prove that 
\begin{equation}\label{eq.CVLt}
 \sup_{x\in\bar{\mathscr D}}\mathbb E_x[|L|^2_t]\to 0 \text{ as } t\to 0^+.
\end{equation}
Set $F_t(x):=\mathbb E_x[|L|^2_t], \ t\ge 0, x\in\bar{\mathscr D}$.  
Let us show that   $x\in\bar{\mathscr D}\mapsto F_t(x)$ is continuous.
   To do so, we need to make appear  the variation process $|L|_t$ in   the equation \eqref{eq.LR}. We use It\=o formula with the function $\Theta$, see \eqref{eq.b}. 
 Note that by \eqref{eq.L=}, $dL_t= \mathbf n_i(X_t) d|L|_t= \nabla \Theta (X_t)d|L|_t$ and hence $\nabla \Theta (X_t) \cdot dL_t  = d|L|_t$.  
We therefore have, using  It\=o formula, for all $x,y\in \bar{\mathscr D}$ and $t\ge 0$:
  \begin{align*}
 &\Theta(X_t(x))   =   \Theta(x) +  \int_0^t \mathscr L\Theta(X_s(x)) ds  +  \int_0^t  \nabla \Theta (X_s(x))\cdot \sigma (X_s(x)) dB_s  + |L(x)|_t,
 \end{align*}
and thus, 
  \begin{align*}
 &\Theta(X_t(x)) - \Theta(X_t(y))\\
 &=   \Theta(x)-\Theta(y)+  \int_0^t \big(\mathscr L\Theta(X_s(x))-\mathscr L\Theta(X_s(y))\big) ds \\
 &\quad +  \int_0^t \Big(\nabla \Theta (X_s(x))\cdot \sigma (X_s(x))  - \nabla \Theta (X_s(y))\cdot \sigma (X_s(y)) \Big)dB_s    + (|L(x)|_t- |L(y)|_t).
 \end{align*}
We then deduce with the  It\=o isometry and Doob's $L^2$ maximal inequality that for all $T\ge 0$:
  \begin{equation}\label{eq.CVbb}
 \sup_{x\in\bar{\mathscr D}}\mathbb E_x[\sup_{t\in [0,T]}|L|^2_t]< +\infty.
\end{equation}
Indeed, with  $M_t(x):=\int_0^t \nabla\Theta(X_s(x))\cdot\sigma(X_s(x))\,dB_s.
$, it holds that
$$
\sup_{t\le T}|L(x)|_t^2 \le 4\Big(2\|\Theta\|_\infty^2 + T^2\|\mathscr L\Theta\|_\infty^2 + \sup_{t\le T}|M_t(x)|^2\Big)
$$
using that $\Theta$, and $\mathscr L\Theta$ are bounded on $\bar{\mathscr D}$ and the Cauchy-Schwarz inequality on the drift integral. Taking $\mathbb E_x$ and applying Doob's $L^2$ maximal inequality then the It\=o isometry to the martingale term,
$$
\mathbb E_x\big[\sup_{t\le T}|M_t(x)|^2\big] \le 4\,\mathbb E_x[M_T(x)^2] = 4\int_0^T\mathbb E_x\big[|\nabla\Theta(X_s(x))\sigma(X_s(x))|^2\big]\,ds \le 4T\|\nabla\Theta\|_\infty^2\|\sigma\|_\infty^2.
$$
This implies \eqref{eq.CVbb}. 
 Set $J(z)=\nabla \Theta (z)\cdot \sigma (z)$. 
Note that since $\Theta$ is $C^2_b$, $\sigma$ and $b$ are  Lipschitz, and $\mathscr D$ is bounded, the functions $J$, $\Theta$, and $\mathscr L\Theta$ are Lipschitz over $\mathscr D$. 
Using  \eqref{eq.Ex} for all $t\ge 0$, there exists $C>0$  (which can change from one occurrence to another) such that  
  \begin{align*}
   \mathbb E \big [\big | |L(x)|_t- |L(y)|_t\big|^2\big ]&\le C |x-y|^2 + \mathbb E[| \Theta(X_t(x)) - \Theta(X_t(y))|^2]\\
   &\quad+ C \mathbb E\Big[ \Big| \int_0^t [\mathscr L\Theta(X_s(x))-\mathscr L\Theta(X_s(y))]ds  \Big|^2\Big] \\
   &\quad  + C \mathbb E\Big[ \Big| \int_0^t [J(X_s(x))-J(X_s(y))]dB_s  \Big|^2\Big] \\
   &\le C |x-y|^2+ C\mathbb E[ \sup_{s\in [0,t]}|  X_s(x)  -  X_s(y)|^2]\le   C |x-y|^2.  
      \end{align*}
Together with \eqref{eq.CVbb}, this proves that $x\in\bar{\mathscr D}\mapsto F_t(x)$ is (Lipschitz) continuous. Write indeed $$F_t(x)-F_t(y)=\mathbb E[|L(x)|_t^2]-\mathbb E[|L(y)|_t^2]=\mathbb E\big[(|L(x)|_t-|L(y)|_t)(|L(x)|_t+|L(y)|_t)\big],$$ and apply Cauchy-Schwarz and  \eqref{eq.CVbb}).
   We now come back to the proof of  \eqref{eq.CVLt}. For all $x\in \bar{\mathscr D}$, $  |L(x)|_t$ decreases to $0$  when $t\searrow 0$. Hence $F_t(x)$ also decreases to $0$ when   $t\searrow 0$. Then, \eqref{eq.CVLt} follows by Dini's theorem.

\medskip

\underline{Step 1c}. We now show that    \eqref{eq.CV-Lp} and    \eqref{eq.CVLt}  imply  \eqref{eq.CVP}.

In view of \eqref{eq.ITO}, it suffices to show that each of the terms in the right-hand side of  \eqref{eq.ITO} converges in probability to $0$ as $t\to 0^+$, uniformly in $x\in \bar{\mathscr D}$. Since $G_t(x)\ge c_0>0$ uniformly in $t\ge 0$ and $x\in \bar{\mathscr D}$, $|X_t(x)-x|^2\le H_t(x)/c_0$, so that
$$
\mathbb P_x\big[|X_t(x)-x|\ge\ve\big]=\mathbb P_x\big[|X_t(x)-x|^2\ge\ve^2\big]\le \mathbb P_x\big[H_t(x)\ge c_0\ve^2\big],
$$
and the right-hand side goes to $0$ uniformly in $x$ as soon as $H_t(x)\to0$ in probability, uniformly in $x$, which is what we now prove.
Denoting by $C_0>0$ an upper bound for $G_t(x)$ (uniformly in $t,x$), and by $C>0$ a constant such that $H_t(x)\le C$ a.s. for all $x\in \bar{\mathscr D}$ and $t\ge 0$ (which holds simply because $X_t(x)$ and $x$ both lie in the bounded set $\bar{\mathscr D}$, so that $|X_t(x)-x|\le\mathrm{diam}(\bar{\mathscr D})$), we indeed have, by the It\=o isometry,
 \begin{align*}
 &\mathbb P_x\Big [\big|\int_0^t G_s \,  (X_s -x)\cdot  \sigma(X_s(x))dB_s\big|\ge \ve\Big ]\\
 & \le  \frac{1}{\ve^2}\ \mathbb E_x\Big [\big|\int_0^t G_s\,  (X_s -x)\cdot  \sigma(X_s )dB_s\big|^2\Big]\\
 &=\frac{1}{\ve^2}\ \mathbb E_x\Big[\int_0^tG_s^2\,|\sigma^*(X_s)(X_s-x)|^2\,ds\Big]\\
 &\le  \frac{C_0^2  \Vert \sigma\Vert^2_{\infty}  t}{\ve^2} \ \mathbb E_x \big [ \sup_{[0,1]}|X_s -x|^2\big ] \to 0 \text{ as } t\to 0^+, \text{ uniformly in $x\in \bar{\mathscr D}$}, 
 \end{align*}
using \eqref{eq.CV-Lp} (with $T=1$) to bound $\mathbb E_x[\sup_{[0,1]}|X_s-x|^2]$ uniformly in $x$. 
All the other non-boundary terms (i.e. the integrals which are not  w.r.t. $d|L(x)|_t$) in the  right-hand side of  \eqref{eq.ITO} are treated similarly.
On the other hand, it is  straightforward to see that \eqref{eq.CVLt} implies that the two boundary terms (i.e. the integrals   w.r.t. $d|L(x)|_t$) in the  right-hand side of  \eqref{eq.ITO}  converge in probability to $0$ as $t\to 0^+$, uniformly in $x\in \bar{\mathscr D}$. Indeed, for the first one, we have that:
$$\Big|2\int_0^t G_s(x)(X_s(x)-x)\cdot\mathbf n_i(X_s(x))\,d|L(x)|_s\Big| \le 2C_0\,\mathrm{diam}(\bar{\mathscr D})\int_0^t d|L(x)|_s = 2C_0 \mathrm{diam}(\bar{\mathscr D})\,|L(x)|_t,$$
 and $\sup_x\mathbb P_x[|L(x)|_t\ge\delta]\le\sup_x\mathbb E_x[|L(x)|_t^2]/\delta^2\to0$ as $t\to0^+$ by \eqref{eq.CVLt}. Summing the finitely many terms, $H_t(x)\to0$ in probability as $t\to0^+$, uniformly in $x\in\bar{\mathscr D}$. This ends the proof of \eqref{eq.CVP}.
  \medskip

 \underline{Step 2}. 
 Let us now  end the proof of \eqref{eq.Ea}. 
 
 Let $\delta>0$. 
We have for all $t>0$,
 \begin{align}\label{eq.Oz}
   \mathbb P_{x}[\tau_{\mathscr B(x,\delta)} \le t ]&\le \mathbb P_{  x}\big [\tau_{\mathscr B(x,\delta)}\le t, |X_t-x|< \delta/2 \big ]  +   \mathbb P_{x}\big [  |X_t-x|\ge \delta/2\big].
   \end{align}
Because we already have \eqref{eq.CVP}, to prove \eqref{eq.Ea},   it suffices to treat the first term in the  right-hand side of the previous inequality. 
When $X_0=x$, $\tau_{\mathscr B(x,\delta)}\le t$, and  $|X_t-x|< \delta/2$, it holds  $|X_t-X_{ \tau_{\mathscr B(x,\delta)}} |\ge \delta/2$. Then,  by the strong Markov property: 
\begin{align*}
    \mathbb P_{x}\big [\tau_{\mathscr B(x,\delta)}\le t, |X_t-x|< \delta/2\big ]  
     &\le  \mathbb P_{x}\big [\tau_{\mathscr B(x,\delta)}\le t, |X_{t}-X_{\tau_{\mathscr B(x,\delta)}}|\ge  \delta/2 \big ]\\
     &= \mathbb E_{x}\Big [\mathbf1_{\tau_{\mathscr B(x,\delta) } \le t}     \mathbb P_{X_{\tau_{\mathscr B(x,\delta) }  } }\big [|X_{t-\tau_{\mathscr B(x,\delta)} }-X_0|\ge \delta/2  \big ]  \Big ].
   \end{align*}
   Set $h_t(y,u)=\mathbb P \big [|X_{t-u}(y)-y|\ge \delta/2  \big ]$, $u\in [0,t]$, $y\in \bar{\mathscr D}$.   We have 
   $$a_t:=\sup_{y\in\bar{\mathscr D}}\sup_{u\in[0,t]}h_t(y,u)=\sup_{y\in\bar{\mathscr D}}\sup_{u\in[0,t]}\mathbb P_y\big[|X_{t-u}(y)-y|\ge\delta/2\big]=\sup_{r\in[0,t]}\phi(r)$$
   where $$\phi(r):=\sup_{y\in\bar{\mathscr D}}\mathbb P_y\big[|X_r(y)-y|\ge\delta/2\big],\qquad r\ge0.$$
   By \eqref{eq.CVP},  $\phi(r)\to 0 =\phi(0)$ as $r\to 0^+$. 
   Thus,   $ a_t=\sup_{r\in [0,t]} \phi(r)\to 0 $ as $t\to 0^+$.     Hence, we get that $\lim_{t\to 0^+} \sup_{x\in \bar{\mathscr D}}\mathbb P_{x} [\tau_{\mathscr B(x,\delta)}\le t, |X_s-x|< \delta/2  ]=0$. Recalling \eqref{eq.Oz}, this finally concludes the proof of  \eqref{eq.Ea}. 
  \medskip
 
 \noindent
 \textbf{B. Second proof of \eqref{eq.Ea}.} The second proof we give of \eqref{eq.Ea} relies on the It\=o formula with a suitable functional and  does not  use the strong Markov property of the process \eqref{eq.LR}  (however it uses \eqref{eq.CVLt}).   We still assume  \ass{A1} and \ass{A2}, and  we consider  a $C^3$ bounded domain  $\mathscr D$ of $\mathbb R^d$. 
 Let  $x\in \bar{\mathscr D}$.   Let $\Phi:\mathbb R^d\to [0,1]$ be a smooth function such that  $\Phi=0$ on $\mathscr B(0,\delta/2)$ and $\Phi=1$ on $\mathscr B(0,\delta)^c$.  Note that $\Phi$, $\nabla \Phi$, and ${\rm Hess}\, \Phi$ are uniformly bounded over $\mathbb R^d$. Set  $\Phi_x(z)= \Phi(z-x)$. 
   Hence, for all $s\ge 0$, 
 $$|\mathscr L\Phi_x(X_{s}(x))|\le \sup_{y,z\in \bar{\mathscr D}}|\mathscr L\Phi_y(z)|< +\infty.$$ 
By It\=o formula and the optional stopping theorem, one has (since $\Phi_x(x)=0$):
  \begin{align*}
\mathbb E_{x}[\Phi_x(X_{t\wedge\tau_{\mathscr B(x,\delta)}})]& \le   \mathbb E_x\Big[\int_0^{t\wedge \tau_{\mathscr B(x,\delta)}} |\mathscr L \Phi_x(X_{s}(x))| ds\Big] \\
&\quad +\mathbb E_x\Big[ \Big|\int_0^{t\wedge \tau_{\mathscr B(x,\delta)}}  \partial_{\mathbf n_i(X_s)} \Phi_x(X_s)\mathbf 1_{X_s\in \partial \mathscr D} d |L|_s \Big| \Big]    \\
&\le t \sup_{y,z\in \bar{\mathscr D}}|\mathscr L\Phi_y(z)|  + \sup_{y, z\in \bar{\mathscr D}}|\partial_{\mathbf n_i(z)} \Phi_y(z)| \ \sup_{y\in\bar{\mathscr D}}\mathbb E_y[|L|_t].
\end{align*}
Note  also that $|X_{ \tau_{\mathscr B(x,\delta)}}(x)-x|= \delta$.
Hence,  $\Phi_x(X_{ \tau_{\mathscr B(x,\delta)}}(x))=1$ and
$$ \mathbb P_x[\tau_{\mathscr B(x,\delta)}\le t]= \mathbb E_{x}[\mathbf 1_{\tau_{\mathscr B(x,\delta)}\le t} \Phi_x(X_{ \tau_{\mathscr B(x,\delta)}})] \to 0 \text{ as } t\to 0,$$
uniformly in $x\in \bar{\mathscr D}$. Note that we have also used \eqref{eq.CVLt} to deduce the previous limit.  
This ends the proof of \eqref{eq.Ea} and then of \ass{C4}. 
\medskip
 
 \noindent
 \textbf{C. Third proof of Assumption \ass{C4}.} 
 
Because  $\mathscr D$ is sufficiently smooth, there is another  way to prove \ass{C4}, very close to the previous one, which, once $P_t$ is known to be strongly Feller, (i) does not rely  on \eqref{eq.CVLt} and (ii) does not use the fact that $\mathscr D$ is bounded.
 Let us be more precise.   
  We   place ourselves in one of the two settings of Proposition \ref{pr.C4}. 
We claim that \ass{C4} is satisfied if for all    $x\in \mathscr O$, $0<\delta_x\le {\rm dist}(x, \Sigma)/10$, and every sequence $(x_n)_{n\in\mathbb N}\subset   \mathscr B(x,\delta_x)\cap \mathscr O$ such that $x_n\to x \in \mathscr O$, we have:
\begin{equation}\label{eq.Ea2}
 \lim_{t\to 0^+} \sup_{n\ge 0} \mathbb P_{x_n}[\tau_{\mathscr B(x_n,3\delta_x)}\le t]  =0.
 \end{equation}
 \underline{Step 1}. Let us prove that \eqref{eq.Ea2} implies \ass{C4}. 
 
 The proof is inspired by the one made to prove~\cite[Theorem 2.2]{chung2001brownian}. Let $x_n\to x\in \mathscr O$, $x_n\in \mathscr O$.  Let $0<\delta_x\le {\rm dist}(x, \Sigma)/10$. We can assume that $(x_n)_n\subset   \mathscr B(x,\delta_x)\cap \mathscr O$.  
Using the Markov property of the process $(X_t)_{t\ge 0}$, one has for $0\le s\le t$,   $f\in b\mathcal B(\mathscr O)$, 
$$P_t^{\mathscr O}f( x_n)= \mathbb E_{x_n}[\mathbf 1_{s<\tau_{\mathscr O}} g_s(X_s)],$$
 where $g_s(z)=\mathbb E_{z}[\mathbf 1_{t-s<\tau_{\mathscr O}} f(X_{t-s})]$. 
  In addition, we have using \eqref{eq.Ea2} that 
\begin{align*}
\sup_{n\ge 0}| P_t^{\mathscr O}f(x_n)-P_sg_s(x_n)|\le \Vert f\Vert_{\infty}\sup_{n\ge 0} \mathbb P_{x_n}[\tau_{\mathscr O}\le s]\le  \Vert f\Vert_{\infty}\sup_{n\ge 0} \mathbb P_{x_n}[\tau_{\mathscr B(x_n,3\delta_x)}\le s]\to 0,
\end{align*}
as $s\to 0^+$. 
Note that for the same reason,  we also have $| P_t^{\mathscr O}f(x)-P_sg_s(x)|\to 0$ as $s\to 0^+$. 
By \ass{C1} proved before, the function $P_sg_s$ is continuous over $\bar{\mathscr D}$. Writing 
$$| P_t^{\mathscr O}f(x_n)-P_t^{\mathscr O}f(x)|\le | P_t^{\mathscr O}f(x_n)-P_sg_s(x_n)|+ |P_sg_s(x_n)-P_sg_s(x)|+ | P_t^{\mathscr O}f(x)-P_sg_s(x)|,$$
we immediately deduce that $| P_t^{\mathscr O}f(x_n)-P_t^{\mathscr O}f(x)|\to 0$ as $n\to +\infty$. In conclusion,  \eqref{eq.Ea2} implies \ass{C4}.

 \underline{Step 2}. We now prove \eqref{eq.Ea2}.

We consider the case when $x\in \partial \mathscr D$ (the case when $x\in \mathscr D$ is treated with the same arguments).
Since $\mathscr D$ is $C^3$, using a tangential-normal system of coordinates $(x',x_d)$ in a neighborhood of $\partial \mathscr D$ in $\bar{\mathscr D}$ which flatten out $\partial \mathscr D$, we can construct a $C^2$ function $\Psi:\bar{\mathscr D}\to [0,1]$ such that $\Psi=0$ on $\mathscr B(x,\delta_x)\cap \mathscr O$, $\Psi=1$ on $\mathscr B (x,2\delta_x)^c\cap \mathscr O$, and
$$\nabla \Psi(z)\cdot \mathbf n_i(z)=0 \text{ for all } z\in \partial \mathscr D.$$
Note that for all $n\ge 0$, $\Psi(x_n)=0$ since $x_n\in \mathscr B(x,\delta_x)\cap \mathscr O$.
Hence, using It\=o's formula and the optional stopping theorem, one has:
\begin{align*}
\mathbb E_{x_n}[\Psi (X_{t\wedge\tau_{\mathscr B(x_n,3\delta_x)}})]& \le \mathbb E_{x_n}\Big[\int_0^{t\wedge \tau_{\mathscr B(x_n,3\delta_x)}} |\mathscr L \Psi (X_{s})| ds\Big] \le t \sup_{z\in \bar{\mathscr D}}|\mathscr L\Psi(z)|.
\end{align*}
In addition, $|X_{ \tau_{\mathscr B(x_n,3\delta_x)}}(x_n)-x_n|= 3\delta_x$. Since $|x_n-x|\le \delta_x$, one has that $|X_{ \tau_{\mathscr B(x_n,3\delta_x)}}(x_n)-x|\ge 2\delta_x$ and thus $\Psi(X_{ \tau_{\mathscr B(x_n,3\delta_x)}}(x_n))=1$ for all $n\ge 0$. We deduce that:
$$ \mathbb P_{x_n}[\tau_{\mathscr B(x_n,3\delta_x)}\le t]= \mathbb E_{x_n}[\mathbf 1_{\tau_{\mathscr B(x_n,3\delta_x)}\le t} \Psi(X_{ \tau_{\mathscr B(x_n,3\delta_x)}})] \le \mathbb E_{x_n}[\Psi (X_{t\wedge\tau_{\mathscr B(x_n,3\delta_x)}})]\le t \sup_{z\in \bar{\mathscr D}}|\mathscr L\Psi(z)|. $$
This ends the proof of \eqref{eq.Ea2}.

\end{proof}

\subsubsection{On Assumption \ass{C5}} 

\begin{proposition}\label{pr.C5}
 Assume  \ass{A1}-\ass{A2} and  let  $\mathscr D$ be a $C^3$ bounded domain of $\mathbb R^d$ (or assume \ass{B} and let $\mathscr D$ be  a $C^4$ bounded domain of $\mathbb R^d$).
Then, it holds:
 \begin{enumerate}
 \item[$\mathfrak 1$.]  For all $t>0$, $x\in \mathscr O$, and $U\subset\mathscr O$ nonempty open,
 $\mathbb P_x[X_t\in U, t<\tau_{\mathscr O}]>0$.  
  \item[$\mathfrak 2$.] If  \ass{C} holds, then   there exists $x\in \mathscr O$ such that $\mathbb P_x[\tau_{\mathscr O}< +\infty]>0$. 
   \end{enumerate}
   \end{proposition}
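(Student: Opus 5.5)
Part $\mathfrak 2$ is immediate: by \ass{C} there is $x\in\mathscr O$ with $\mathbb P_x[\exists t>0, X_t\in\bar\Sigma]>0$. On this event $\tau_{\mathscr O}<+\infty$, because $\tau_{\mathscr O}=\inf\{t\ge0, X_t\in\bar\Sigma\}$. Hence $\mathbb P_x[\tau_{\mathscr O}<+\infty]>0$. The substance of the proposition is therefore part $\mathfrak 1$.

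For part $\mathfrak 1$ I would first treat starting points $x$ in the interior $\mathscr D$ and then reduce boundary starting points to that case.

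\emph{Interior starting points.} Fix $x\in\mathscr D$, $t>0$, and a nonempty open set $U\subset\mathscr O$. Shrinking $U$, I may assume $U=\mathscr B(y,r)$ with $\overline{\mathscr B(y,2r)}\subset\mathscr D$. Since $\mathscr D$ is open and connected, I choose a smooth path $\gamma:[0,t]\to\mathscr D$ from $x$ to $y$ and a tube $V=\{z:\mathrm{dist}(z,\gamma([0,t]))<\varepsilon\}$ with $\bar V\subset\mathscr D$. As long as $X$ stays in $V$, it never touches $\partial\mathscr D$; thus $L$ does not increase there, $X$ is not absorbed, and $X$ solves an ordinary SDE there. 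Let $b^V,\sigma^V$ be continuous extensions of $b,\sigma$ from $\bar V$ to $\mathbb R^d$, with $\sigma^V$ uniformly elliptic (and Lipschitz under \ass{A1}), and let $\tilde X$ be the corresponding unreflected diffusion from $x$. Pathwise (or weak) uniqueness up to the exit time of $V$ gives $\mathbb P_x[X_s\in V\ \forall s\le t,\ X_t\in U]=\mathbb P_x[\tilde X_s\in V\ \forall s\le t,\ \tilde X_t\in U]$. The right-hand side is positive by the Stroock--Varadhan support theorem for elliptic diffusions: the skeleton $\gamma$ is realized by the control $h=\sigma^{-1}(\dot\gamma-b)$, and the tube probability is positive. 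Under \ass{B}, a Girsanov argument reduces this to Brownian motion directly. Finally, $V\subset\mathscr D\subset\mathscr O$, so this event is contained in $\{X_t\in U,\ t<\tau_{\mathscr O}\}$.

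\emph{Boundary starting points $x\in\partial\mathscr D\setminus\bar\Sigma$.} This is the main obstacle, since the process is instantaneously reflected and cannot simply be started inside. I would use that $\{s: X_s(x)\in\partial\mathscr D\}$ has Lebesgue measure zero a.s., which follows from the existence of a density for $X_s(x)$ for every $s>0$ (Theorems \ref{th.SF} and \ref{th.SF2}) together with Fubini. Next, by \eqref{eq.Ea} with $\delta<\mathrm{dist}(x,\bar\Sigma)$, there is $s_0\in(0,t)$ such that $\mathbb P_x[\tau_{\mathscr B(x,\delta)}> s]>1/2$ for all $s\le s_0$. Combining with the measure-zero property, I pick a deterministic $s\le s_0$ with $\mathbb P_x[X_s\in\mathscr D,\ \tau_{\mathscr B(x,\delta)}>s]>0$. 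On this event $s<\tau_{\mathscr O}$, because $\mathscr B(x,\delta)\cap\bar{\mathscr D}\subset\mathscr O$. The Markov property at time $s$ then gives
\[
\mathbb P_x[X_t\in U,\ t<\tau_{\mathscr O}]\ \ge\ \mathbb E_x\big[\mathbf 1_{s<\tau_{\mathscr O},\,X_s\in\mathscr D}\,\mathbb P_{X_s}[X_{t-s}\in U,\ t-s<\tau_{\mathscr O}]\big]>0,
\]
since the inner probability is positive for every interior point by the interior case. The care needed is to make the interior step valid for arbitrary positive times $t-s$, which the support-theorem argument does, and to justify the measure-zero claim from the density results.
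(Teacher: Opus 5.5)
Your proof is correct and follows essentially the same route as the paper: item $\mathfrak 2$ comes directly from \ass{C}; the interior case identifies $X$ with an unreflected elliptic diffusion up to the exit from $\mathscr D$ (the paper reduces to Brownian paths via Girsanov, while you invoke the Stroock--Varadhan support theorem in a tube); and the boundary case reaches $\mathscr D$ before $\bar\Sigma$ at a small deterministic time and then applies the Markov property together with the interior case. The only difference is in how that small time is found: the paper obtains $T$ with $\mathbb P_x[X_T\in\mathscr D,\ T<\tau_{\mathscr O}]>0$ by contradiction (rational times, path continuity, and $\mathrm{Leb}(B(x))=0$), whereas you get it directly from small-time localization in $\mathscr B(x,\delta)$ combined with $\mathbb P_x[X_s\in\partial\mathscr D]=0$, which is slightly cleaner and rests on the same density input.
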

    Proposition \ref{pr.C5}   implies \ass{C5}. 
Let us mention that the proof of Proposition \ref{pr.C5}  does not use the fact that $\mathscr D$ is bounded.    

   \begin{proof}  
  The proof of item $\mathfrak 2$ is immediate. Let us prove item $\mathfrak 1$.    
  Let  $t>0$ and   $U$ be an open subset of $\mathscr O$ where we recall that $\mathscr O=\bar{\mathscr D} \setminus \bar \Sigma$. Then,  there exists $z\in U\cap \mathscr D$ and we can thus consider $r>0$ such that  $\mathscr B(z,r)\subset U\cap\mathscr D$. The proof of Item $\mathfrak 1$ is divided into two steps.   
 \medskip

 \underline{Step 1}. Let us first consider the case when  $x\in \mathscr D$. 
 
 Set $\tau_{\mathscr D} = \inf\{t\ge 0, X_t\notin \mathscr D\}$. 
 Let  $(Y_t)_{t\ge 0}$ be  the solution of over $ \mathbb R^d$ of the equation $
 dY_t= \sigma(Y_t)dB_t + b(Y_t)dt$,    
 where we  have extended  the coefficients  outside $\bar{\mathscr D}$ such that:
 \begin{enumerate}
 \item[$i)$] Under \ass{A1}-\ass{A2}, $\sup_{x\in \mathbb R^d}(|b(x)|+ \Vert \sigma (x)\Vert_{\rm{HS}}  )<+\infty $ for all $x\in \mathbb R^d$,  $b,\sigma$ are Lipschitz over $\mathbb R^d$, and $\sigma$ is uniformly elliptic over $\mathbb R^d$. In this case, there is a unique strong solution to the martingale problem
 
  \item[$ii)$] Under \ass{B}, $\sup_{x\in \mathbb R^d}|b(x)|  <+\infty $  and $b$ is continuous over $\mathbb R^d$. In that case, it is well-known that there is a unique weak solution to  $ dY_t= \sigma(Y_t)dB_t + b(Y_t)dt$ (as there is a unique solution to the associated martingale problem~\cite[Chapter 18]{kallenberg1997foundations}). 
 \end{enumerate}
Observe that $L_t(x)=0$ for all $t<\tau_{\mathscr D}$, hence $\tau_\mathscr D = \inf\{t\ge 0, Y_t\notin \mathscr D\}$. Moreover, by uniqueness of the stopped martingale problem~\cite[Chapter 4, Section 5]{EK},  we have for all $x\in \mathscr D$: 
$$(X_{t\wedge \tau_{\mathscr D}}(x))_{t\ge 0} =(Y_{t\wedge \tau_{\mathscr D}}(x))_{t\ge 0}\text{ in law}.$$   

 In both cases $i)$ and $ii)$, it is straightforward to prove that the  Girsanov formula holds  for the law of the trajectories $(Y_{[0,T]}(x))_{x\in \mathbb R^d}$ and we then easily deduce, using the knowledge of the law of the trajectories of a Brownian motion on $\mathbb R^d$   that   for all $y_0,y_1\in \mathscr D$,     $t>0$, and every $r_1>0$:
 $$ \mathbb P_{y_0}[ Y_t \in  \mathscr B(y_1,r_1), t<\tau_{\mathscr D}]>0.$$
 Hence, one deduces that
  $$\mathbb P_x[ X_t \in U, t<\tau_{\mathscr O}]\ge \mathbb P_x[ X_t \in U, t<\tau_{\mathscr D}]\ge   \mathbb P_x[ X_t \in \mathscr B(z,r), t<\tau_{\mathscr D}]=\mathbb P_x[ Y_t \in \mathscr B(z,r), t<\tau_{\mathscr D}]>0.$$
We have thus proved the desired result:  for all  $t>0$ and  every open  subset  $U$  of $\mathscr O=\bar{\mathscr D} \setminus \bar \Sigma$,
\begin{equation}\label{eq.st1}
\mathbb P_x[X_t\in U, t<\tau_{\mathscr O}]>0, \ \forall x\in  {\mathscr D}  .
\end{equation}

 \underline{Step 2}. 
 Let us now  consider the case when  $x\in \partial \mathscr D\setminus \bar{\Sigma}$. 
 
 We will first show that the process moves away from the boundary  without touching  $\bar \Sigma$. More precisely, let us prove that   for all $T_F>0$, there exists  $T=T_x\in [0,T_F]$ such that: 
\begin{equation}\label{eq.O-}
\mathbb P_x[X_T\in \mathscr D, T<\tau_{\mathscr O}]>0.
\end{equation}  
Assume that \eqref{eq.O-} does not hold. Then, there  exists $T_F>0$ such that  for all $t\in [0,T_F]$, $\mathbb P_x[X_t\in \partial \mathscr D \text{ or }  \tau_{\mathscr O}\le t ]=1$. Hence, one has:
$$  \mathbb P_x\Big [    \bigcap_{t \in [0,T_F]\cap \mathbb Q}\big \{X_t\in \partial \mathscr D \text{ or }  \tau_{\mathscr O}\le t \}\Big ]=1.$$
 This rewrites
 $$  \mathbb P_x\Big [    \forall t \in [0,T_F]\cap \mathbb Q,  \, X_t\in \partial \mathscr D \text{ or }  \tau_{\mathscr O}\le t  \Big ]=1,$$
 i.e. there exists $\mathbf \Omega_x\subset \mathbf  \Omega$ with $\mathbb P(\mathbf \Omega_x)=1$ such that for all $\omega\in \mathbf \Omega_x$ and all $t \in [0,T_F]\cap \mathbb Q$, either $X_t(\omega)\in \partial \mathscr D$ or $\{X_u(\omega), u\in [0,t]\}\cap \bar \Sigma \neq \emptyset$.
 In what follows, $\mathbf \Omega_x$  denotes a set of probability $1$ which can change from one occurrence to another.  
When $X_0=x$, we have that  a.s. $\sup_{s\in [0,u]} |X_s-x|\to 0$ as 
$u\to 0^+$. Therefore, since $x\in \partial \mathscr D\setminus \bar \Sigma$, we deduce that   when $X_0=x$,   for all $\omega\in \mathbf \Omega_x $,  there exists  $\varepsilon(\omega)\in (0,T_F)$, such that $\{X_u(\omega), u\in [0,\varepsilon(w)]\}\subset \bar{ \mathscr D}\setminus \bar\Sigma$. Hence, for all $  \omega\in \mathbf \Omega_x $  and all $t\in [0,\varepsilon(\omega)]\cap \mathbb Q$, 
$X_t(\omega)\in \partial \mathscr D$. By continuity of the trajectories of the process $(X_t)_{t\ge 0}$ and because $\partial \mathscr D$ is a closed set, we deduce that for all $  \omega\in \mathbf \Omega_x $, it holds $$\{X_u(\omega), u\in [0,\varepsilon(w)]\} \subset \partial \mathscr D,$$
that is
\begin{equation}\label{eq.intB}
    [0,\varepsilon(w)] \subset B(x) :=\{s\ge 0, X_s(x)\in \partial \mathscr D\}.
\end{equation}
However, it is also well-known that, for all $x\in \bar{\mathscr D}$,  the set $B(x)$  is  a.s. a closed set of zero Lebesgue measure\footnote{This is a direct consequence of Theorems~\ref{th.SF} and \ref{th.SF2}: since $\partial\mathscr D$ has zero Lebesgue measure in $\mathbb R^d$ and, by Theorems~\ref{th.SF} and \ref{th.SF2},  one has $\mathbb P_x[X_u\in\partial\mathscr D]=0$ for all $u>0$. Thus, for every $t>0$,
$$\mathbb E_x\Big[\int_0^t \mathbf 1_{\partial\mathscr D}(X_u)\,du\Big]=\int_0^t \mathbb P_x[X_u\in\partial\mathscr D]\,du=0,$$
so that a.s. $\int_0^t\mathbf 1_{\partial\mathscr D}(X_u)\,du=0$. Letting $t\to+\infty$   gives $\mathrm{Leb}(B(x))=0$ a.s. Alternatively, this fact can be obtained directly, without appealing to Theorems~\ref{th.SF} and \ref{th.SF2}, via the occupation-times formula applied to a well-chosen $C^2$ function $g:\bar{\mathscr D}\to[0,+\infty)$ vanishing exactly on $\partial\mathscr D$ and coinciding with $\mathrm{dist}(\cdot,\partial\mathscr D)$ near the boundary, using \ass{A2} to get nondegeneracy of the martingale part of $g(X_t)$ at level $0$.} without isolated point (i.e.  a zero Lebesgue measure perfect set). Therefore, \eqref{eq.intB} is absurd. This achieves the  proof of \eqref{eq.O-}. 

We now end   the proof of item $\mathfrak 1$. 
Let  $t>0$. Consider  $T\in [0,t/2]$   as in \eqref{eq.O-} (for $T_F=t/2$) and set $t=T+u$, $u>0$. 
By the  Markov property of the process $(X_t(x))_{t\ge 0}$, we have 
\begin{align*}
 \mathbb P_x\big [X_t\in \mathscr B(z,r), t<\tau_{\mathscr O}\big ] &=\mathbb E_x\Big[\mathbf 1_{T<\tau_{\mathscr O} }\mathbb P_{X_T }\big [X_{u}\in \mathscr B(z,r), u<\tau_{\mathscr O}\big ] \Big]\\
&\ge \mathbb E_x\Big[ \mathbf 1_{T<\tau_{\mathscr O}, X_T\in \mathscr D }  \,  \mathbb P_{X_T }\big [X_{u}\in \mathscr B(z,r), u<\tau_{\mathscr O}\big ]  \Big].
\end{align*}
Assume that  the last term in the previous inequality vanishes. Then,  a.s. it holds:
 $$ \mathbf 1_{T<\tau_{\mathscr O}, X_T\in \mathscr D }  \,  \mathbb P_{X_T }\big [X_{u}\in \mathscr B(z,r), u<\tau_{\mathscr O}\big ]  =0.$$
On the other hand, by \eqref{eq.O-}, there exists $\mathbf \Omega_{x,T}\subset \mathbf  \Omega$, with $\mathbb P_x[\mathbf \Omega_{x,T}]>0$ such that  for all $  \omega \in \mathbf \Omega_{x,T}$, $T<\tau_{\mathscr O}(\omega)$ and $X_T(\omega)\in \mathscr D$, and therefore, it holds:
 $$  \mathbb P_{X_T(\omega) }\big [X_{u}\in \mathscr B(z,r), u<\tau_{\mathscr O}\big ]  =0, \ \forall   \omega \in \mathbf \Omega_{x,T}.$$
Since $X_T(\omega)\in \mathscr D$, this contradicts \eqref{eq.st1}. Hence,  $\mathbb P_x\big [X_t\in \mathscr B(z,r), t<\tau_{\mathscr O}\big ]>0$, which is the desired result.  The proof of item $\mathfrak 1$ is complete.  
    \end{proof}
      
      \begin{remark}\label{re.sigma}
      Under the assumptions of Theorem \ref{th.1}, when $\Sigma$  has  non empty interior (i.e. for some $x_0\in \Sigma$ and $r>0$, $\mathscr B(x_0,r)\cap \partial \mathscr D\subset \Sigma$), Assumption \ass{C} holds. Indeed, let  $x\in \mathscr D$. Recall that in this case, $(X_{t\wedge \tau_{\mathscr D}}(x))_{t\ge 0} =(Y_{t\wedge \tau_{\mathscr D}}(x))_{t\ge 0}$ in law (thus, $ (\tau_{\mathscr D},\,X_{\tau_{\mathscr D}} )=(\tau_{\mathscr D},\,Y_{\tau_{\mathscr D}})$ in law\footnote{To see this, argue as follows. For a  continuous path $\omega:[0,+\infty)\to\bar{\mathscr D}$, define 
 $$\tau_{\mathscr D}(\omega):=\inf\{t\ge0:\omega(t)\in\partial\mathscr D\},\qquad \Phi(\omega):=(\tau_{\mathscr D}(\omega),\,\omega(\tau_{\mathscr D}(\omega)))\ \text{ on } \{\tau_{\mathscr D}(\omega)<+\infty\}.$$
 Since $\partial\mathscr D$ is closed, $\tau_{\mathscr D}$ is measurable on $C([0,+\infty),\bar{\mathscr D})$, and so is $\Phi$ on $\{\tau_{\mathscr D}<+\infty\}$. Since both processes start at $x\in\mathscr D$, stopping them at $\tau_{\mathscr D}$ does not change their first hitting time of $\partial\mathscr D$, so that $(\tau_{\mathscr D},X_{\tau_{\mathscr D}})=\Phi(X_{\cdot\wedge\tau_{\mathscr D}})$ and $(\tau_{\mathscr D},Y_{\tau_{\mathscr D}})=\Phi(Y_{\cdot\wedge\tau_{\mathscr D}})$. The two stopped processes having the same law, so do their images under $\Phi$.}, and  in particular $\mathbb P_x[X_{\tau_{\mathscr D}}\in\Sigma]=\mathbb P_x[Y_{\tau_{\mathscr D}}\in\Sigma]$). Moreover, because $\Sigma$  has  non empty interior, it follows that the process $(Y_t(x),t\ge 0)$ has non zero probability to hit $\partial \mathscr D$ at $\Sigma$ (this can be easily  obtained using the  knowledge of the  support of the trajectories of the Brownian motion), i.e. $\mathbb P_x[  Y_{\tau_{\mathscr D}} \in \Sigma]>0$.  This proves  that \ass{C} is satisfied.      
       \end{remark}
      
 
 \noindent
The proof of Theorem \ref{th.1} is complete.

\section{Analytical properties of the quasi-stationary distribution:  proof of Theorem \ref{th.2}}


 In this section, we prove Theorem \ref{th.2}.

\subsection{Realization of {$-\mathscr L$} with mixed boundary conditions} 
\label{sec.H1}


\subsubsection{The form $\mathbf L$ associated with {$-\mathscr L$} and  mixed boundary conditions}
We assume throughout this section that  $\mathscr D \subset \mathbb{R}^d$ is a $C^\infty$ bounded domain of $\mathbb R^d$, $d\ge 2$, and that $\Sigma$ is a nonempty $C^3$  open subset of $\partial \mathscr D$. 
In all this section:
\begin{enumerate}
\item[$\mathfrak 1$.]   $\SigD := \bar{\Sigma}$ (playing the role of the absorbing part $\bar\Sigma$ of the boundary considered in Section~\ref{sec.NE}). Call $\SigD$ the Dirichlet part.
 \item[$\mathfrak 2$.]  $\SigN := \partial\mathscr D \setminus \SigD$
(the Neumann part). Note that  $\SigN$ is open.  
 \item[$\mathfrak 3$.] $\sigma \equiv \sqrt 2\, \text{Id}_{\mathbb R^d}$ and $b:\mathbb R^d\to \mathbb R^d$ is $C^\infty$. 
 \item [$\mathfrak 4$.]  $\mathbf n_e=-\mathbf n_i$ is the outward-pointing unit normal to $\mathscr D$.
\end{enumerate}
We note that in this  setting, Assumptions  \ass{A1}, \ass{A2} and \ass{C} are satisfied (see Remark~\ref{re.sigma}). As a result, there exists a unique quasi-stationary distribution  $\bm{\rho}$ for    the process $(X_t)_{t\ge 0}$ with absorbing boundary part $\bar \Sigma$  in  $\mathscr P(\mathscr O)$ (see Theorem \ref{th.1} and Definition \ref{QSD}). 
Define
$$
  C^\infty_{\SigD}(\mathscr D)
  := \bigl\{ w|_\mathscr D, \  w \in C^\infty_c(\mathbb{R}^d) \text{ and }  \supp w \cap \SigD = \emptyset \bigr\}.
$$
We denote by $H^1_{0,\SigD}(\mathscr D)$ the closure of $C^\infty_{\SigD}(\mathscr D)$
in the    $H^1(\mathscr D)$.  
We recall that when $\sigma \equiv \sqrt 2\, \text{Id}_{\mathbb R^d}$, the operator defined in \eqref{eq.Lo} {(the infinitesimal generator of \eqref{eq.LR})} writes:
$$
  {\mathscr L \phi = \Delta \phi + b \cdot \nabla \phi.}
$$
{Since the principal part of $-\mathscr L$ is the nonnegative operator $-\Delta$, we work in this section with}
$$
  {-\mathscr L \phi = -\Delta \phi - b \cdot \nabla \phi .}
$$
{Its formal $L^2(\mathscr D)$-adjoint is $-\mathscr L^*$, where
$\mathscr L^* \phi := \Delta \phi - \operatorname{div}(b\,\phi)= \Delta\phi - b\cdot\nabla\phi-(\operatorname{div}b)\,\phi$.}

Define the sesquilinear form $\mathbf{L} : \HsD \times \HsD \to \mathbb{C}$ (associated with {$-\mathscr L$}) by
\begin{equation}\label{eq:form}
  \mathbf{L}(u,v)
  := \int_\mathscr D \nabla u \cdot \overline{\nabla v}
  {-} (b \cdot \nabla u)\,\bar v.
\end{equation}
Moreover, for any $\kappa \geq   \|b\|_\infty^2/2$, the shifted  form
$\mathbf{L}_{\kappa}(u,v) := \mathbf{L}(u,v) + \kappa(u,v)_{L^2(\mathscr D)}$
is accretive on $\HsD$ (see \cite[Definition 1.4]{Ouhabaz}):
\begin{equation}\label{eq:coercive}
  \Re\, \mathbf{L}_{\kappa}(u,u) \geq \tfrac{1}{2}\|\nabla u\|^2_{L^2}
  \qquad \forall\, u \in \HsD.
\end{equation}
It is also continuous, closed and sectorial on $L^2(\mathscr D)$.
Let $A_2$ denote the operator on $L^2(\mathscr D)$ associated with $\mathbf{L}$, i.e. the unique closed operator of maximal domain satisfying \cite[Section 1.2.3]{Ouhabaz}:
\begin{equation}\label{eq.LA2}
      \mathbf{L}(u,v) = \langle A_2u, v\rangle _{L^2(\mathscr D)}
      \quad \forall\, v \in \HsD, \qquad u \in \dom(A_2) \subset \HsD.
\end{equation} 

\subsubsection{The operator $A_2$ and its adjoint}
\label{sec.A12}



 \begin{proposition}\label{pr.domA2}
 The operator $A_2$  is   quasi-m-accretive and quasi-m-sectorial,  and $-A_2$     is the generator of a holomorphic $C_0$-semigroup  $S_2=(e^{-tA_2})_{t\ge 0}$ on $L^2(\mathscr D)$.  In addition, 
 $\|e^{-tA_2}\|_{\mathcal L(L^2)}\le  e^{\frac12\|b\|_\infty^2t}$.
Moreover, 
$$
  \dom(A_2)
  = \bigl\{\, u \in \HsD, \  \mathscr L u \in L^2(\mathscr D) \text{ and }  
    \partial_{\mathbf n_e} u |_{\SigN} = 0 \,\bigr\},
$$
where $\mathscr L u \in L^2(\mathscr D)$ is understood in the sense of distributions and
$\partial_{\mathbf n_e} u|_{\SigN} = 0$ in $H^{-1/2}(\SigN)$.
Moreover, $A_2u = {-}\mathscr L u$ for every $u \in \dom(A_2)$. 
\end{proposition}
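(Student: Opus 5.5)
The plan has three parts: the form-theoretic properties, the semigroup estimate, and the identification of the domain.

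\emph{Form properties.} We first check that $\mathbf L$ is a densely defined, continuous, closed, quasi-sectorial form on $L^2(\mathscr D)$ with domain $\HsD$.

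Density of $\HsD$ in $L^2(\mathscr D)$ holds because $C_c^\infty(\mathscr D)\subset C^\infty_{\SigD}(\mathscr D)$.

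Continuity follows from $|\mathbf L(u,v)|\le \|\nabla u\|\|\nabla v\|+\|b\|_\infty\|\nabla u\|\|v\|$.

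For the lower bound, Young's inequality gives $|\int (b\cdot\nabla u)\bar u|\le \frac12\|\nabla u\|^2+\frac12\|b\|_\infty^2\|u\|^2$. Hence
$$\Re\,\mathbf L(u,u)+\tfrac12\|b\|_\infty^2\|u\|^2\ge \tfrac12\|\nabla u\|^2 .$$
This is \eqref{eq:coercive} with $\kappa=\frac12\|b\|_\infty^2$.

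For the sector condition, $|\Im\,\mathbf L(u,u)|\le \|b\|_\infty\|\nabla u\|\|u\|$, which is bounded by $C(\Re\,\mathbf L_\kappa(u,u)+\|u\|^2)$.

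Closedness is the statement that the norm $(\Re\,\mathbf L_\kappa(u,u)+\|u\|^2)^{1/2}$ is equivalent to the $H^1$ norm on $\HsD$. The upper bound is continuity and the lower bound is \eqref{eq:coercive}. Since $\HsD$ is closed in $H^1(\mathscr D)$ by definition, the form is closed.

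By the general theory in \cite[Section 1.2 and Theorem 1.52]{Ouhabaz}, the associated operator $A_2$ is then quasi-m-sectorial (hence quasi-m-accretive), and $-A_2$ generates a holomorphic $C_0$-semigroup.

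\emph{Semigroup bound.} Since $A_2+\kappa$ is m-accretive, the Lumer--Phillips theorem gives $\|e^{-t(A_2+\kappa)}\|\le 1$. This yields $\|e^{-tA_2}\|\le e^{\kappa t}$ with $\kappa=\frac12\|b\|_\infty^2$.

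\emph{Domain identification, inclusion $\subset$.} Let $u\in\dom(A_2)$ and $f=A_2u$.
Testing \eqref{eq.LA2} with $v\in C_c^\infty(\mathscr D)$ gives $-\Delta u-b\cdot\nabla u=f$ in $\mathcal D'(\mathscr D)$. So $\mathscr Lu=-f\in L^2$ and $A_2u=-\mathscr Lu$.
For the Neumann condition, $\Delta u=-f-b\cdot\nabla u\in L^2$ and $\nabla u\in L^2$. The generalized Green formula therefore defines $\partial_{\mathbf n_e}u\in H^{-1/2}(\partial\mathscr D)$ with
$$\int\nabla u\cdot\overline{\nabla v}=-\int \Delta u\,\bar v+\langle \partial_{\mathbf n_e}u,\bar v\rangle$$
for $v\in H^1(\mathscr D)$.
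Subtracting this from \eqref{eq.LA2} gives $\langle\partial_{\mathbf n_e}u,\bar v\rangle=0$ for all $v\in C^\infty_{\SigD}(\mathscr D)$. The traces of such $v$ on $\partial\mathscr D$ include every smooth function compactly supported in the open set $\SigN$. Hence $\partial_{\mathbf n_e}u|_{\SigN}=0$, with $H^{-1/2}(\SigN)$ interpreted as the dual of $\tilde H^{1/2}$-functions supported in $\SigN$.

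\emph{Domain identification, inclusion $\supset$.} Take $u\in\HsD$ with $\mathscr Lu\in L^2$ and vanishing conormal derivative on $\SigN$. Green's formula gives $\mathbf L(u,v)=\langle-\mathscr Lu,v\rangle$ for every $v\in C^\infty_{\SigD}(\mathscr D)$. The boundary pairing vanishes because the trace of $v$ is supported away from $\SigD$, hence compactly in $\SigN$. Both sides are continuous in $v$ for the $H^1$ norm, so the identity extends by density to all $v\in\HsD$. By maximality of $A_2$, this gives $u\in\dom(A_2)$.

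\emph{Main obstacle.} The delicate step is making the normal-derivative statement precise: the meaning of $H^{-1/2}(\SigN)$ and the use of the Green formula for $u\in H^1$ with $\Delta u\in L^2$ on a smooth domain. The key point is that traces of $C^\infty_{\SigD}$-functions are exactly the test functions compactly supported in $\SigN$. I would state the Green identity via the $H^{-1/2}(\partial\mathscr D)$–$H^{1/2}(\partial\mathscr D)$ duality and then restrict it to $\SigN$.
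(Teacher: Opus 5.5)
Your proposal is correct and follows essentially the paper's route: the form method (accretivity of $\mathbf L_\kappa$ with $\kappa=\frac12\|b\|_\infty^2$, closedness and sectoriality of the form, then the general form theory and Lumer--Phillips for the holomorphic semigroup and its bound), followed by the generalized Green formula to identify $\dom(A_2)$ in both directions. The only differences are that you check the form properties explicitly where the paper cites them, and that you read $\partial_{\mathbf n_e}u|_{\SigN}=0$ by pairing against traces in $C_c^\infty(\SigN)$, i.e.\ in the dual of $\tilde H^{1/2}(\SigN)$; this is the precise version of the paper's appeal to surjectivity of the trace map $\HsD\to H^{1/2}(\SigN)$, since traces of $\HsD$ actually fill only $\tilde H^{1/2}(\SigN)$, not all of $H^{1/2}(\SigN)$.
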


Note that since $b\in W^{1,\infty}(\mathscr D)^d$, for all $u\in H^1(\mathscr D)$, we have the equivalences
\begin{equation}
    \Lr u \in L^2(\mathscr D) \iff \Lr^* u \in L^2(\mathscr D) \iff \Delta u \in L^2(\mathscr D).
\end{equation}

\begin{proof}
Applying \cite[Proposition 1.22]{Ouhabaz} to $\mathbf L_{\kappa_0}$, $\kappa_0:=\|b\|_\infty^2/2$, whose associated operator is $A_2+\kappa_0$, shows that $A_2+\kappa_0$ is m-accretive, hence $\|e^{-tA_2}\|_{\mathcal L(L^2)}\le e^{\kappa_0 t}$ \cite[Proposition 1.51]{Ouhabaz}. For $\kappa>\kappa_0$, $A_2+\kappa$ is sectorial by \cite[Proposition 1.27]{Ouhabaz}, and the holomorphy of $(e^{-tA_2})_{t\ge0}$ follows from \cite[Theorem 1.52]{Ouhabaz}.

For $u \in \HsD$ with $\mathscr L u \in L^2(\mathscr D)$, the trace $\partial_{\mathbf n_e} u \in H^{-1/2}(\partial\mathscr D)$ is well-defined via the Green formula
\begin{equation}\label{eq:green}
  \int_\mathscr D \nabla u \cdot \overline{\nabla v}
  = -\int_\mathscr D (\Delta u)\,\bar v
  + \langle \partial_{\mathbf n_e} u,\, \bar v \rangle_{\partial\mathscr D},
  \qquad \forall\, v \in H^1(\mathscr D),
\end{equation}
where the last bracket denotes the duality pairing between $H^{-1/2}(\partial\mathscr D)$ and $H^{1/2}(\partial\mathscr D)$.

\medskip
\noindent
\textbf{($\subset$).}
Let $u \in \dom(A_2)$ {and set $f:=A_2u$}. By  definition (see  \eqref{eq.LA2}): 
\begin{equation}\label{eq:formid}
  \mathbf{L}(u,v) = \langle f,v  \rangle_{L^2(\mathscr D)} \qquad \forall\, v \in \HsD.
\end{equation}
Testing with $v \in \mathcal{D}(\mathscr D) :=C^\infty_c(\mathscr D)$ and integrating by parts gives
\[
  \langle -\Delta u {-} b \cdot \nabla u,\, v
  \rangle_{\mathcal{D}',\mathcal{D}} = \langle f,v  \rangle_{L^2(\mathscr D)},
\]
so ${-}\mathscr L u = f \in L^2(\mathscr D)$ in the sense of distributions (and thus in $L^2(\mathscr D)$). Above $\langle \cdot, \cdot \rangle_{\mathcal{D}', \mathcal{D}}$ denotes the duality bracket between distributions and test functions. Recall that  $\partial_{\mathbf n_e} u \in H^{-1/2}(\partial\mathscr D)$ is well-defined.
Applying \eqref{eq:green} to the form \eqref{eq:form} and using $v|_{\SigD} = 0$
for all $v \in \HsD$:
\[
  \mathbf{L}(u,v)
  = -\int_\mathscr D (\Delta u)\,\bar v
    + \langle \partial_{\mathbf n_e} u,\, \bar v\rangle_{\SigN}
    {-} \int_\mathscr D (b \cdot \nabla u)\,\bar v
  =  \langle f,v \rangle _{L^2(\mathscr D)} + \langle \partial_{\mathbf n_e} u,\, \bar v\rangle_{\SigN}.
\]
Comparing with \eqref{eq:formid} gives
$\langle \partial_{\mathbf n_e} u,\, \bar v\rangle_{\SigN} = 0$
for all $v \in \HsD$.
Since the trace map $\HsD \to H^{1/2}(\SigN)$ is surjective,
we conclude
$\partial_{\mathbf n_e} u|_{\SigN} = 0 \text{ in } H^{-1/2}(\SigN)$. 

\medskip
\noindent
\textbf{($\supset$).}
Let $u \in \HsD$ with ${-}\mathscr L u = f \in L^2(\mathscr D)$ and $\partial_{\mathbf n_e} u|_{\SigN} = 0$. 
For every $v \in \HsD$, applying \eqref{eq:green} and using $v|_{\SigD} = 0$:
\begin{align*}
  \mathbf{L}(u,v)
  &= -\int_\mathscr D (\Delta u)\,\bar v
     + \underbrace{\langle \partial_{\mathbf n_e} u,\,\bar v\rangle_{\SigD}}_{=\,0}
     + \langle \partial_{\mathbf n_e} u,\,\bar v\rangle_{\SigN}
     {-} \int_\mathscr D (b \cdot \nabla u)\,\bar v \\
  &= \int_\mathscr D (-\Delta u {-} b \cdot \nabla u)\,\bar v
     + \underbrace{\langle \partial_{\mathbf n_e} u,\,\bar v\rangle_{\SigN}}_{=\,0}
  = \langle f,v  \rangle_{L^2(\mathscr D)}.
\end{align*}
Thus $u \in \dom(A_2)$ and $A_2u = f = {-}\mathscr L u$.

\end{proof}


For $u,v \in \HsD$, define the adjoint form \cite[Definition 1.5]{Ouhabaz}  of $\mathbf L$ by 
\begin{equation}\label{eq.ouh}
  \mathbf L^*(v,u) := \overline{\mathbf L(u,v)}= \int_\mathscr D \nabla v \cdot \overline{\nabla u}   {-} \int_{\mathscr{D}} (v b) \cdot \nabla \bar{u}. 
  \end{equation}
Using the divergence theorem  and the fact that
$u,v \in \HsD$ vanish    on $\SigD$,
$$
  \int_\mathscr D (vb)\cdot \nabla \bar u
  = -\int_\mathscr D (b\cdot\nabla v)\,\bar u
    - \int_\mathscr D v\,(\operatorname{div}b)\,\bar u
    + \int_{\SigN} (b\cdot \mathbf n_e)\, v\,\bar u ,
$$
we deduce that
for all $u,v \in \HsD$,
\begin{equation}\label{eq:Lstar}
  \mathbf L^*(v,u)
  = \int_\mathscr D \nabla v \cdot \overline{\nabla u}
    {+} (b\cdot\nabla v)\,\bar u
    {+} (\operatorname{div}b)\,v\,\bar u
  \;{-}\; \int_{\SigN} (b\cdot \mathbf n_e)\, v\,\bar u  .
\end{equation}
By the representation theorem,  $\mathbf L^*$ is the form
associated with $A^*_2$, the Banach-space adjoint of $A_2$: $\dom(A^*_2)\subset\HsD$
and $\mathbf L^*(v,u) = (A^*_2v,u)_{L^2(\mathscr D)}$ for all $u\in\HsD$, $v\in\dom(A^*_2)$ \cite[Proposition 1.24]{Ouhabaz}.

\begin{proposition}\label{prop:domAstar-L2}
Under the hypotheses of Proposition~\ref{pr.domA2}, the $L^2$-adjoint $A_2^*$
of $A_2$ satisfies
$$
  \dom(A_2^*)
  = \Bigl\{\, v \in \HsD, \ \mathscr L^* v \in L^2(\mathscr D) \text{ and }
    \partial_{\mathbf n_e} v {-} (b\cdot \mathbf n_e)\,v  |_{\SigN} = 0 \ \text{in } H^{-1/2}(\SigN) \,\Bigr\},
$$
where, in the sense of distributions,  $
  \mathscr L^* v {=\Delta v - b\cdot\nabla v - (\operatorname{div}b)\,v}$,
 and $\partial_{\mathbf n_e} v \in H^{-1/2}(\partial\mathscr D)$
is the  trace given by \eqref{eq:green}. Moreover ${A_2^*v = -\mathscr L^* v= -\Delta v+b\cdot\nabla v+(\operatorname{div}b)\,v}$ for every $v$ in this domain. 
\end{proposition}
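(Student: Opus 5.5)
The plan mirrors the proof of Proposition~\ref{pr.domA2}, applied now to the adjoint form $\mathbf L^*$ in its expanded expression \eqref{eq:Lstar}. Since $\mathbf L$ is a closed sectorial form on $\HsD$, \cite[Proposition 1.24]{Ouhabaz} identifies $A_2^*$ with the operator associated with $\mathbf L^*$. Concretely, $v\in\dom(A_2^*)$ if and only if $v\in\HsD$ and there is $g\in L^2(\mathscr D)$ with $\mathbf L^*(v,u)=\langle g,u\rangle_{L^2(\mathscr D)}$ for all $u\in\HsD$. It therefore suffices to characterize this last condition by the distributional equation together with the Robin-type condition on $\SigN$.

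\textbf{Inclusion ($\subset$).} Let $v\in\dom(A_2^*)$ and set $g:=A_2^*v$.

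First, I test \eqref{eq:Lstar} with $u\in C_c^\infty(\mathscr D)$. The boundary term vanishes, and an integration by parts on the gradient term gives $-\Delta v+b\cdot\nabla v+(\operatorname{div}b)v=g$ in $\mathcal D'(\mathscr D)$. Since $b\in C^\infty(\bar{\mathscr D})$ and $v\in H^1$, the lower-order terms lie in $L^2$. Hence $\Delta v\in L^2$, which is equivalent to $\mathscr L^*v\in L^2$, and $-\mathscr L^*v=g$.

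Second, the Green formula \eqref{eq:green} now applies to $v$. It defines $\partial_{\mathbf n_e}v\in H^{-1/2}(\partial\mathscr D)$ and gives $\int\nabla v\cdot\overline{\nabla u}=-\int\Delta v\,\bar u+\langle\partial_{\mathbf n_e}v,\bar u\rangle_{\partial\mathscr D}$. For $u\in\HsD$ the trace vanishes on $\SigD$, so the pairing reduces to $\SigN$. Substituting into \eqref{eq:Lstar} and comparing with $\langle g,u\rangle$ yields
$$\langle\partial_{\mathbf n_e}v-(b\cdot\mathbf n_e)v,\bar u\rangle_{\SigN}=0\quad\text{for all } u\in\HsD.$$
Here $(b\cdot\mathbf n_e)v|_{\SigN}\in H^{1/2}\subset H^{-1/2}$, since $b\cdot\mathbf n_e$ is smooth.

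Third, I use surjectivity of the trace map $\HsD\to H^{1/2}(\SigN)$, the same fact invoked in Proposition~\ref{pr.domA2}. Its image is dense in the test space of the relevant $H^{-1/2}(\SigN)$ duality, so the Robin condition holds in $H^{-1/2}(\SigN)$.

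\textbf{Inclusion ($\supset$).} Conversely, take $v\in\HsD$ with $\mathscr L^*v\in L^2$ and the Robin condition. Running the same Green computation backwards gives $\mathbf L^*(v,u)=\langle-\mathscr L^*v,u\rangle_{L^2(\mathscr D)}$ for all $u\in\HsD$. Hence $v\in\dom(A_2^*)$ and $A_2^*v=-\mathscr L^*v$.

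The main obstacle is the boundary bookkeeping. Two points need care: the rewriting of $\int(vb)\cdot\nabla\bar u$ into \eqref{eq:Lstar} only holds in an $H^1$ sense, and the pairing on $\SigN$ alone has to be interpreted correctly. I would justify \eqref{eq:Lstar} by density of $C^\infty_{\SigD}(\mathscr D)$ in $\HsD$, using continuity of both sides in $H^1\times H^1$ via the trace theorem. I would also interpret $H^{-1/2}(\SigN)$ as the dual of the trace space of $\HsD$ on $\SigN$, exactly as in Proposition~\ref{pr.domA2}, so that the duality argument there applies verbatim.
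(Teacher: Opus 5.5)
Your proposal is correct and follows the paper's proof essentially step by step: identify $A_2^*$ as the operator associated with $\mathbf L^*$, test \eqref{eq:Lstar} with $C_c^\infty(\mathscr D)$ to get the distributional equation, apply the Green formula \eqref{eq:green} and the trace argument on $\SigN$ for the Robin condition, and run the computation backwards for the reverse inclusion. Your reading of $H^{-1/2}(\SigN)$ as the dual of the trace space of $\HsD$ on $\SigN$ is a slightly more careful version of the paper's appeal to surjectivity of the trace map onto $H^{1/2}(\SigN)$.
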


\begin{proof}
 If $v\in\HsD$ and $\mathscr L^*v\in L^2(\mathscr D)$ then
$\Delta v = {\mathscr L^*v + b\cdot\nabla v + (\operatorname{div}b)v} \in L^2(\mathscr D)$
since $b,\operatorname{div}b \in L^\infty(\mathscr D)$. Hence $\partial_{\mathbf n_e} v \in
H^{-1/2}(\partial\mathscr D)$ is well-defined via \eqref{eq:green}, exactly as
in the preliminary step of the proof of Proposition~\ref{pr.domA2}.

\smallskip\noindent
\textbf{($\subset$).} Let $v\in\dom(A_2^*)$ and set $g:=A_2^*v$, so that
$\mathbf L^*(v,u) = \langle g,u  \rangle_{L^2(\mathscr D)}$ for all $u\in\HsD$. Testing \eqref{eq:Lstar}
against $u\in C^\infty_c(\mathscr D)$ (the boundary term drops out) gives
${-}\mathscr L^* v = g \in L^2(\mathscr D)$ in the sense of distributions. Applying
\eqref{eq:green} to the first term of \eqref{eq:Lstar} and using $u|_{\SigD}=0$:
\begin{align*}
  \mathbf L^*(v,u)
  &= -\int_\mathscr D \Delta v\,\bar u + \langle \partial_{\mathbf n_e} v,\bar u\rangle_{\SigN}  {+} \int_\mathscr D (b\cdot\nabla v)\bar u {+} \int_\mathscr D (\operatorname{div}b)v\bar u
    {-} \int_{\SigN} (b\cdot \mathbf n_e) v \bar u\\
  &= \langle g,u  \rangle_{L^2(\mathscr D)} + \big\langle \partial_{\mathbf n_e} v {-} (b\cdot \mathbf n_e)v,\ \bar u\big\rangle_{\SigN}.
\end{align*}
Since $\mathbf L^*(v,u)=\langle g,u  \rangle_{L^2(\mathscr D)}$, we get
$\langle \partial_{\mathbf n_e} v {-} (b\cdot \mathbf n_e)v,\bar u\rangle_{\SigN}=0$ for all $u\in\HsD$.
Because the trace map $\HsD \to H^{1/2}(\SigN)$ is surjective,
$\partial_{\mathbf n_e} v {-} (b\cdot \mathbf n_e)v|_{\SigN} = 0$ in $H^{-1/2}(\SigN)$.

\smallskip\noindent
\textbf{($\supset$).} Conversely, if $v\in\HsD$ satisfies ${-}\mathscr L^* v = g \in
L^2(\mathscr D)$ and $\partial_{\mathbf n_e} v {-} (b\cdot \mathbf n_e)v|_{\SigN}=0$, the same
computation run in reverse, for every $u\in\HsD$, gives
$\mathbf L^*(v,u) = \langle g,u  \rangle_{L^2(\mathscr D)}$, so $v\in\dom(A_2^*)$ and $A_2^*v = g = {-}\mathscr L^*v$.

\end{proof}



\subsubsection{Extensions to   $C_0$-semigroups  on $L^p(\mathscr D)$}

\begin{proposition}\label{pr.extensionS}
For $p\in (1,+\infty)$, the  semigroup $S_2=(e^{-tA_2})_{t\ge 0}$ (resp. {$S_2^*=(e^{-tA_2^*})_{t\ge 0}$}) generated by $-A_2$ (resp. $-A_2^*$)  on $L^2(\mathscr D)$ extends consistently to a
 $C_0$-semigroup $S_p$ (resp. $S_p^*$) on $L^p(\mathscr D)$ (resp. on $L^{p'}(\mathscr D)$, where $1/p+ 1/{p'}=1$){, and $S_p^*(t)$ is the Banach-space adjoint of $S_p(t)$}.
Let $-A_p$ denote the generator of $S_p$, i.e. $S_p=(e^{-tA_p})_{t\ge0}$. In addition, 
\begin{itemize}
  \item[$\mathfrak 1$.] If $p \in [2,+\infty)$, then
    $\dom(A_p) = \{ u \in \dom(A_2) \cap L^p(\mathscr D),  A_2 u \in L^p(\mathscr D) \}$
    and $A_p u = A_2u$.
    \smallskip
    
  \item[$\mathfrak 2$.] If $p \in (1,2)$, then $A_p$ is the closure of $A_2$ in $L^p(\mathscr D)$.
\end{itemize}
Finally, the infinitesimal generator of the dual semigroup $S_p^*$ is precisely the operator adjoint $  -A_p^*$ (namely  $S^{*}_p(t) = e^{-tA_p^*}$) and it holds: 
\begin{itemize}
  \item[$\mathfrak 3$.] If $p \in (1,2)$ (equivalently $p'\in(2,+\infty)$), then
$
      \dom(A_{p}^*)
      = \{\,
          v \in \dom(A_2^*) \cap L^{p'}(\mathscr D), 
          A_2^* v \in L^{p'}(\mathscr D)
        \} $
    and $A_{p}^* v = {A_2^*v=-\mathscr L^* v}$. 
    \smallskip
    
  \item[$\mathfrak 4$.] If $p \in (2,+\infty)$ (equivalently $p'\in(1,2)$), then $A_{p}^*$
    is the closure of $A_2^*$ in $L^{p'}(\mathscr D)$.
\end{itemize} 

\end{proposition}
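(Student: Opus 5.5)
The plan is to obtain the $L^p$ extensions from Gaussian (or at least $L^1$–$L^\infty$) bounds on the heat kernel of $S_2$, and then to identify the generators by standard consistency and duality arguments.

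\textbf{Step 1: extrapolation.} The form $\mathbf L$ on $\HsD$ has real, bounded measurable coefficients. Its domain $\HsD$ is invariant under $u\mapsto (\Re u)^+$ and under $u\mapsto (1\wedge|u|)\operatorname{sign}u$, and it is an ideal of $H^1(\mathscr D)$. We check the criteria of \cite{Ouhabaz} (Beurling–Deny type, Theorems 2.6–2.15 there) for the shifted form $\mathbf L_\kappa$: the semigroup $e^{-t(A_2+\kappa)}$ is positive and $L^\infty$-contractive for $\kappa$ large, with $\kappa\ge \|b\|_\infty^2/2$ absorbing the drift. Next we check the same properties for the adjoint form \eqref{eq:Lstar}. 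Here the boundary term $-\int_{\SigN}(b\cdot\mathbf n_e)v\bar u$ and the term $\operatorname{div}b$ are controlled by enlarging $\kappa$, using the trace inequality $\|u\|_{L^2(\partial\mathscr D)}^2\le \epsilon\|\nabla u\|^2+C_\epsilon\|u\|^2$. This yields $L^1$-contractivity of $e^{-t(A_2+\kappa)}$ by duality. Alternatively, we invoke directly the Gaussian upper bound \cite[Theorem 5.4]{OuhabazJOT}, which applies since $\mathscr D$ is smooth, hence an extension domain, and $\HsD$ is the closure of $C^\infty_{\SigD}(\mathscr D)$.

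By Riesz–Thorin, $S_2$ then extends to a consistent family of semigroups $S_p$ on $L^p(\mathscr D)$ for $p\in[1,\infty]$, with $\|S_p(t)\|\le e^{\kappa t}$. Strong continuity on $L^p$, $1<p<\infty$, follows by interpolating strong continuity on $L^2$ with uniform boundedness on $L^1$ and $L^\infty$. The same reasoning applies to $S_2^*$. Since $\langle S_2(t)f,g\rangle=\langle f,S_2^*(t)g\rangle$ for $f,g\in L^2\cap L^\infty$, density gives $S_p(t)'=S^*_{p'}(t)$ on $L^{p'}$.

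\textbf{Step 2: the case $p\ge2$.} Let $B$ be the operator on the right-hand side, i.e. $\dom(B)=\{u\in\dom(A_2)\cap L^p:\ A_2u\in L^p\}$ with $Bu=A_2u$. For $u\in\dom(A_p)\subset L^p\subset L^2$ (the domain is bounded), the quotients $(u-S(t)u)/t$ converge in $L^p$, hence in $L^2$. Thus $u\in\dom(A_2)$ and $A_2u=A_pu\in L^p$, so $A_p\subset B$.

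Conversely, take $\lambda>\kappa$ in both resolvent sets. The resolvents are Laplace transforms of consistent semigroups, so they are consistent. If $u\in\dom(B)$, set $f=(\lambda+A_2)u\in L^p$. Then $u=(\lambda+A_2)^{-1}f=(\lambda+A_p)^{-1}f\in\dom(A_p)$. The description of $\dom(A_p)$ given before Theorem~\ref{th.2} then follows from Proposition~\ref{pr.domA2}. Item $\mathfrak 3$ is proved in exactly the same way, with $A_2^*$ and $p'>2$, using Proposition~\ref{prop:domAstar-L2}.

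\textbf{Step 3: the case $p<2$.} Here $\dom(A_2)\cap\{A_2u\in L^p\}$ contains $(\lambda+A_2)^{-1}(L^2)$, which is dense in $L^p$ and invariant under $S_p$, since $S_p$ maps $L^2$ into $L^2$ consistently. By the core theorem (Engel–Nagel, Prop. II.1.7), $\dom(A_2)$ is therefore a core for $A_p$. Since $A_p=A_2$ on it, $A_p$ is the closure of $A_2$ in $L^p$. The same argument gives item $\mathfrak 4$. Finally, the generator of the adjoint semigroup $S_p(t)'$ is the adjoint generator $-A_p^*$; this is a standard fact on reflexive spaces, where the sun-dual is the full dual. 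Hence $S^*_{p'}(t)=e^{-tA_p^*}$.

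The main obstacle I expect is Step 1, and specifically the $L^\infty$-contractivity (equivalently $L^1$) for the adjoint form. That form carries the Robin boundary term on $\SigN$ together with the $\operatorname{div}b$ term, and the sign of $b\cdot\mathbf n_e$ is arbitrary. This forces a careful choice of $\kappa$ via the trace inequality, or else a reliance on the cited Gaussian bound, whose hypotheses on the mixed space $\HsD$ (the extension and ideal properties) must be verified.
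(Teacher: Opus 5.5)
\textbf{The first route in your Step 1 fails.} The Beurling--Deny argument with a trace inequality cannot work. It also produces a false bound: $\|S_p(t)\|\le e^{\kappa t}$ for all $p\in[1,\infty]$ fails at $p=1$.

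The $L^\infty$-contractivity criterion for $e^{-t(A_2^*+\kappa)}$ is equivalent to $L^1$-contractivity of $e^{-t(A_2+\kappa)}$. It requires $\mathbf L^*_\kappa(\phi,\psi)\ge0$ for $\phi=1\wedge u$ and $\psi=(u-1)^+$, with $0\le u\in\HsD$. Since $\nabla\phi\cdot\nabla\psi=0$ a.e. and $\phi=1$ on $\{\psi>0\}$, the gradient term drops out completely, and the condition reduces to
\[
\kappa\int_{\mathscr D}\psi+\int_{\mathscr D}(\operatorname{div}b)\,\psi-\int_{\SigN}(b\cdot\mathbf n_e)\,\psi\ \ge\ 0 .
\]
No $\|\nabla\psi\|_{L^2}^2$ is left to absorb the boundary term, so your $L^2$ trace inequality is of no use. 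Take $\psi$ equal to $1$ on a piece of $\SigN$ where $b\cdot\mathbf n_e>0$, supported in a boundary layer of width $\delta$. Then the first two terms are $O(\delta)$ while the last is of order one, for every $\kappa$. Since the criterion is an equivalence, $e^{-t(A_2+\kappa)}$ is not $L^1$-contractive for any $\kappa$.

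The mechanism is this: when the drift pushes against the reflecting wall, mass piles up in a layer of width $\sqrt t$. In a half-line model the density started from $1$ is about $1+2\beta\sqrt{t/\pi}$ at the wall, so $\|e^{-tA_2}\|_{\mathcal L(L^1)}\ge 1+c\sqrt t$. Consequently this route only yields, by Riesz--Thorin between $L^2$ and $L^\infty$, bounds for $S_2$ on $L^p$ with $p\ge2$. It gives nothing for $p\in(1,2)$, nor for $S_2^*$ on $L^{p'}$ with $p'>2$.

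\textbf{What saves the proof.} Your fallback is exactly the paper's argument. The Gaussian bound of \cite[Theorem 5.4]{OuhabazJOT} bounds both $\operatorname*{ess\,sup}_x\int_{\mathscr D}|p(t,x,y)|\,dy$ and $\operatorname*{ess\,sup}_y\int_{\mathscr D}|p(t,x,y)|\,dx$ by $Me^{\omega t}$, with some $M\ge1$. By the Schur test, $\|S_2(t)f\|_{L^q}\le Me^{\omega t}\|f\|_{L^q}$ for every $q$, and the same holds for the adjoint kernel $\bar p(t,x,y)$. Exponential boundedness with a constant $M\ge1$, rather than your $e^{\kappa t}$, is all that the extension, strong-continuity, resolvent-consistency and duality steps require. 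So with Step 1 resting on the Gaussian bound alone, your proof goes through.

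A form-based route does exist for each fixed $p\in(1,\infty)$. Testing against $|u|^{p-2}u$ keeps a gradient term $\frac{4(p-1)}{p^2}\int|\nabla|u|^{p/2}|^2$, and your trace inequality applied to $|u|^{p/2}$ then absorbs the boundary term. The price is a shift that blows up as $p\to1$.

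\textbf{The rest.} Steps 2 and 3 and the duality part are correct and follow the paper: resolvents are consistent as Laplace transforms of consistent semigroups, and reflexivity identifies the generator of $S_p(t)^*$ as $-A_p^*$. In Step 3 you use the core theorem where the paper proves $\bar A_2\subset A_p$ and $A_p\subset\bar A_2$ directly. That works, but you should state explicitly that $\dom(A_2)\subset\dom(A_p)$ with $A_p=A_2$ there. This follows because $t^{-1}(S_2(t)u-u)\to-A_2u$ in $L^2(\mathscr D)\hookrightarrow L^p(\mathscr D)$.
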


\begin{proof} The proof is divided into  two steps. 
 
\underline{Step 1}. Extensions of $S_2$ and $S_2^*$ to a   $C_0$-semigroups on $L^p(\mathscr D)$

The form $\mathbf L$ is of the type considered in \cite[(2.1)]{OuhabazJOT}. Here $V=\HsD$, the second-order coefficients are $a_{kj}=\delta_{kj}$, the first-order coefficients are $-b_k$ in front of $\partial_k u\,\bar v$ (where $b=(b_1,\dots,b_d)$), there is no first-order term in front of $u\,\partial_k\bar v$, and there is no zero-order term. All coefficients are bounded since $b\in C^\infty(\bar{\mathscr D})$. One easily checks that all the assumptions  of \cite[Theorem 5.4]{OuhabazJOT} are satified. 
By \cite[Theorem 5.4 $(i)$]{OuhabazJOT} (with, say, $\varepsilon=1$), there exist $C,\delta>0$ and $\omega\ge0$ such that the following holds for every $t>0$. There is a measurable function $p(t,\cdot,\cdot)$ on $\mathscr D\times\mathscr D$ such that, for all $f\in L^2(\mathscr D)$, and a.e. $x,y\in\mathscr D$,
\begin{equation}\label{eq.kernelrep}
S_2(t)f(x)=\int_{\mathscr D}p(t,x,y)\,f(y)\,dy
\end{equation}
and
\begin{equation}\label{eq.gaussian}
|p(t,x,y)|\le C\,t^{-d/2}\,e^{\omega t}\,e^{-\frac{|x-y|^2}{4\delta t}}.
\end{equation}
 \smallskip
\noindent\textit{$L^q$ bounds.} Since $\int_{\mathbb R^d}e^{-|z|^2/(4\delta t)}\,dz=(4\pi\delta t)^{d/2}$, integrating \eqref{eq.gaussian} in $y$ (resp.\ in $x$) gives, with $M:=C(4\pi\delta)^{d/2}$,
\begin{equation}\label{eq.schur}
\operatorname*{ess\,sup}_{x\in\mathscr D}\int_{\mathscr D}|p(t,x,y)|\,dy\ \le\ M e^{\omega t}
\quad\text{and}\quad
\operatorname*{ess\,sup}_{y\in\mathscr D}\int_{\mathscr D}|p(t,x,y)|\,dx\ \le\ M e^{\omega t}.
\end{equation}
We claim that  that for all $t\ge0$ and $q\in(1,+\infty]$,
\begin{equation}\label{eq.Lqbound}
\|S_2(t)f\|_{L^q(\mathscr D)}\le Me^{\omega t}\|f\|_{L^q(\mathscr D)},\qquad f\in L^2(\mathscr D)\cap L^q(\mathscr D).
\end{equation}
We prove this claim. 
Enlarging $C$ if necessary, we may assume $M\ge1$. Then \eqref{eq.Lqbound} is trivial for $t=0$, so let $t>0$.

\emph{Case $q=+\infty$.} By \eqref{eq.kernelrep} and \eqref{eq.schur}, for a.e.\ $x\in\mathscr D$, it holds that:
$$
|S_2(t)f(x)|\le\|f\|_{L^\infty(\mathscr D)}\int_{\mathscr D}|p(t,x,y)|\,dy\le Me^{\omega t}\|f\|_{L^\infty(\mathscr D)}.
$$

\emph{Case $q\in(1,\infty)$.} Let $q'$ be the conjugate exponent of $q$. For a.e.\ $x$, we have:
$$
|S_2(t)f(x)|\le\int_{\mathscr D}|p(t,x,y)|^{\frac1{q'}}\,|p(t,x,y)|^{\frac1q}|f(y)|\,dy\le\Big(\int_{\mathscr D}|p(t,x,y)|\,dy\Big)^{\frac1{q'}}\Big(\int_{\mathscr D}|p(t,x,y)|\,|f(y)|^q\,dy\Big)^{\frac1q}.
$$
Hence, we have: 
$$
\|S_2(t)f\|_{L^q(\mathscr D)}^q\le (Me^{\omega t})^{\frac q{q'}}\int_{\mathscr D}|f(y)|^q\Big(\int_{\mathscr D}|p(t,x,y)|\,dx\Big)dy\le (Me^{\omega t})^{q}\|f\|^q_{L^q(\mathscr D)} .
$$
This proves \eqref{eq.Lqbound}.

\smallskip
\noindent\textit{The adjoint semigroup.} Let $f,g\in L^2(\mathscr D)$. By \eqref{eq.schur} and since $f,g\in L^1(\mathscr D)$, we have that  $\int_{\mathscr D}\int_{\mathscr D}|p(t,x,y)||f(y)||g(x)|\,dy\,dx<+\infty$. Fubini's theorem then gives $(S_2(t)f,g)_{L^2(\mathscr D)}=(f,h)_{L^2(\mathscr D)}$, where
\begin{equation}\label{eq.pbar}
    h(y):=\int_{\mathscr D}\bar p(t,x,y)\,g(x)\,dx .
\end{equation}
Hence $S_2^*(t)g=S_2(t)^*g=h$. In other words, $S_2^*(t)$ has the kernel $p^*(t,y,x):=\bar p(t,x,y)$, which satisfies \eqref{eq.schur} with the roles of $x$ and $y$ exchanged. The proof of \eqref{eq.Lqbound} therefore applies verbatim, and \eqref{eq.Lqbound} also holds for $S_2^*(t)$.

\smallskip
\noindent\textit{Extension to $C_0$-semigroups.} Let $p\in(1,+\infty)$ and $T\in\{S_2,S_2^*\}$. Since $L^2(\mathscr D)\cap L^p(\mathscr D)$ is dense in $L^p(\mathscr D)$, \eqref{eq.Lqbound} shows that each $T(t)$ extends uniquely to a bounded operator $T_p(t)$ on $L^p(\mathscr D)$ with $\|T_p(t)\|_{\mathcal L(L^p(\mathscr D))}\le Me^{\omega t}$. By \eqref{eq.Lqbound}, $T(s)$ maps $L^2(\mathscr D)\cap L^p(\mathscr D)$ into itself. By density, $(T_p(t))_{t\ge0}$ is therefore a semigroup, consistent with $T$ on $L^2(\mathscr D)\cap L^p(\mathscr D)$. It is also strongly continuous. Since it is bounded on $[0,1]$, it suffices to check that $\|T_p(t)f-f\|_{L^p(\mathscr D)}\to0$ as $t\to0^+$ for $f$ in a dense subset of $L^p(\mathscr D)$:
\begin{itemize}
\item[$i)$] if $p\le2$ and $f\in L^2(\mathscr D)$, then $\|T_p(t)f-f\|_{L^p(\mathscr D)}\le|\mathscr D|^{\frac1p-\frac12}\|T(t)f-f\|_{L^2(\mathscr D)}\to0$;
\item[$ii)$] if $p>2$, let $f\in L^\infty(\mathscr D)$ and $t\in(0,1]$. By \eqref{eq.Lqbound} with $q=+\infty$, $g_t:=T(t)f-f$ satisfies $\|g_t\|_{L^\infty(\mathscr D)}\le(Me^{\omega}+1)\|f\|_{L^\infty(\mathscr D)}$. Hence $\|g_t\|_{L^p(\mathscr D)}^p\le\|g_t\|_{L^\infty(\mathscr D)}^{p-2}\|g_t\|_{L^2(\mathscr D)}^2\to0$.
\end{itemize}
We then set $S_p:=(S_2)_p$ and $S_p^*:=(S_2^*)_{p'}$. Use the duality $\langle f,g\rangle:=\int_{\mathscr D}f\,\bar g$ between $L^p(\mathscr D)$ and $L^{p'}(\mathscr D)$. For $f\in L^2(\mathscr D)\cap L^p(\mathscr D)$ and $g\in L^2(\mathscr D)\cap L^{p'}(\mathscr D)$,
$$
\langle S_p(t)f,g\rangle=(S_2(t)f,g)_{L^2(\mathscr D)}=(f,S_2^*(t)g)_{L^2(\mathscr D)}=\langle f,S^*_p(t)g\rangle .
$$
By density, $S_p^*(t)$ is the Banach-space adjoint of $S_p(t)$. Since $L^p(\mathscr D)$ is reflexive for $p\in(1,+\infty)$, the generator of the adjoint semigroup $(S_p(t)^*)_{t\ge0}$ is the adjoint $-A_p^*$ of the generator $-A_p$ of $S_p$ \cite[Corollary 1.10.6]{Pazy}. That is, $S^{*}_p(t) = S_p(t)^{*}$, $t\ge 0$.

\underline{Step 2}.  The operators  $A_p$ and their domains

The domain  $\mathscr D$ bounded gives the continuous embedding $L^q(\mathscr D)\subset L^r(\mathscr D)$ for $q\ge r$, and for all $p\in(1,+\infty)$, $S_p=(e^{-tA_p})_{t\ge0}$ is consistent with $S_2$ on $L^2(\mathscr D)\cap L^p(\mathscr D)$, i.e.
\begin{equation}\label{eq.consist}
S_p(t)f=S_2(t)f,\qquad f\in L^2(\mathscr D)\cap L^p(\mathscr D),\ t\ge0.
\end{equation}
For  $p\in(1,+\infty)$, $f\in L^2(\mathscr D)\cap L^p(\mathscr D)$, and $\kappa$ with $\operatorname{Re}\kappa$ large enough so that $\kappa\in\rho(-A_2)\cap\rho(-A_p)$ (where $\rho(T)$ denotes the resolvent set of $T$),
\begin{equation}\label{eq.reslemma}
(\kappa+A_2)^{-1}f=(\kappa+A_p)^{-1}f.
\end{equation}
Indeed  \eqref{eq.reslemma} is a consequence of the fact that 
\begin{equation*}
(\kappa+A_2)^{-1}f=\int_0^\infty e^{-\kappa t}S_2(t)f\,dt\ \text{ in }L^2(\mathscr D),\qquad
(\kappa+A_p)^{-1}f=\int_0^\infty e^{-\kappa t}S_p(t)f\,dt\ \text{ in }L^p(\mathscr D),
\end{equation*}
together with   \eqref{eq.consist}.

 \textit{Proof of $\mathfrak 1$ ($p\in(2,+\infty)$, the case $p=2$ is trivial).} 
 
 Here $L^p(\mathscr D)\subset L^2(\mathscr D)$, so $L^2(\mathscr D)\cap L^p(\mathscr D)=L^p(\mathscr D)$. Define the operator $B_p$ by 
\begin{equation*}
\dom(B_p):=\big\{u\in\dom(A_2)\cap L^p(\mathscr D), \, A_2u\in L^p(\mathscr D)\big\}, \quad B_pu:=A_2u,
\end{equation*}
 Fix $\kappa\in \rho(-A_2)\cap\rho(-A_p)$.

\emph{(a)} Let $f\in L^p(\mathscr D)\subset L^2(\mathscr D)$ and $u:=(\kappa+A_2)^{-1}f\in\dom(A_2)$. By \eqref{eq.reslemma} (with $q=p$), $u=(\kappa+A_p)^{-1}f$, so $u\in\dom(A_p)\subset L^p(\mathscr D)$, and  $A_2u=f-\kappa u\in L^p(\mathscr D)$. Thus $u\in\dom(B_p)$ and
$
(\kappa+B_p)u=f$. Consequently, 
 $(\kappa+B_p)\circ(\kappa+A_2)^{-1} |_{L^p}=\operatorname{id}_{L^p}$. In particular $\kappa+B_p$ is surjective onto $L^p(\mathscr D)$.

\emph{(b)} If $u\in\dom(B_p)$ and $(\kappa+B_p)u=0$, then $u\in\dom(A_2)$ and $(\kappa+A_2)u=0$ in $L^2(\mathscr D)$. Since $\kappa\in\rho(-A_2)$, $u=0$. Hence $\kappa+B_p$ is injective.

By \emph{(a)} and \emph{(b)}, $\kappa+B_p:\dom(B_p)\to L^p(\mathscr D)$ is bijective with inverse $(\kappa+A_2)^{-1}|_{L^p}$, which equals $(\kappa+A_p)^{-1}$ (since for any $f \in  L^p(\mathscr D)$, $(\kappa + A_2)^{-1}f = (\kappa + A_p)^{-1}f$). Therefore, since $(\kappa + B_p)^{-1} = (\kappa + A_p)^{-1}$, 
\begin{equation*}
\dom(B_p)= \left\{(\kappa + B_p)^{-1} f, \  f \in L^p(\mathscr D)\right\} = \left\{(\kappa + A_p)^{-1} f, \  f \in L^p(\mathscr D)\right\} =  \operatorname{Ran}\big((\kappa+A_p)^{-1}\big)=\dom(A_p), 
\end{equation*}
and $B_p=A_p$ 
on this common domain. This proves the desired result. 

\medskip
\noindent\textit{Proof of $\mathfrak 2$ ($p\in(1,2)$).} 

Here $L^2(\mathscr D)\subset L^p(\mathscr D)$, densely, so $A_2$,  with domain $\dom(A_2)\subset L^2(\mathscr D)\subset L^p(\mathscr D)$ and values $A_2u\in L^2(\mathscr D)\subset L^p(\mathscr D)$,  is a densely defined operator on $L^p(\mathscr D)$, $\dom(A_2)$ being dense in $L^2(\mathscr D)$ and hence in $L^p(\mathscr D)$. Fix $\kappa>0$ in $\rho(-A_2)\cap\rho(-A_p)$.

\emph{(a)} $\bar A_2\subset A_p$ (where the closure  is taken  in $L^p(\mathscr D)$). Let $u\in\dom(A_2)$ and $f:=(\kappa+A_2)u\in L^2(\mathscr D)=L^2(\mathscr D)\cap L^p(\mathscr D)$. By \eqref{eq.reslemma}, $u=(\kappa+A_2)^{-1}f=(\kappa+A_p)^{-1}f$, so $u\in\dom(A_p)$ and $A_pu=f-\kappa u=A_2u$. Thus $A_2\subset A_p$. Since $A_p$ is closed in $L^p(\mathscr D)$, we deduce that $A_2$ has a closed extension in $L^p(\mathscr D)$ (namely $A_p$), $A_2$ is therefore closable in $L^p(\mathscr D)$, and $\bar A_2\subset A_p$. 

\emph{(b)} $A_p\subset\bar A_2$. Let $u\in\dom(A_p)$ and $g:=(\kappa+A_p)u\in L^p(\mathscr D)$. Since $L^2(\mathscr D)$ is dense in $L^p(\mathscr D)$, pick $g_n\in L^2(\mathscr D)$ with $g_n\to g$ in $L^p(\mathscr D)$. As $L^2(\mathscr D)\subset L^p(\mathscr D)$, $g_n\in L^2(\mathscr D)\cap L^p(\mathscr D)$, so set $u_n:=(\kappa+A_2)^{-1}g_n\in\dom(A_2)$. By \eqref{eq.reslemma}, $u_n=(\kappa+A_p)^{-1}g_n$ as well. Since $(\kappa+A_p)^{-1}$ is bounded on $L^p(\mathscr D)$, we have in $L^p(\mathscr D)$ and as $n\to +\infty$:
\begin{equation*}
u_n=(\kappa+A_p)^{-1}g_n\to (\kappa+A_p)^{-1}g=u \ \text{ and } \ 
A_2u_n=g_n-\kappa u_n\to  g-\kappa u=A_pu. 
\end{equation*}
Thus $u_n\in\dom(A_2)$, $u_n\to u$ and $A_2u_n\to A_pu$ in $L^p(\mathscr D)$ imply  $u\in\dom(\bar A_2)$ with $\bar A_2u=A_pu$. Hence $A_p\subset\bar A_2$.

Combining \emph{(a)} and \emph{(b)} yields $A_p=\bar A_2$.

The proof of $\mathfrak 3$ and $\mathfrak 4$ follows from the arguments used in Step 2 applied to $A_p^*$ on $L^{p'}(\mathscr D)$). 

\end{proof}

\subsection{Existence of sufficiently smooth non negative eigenfunctions for $A_p$ and $A_p^*$}
 
\subsubsection{Main result}
\begin{theorem}\label{th.3}
Let the assumptions of Theorem \ref{th.2} hold. 
Then, there exist a real number $\lambda  \in \mathbb{R}$ and an eigenfunction
$$
\mathbf u \in \Big ( \bigcap_{p \in (1,+\infty)} \dom(A_p)  \Big) \cap  \Big( \bigcap_{\varepsilon>0} C^{0, 1/2-\varepsilon}(\bar{\mathscr D})  \Big) \cap C^\infty(\bar{\mathscr D}\setminus \partial \Sigma),
$$
satisfying $\mathbf u > 0$ in $\mathscr D \cup \SigN$ and $\mathbf u|_{\SigD} = 0$, such that $
  A_p \mathbf u = \lambda  \mathbf u$, for every $p \in (1,+\infty)$. 
Moreover, $\lambda$ is an eigenvalue of the dual operator $A_{p}^*$ for all $p \in (1,+\infty)$, with an associated dual eigenfunction
$$
  \mathbf v \in  \Big( \bigcap_{p \in (1,+\infty)} \dom(A_{p}^*)  \Big) \cap  \Big( \bigcap_{\varepsilon>0} C^{0, 1/2-\varepsilon}(\bar{\mathscr D})  \Big)\cap C^\infty(\bar{\mathscr D}\setminus \partial \Sigma),
$$
satisfying $\mathbf v > 0$ in $\mathscr D$ and $\mathbf v|_{\SigD} = 0$, such that $
  A_{p}^* \mathbf v = \lambda \mathbf v $ for every $p \in (1,+\infty)$. For every $p \in (1,+\infty)$, $A_p$ and $A_p^*$ have compact resolvent, and are quasi-sectorial (see Proposition \ref{pr.sectorial}). Finally, the eigenvalue
  $\lambda$ is algebraically simple  for $A_p$ and $A_p^*$, see  Proposition \ref{pr.simple}. 
\end{theorem}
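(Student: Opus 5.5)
The plan follows the strategy announced in Section~\ref{sec.St2}: compactness, then positivity and irreducibility, then Krein--Rutman, then regularity, then transfer of the eigenpair to every $L^p$.

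\textbf{Step 1: compact resolvent.} For $\kappa$ large, the resolvent $(\kappa+A_2)^{-1}$ maps $L^2(\mathscr D)$ into $\HsD$, by the coercivity estimate \eqref{eq:coercive}. Since $\mathscr D$ is bounded and smooth, Rellich--Kondrachov makes it compact on $L^2$, so $A_2$ and $A_2^*$ have compact resolvent.

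To reach $L^p$ with $p>2$, I would use Savaré's estimate \cite{Savare1997}: for $f\in L^2$ the solution of the mixed problem lies in $H^{3/2-\varepsilon}(\mathscr D)$, and this embeds compactly into $L^p$ for $p$ slightly above $2$. For $p$ near $2$ one can also interpolate instead, using the Krasnosel'skii/Calderón-space compactness result between a compact $L^2$ operator and an operator bounded on $L^q$; boundedness on $L^q$ comes from \eqref{eq.Lqbound} via the Laplace transform. Interpolating over the whole scale $(1,+\infty)$ then gives compactness of $(\kappa+A_p)^{-1}$ for every $p$. Duality (Schauder) gives the same for $A_p^*$.

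Quasi-sectoriality on $L^p$ would come from the Gaussian bounds and holomorphy of $S_2$ (\cite{OuhabazJOT}), extrapolated to $L^p$. Its precise statement is Proposition~\ref{pr.sectorial}.

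\textbf{Step 2: positivity, irreducibility, Krein--Rutman.} Positivity of $S_2$ follows from the first Beurling--Deny (Ouhabaz) criterion. One checks that $u\in\HsD$ implies $u^+\in\HsD$ and that $\Re\mathbf L(u^+,u^-)\le 0$; both are immediate since $\nabla u^+\cdot\nabla u^-=0$ and $(b\cdot\nabla u^+)u^-=0$.

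Irreducibility follows from Ouhabaz's ideal criterion: the invariant closed ideals are $L^2(\omega)$, and invariance forces $\mathbf 1_\omega\HsD\subset\HsD$. Since $\mathscr D$ is connected, $\omega$ must be null or conull. Positive irreducible semigroups with compact resolvent have strictly positive spectral bound. The Krein--Rutman theorem (or de Pagter's theorem) then yields a real eigenvalue $\lambda=-s(-A_p)$, algebraically simple, with an eigenfunction $\mathbf u$ that is a quasi-interior point, hence $>0$ a.e. The same holds for $\mathbf v$ and $A_p^*$, with the same $\lambda$.

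Consistency across $p$ follows from \eqref{eq.reslemma}: the spectrum of $A_p$ does not depend on $p$, and the eigenfunctions agree. Algebraic simplicity, meaning the Riesz projection has rank one, is proved as in Proposition~\ref{pr.simple}, using the positivity of $\langle \mathbf u,\mathbf v\rangle$.

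\textbf{Step 3: regularity.} This is the main obstacle.
\begin{itemize}
\item \emph{Interior and away from the interface.} Elliptic bootstrap, with Dirichlet data near $\SigD^\circ$, Neumann data for $\mathbf u$ and Robin data for $\mathbf v$ near $\SigN$, gives $C^\infty(\bar{\mathscr D}\setminus\partial\Sigma)$.
\item \emph{At the interface $\partial\Sigma$.} I would invoke Shamir's result \cite{Shamir1968}, after localizing and flattening with $C^3$ charts. It gives $C^{0,1/2-\varepsilon}$ regularity for $H^1$ solutions of mixed problems whose right-hand side lies in $L^q$, $q$ large.
\item \emph{Bootstrap for the right-hand side.} The data is $\lambda\mathbf u$ or $\lambda\mathbf v$. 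First use $L^\infty$ bounds from the Gaussian estimates: $\mathbf u=e^{\lambda t}S_p(t)\mathbf u\in L^\infty$. Shamir's result then applies.
\item \emph{Robin term.} For $\mathbf v$, the Robin term is lower order and is absorbed into the data.
\end{itemize}

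\textbf{Step 4: strict positivity and membership in every domain.} Since $\mathbf u,\mathbf v$ are continuous, nonnegative and nontrivial, the strong maximum principle gives strict positivity in $\mathscr D$. Hopf's lemma on $\SigN$ gives positivity of $\mathbf u$ there as well: a zero on $\SigN$ would contradict the Neumann condition.

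Finally, $\mathbf u,\mathbf v\in L^\infty$ implies $\mathbf u\in\dom(A_p)$ for all $p$ by Proposition~\ref{pr.extensionS}~$\mathfrak 1$. For $p<2$, inclusion in the domain follows from $A_2\subset A_p$.
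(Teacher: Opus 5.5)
Your proposal is correct and follows essentially the paper's route: Ouhabaz's positivity and irreducibility criteria, Krein--Rutman/de Pagter applied to the resolvent, Shamir's theorem at $\partial\Sigma$, the maximum principle and Hopf's lemma, $L^\infty$ membership to land in every $\dom(A_p)$, interpolation of compactness, Gaussian bounds for sectoriality, and simplicity as in Proposition~\ref{pr.simple}; running Krein--Rutman in $L^2$, where the $H^1$ bound plus Rellich--Kondrachov already gives compactness, is a harmless simplification that makes Savar\'e's estimate and Lemma~\ref{lem:v1-H1} unnecessary. One slip to fix: the spectral bound $s(-A_p)$ is not strictly positive (in fact $s(-A_p)=-\lambda=-\eta<0$), and what you actually need is only $s(-A_p)>-\infty$, which follows from $\mathsf r_{sp}(R^2_\kappa)>0$ (de Pagter).
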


\noindent
\textbf{Note}. 
We will show   in Proposition \ref{pr.fin} that  $\lambda=\eta$  (where $\eta$ is given by Theorem \ref{th.1}). In particular $\lambda>0$.  
 We will moreover show in Proposition \ref{pr.dominance} that $\lambda$ dominates the spectrum of $A_{p}$ for every $p\in(1,+\infty)$, i.e.\ $\mathrm{Re}(\mu)\ge\lambda$ for every $\mu\in\sigma(A_{p})$.

\begin{theorem}\label{th.Brown}
Let the assumptions of Theorem \ref{th.2} hold and consider $\mathbf v$ as in Theorem \ref{th.3}. 
Then, there exists $q>1$  such that  
$$
\partial_{\mathbf n_e}  \mathbf v \in  L^q(\partial\mathscr D). 
$$
\end{theorem}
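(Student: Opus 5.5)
The plan is to reduce the statement to the $L^q$ estimates of \cite{OtBr13} for the nontangential maximal function of the gradient of solutions to mixed problems, after localizing and freezing the lower-order terms. By Theorem~\ref{th.3}, $\mathbf v\in H^1_{0,\SigD}(\mathscr D)\cap C^{0,1/2-\varepsilon}(\bar{\mathscr D})\cap C^\infty(\bar{\mathscr D}\setminus\partial\Sigma)$ satisfies
$$-\Delta\mathbf v=-b\cdot\nabla\mathbf v-(\operatorname{div}b)\mathbf v-\lambda\mathbf v=:F\ \text{in }\mathscr D,\qquad \mathbf v=0\ \text{on }\SigD,\qquad \partial_{\mathbf n_e}\mathbf v=(b\cdot\mathbf n_e)\mathbf v\ \text{on }\SigN .$$
Away from $\partial\Sigma$, $\mathbf v$ is smooth up to the boundary, so $\partial_{\mathbf n_e}\mathbf v$ is bounded on $\partial\mathscr D\setminus\mathcal U$ for any neighborhood $\mathcal U$ of $\partial\Sigma$. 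Hence it suffices to prove integrability of $\partial_{\mathbf n_e}\mathbf v$ in a neighborhood of the interface, i.e.\ of $(\nabla\mathbf v)^*$ in $L^q(\partial\mathscr D\cap\mathcal U)$ for some $q>1$.

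\emph{Step 1 (splitting off the volume source).} Let $w$ be the Newtonian potential of $F\mathbf 1_{\mathscr D}$, or a solution of a Dirichlet problem with a smooth enlargement. Since $\mathbf v\in W^{1,r}$ for some $r>2$ (Gröger/Savaré), we have $F\in L^r(\mathscr D)$. We would like to take $r>d$, which would give $w\in W^{2,r}$ and $\nabla w$ continuous up to the boundary. If $r$ is too small for this, we bootstrap: $F$ only contains $\nabla\mathbf v$, which lies in $H^{1/2-\varepsilon}$ by Savaré, and interior/weighted estimates are only needed near the boundary. We would still aim to show $\nabla w\in L^\infty$, or at least that $(\nabla w)^*\in L^{q_0}(\partial\mathscr D)$ for some $q_0>1$, using $w\in W^{2,r}$ with $r>1$ and trace theorems.

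\emph{Step 2 (harmonic mixed problem).} Set $h:=\mathbf v-w$. Then $\Delta h=0$ in $\mathscr D$, $h=-w$ on $\SigD$, and $\partial_{\mathbf n_e}h=(b\cdot\mathbf n_e)\mathbf v-\partial_{\mathbf n_e}w$ on $\SigN$. The Dirichlet datum lies in $W^{1,q_0}(\SigD)$ and the Neumann datum lies in $L^{q_0}(\SigN)$, the latter because $\mathbf v$ is continuous. Since $\mathscr D$ is smooth and $\Sigma$ is $C^3$, the pair $(\mathscr D,\SigD)$ is a ``creased''/admissible configuration in the sense of \cite{OtBr13}, in particular a Lipschitz domain with a Lipschitz interface. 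Their theorem then provides $p_0>1$ such that, for $1<q<p_0$, the mixed problem with data in $W^{1,q}(\SigD)\times L^q(\SigN)$ has a unique solution with $(\nabla h)^*\in L^q(\partial\mathscr D)$. We choose $q\le\min(q_0,p_0)$.

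\emph{Step 3 (identification).} We must check that $h$ coincides with the Brown--Ott solution. Both are $H^1$ variational solutions of the same mixed problem, and uniqueness holds in the energy class, using that the trace of $\nabla h$ is the nontangential limit. Consequently $\partial_{\mathbf n_e}\mathbf v=\partial_{\mathbf n_e}h+\partial_{\mathbf n_e}w\in L^q(\partial\mathscr D)$, and this agrees with the $H^{-1/2}$ normal derivative defined by \eqref{eq:green}.

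The main obstacle is Step 1/3: the Brown--Ott theory is stated for the Laplacian on Lipschitz domains with boundary data only, while our operator has a drift and a Robin-type Neumann condition. If controlling $\nabla w$ proves awkward, an alternative is a perturbation argument. Since $\mathbf v$ is Hölder continuous and, by Savaré, in $W^{1,r}$, the lower-order terms are compact perturbations, so the $L^q$ mixed-problem estimate of \cite{OtBr13} should transfer to the operator with drift via a Fredholm/localization argument near $\partial\Sigma$, flattening the boundary with the smooth structure.
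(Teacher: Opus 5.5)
Your proposal is correct and follows essentially the paper's route: you subtract a solution $w$ of $\Delta w=\Delta\mathbf v$, apply \cite[Theorem 1.2]{OtBr13} to the harmonic remainder with mixed data, and add back the boundary trace of $\nabla w$. The paper streamlines your Step~1 in two ways. It takes $w\in H^2(\mathscr D)\cap H^1_0(\mathscr D)$ using only $F\in L^2(\mathscr D)$, so no Gr\"oger/Savar\'e $W^{1,r}$ bound or bootstrap is needed, since $\nabla w|_{\partial\mathscr D}\in L^2(\partial\mathscr D)$ already. This choice also gives $\zeta=\mathbf v-w$ zero Dirichlet data on $\SigD$. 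One correction: the configuration is not ``creased'', because the two boundary parts meet tangentially; what you need is only the Lipschitz-interface hypothesis of \cite{OtBr13}, which does hold here.
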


We now turn to the proofs  of Theorems \ref{th.3} and  \ref{th.Brown}.

\subsubsection{Resolvent operators and  Krein-Rutman theorem}

A linear subspace $J \subseteq X$ of a Banach lattice $X$ is called an ideal if it satisfies the following property:
$$\text{If } f \in J \text{ and } g \in X \text{ satisfy } \vert g\vert  \le \vert f\vert , \text{ then } g \in J,$$
where $|f|=f^++f^-$ recalling $f^+ = \max(f,0)$ and denoting $f^- = \max(-f,0)$.
Let  $T \in \mathscr{L}(X)$ be a positive   linear operator. A closed ideal $J \subseteq X$ is $T$-invariant if, by definition, $T(J) \subseteq J$. The operator $T$ is called  irreducible if its only closed invariant ideals are the trivial ones, $\{0\}$ and $X$ (see  \cite{dePagter}).
The starting point of the proof of Theorem~\ref{th.3} is  the following form of the Krein-Rutman theorem
(see \cite[Theorem~19.2]{Deimling} and \cite[Theorem~3]{dePagter}).

\begin{theorem}\label{KR}
Let $X$ a Banach Lattice (of dimension greater than one), $K \subset X$ a
total cone, and $T \in \mathcal{L}(X)$ a positive irreducible compact
operator, then the spectral radius satisfies $ \mathsf r_{sp}(T)>0$ and there exist
$u \in K \setminus \{0\}$, $u^* \in K^* \setminus \{0\}$ such that
$$
Tu = \mathsf r_{sp}(T)u
\qquad\text{and}\qquad
T^*u^* = \mathsf r_{sp}(T)u^*,
$$
where $K^* \subset X^*$ denotes the dual cone of $K$.
\end{theorem}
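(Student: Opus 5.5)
The plan is to split the statement into two independent parts. The first is the strict positivity $\mathsf r_{sp}(T)>0$, which is the only place where irreducibility enters. The second is the existence of positive eigenvectors for $T$ and $T^*$ at the eigenvalue $\mathsf r_{sp}(T)$; this part only uses positivity and compactness once $\mathsf r_{sp}(T)>0$ is known.

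\textbf{Step 1 (spectral radius).} I would argue by contradiction, following de Pagter: assume $T$ is positive, compact and quasinilpotent on a Banach lattice of dimension $>1$, and build a nontrivial closed $T$-invariant ideal. The route I would try first is the following.
\begin{itemize}
\item Pick $0<x\in X$ and consider the closed ideal generated by the orbit $\{T^n x\}_{n\ge0}$. This ideal is $T$-invariant, because $T$ is positive and $|Ty|\le T|y|$.
\item If this ideal is proper and nonzero, we are done.
\item Otherwise every such orbit generates $X$ as a closed ideal. In that case I would use compactness together with quasinilpotency, via a Lomonosov/Ando–Krieger type argument based on the compact positive operator $\sum_n \varepsilon^n T^n$ and order intervals $[0,x]$, to produce a proper closed invariant ideal. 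This contradicts irreducibility.
\end{itemize}
I expect this to be the main obstacle: it is genuinely the content of de Pagter's theorem, and in the paper I would simply invoke \cite[Theorem~3]{dePagter}.

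\textbf{Step 2 (eigenvector of $T$).} Set $r:=\mathsf r_{sp}(T)>0$. First I would show $r\in\sigma(T)$, using the Pringsheim-type property of positive operators. For $\lambda>r$, the resolvent is given by the positive Neumann series $(\lambda-T)^{-1}=\sum_n\lambda^{-n-1}T^n$. Since $|R y|\le R|y|$ for a positive operator $R$, its norm is attained on the cone, and $\|(\lambda-T)^{-1}\|\to\infty$ as $\lambda\downarrow r$. Hence there exist $\lambda_n\downarrow r$ and $y_n\in K$ with $\|y_n\|\le 1$ such that $\|(\lambda_n-T)^{-1}y_n\|\to\infty$. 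Define
\[
x_n:=\frac{(\lambda_n-T)^{-1}y_n}{\|(\lambda_n-T)^{-1}y_n\|}\in K .
\]
These satisfy $\lambda_n x_n - Tx_n\to0$. By compactness of $T$, a subsequence of $Tx_n$ converges, so $x_n=\lambda_n^{-1}(Tx_n+o(1))$ converges to some $u$. The limit satisfies $u\in K$, $\|u\|=1$ and $Tu=ru$.

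\textbf{Step 3 (dual eigenvector).} $T^*$ is a positive compact operator on the dual Banach lattice $X^*$, with $\mathsf r_{sp}(T^*)=\mathsf r_{sp}(T)=r>0$. Its positive cone is the dual cone $K^*$, and this cone is proper because $K$ is total. Repeating Step 2 verbatim for $T^*$, with the norm of the resolvent again attained on positive functionals, yields $u^*\in K^*\setminus\{0\}$ with $T^*u^*=ru^*$.
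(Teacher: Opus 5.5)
Your proposal is correct, and its skeleton matches the paper's. The paper does not prove Theorem~\ref{KR}; it combines two citations. The first is \cite[Theorem~3]{dePagter}, which gives $\mathsf r_{sp}(T)>0$ for a positive irreducible compact operator on a Banach lattice of dimension $>1$. This is your Step~1, which you also end up citing. The second is the Krein--Rutman theorem \cite[Theorem~19.2]{Deimling}, which produces the eigenvectors in $K\setminus\{0\}$ and $K^*\setminus\{0\}$ for any compact positive $T$ with $\mathsf r_{sp}(T)>0$ and any total cone $K$ in a real Banach space.

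The difference is that you reprove the second ingredient, and your proof uses the lattice structure essentially. You implicitly take $K=X_+$, which is the only reading compatible with lattice positivity and irreducibility, and is how the paper applies the result ($K=L^{p_0}_+(\mathscr D)$). With $K=X_+$, the norm of a positive operator is computed on $X_+$, and $\mathsf r_{sp}(T)\in\sigma(T)$. The eigenvector then comes out of a single compactness argument along $\lambda_n\downarrow\mathsf r_{sp}(T)$, and the dual eigenvector is the same argument in the Banach lattice $X^*$.

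Your route buys a short, self-contained proof. Deimling's version buys generality: for a merely total cone $K-K$ is only dense, so the resolvent norm cannot be read off $K$ and a more delicate argument is needed. That generality is not required here.

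One expository point. The blow-up $\|(\lambda-T)^{-1}\|\to\infty$ as $\lambda\downarrow\mathsf r_{sp}(T)$ follows from $\mathsf r_{sp}(T)\in\sigma(T)$. That Pringsheim property is proved via the domination $|(\mu-T)^{-1}g|\le(|\mu|-T)^{-1}|g|$ for complex $|\mu|>\mathsf r_{sp}(T)$ on the complexification, the same mechanism as Lemma~\ref{lem:domination}. It does not follow merely from positivity of $(\lambda-T)^{-1}$ for real $\lambda$. Your sentence juxtaposing ``$|Ry|\le R|y|$'' with the blow-up reads as if the latter sufficed, so state the domination explicitly or cite the Pringsheim property as a known result.
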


Note that we do not assume the cone to be solid, therefore, this  standard Krein-Rutman theorem does not yield uniqueness of the eigenvectors.

The following proposition is  well-known, we recall it for sake the of completeness. 

\begin{proposition}\label{pr.irr}
Let $X:=L^p(\mathscr D)$ for some $p\in(1,+\infty)$, and let $T\in\mathcal L(X)$
be a positive operator such that
\begin{equation}
  0\le f\in X,\ f\not\equiv0 \implies Tf>0 \text{ a.e. in }\mathscr D. \tag{\ref{eq.irr}}
\end{equation}
Then $T$ is irreducible: its only closed $T$-invariant ideals are $\{0\}$ and
$X$.
\end{proposition}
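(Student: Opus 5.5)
Let $J\subseteq X$ be a closed ideal with $T(J)\subseteq J$ and $J\neq\{0\}$. I will show $J=X$. The plan is to produce a function $g\in J$ with $g>0$ a.e. in $\mathscr D$, and then use that a closed ideal containing a strictly positive function is all of $L^p(\mathscr D)$.

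\emph{Step 1: a strictly positive element.} Pick $f\in J$ with $f\not\equiv 0$. Since $J$ is an ideal and $\big||f|\big|=|f|$, we get $|f|\in J$. Set $h:=|f|$, so $0\le h$ and $h\not\equiv0$. By invariance, $g:=Th\in J$, and by \eqref{eq.irr} we have $g>0$ a.e. in $\mathscr D$. Because $T$ is positive, $g$ is real-valued and nonnegative, so $g=|g|$.

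\emph{Step 2: every function dominated by a multiple of $g$ lies in $J$.} For $n\in\mathbb N$ and $u\in X$, set $u_n:=u\,\mathbf 1_{\{|u|\le n g\}}$. Then $|u_n|\le n g=|ng|$ with $ng\in J$, so the ideal property gives $u_n\in J$. On the set where $u\neq 0$ we have $g>0$ a.e., so $\{|u|\le ng\}$ increases to a set of full measure as $n\to\infty$. Hence $|u-u_n|=|u|\mathbf 1_{\{|u|>ng\}}\to0$ a.e., and this difference is dominated by $|u|\in L^p$. Dominated convergence (using $p<\infty$) gives $u_n\to u$ in $L^p(\mathscr D)$. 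Since $J$ is closed, $u\in J$, so $J=X$.

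The only point needing care is Step 2, namely the truncation argument: strict positivity a.e. of $g$ is exactly what makes the truncation sets exhaust $\mathscr D$, and $p<+\infty$ is needed for dominated convergence in $L^p$. If complex-valued spaces are intended, the same argument applies with $|u|$ the modulus, since the ideal property is stated with moduli.
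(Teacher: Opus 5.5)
Your proof is correct and follows essentially the same route as the paper: produce the a.e.\ strictly positive element $T|f|\in J$, then show that a closed ideal containing such an element is all of $L^p(\mathscr D)$ by truncation and dominated convergence. The only cosmetic difference is that you truncate an arbitrary $u$ directly as $u\mathbf 1_{\{|u|\le ng\}}$. The paper instead uses $\min(\varphi,nh)$ for $\varphi\ge0$ and then splits $\varphi=\varphi^+-\varphi^-$. Your version also covers complex-valued functions without change.
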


\begin{proof}
Let $J\subseteq X$ be a closed ideal with $T(J)\subseteq J$, and suppose
$J\ne\{0\}$. We show $J=X$.

\underline{Step 1}. $J$ contains a strictly positive element.

Since $J\ne\{0\}$, pick $g\in J$, $g\not\equiv0$. Because $J$ is an ideal and
$|g|\le|g|$, we have $|g|\in J$, and $J$ being $T$-invariant yields $h=T|g|\in J$. By \eqref{eq.irr} applied to $|g|$, we have $h>0$ a.e. in $\mathscr D$.

\underline{Step 2}. An ideal containing an a.e.\ strictly positive element is all
of $X$.

 Let $0\le\varphi\in X$ be arbitrary. For $n\in\mathbb N$ set
$$
  \varphi_n := \varphi \wedge (nh) = \min(\varphi, nh).
$$
Since $h\in J$ and $J$ is a linear subspace, $nh\in J$. Moreover
$0\le\varphi_n\le nh$, i.e.\ $|\varphi_n|\le|nh|$, so by the ideal property, we get $\varphi_n\in J$ for every $n$. 
Since $h>0$ a.e.\ in $\mathscr D$, for a.e.\ $x\in\mathscr D$ we have
$nh(x)>\varphi(x)$ as soon as $n>\varphi(x)/h(x)$, so $\varphi_n(x)=\varphi(x)$
for $n$ large (say $n\ge n_x$). Therefore, we have that  $\varphi_n\to\varphi$ a.e.\ as $n\to+\infty$. Moreover
$0\le\varphi-\varphi_n\le\varphi\in L^p(\mathscr D)$, so by dominated
convergence $\varphi_n\to\varphi$ in $L^p(\mathscr D)$-norm. Since $J$ is
closed and $\varphi_n\in J$ for every $n$, we conclude $\varphi\in J$.

This shows $\varphi\in J$ for every $0\le\varphi\in X$. For a general
$\varphi\in X$, write $\varphi=\varphi^+-\varphi^-$ and recall that
$\varphi^+,\varphi^-\ge0$; both belong to $J$ by the above, and $J$ is a linear
subspace, so $\varphi\in J$. Hence $J=X$.

\end{proof}

The goal is now to apply the above Krein-Rutman theorem to the resolvent of  $A_p$, for some well-chosen $p> 1$. 
Define for $\kappa\in \rho(-A_{p})$  the resolvent operator $
  R_\kappa^{p} := (A_{p} + \kappa)^{-1}
  : L^{p}(\mathscr D) \to L^{p}(\mathscr D)$. 
The range of   $R_\kappa^{p}$ is    $\dom(A_{p})$.   
 Let 
\begin{equation}\label{eq.w2}
\omega_2:=\frac12\|b\|_\infty^2,
\end{equation}
  so that $\|e^{-tA_2}f\|_{L^2(\mathscr D)}\le e^{\omega_2 t}\|f\|_{L^2(\mathscr D)}$ for all $t\ge0$ (see Proposition~\ref{pr.domA2}). Let $p\ge 2$. By Riesz-Thorin interpolation with $\|e^{-tA_2}f\|_{L^\infty(\mathscr D)}\le\|f\|_{L^\infty(\mathscr D)}$, one also has $\|S_{p}(t)\|_{\mathcal L(L^{p}(\mathscr D))}\le e^{2\omega_2t/p}\le e^{\omega_2t}$. Hence every $\kappa>\omega_2$ belongs to $\rho(-A_2)\cap\rho(-A_{p})$, and, since $L^{p}(\mathscr D)\subset L^2(\mathscr D)$, \eqref{eq.reslemma} gives
\begin{equation}\label{eq.Rp0R2}
\forall p\ge 2, \forall \kappa>\omega_2, \forall f\in L^{p}(\mathscr D), \quad
R_\kappa^{p}f=R_\kappa^2f.
\end{equation}
 Set 
$$
  p_{\max} :=
  \begin{cases}
    +\infty, & d \leq 3, \\
    \dfrac{2d}{d-3}, & d \geq 4.
  \end{cases}
$$
In both cases $p_{\max} > 2$. Fix once and for all  
$$
  p_0 \in [2,\, p_{\max}).
$$

\begin{lemma}\label{lem:compact}
Let $\kappa\in \rho(-A_{p_0})$. Then $R_\kappa^{p_0} : L^{p_0}(\mathscr D) \to L^{p_0}(\mathscr D)$ is compact.
\end{lemma}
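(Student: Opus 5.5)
By the resolvent identity, compactness of $R_\kappa^{p_0}$ for one $\kappa\in\rho(-A_{p_0})$ gives it for all of them. Indeed, $R_\kappa^{p_0}=R_{\kappa_1}^{p_0}+(\kappa_1-\kappa)R_\kappa^{p_0}R_{\kappa_1}^{p_0}$, and compact operators form a two-sided ideal. So I fix a real $\kappa>\omega_2$ (large). By \eqref{eq.Rp0R2} we then have $R_\kappa^{p_0}f=R_\kappa^2 f$ for every $f\in L^{p_0}(\mathscr D)$. Let $(f_n)$ be bounded in $L^{p_0}(\mathscr D)$ and set $u_n:=R^2_\kappa f_n$. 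The goal is to extract a subsequence of $(u_n)$ that converges in $L^{p_0}(\mathscr D)$.

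\textbf{Step 1: a bound above $H^1$.} The $(f_n)$ are bounded in $L^2(\mathscr D)$, since $p_0\ge2$ and $\mathscr D$ is bounded. Each $u_n\in\HsD$ solves the mixed problem $-\Delta u_n=f_n+b\cdot\nabla u_n-\kappa u_n$ in $\mathscr D$, with $u_n=0$ on $\SigD$ and $\partial_{\mathbf n_e}u_n=0$ on $\SigN$. From \eqref{eq:coercive}, testing \eqref{eq.LA2} against $u_n$ gives a uniform $H^1$ bound on $(u_n)$. Hence the right-hand side $g_n:=f_n+b\cdot\nabla u_n-\kappa u_n$ is bounded in $L^2(\mathscr D)$.

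I then invoke Savaré's sharp regularity result for the Laplacian with mixed Dirichlet--Neumann conditions \cite{Savare1997}. This is where we use that $\mathscr D$ is $C^\infty$ and $\Sigma$ is a $C^3$ open subset of $\partial\mathscr D$. It yields, for each $\varepsilon>0$, a bound $\|u_n\|_{H^{3/2-\varepsilon}}\le C_\varepsilon(\|g_n\|_{L^2}+\|u_n\|_{H^1})$. So $(u_n)$ is bounded in $H^{3/2-\varepsilon}(\mathscr D)$ for every $\varepsilon>0$.

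\textbf{Step 2: compact embedding.} By Rellich--Kondrachov for fractional Sobolev spaces on a bounded smooth domain, $H^{s}(\mathscr D)\hookrightarrow L^{q}(\mathscr D)$ is compact whenever $1/q>1/2-s/d$. Take $s=3/2-\varepsilon$. For $d\le3$ we have $1/2-s/d\le 0$ up to $\varepsilon$, so the embedding is compact into every $L^q$ with $q<\infty$. For $d\ge4$ the critical exponent is $2d/(d-3+2\varepsilon)$. Since $p_0<p_{\max}=2d/(d-3)$, we can choose $\varepsilon>0$ small enough that $p_0<2d/(d-3+2\varepsilon)$. In both cases $H^{3/2-\varepsilon}(\mathscr D)\hookrightarrow L^{p_0}(\mathscr D)$ is compact. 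Therefore $(u_n)=(R^{p_0}_\kappa f_n)$ has a convergent subsequence in $L^{p_0}(\mathscr D)$, which proves that $R_\kappa^{p_0}$ is compact.

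\textbf{Main obstacle.} The delicate point is Step 1, namely checking that Savaré's estimate applies in the form needed. His result is stated for divergence-form operators whose data are only in a dual space, and his geometric hypotheses concern the interface $\partial\Sigma$ inside $\partial\mathscr D$. I would first move the first-order term $b\cdot\nabla u_n$ to the right-hand side, as above, so that only the pure Laplacian with $L^2$ data remains. I would then verify that the $C^3$ regularity of the interface fits his framework.

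If that approach runs into trouble, there is a fallback. One can localize with a partition of unity: away from $\partial\Sigma$, standard Dirichlet or Neumann elliptic regularity gives an $H^2$ bound. Near $\partial\Sigma$, one flattens both $\partial\mathscr D$ and $\partial\Sigma$ and applies the half-space estimate; the model function $\mathrm{Re}\sqrt z$ shows that $H^{3/2-\varepsilon}$ is exactly the regularity one can expect there.
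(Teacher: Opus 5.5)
Your proof is correct and follows essentially the same route as the paper: identify $R_\kappa^{p_0}$ with $R_\kappa^2$ on $L^{p_0}(\mathscr D)$, apply Savar\'e's $H^{3/2-\varepsilon}$ estimate, use Rellich--Kondrachov with $p_0<p_{\max}$, and pass to a general $\kappa$ by the resolvent identity. The one difference is cosmetic. You first derive a uniform $H^1$ bound (this needs $\kappa>\omega_2$ strictly, since \eqref{eq:coercive} alone only controls $\|\nabla u\|_{L^2}$), then move $b\cdot\nabla u_n$ to the right-hand side so that Savar\'e's result is applied to the pure Laplacian; the paper instead cites his estimate directly for $R^2_{\kappa_0}$.
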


\begin{proof}
Fix $\kappa_0> \omega_2$ (see \eqref{eq.w2}).  

\underline{Step 1}. Reduction to $L^2$.

Since $\mathscr D$ is bounded and $p_0 \ge 2$, we have
$L^{p_0}(\mathscr D) \hookrightarrow L^2(\mathscr D)$ with
$\|f\|_{L^2(\mathscr D)} \leq |\mathscr D|^{1/2 - 1/p_0}\|f\|_{L^{p_0}(\mathscr D)}$.
Since $\kappa_0\in\rho(-A_2)\cap\rho(-A_{p_0})$, \eqref{eq.reslemma} gives, for every $f \in L^{p_0} (\mathscr D)\subset L^2(\mathscr D)$,
$R_{\kappa_0}^{p_0} f = R_{\kappa_0}^2 f$.

\underline{Step 2}. Savar\'e regularity.

For any $g \in L^2(\mathscr D)$, the solution $w = R_{\kappa_0}^2 g \in \HsD$
of the mixed boundary value problem satisfies
(see \cite{Savare1997})
\[
\forall \varepsilon \in (0,\varepsilon_0), \exists C_\ve>0, \forall  w \in H^{3/2-\varepsilon}(\mathscr D), \quad
  \|w\|_{H^{3/2-\varepsilon}(\mathscr D)} \leq C_\varepsilon \|g\|_{L^2(\mathscr D)}.
\]
Combined with Step 1:
\[
  \|R_{\kappa_0}^{p_0} f\|_{H^{3/2-\varepsilon}(\mathscr D)}
  \leq C_\varepsilon |\mathscr D|^{1/2-1/p_0} \|f\|_{L^{p_0}(\mathscr D)},
\]
so $R_{\kappa_0}^{p_0} : L^{p_0}(\mathscr D) \to H^{3/2-\varepsilon}(\mathscr D)$ is bounded.

\underline{Step 3}. Rellich-Kondrachov.

By the choice $p_0 < p_{\max}$, one has $\frac{3}{2} > \frac{d}{2} - \frac{d}{p_0}$.
Choose $\varepsilon > 0$ small enough that $s_\varepsilon := 3/2 - \varepsilon > d/2 - d/p_0$.
Then $H^{s_\varepsilon}(\mathscr D) \hookrightarrow L^{p_0}(\mathscr D)$ compactly by Rellich-Kondrachov.
Hence the composition
  $$R_{\kappa_0}^{p_0} :  L^{p_0}(\mathscr D) \stackrel{\text{bdd}}{\to} H^{s_\varepsilon}(\mathscr D)  \stackrel{\text{cpt}}{\hookrightarrow} L^{p_0}(\mathscr D)$$
is compact. 
Let $\kappa\in\rho(-A_{p_0})$. By the resolvent identity in $L^{p_0}(\mathscr D)$,
$$
  R_\kappa^{p_0}-R_{\kappa_0}^{p_0}=(\kappa_0-\kappa)\,R_{\kappa_0}^{p_0}R_\kappa^{p_0},
  \qquad\text{i.e.}\qquad
  R_\kappa^{p_0}=R_{\kappa_0}^{p_0}\big(I+(\kappa_0-\kappa)R_\kappa^{p_0}\big).
$$
Thus, the operator $R_\kappa^{p_0}$ is compact.

\end{proof}

Let us mention that we will prove below thanks to the interpolation of compact operators, that for all $\kappa\in \rho(-A_p)$,   $  R_\kappa^{p}$ is a compact operator for every $p\in (1,+\infty)$, see Lemma \ref{le.comp}. 
 

\begin{lemma}\label{lem:positive} Let  $\kappa\in \rho(-A_{p_0})$.
If $0 \leq f \in L^{p_0}(\mathscr D)$ then $R_\kappa^{p_0} f \geq 0$ a.e.
\end{lemma}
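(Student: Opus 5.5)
The plan is to obtain positivity of the resolvent from positivity of the semigroup, and then to propagate it along the real axis by a Neumann-series argument. One caveat comes first. For complex $\kappa$ the inequality ``$R_\kappa^{p_0}f\ge0$'' only makes sense if $R_\kappa^{p_0}f$ is real, and for real $\kappa$ below $-s(-A_{p_0})$, where $s(-A_{p_0})$ is the spectral bound, positivity generally fails. I will therefore prove the statement for every real $\kappa\in\rho(-A_{p_0})$ in the unbounded component $(-s(-A_{p_0}),+\infty)$ of $\rho(-A_{p_0})\cap\mathbb R$. I expect this to be the only reading under which the lemma is used later, with the Krein--Rutman argument applied at a single large real $\kappa$.

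\emph{Step 1: positivity of $S_2$.} I would apply Ouhabaz's first Beurling--Deny criterion \cite{Ouhabaz} to the real form $\mathbf L$ on $\HsD$. This needs two facts.
\begin{itemize}
\item[$\mathfrak 1$.] If $u\in\HsD$ is real, then $u^+\in\HsD$. This holds because truncation preserves $H^1(\mathscr D)$ and the closure of $C^\infty_{\SigD}(\mathscr D)$; one approximates $u^+$ by $(u_n)^+$, mollified away from $\SigD$.
\item[$\mathfrak 2$.] $\mathbf L(u^+,u^-)\le0$. Here $\nabla u^+\cdot\nabla u^-=0$ a.e. and $(b\cdot\nabla u^+)u^-=0$ a.e., so in fact $\mathbf L(u^+,u^-)=0$.
\end{itemize}
The criterion then gives that $S_2(t)$ is positive for every $t\ge0$. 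By the consistency \eqref{eq.consist} and density of $L^2(\mathscr D)\cap L^{p_0}(\mathscr D)$, the semigroup $S_{p_0}(t)$ is positive as well.

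\emph{Step 2: large real $\kappa$.} For $\kappa>\omega_2$, the Laplace representation from the proof of Proposition~\ref{pr.extensionS} gives
$$R_\kappa^{p_0}f=\int_0^\infty e^{-\kappa t}S_{p_0}(t)f\,dt,$$
a Bochner integral of positive functions. Since the positive cone is closed in $L^{p_0}(\mathscr D)$, this yields $R_\kappa^{p_0}f\ge0$ whenever $f\ge0$.

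\emph{Step 3: extension down the real axis.} Let $I$ be the set of real $\kappa\in\rho(-A_{p_0})$ with $R_\kappa^{p_0}\ge0$; by Step 2 it contains $(\omega_2,\infty)$. Take $\kappa_0\in I$. For real $\kappa<\kappa_0$ with $(\kappa_0-\kappa)\,\mathsf r_{sp}(R_{\kappa_0}^{p_0})<1$, the resolvent identity gives
$$R^{p_0}_\kappa=\sum_{n\ge0}(\kappa_0-\kappa)^nR_{\kappa_0}^{n+1},$$
where $R_{\kappa_0}^{n+1}$ denotes the $(n+1)$-th power of $R_{\kappa_0}^{p_0}$. This is a norm-convergent series of positive operators with nonnegative coefficients, so $R^{p_0}_\kappa\ge0$.

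For a positive operator, $\mathsf r_{sp}(R_{\kappa_0}^{p_0})=(\kappa_0+s)^{-1}$ with $s=-s(-A_{p_0})$. This is the Pringsheim-type property of positive resolvents, which I would take from the standard theory of positive semigroups on Banach lattices. Hence the series converges for every $\kappa\in(s,\kappa_0]$, and $I\supset(s,\infty)$.

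The main obstacle is this last step: identifying the radius of convergence of the Neumann series with the distance to the spectral bound, that is, ensuring that the spectral bound is attained on the real axis for a positive semigroup. If I want to avoid citing that theory, I would fall back on a continuity argument. Let $\kappa_*=\inf I$. Closedness of the cone and norm-continuity of $\kappa\mapsto R_\kappa^{p_0}$ on $\rho(-A_{p_0})$ show that, if $\kappa_*\in\rho(-A_{p_0})$, then $\kappa_*\in I$. The Neumann expansion around $\kappa_*$ then pushes $I$ strictly further left, a contradiction. Hence $\kappa_*\notin\rho(-A_{p_0})$, and $I$ is exactly the unbounded real component of the resolvent set, which is the claimed statement.
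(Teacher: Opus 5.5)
Your proof is correct, and your opening caveat is justified. For an arbitrary $\kappa\in\rho(-A_{p_0})$ the statement is false: for real $\kappa<-\lambda$ close enough to $-\lambda$ one has $\kappa\in\rho(-A_{p_0})$ and $R_\kappa^{p_0}\mathbf u=(\lambda+\kappa)^{-1}\mathbf u<0$. The paper's own argument tacitly assumes $\kappa$ real and large.

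\textbf{The paper's route.} The paper works directly at the resolvent level. It sets $u=R_\kappa^{p_0}f=R_\kappa^2f\in\HsD$ and tests $\mathbf L_\kappa(u,v)=\langle f,v\rangle_{L^2(\mathscr D)}$ with $v=u^-$. Using $\mathbf L_\kappa(u^+,u^-)=0$ and coercivity of $\mathbf L_\kappa$, it gets $0\le\langle f,u^-\rangle=-\mathbf L_\kappa(u^-,u^-)\le-c\|u^-\|_{H^1(\mathscr D)}^2$, hence $u^-=0$. This is a short Stampacchia-type truncation argument. It only covers $\kappa$ for which $\mathbf L_\kappa$ is coercive (e.g.\ $\kappa>\|b\|_\infty^2/2$), but that is all the Krein--Rutman step needs.

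\textbf{Your route.} You rely on the same identity $\mathbf L(u^+,u^-)=0$. You feed it into the Beurling--Deny criterion for $S_2$ and the Laplace transform, which is exactly what the paper does later in Proposition~\ref{pr.irred2}. You then continue positivity down the real axis by the Neumann series. This costs more machinery but gives the sharp range: the whole unbounded component of $\rho(-A_{p_0})\cap\mathbb R$.

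\textbf{A sign slip to fix.} Positivity holds for $\kappa>s(-A_{p_0})$, which equals $-\lambda$ here, not for $\kappa>-s(-A_{p_0})$. With $s:=-s(-A_{p_0})$ and $\mathsf r_{sp}(R_{\kappa_0}^{p_0})=(\kappa_0+s)^{-1}$, the series converges iff $\kappa_0-\kappa<\kappa_0+s$. That is, it converges for $\kappa\in(-s,\kappa_0]$, so $I\supset(-s,+\infty)$, not $(s,+\infty)$.

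Your continuity fallback does not depend on this sign and reaches the correct conclusion. You should, however, define $\kappa_*:=\inf\{k:\ (k,+\infty)\subset I\}$ rather than $\inf I$. Only then can you use right-continuity of $\kappa\mapsto R_\kappa^{p_0}$ at $\kappa_*$.
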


\begin{proof}
Set $u = R_\kappa^{p_0} f$ and recall that $u^\pm \in \HsD$ since $u\in\HsD$.
Since a.e. $\nabla u^+ \cdot \nabla u^- = 0$ and $(b \cdot \nabla u^+) u^- = 0$, testing the equation
$\mathbf{L}_\kappa(u, v) = \langle f, v\rangle_{L^2(\mathscr D)}$ with $v = u^-$ gives
\[
  0 \leq \langle f, u^-\rangle _{L^2(\mathscr D)}
  = \mathbf{L}_\kappa(u, u^-)
  = \mathbf{L}_\kappa(u^+ - u^-, u^-)
  = -\mathbf{L}_\kappa(u^-, u^-)
  \leq -c \|u^-\|_{H^1(\mathscr D)}^2 \leq 0.
\]
Hence $u^- = 0$, i.e.\ $u \geq 0$ a.e.

\end{proof}

\begin{proposition} \label{pr.irred2}
Let  $\kappa>\omega_2$ (see \eqref{eq.w2}). Then
$$
  0\le f\in L^{p_0}(\mathscr D),\ f\not\equiv0 \quad\implies\quad R_{\kappa}^{p_0}f>0
  \text{ a.e. in }\mathscr D.
$$
In particular $R_\kappa^{p_0}$ is an irreducible compact positive operator on the Banach
lattice $L^{p_0}(\mathscr D)$ from Proposition \ref{pr.irr}.
\end{proposition}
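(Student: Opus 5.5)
We need to prove strict positivity of $R_\kappa^{p_0}f$ a.e. for $0\le f\not\equiv 0$. The plan is to combine the positivity of Lemma~\ref{lem:positive} with a strong minimum principle for nonnegative weak supersolutions. Set $u:=R_\kappa^{p_0}f=R_\kappa^2 f$ (by \eqref{eq.Rp0R2}). Then $u\in\HsD$, $u\ge0$ a.e. by Lemma~\ref{lem:positive}, and in $\mathscr D$ it solves, in the distributional sense,
$$-\Delta u-b\cdot\nabla u+\kappa u=f\ \ge 0 .$$
So $u$ is a nonnegative weak supersolution of a uniformly elliptic operator in divergence form (principal part $\Delta$) with bounded lower-order coefficients and nonnegative zero-order coefficient $\kappa>0$.

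The key tool is the weak Harnack inequality for nonnegative $H^1$ supersolutions (Trudinger; Gilbarg--Trudinger, Theorem~8.18). For every ball $\mathscr B(x,4r)\subset\mathscr D$ it gives
$$r^{-d}\int_{\mathscr B(x,2r)}u\le C\inf_{\mathscr B(x,r)}u ,$$
with the infimum understood as an essential infimum. From this I will derive that the set $Z:=\{x\in\mathscr D:\ \operatorname{ess\,inf}_{\mathscr B(x,\rho)}u=0 \text{ for all }\rho>0\}$ is both open and closed in $\mathscr D$. It is open because, if $x\in Z$, the weak Harnack inequality forces $u=0$ a.e. on a ball around $x$, and every point of that ball then lies in $Z$. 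It is closed because its complement is clearly open. Since $\mathscr D$ is connected, either $Z=\mathscr D$, in which case $u\equiv0$ a.e., or $Z=\emptyset$. In the latter case $u$ is locally essentially bounded below by positive constants, hence $u>0$ a.e. Alternatively, since $b\in C^\infty$, interior elliptic regularity makes $u$ continuous in $\mathscr D$ whenever $f$ is smooth enough, and one could invoke the classical strong maximum principle; the weak Harnack route avoids any regularity assumption on $f$, so I will use that.

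It remains to exclude $u\equiv0$. If $u=0$ a.e., then testing the form identity $\mathbf L_\kappa(u,v)=\langle f,v\rangle_{L^2(\mathscr D)}$ with $v\in C_c^\infty(\mathscr D)$ gives $\int_{\mathscr D} f v=0$ for all such $v$, hence $f=0$ a.e. This contradicts $f\not\equiv0$, so $R_\kappa^{p_0}f>0$ a.e. in $\mathscr D$. The final claim follows directly: $R_\kappa^{p_0}$ is compact by Lemma~\ref{lem:compact} and positive by Lemma~\ref{lem:positive}, and the property just shown is exactly hypothesis \eqref{eq.irr}, so Proposition~\ref{pr.irr} yields irreducibility.

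I expect the main obstacle to be checking that the weak Harnack inequality applies with the drift written in non-divergence form $b\cdot\nabla u$. Gilbarg--Trudinger allows terms $b^i D_i u$ with $b\in L^\infty$ in the operator $Lu=D_i(a^{ij}D_ju+b^iu)+c^iD_iu+du$, so it does apply, provided the sign convention on $d=-\kappa\le 0$ is kept straight. It also matters that only interior balls are used, which keeps the argument independent of the mixed boundary conditions and of $\partial\Sigma$.
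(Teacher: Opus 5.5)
Your proof is correct, but it takes a different route from the paper.

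The paper works at the level of the semigroup. It invokes Ouhabaz's positivity and irreducibility results for form semigroups on the connected domain $\mathscr D$ to show that $e^{-tA_2}$ is positivity improving, \eqref{eq:irrL2}. It then transfers this to $R_\kappa^2f=\int_0^\infty e^{-\kappa t}e^{-tA_2}f\,dt$ by a Fubini argument on a jointly measurable representative, and finally uses $R_\kappa^{p_0}f=R_\kappa^2f$.

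You instead work directly with the elliptic equation $-\Delta u-b\cdot\nabla u+\kappa u=f\ge0$ in $\mathscr D$. With $a^{ij}=\delta_{ij}$, $c^i=b_i$, $d=-\kappa$, $u$ is a nonnegative $H^1$ supersolution with zero data, so Gilbarg--Trudinger's Theorem~8.18 applies with $k(R)=0$. That theorem needs no sign condition on $d$, so your caveat is harmless. Your open/closed argument for $Z$, together with connectedness, gives the dichotomy, and injectivity of the resolvent excludes $u\equiv0$.

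What your route buys: it makes clear that only interior information and connectedness matter, with no role for the mixed boundary conditions or $\partial\Sigma$. It also avoids both the invariant-ideal machinery and the measure-theoretic bookkeeping of the Laplace transform. The price is De Giorgi--Nash--Moser theory.

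What the paper's route buys: it proves the stronger statement \eqref{eq:irrL2} for the semigroup itself, which is reused in Step~1b of the proof of Proposition~\ref{pr.simple}. With your approach that would need an extra step. Options include a parabolic Harnack inequality, or the facts that irreducibility of the resolvent passes to the semigroup and that a positive irreducible holomorphic semigroup is positivity improving.
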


\begin{proof}
\underline{Step 1}. Positivity improving property of $(e^{-tA_2})_{t\ge0}$.

By \cite[Corollary~4.3]{Ouhabaz} the semigroup $(e^{-tA_2})_{t\ge0}$ is positive on $L^2(\mathscr D)$. Since $\mathscr D$ is connected, \cite[Theorem~4.5]{Ouhabaz} shows that $(e^{-tA_2})_{t\ge0}$ is irreducible \cite[Definition~2.8]{Ouhabaz}, i.e.  for every $t>0$ and every $0\le f\in L^2(\mathscr D)$, $f\not\equiv0$,
\begin{equation}
  e^{-tA_2}f(x) > 0 \quad\text{for a.e. } x\in\mathscr D. \label{eq:irrL2}
\end{equation}

\underline{Step 2}. Positivity improving property of $R^2_\kappa$.

Since $\kappa>\omega_2$, the Hille--Yosida representation gives
$$
  R_{\kappa}^{2}f = \int_0^\infty e^{-{\kappa} t}\,e^{-tA_{2}}f\,dt, \qquad f\in L^{2}(\mathscr D),
$$
as an absolutely convergent Bochner integral in $L^2(\mathscr D)$.
Fix $0\le f\in L^2(\mathscr D)$, $f\not\equiv0$. Since $t\mapsto e^{-tA_2}f$ is continuous from $\mathbb R_+$ into $L^2(\mathscr D)$, it admits a jointly measurable representative $h$ on $\mathbb R_+\times\mathscr D$, i.e.\ $h(t,\cdot)=e^{-tA_2}f$ a.e.\ in $\mathscr D$ for every $t\ge0$\footnote{For instance, extend $e^{-tA_2}f$ by $0$ outside $\mathscr D$ and set $h(t,x):=\limsup_{n\to\infty}\frac{1}{|\mathscr B(0,1/n)|}\int_{\mathscr B(x,1/n)}e^{-tA_2}f(y)\,dy$. Each average is jointly continuous in $(t,x)$, hence $h$ is jointly measurable, and by the Lebesgue differentiation theorem $h(t,\cdot)=e^{-tA_2}f$ a.e.\ for every $t\ge0$.}.
By \eqref{eq:irrL2}, for every fixed $t>0$ the set
$$
  N_t := \{x\in\mathscr D,\  h(t,x)\le0\}
$$
is Lebesgue-null. Denote by ${\rm Leb}$ the Lebesgue measure on $\mathbb R^d$ and by ${\rm Leb}_1$ the Lebesgue measure on $\mathbb R$. Let $E:=\{(t,x)\in(0,+\infty)\times\mathscr D,\ h(t,x)\le0\}$, a measurable subset of the product space. By the Fubini--Tonelli theorem (integrating in $x$ first),
$$
  ({\rm Leb}_1\otimes {\rm Leb})(E) = \int_0^\infty {\rm Leb}(N_t)\,dt = 0.
$$
By the Fubini--Tonelli theorem again (integrating in $t$ first), with $E^x:=\{t>0,\ h(t,x)\le0\}$,
$$
  0 = ({\rm Leb}_1\otimes {\rm Leb})(E) = \int_{\mathscr D}{\rm Leb}_1(E^x)\,dx,
$$
so that ${\rm Leb}_1(E^x)=0$ for a.e.\ $x\in\mathscr D$. Set
$$
  N^1 := \{x\in\mathscr D,\ {\rm Leb}_1(E^x)>0\}, \text{ so that } {\rm Leb}(N^1)=0.
$$
On the other hand, by the Cauchy--Schwarz inequality and since ${\rm Leb}(\mathscr D)<\infty$,
$$
\int_0^\infty e^{-\kappa t}\|h(t,\cdot)\|_{L^1(\mathscr D)}\,dt\le {\rm Leb}(\mathscr D)^{1/2}\|f\|_{L^2(\mathscr D)}\int_0^\infty e^{-(\kappa-\omega_2) t}\,dt<\infty .
$$
By the Fubini--Tonelli theorem once more, $\int_0^\infty e^{-\kappa t}|h(t,x)|\,dt<+\infty$ for a.e.\ $x\in\mathscr D$, and for such $x$ the Bochner integral $R_{\kappa}^2f=\int_0^\infty e^{-\kappa t}h(t,\cdot)\,dt$ coincides with the Lebesgue integral $\int_0^\infty e^{-\kappa t}h(t,x)\,dt$. Set
$$
  N^2 := \Big\{x\in\mathscr D,\ \int_0^\infty e^{-\kappa t}|h(t,x)|\,dt=+\infty
  \ \text{or}\ R_{\kappa}^2f(x)\ne\int_0^\infty e^{-\kappa t}h(t,x)\,dt\Big\},
  \text{ so that } {\rm Leb}(N^2)=0.
$$
Let $N:=N^1\cup N^2$, so that ${\rm Leb}(N)=0$, and fix $x\in\mathscr D\setminus N$. Since $x\notin N^1$, $h(t,x)>0$ for a.e.\ $t>0$, hence $\int_0^\infty e^{-\kappa t}h(t,x)\,dt>0$. Since $x\notin N^2$, this integral is finite and equals $R_{\kappa}^2f(x)$. Therefore $R_{\kappa}^2f(x)>0$ for every $x\in\mathscr D\setminus N$, i.e.
\begin{equation}
  0\le f\in L^2(\mathscr D),\ f\not\equiv0 \implies R_{\kappa}^2f>0 \text{ a.e. in }\mathscr D. \label{eq:irrL2res}
\end{equation}

\underline{Step 3}. Conclusion.

Let $0\le f\in L^{p_0}(\mathscr D)$, $f\not\equiv0$. Since $p_0\ge2$, $f\in L^2(\mathscr D)$, so that \eqref{eq.Rp0R2} and \eqref{eq:irrL2res} give
\begin{equation}\label{eq.irr}
  0\le f\in L^{p_0}(\mathscr D),\ f\not\equiv0 \implies R_\kappa^{p_0}f=R_\kappa^{2}f>0 \text{ a.e. in }\mathscr D,
\end{equation}
which is the first assertion of the proposition. In particular $R_\kappa^{p_0}$ is a positive operator on $L^{p_0}(\mathscr D)$. It is compact by Lemma~\ref{lem:compact}, since $\kappa\in\rho(-A_{p_0})$, and it is irreducible by \eqref{eq.irr} and Proposition~\ref{pr.irr} above. This ends the proof of the proposition.

\end{proof}

 The space $L^{p_0}(\mathscr D)$ is a Banach lattice with positive cone
$$K =L^{p_0}_+(\mathscr D) :=  \{ f \in L^{p_0}(\mathscr D)),  f(x) \ge 0 \text{ a.e. on } \mathscr D  \},$$ which is total (its closed linear span is $L^{p_0}(\mathscr D)$), and whose dual cone is $K^*=L^{p_0'}_+(\mathscr D)$.
We fix once and for all 
$$\kappa>\omega_2.$$
 By Proposition~\ref{pr.irred2},
$R_\kappa^{p_0}$ is compact, positive, and irreducible.
Applying Theorem~\ref{KR} and setting $r_{p_0} := \mathsf r_{sp}(R_\kappa^{p_0}) > 0$, there exist
$\mathbf u_{p_0} \in K \setminus \{0\}$ and $\mathbf v_{p_0'}\in K^*\setminus\{0\}$ such that
\begin{equation}\label{eq.KRuv}
  R_\kappa^{p_0} \mathbf u_{p_0} = r_{p_0}\, \mathbf u_{p_0}
  \qquad\text{and}\qquad
  (R_\kappa^{p_0})^* \mathbf v_{p_0'} = r_{p_0}\, \mathbf v_{p_0'} .
\end{equation}
Since $\mathbf u_{p_0}=r_{p_0}^{-1}R_\kappa^{p_0}\mathbf u_{p_0}$ with $0\le \mathbf u_{p_0}\not\equiv0$, \eqref{eq.irr} gives $\mathbf u_{p_0}>0$ a.e.\ in $\mathscr D$. Moreover, for every $0\le f\in L^{p_0}(\mathscr D)$, $f\not\equiv0$, \eqref{eq.irr} gives
$$
r_{p_0}\int_{\mathscr D} f\,\mathbf v_{p_0'}=\int_{\mathscr D} (R_\kappa^{p_0}f)\,\mathbf v_{p_0'}>0,
$$
since $R_\kappa^{p_0}f>0$ a.e.\ and $0\le\mathbf v_{p_0'}\not\equiv0$. Choosing $f=\mathbf 1_{\{\mathbf v_{p_0'}=0\}}$ shows that $\{\mathbf v_{p_0'}=0\}$ is Lebesgue-null, i.e.\ $\mathbf v_{p_0'}>0$ a.e.\ in $\mathscr D$.

Since $\mathbf u_{p_0}=r_{p_0}^{-1}R_\kappa^{p_0}\mathbf u_{p_0}\in\operatorname{ran}(R_\kappa^{p_0})=\dom(A_{p_0})$,
applying $A_{p_0} + \kappa$ to the first identity in \eqref{eq.KRuv} yields
\begin{equation*}
A_{p_0} \mathbf u_{p_0} = \lambda_{p_0} \mathbf u_{p_0}, \qquad \lambda_{p_0} := r_{p_0}^{-1} - \kappa \in \mathbb{R}.
\end{equation*}
Similarly, since $L^{p_0}(\mathscr D)$ is reflexive, the adjoint of $R_\kappa^{p_0}$ is $(R_\kappa^{p_0})^*=(A_{p_0}^* + \kappa)^{-1}$, so that $\mathbf v_{p_0'}\in\dom(A_{p_0}^*)$ and
\begin{equation*}
A_{p_0}^* \mathbf v_{p_0'} = \lambda_{p_0} \mathbf v_{p_0'} .
\end{equation*}
The functions $\mathbf u_{p_0}$, $\mathbf v_{p_0'}$ and the number $\lambda_{p_0}$ are the candidates for $\mathbf u$, $\mathbf v$ and $\lambda$ in Theorem~\ref{th.3}. We shall now study their regularity properties.

\subsubsection{On the functions  $\mathbf u_{p_0} $ and $\mathbf v_{p_0'} $}
\begin{lemma}\label{lem:v1-H1}
The function $\mathbf v_{p_0'}$ belongs to $\dom(A_2^*)\subset\HsD$, and $A_2^*\mathbf v_{p_0'}=\lambda_{p_0}\mathbf v_{p_0'}$.
\end{lemma}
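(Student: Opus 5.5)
The plan is to bootstrap the integrability of $\mathbf v_{p_0'}$ from $L^{p_0'}(\mathscr D)$ up to $L^2(\mathscr D)$, and then use consistency of the adjoint resolvents on $L^2\cap L^{p_0'}$ to place it in $\dom(A_2^*)$. If $p_0=2$ there is nothing to do, since then $(R^{2}_\kappa)^*=(A_2^*+\kappa)^{-1}$ directly. So assume $p_0>2$, hence $p_0'\in(1,2)$ and $L^2(\mathscr D)\subset L^{p_0'}(\mathscr D)$.

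\emph{Step 1: the adjoint resolvent as a Laplace transform.} By Proposition~\ref{pr.extensionS}, $-A_{p_0}^*$ generates the semigroup $S^*_{p_0}$ on $L^{p_0'}(\mathscr D)$, which is the consistent extension of $S_2^*$. Write the kernel of $S^*_{p_0}(t)$ as $p^*(t,y,x)=\bar p(t,x,y)$, see \eqref{eq.pbar}. By the Gaussian bound \eqref{eq.gaussian} and H\"older's inequality,
$$\|S^*_{p_0}(t)\|_{\mathcal L(L^{p_0'},L^\infty)}\le C\,t^{-d/(2p_0')}e^{\omega t}.$$
For $\kappa>\omega$ large (and also $\kappa>\omega_2$), and for every integer $n\ge1$,
$$\big((R^{p_0}_\kappa)^*\big)^n=(A^*_{p_0}+\kappa)^{-n}=\int_0^\infty \frac{t^{n-1}}{(n-1)!}\,e^{-\kappa t}\,S^*_{p_0}(t)\,dt .$$

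\emph{Step 2: boundedness of $\mathbf v_{p_0'}$.} Choose $n>d/(2p_0')$. Then the integrand $t^{n-1}t^{-d/(2p_0')}e^{-(\kappa-\omega)t}$ is integrable on $(0,\infty)$, so $\big((R^{p_0}_\kappa)^*\big)^n$ is bounded from $L^{p_0'}(\mathscr D)$ to $L^\infty(\mathscr D)$. The eigen-relation \eqref{eq.KRuv} gives
$$\mathbf v_{p_0'}=r_{p_0}^{-n}\big((R^{p_0}_\kappa)^*\big)^n\mathbf v_{p_0'}\in L^\infty(\mathscr D)\subset L^2(\mathscr D).$$
The fixed $\kappa$ used in \eqref{eq.KRuv} may be smaller than the threshold needed here. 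In that case I would pass to a larger $\kappa'$ through the resolvent identity: $\mathbf v_{p_0'}$ is an eigenvector of $(A^*_{p_0}+\kappa')^{-1}$ with eigenvalue $(\lambda_{p_0}+\kappa')^{-1}$, which is nonzero and finite because $\lambda_{p_0}+\kappa'=r_{p_0}^{-1}+\kappa'-\kappa>0$.

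\emph{Step 3: identification in $L^2$.} Exactly as in \eqref{eq.reslemma}, comparing the Laplace transforms of $S^*_{p_0}$ and $S_2^*$, which agree on $L^2\cap L^{p_0'}=L^2$, yields
$$(A^*_{p_0}+\kappa)^{-1}g=(A^*_2+\kappa)^{-1}g\qquad\text{for all } g\in L^2(\mathscr D).$$
Applying this with $g=\mathbf v_{p_0'}$ gives $\mathbf v_{p_0'}=r_{p_0}^{-1}(A_2^*+\kappa)^{-1}\mathbf v_{p_0'}\in\dom(A_2^*)\subset\HsD$. Applying $A_2^*+\kappa$ to both sides then gives
$$A_2^*\mathbf v_{p_0'}=(r_{p_0}^{-1}-\kappa)\mathbf v_{p_0'}=\lambda_{p_0}\mathbf v_{p_0'}.$$

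The main obstacle I expect is Step 2, namely the $L^{p_0'}\to L^\infty$ smoothing of the adjoint semigroup. It rests on applying the Gaussian bound of \cite[Theorem 5.4]{OuhabazJOT} to the adjoint kernel and on checking that it holds uniformly in the relevant variables. If this turns out to be delicate, I would fall back on a weaker form: the bound $\|S_{p_0}^*(t)\|_{\mathcal L(L^{p_0'},L^2)}\lesssim t^{-\frac d2(\frac1{p_0'}-\frac12)}$, which already suffices with $n=1$ because $\frac d2\big(\frac1{p_0'}-\frac12\big)<1$ when $p_0<p_{\max}$.
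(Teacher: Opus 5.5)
Your proof is correct. It obtains the key fact $\mathbf v_{p_0'}\in L^2(\mathscr D)$ by a different mechanism from the paper's.

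\textbf{The paper's route.} It dualizes the factorization $R_\kappa^{p_0}=\iota\circ\tilde R_\kappa^2\circ j$ already used in Lemma~\ref{lem:compact}. Savar\'e's mixed-problem estimate gives $R^2_\kappa:L^2(\mathscr D)\to H^{3/2-\varepsilon}(\mathscr D)$, and the Sobolev embedding $H^{3/2-\varepsilon}(\mathscr D)\hookrightarrow L^{p_0}(\mathscr D)$ then requires $p_0<p_{\max}$. Consequently $(R^{p_0}_\kappa)^*$ factors through $L^2(\mathscr D)$, and so does the eigenvector $\mathbf v_{p_0'}=r_{p_0}^{-1}(R^{p_0}_\kappa)^*\mathbf v_{p_0'}$.

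\textbf{Your route.} You use heat-kernel smoothing of the adjoint semigroup. The Gaussian bound gives $\|S^*_{p_0}(t)\|_{\mathcal L(L^{p_0'},L^\infty)}\lesssim t^{-d/(2p_0')}e^{\omega t}$. The Laplace representation of $(A^*_{p_0}+\kappa')^{-n}$ with $n>d/(2p_0')$ then yields $L^{p_0'}\to L^\infty$ boundedness.

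\textbf{Comparison.} Your route avoids Savar\'e's theorem entirely, and for this lemma it does not use $p_0<p_{\max}$ (only your fallback with $n=1$ does). It also gives the stronger conclusion $\mathbf v_{p_0'}\in L^\infty(\mathscr D)$ at once; the paper only obtains boundedness later, from Shamir's H\"older estimate. The price is reliance on the Gaussian bound, but the paper already records that the adjoint kernel is the transpose $\bar p(t,x,y)$ (see \eqref{eq.pbar}). So the bound transfers verbatim, and the worry in your last paragraph is unfounded. Your identification step is equivalent to the paper's Step~2: the paper proves $(A^*_{p_0}+\kappa)^{-1}g=(A_2^*+\kappa)^{-1}g$ on $L^2(\mathscr D)$ by duality against $R^{p_0}_\kappa h=R^2_\kappa h$, while you use consistency of the Laplace transforms.

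\textbf{Two small points to tidy up.}
\begin{itemize}
\item That the $L^{p_0'}$-valued Bochner integral lands in $L^\infty(\mathscr D)$ is best justified by pairing it with a dense subset of $L^1(\mathscr D)$.
\item In Step~3, either run the comparison at the large $\kappa'$, so that $\mathbf v_{p_0'}=(\lambda_{p_0}+\kappa')(A_2^*+\kappa')^{-1}\mathbf v_{p_0'}$, or note that the growth bound of $S^*_{p_0}$ on $L^{p_0'}(\mathscr D)$ is at most $\omega_2$. The latter follows by duality from the bound $\|S_{p_0}(t)\|\le e^{\omega_2 t}$ recalled before \eqref{eq.Rp0R2}.
\end{itemize}
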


\begin{proof}
\underline{Step 1}. $\mathbf v_{p_0'}  \in L^2(\mathscr D)$.

Since $\kappa>\omega_2=\frac12\|b\|_\infty^2$, the proof of Lemma~\ref{lem:compact} (applied with $\kappa_0=\kappa$) shows that
\[
  R_\kappa^{p_0} = \iota \circ \tilde R_\kappa^2 \circ j,
\]
where $j : L^{p_0}(\mathscr D) \hookrightarrow L^2(\mathscr D)$ is the
bounded embedding, $\tilde R_\kappa^2 : L^2(\mathscr D) \to
H^{3/2-\varepsilon}(\mathscr D)$ denotes $R_\kappa^2$ viewed with values in $H^{3/2-\varepsilon}(\mathscr D)$ (bounded by Savar\'e's estimate),
and $\iota : H^{3/2-\varepsilon}(\mathscr D) \hookrightarrow L^{p_0}(\mathscr D)$ is the Sobolev embedding (for $\varepsilon>0$ small enough).
Taking Banach-space adjoints (with respect to the pairing $\langle f,g\rangle=\int_{\mathscr D}fg$ because every operator considered has real coefficients) reverses the composition:
$$
  (R_\kappa^{p_0})^* = j^* \circ (\tilde R_\kappa^2)^* \circ \iota^*,
$$
where
$$
  \iota^* : L^{p_0'}(\mathscr D) \hookrightarrow \bigl(H^{3/2-\varepsilon}(\mathscr D)\bigr)^*,
  \qquad
  (\tilde R_\kappa^2)^* : \bigl(H^{3/2-\varepsilon}(\mathscr D)\bigr)^* \to L^2(\mathscr D), \qquad
  j^* : L^2(\mathscr D) \hookrightarrow L^{p_0'}(\mathscr D).
$$
By \eqref{eq.KRuv}, $\mathbf v_{p_0'}  = r_{p_0}^{-1} (R_\kappa^{p_0})^* \mathbf v_{p_0'}$. Hence
$$
  \mathbf v_{p_0'}
  = r_{p_0}^{-1}\, j^*\Big((\tilde R_\kappa^2)^* \bigl(\iota^* \mathbf v_{p_0'} \bigr)\Big)
  \ \in\ L^2(\mathscr D).
$$

\smallskip
\noindent
\underline{Step 2}. $\mathbf v_{p_0'}\in\dom(A_2^*)$.

Let $g\in L^2(\mathscr D)$. Since $p_0'\le 2$, $g\in L^{p_0'}(\mathscr D)$. Since $A_2$ has real coefficients and $\kappa$ is real, the adjoint of $R_\kappa^2$ with respect to the pairing $\int_{\mathscr D}fg$ is $(A_2^*+\kappa)^{-1}$. Hence, for every $h\in L^{p_0}(\mathscr D)\subset L^2(\mathscr D)$, using \eqref{eq.Rp0R2},
$$
\int_{\mathscr D}\big((A_{p_0}^*+\kappa)^{-1}g\big)\,h
=\int_{\mathscr D}g\,R_\kappa^{p_0}h
=\int_{\mathscr D}g\,R_\kappa^{2}h
=\int_{\mathscr D}\big((A_2^*+\kappa)^{-1}g\big)\,h .
$$
Since $h\in L^{p_0}(\mathscr D)$ is arbitrary, $(A_{p_0}^*+\kappa)^{-1}g=(A_2^*+\kappa)^{-1}g$. Applying this with $g=\mathbf v_{p_0'}\in L^2(\mathscr D)$ (Step 1), we obtain from \eqref{eq.KRuv}
\begin{equation}\label{eq.domAv}
  \mathbf v_{p_0'}  = r_{p_0}^{-1}\,(A_2^*+\kappa)^{-1}\mathbf v_{p_0'}  \in \dom(A_2^*)\subset\HsD\quad \text{ and }
  \quad A_2^*\mathbf v_{p_0'}=\lambda_{p_0}\mathbf v_{p_0'} .
\end{equation}
The proof of the lemma is complete.

\end{proof}

By Lemma~\ref{lem:v1-H1}, $\mathbf v_{p_0'} \in \dom(A_2^*)\subset\HsD$ and $A_2^*\mathbf v_{p_0'}=\lambda_{p_0}\mathbf v_{p_0'}$. Recall that $\mathbf L^*(v,\psi) = \langle A_2^*v,\psi\rangle _{L^2(\mathscr D)}$ for every $\psi\in\HsD$ and $v\in\dom(A_2^*)$. Applying this with $v=\mathbf v_{p_0'}$ yields
$$\mathbf L^*(\mathbf v_{p_0'},\psi) = \lambda_{p_0}\,\langle \mathbf v_{p_0'},\psi\rangle _{L^2(\mathscr D)}, \qquad \forall\,\psi\in\HsD.$$
Hence, by \eqref{eq:Lstar}, since $\mathbf v_{p_0'}$ is real-valued, for every real-valued $\psi\in\HsD$,
\begin{equation}\label{eq:v1-weak}
  \int_\mathscr D \nabla \mathbf v_{p_0'} \cdot\nabla\psi
  {+} \int_\mathscr D (b\cdot\nabla \mathbf v_{p_0'} )\,\psi  {+} \int_\mathscr D (\operatorname{div}b)\,\mathbf v_{p_0'} \,\psi
  {-} \int_{\SigN} (b\cdot \mathbf n_e)\,\mathbf v_{p_0'} \,\psi
  = \lambda_{p_0}\int_\mathscr D \mathbf v_{p_0'} \,\psi. 
\end{equation}
In view of \eqref{eq:v1-weak}, two features are new compared with the treatment of $\mathbf u_{p_0}$: the zeroth-order term ${(\operatorname{div}b)\mathbf v_{p_0'}}$ and the boundary flux term on $\SigN$.

\begin{proposition}\label{pr.conti}
For all $\ve>0$, $\mathbf u_{p_0}, \mathbf v_{p_0'} \in C^{0,1/2-\ve}(\bar{\mathscr D})$.
\end{proposition}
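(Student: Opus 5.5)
We split the argument into a regularity step away from the interface $\partial\Sigma$ and a local analysis near $\partial\Sigma$. First we record the weak formulations. For $\mathbf v_{p_0'}$ this is \eqref{eq:v1-weak}. For $\mathbf u_{p_0}$ we note that $\mathbf u_{p_0}\in\dom(A_{p_0})\subset\dom(A_2)\subset\HsD$ by Proposition~\ref{pr.extensionS}, so that $\mathbf L(\mathbf u_{p_0},\psi)=\lambda_{p_0}\langle \mathbf u_{p_0},\psi\rangle_{L^2(\mathscr D)}$ for all $\psi\in\HsD$. Both functions are thus $H^1$ weak solutions of homogeneous mixed problems, with smooth coefficients, for a second order operator with principal part $-\Delta$. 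The Dirichlet condition holds on $\SigD$. On $\SigN$ we have a Neumann condition for $\mathbf u_{p_0}$ and the Robin condition $\partial_{\mathbf n_e}v-(b\cdot\mathbf n_e)v=0$ for $\mathbf v_{p_0'}$. The lower order terms act as right-hand sides: $b\cdot\nabla \mathbf u_{p_0}+\lambda_{p_0}\mathbf u_{p_0}$, respectively $-b\cdot\nabla\mathbf v_{p_0'}-(\operatorname{div}b)\mathbf v_{p_0'}+\lambda_{p_0}\mathbf v_{p_0'}$ in the interior and $(b\cdot\mathbf n_e)\mathbf v_{p_0'}$ on $\SigN$. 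All of these lie at least in $L^2$, respectively $H^{1/2}(\SigN)$.

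\emph{Step 1 (global bootstrap).} Apply Savaré's estimate \cite{Savare1997} to the problem with these right-hand sides, exactly as in Lemma~\ref{lem:compact}. For $\mathbf v_{p_0'}$ the Robin term is harmless: either shift it into the form, since Savaré's framework allows lower order boundary terms, or treat it as Neumann data in $H^{1/2}$. This gives $\mathbf u_{p_0},\mathbf v_{p_0'}\in H^{3/2-\varepsilon}(\mathscr D)$ for all small $\varepsilon>0$. In $d=2,3$ this already embeds into $C^{0,\alpha}$ for $d=2$, and only barely fails for $d=3$. It does not give continuity in general dimension, so a further argument is needed.

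\emph{Step 2 (Hölder continuity via Shamir / De Giorgi).} Next we invoke Shamir's theorem \cite{Shamir1968}. For mixed Dirichlet--Neumann problems on $C^\infty$ domains with a $C^3$ interface and smooth coefficients, solutions with sufficiently integrable data belong to $C^{0,1/2-\varepsilon}(\bar{\mathscr D})$ near $\partial\Sigma$ and are smooth elsewhere. To put ourselves in its hypotheses, we first iterate to improve integrability of the right-hand sides. A De Giorgi--Nash--Moser / Stampacchia argument adapted to mixed boundary conditions (truncations $(u-k)^+$ are admissible test functions in $\HsD$) gives $L^\infty$ bounds. Alternatively, we can use $W^{1,q}$, $q>2$, regularity from Gröger \cite{groger1989aw} and bootstrap. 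Either way $\nabla\mathbf u_{p_0},\nabla\mathbf v_{p_0'}\in L^q$ for some $q>2$, and the right-hand sides become as integrable as Shamir requires. Away from $\partial\Sigma$, standard local Dirichlet or Neumann/Robin elliptic regularity (flattening the boundary) gives smoothness, so only neighborhoods of $\partial\Sigma$ matter. There we flatten $\partial\mathscr D$ and $\partial\Sigma$ simultaneously to a half-space model with interface $\{x_{d-1}=0,x_d=0\}$. We then apply Shamir's local $H^{s}$ / Hölder estimate with $s<3/2$ in the tangential-interface variables, and conclude $C^{0,1/2-\varepsilon}$ by a Morrey/Campanato-type embedding along with the known $r^{1/2}$ model behavior ($\mathrm{Re}\sqrt z$).

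\emph{Main obstacle.} The delicate point is Step 2 for $\mathbf v_{p_0'}$ near $\partial\Sigma$. The Robin term and the divergence-form zeroth-order term must be absorbed so that Shamir's result, stated for pure mixed Dirichlet--Neumann problems, applies. We plan to do this by subtracting a correction: solve an auxiliary Neumann problem that carries the boundary datum $(b\cdot\mathbf n_e)\mathbf v_{p_0'}\in W^{1-1/q,q}(\SigN)$, or multiply $\mathbf v_{p_0'}$ by $e^{-\phi}$, where $\phi$ is smooth with $\partial_{\mathbf n_e}\phi=b\cdot\mathbf n_e$ on $\partial\mathscr D$. This turns the Robin condition into a homogeneous Neumann condition while keeping smooth coefficients. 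The multiplication by $e^{-\phi}$ is the route we would try first, since it preserves the Dirichlet condition and costs only smooth lower order terms.
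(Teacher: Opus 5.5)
There is a genuine gap in Step 2, at the point you treat as routine. You move the first-order terms into the data, so near $\partial\Sigma$ you must apply Shamir's theorem with $f=b\cdot\nabla\mathbf u_{p_0}+\lambda_{p_0}\mathbf u_{p_0}$ (and the analogous datum for $\mathbf v_{p_0'}$). Shamir's result gives $C^{0,1/2-\varepsilon}$ for \emph{every} $\varepsilon>0$ only through the $L^p$-scale. Near $\partial\Sigma$ one gets $w\in H^{s,p}$ with $s<1/2+2/p$, which embeds into $C^{0,\alpha}$ only for $\alpha<1/2-(d-2)/p$. This needs $p>\max(4,2(d-2))$ even for continuity, and $p\to\infty$ to approach the exponent $1/2$. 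Hence the data must lie in $L^p$ for every $p<\infty$, and your datum never does. The eigenfunctions genuinely behave like $\mathrm{dist}(\cdot,\partial\Sigma)^{1/2}$ at the junction (the $\mathrm{Re}\sqrt z$ profile you mention), so $|\nabla\mathbf u_{p_0}|\sim\mathrm{dist}(\cdot,\partial\Sigma)^{-1/2}$. Since $\partial\Sigma$ has codimension two in $\mathbb R^d$, $\nabla\mathbf u_{p_0}\in L^q$ only for $q<4$. Therefore no De Giorgi--Stampacchia $L^\infty$ bound or Gr\"oger $W^{1,q}$ bootstrap can make $b\cdot\nabla\mathbf u_{p_0}$ ``as integrable as Shamir requires''. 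With $q<4$ the argument gives no continuity at all for $d\ge4$, and only exponents below $1/4$ when $d=3$. Your fallback, the $H^{s}$ estimate with $s<3/2$, is the $p=2$ case, which you yourself note does not embed into $C^0$ for $d\ge3$. A Morrey/Campanato argument cannot recover the missing $L^p$ information.

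The missing idea is that the bootstrap is unnecessary. As used in the paper, Shamir's Theorem~5.3 and Corollary~5.4 cover a general second-order elliptic operator $A=\sum_{|\beta|\le2}a_\beta D^\beta$ with smooth real coefficients, lower-order terms included. They also cover a general first-order boundary operator $B=\sum_{|\beta|\le1}b_\beta D^\beta$ covering $A$, zero-order part included. So keep everything in the operators. Then $\mathbf u_{p_0}$ solves the homogeneous mixed problem with $A=-\Delta-b\cdot\nabla-\lambda_{p_0}$ and $B=\partial_{\mathbf n_e}$. Likewise $\mathbf v_{p_0'}$ solves it with $A=-\Delta+b\cdot\nabla+\operatorname{div}b-\lambda_{p_0}$ and $B=\partial_{\mathbf n_e}-(b\cdot\mathbf n_e)$. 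In both cases $f=0\in\bigcap_{p<\infty}L^p$. This is the paper's proof, completed by smoothness away from $\partial\Sigma$ and compactness of $\bar{\mathscr D}$. In particular, the Robin term you flag as the main obstacle is covered directly. Your $e^{-\phi}$ conjugation is correct: it turns the Robin condition into a homogeneous Neumann one while preserving $\HsD$ and the smoothness of the coefficients. It is, however, unnecessary, as are Step~1 and the integrability bootstrap.
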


\begin{proof}
We use \cite[Theorem 5.3 and Corollary 5.4]{Shamir1968}. They apply to weak solutions $w$ of mixed problems of the form
\begin{equation}\label{eq.mixedShamir}
Aw=f \ \text{ in }\mathscr D,\qquad w=0\ \text{ on }\SigD,\qquad Bw=0\ \text{ on }\SigN,
\end{equation}
where $\partial\Sigma$ is a $C^3$ submanifold of $\partial\mathscr D$ of dimension $d-2$, $A=\sum_{|\beta|\le2}a_\beta D^\beta$ is a second-order elliptic operator with real coefficients $a_\beta\in C^{1+|\beta|}(\bar{\mathscr D})$ (zero-order terms included), and $B=\sum_{|\beta|\le1}b_\beta D^\beta$ is a first-order operator with real coefficients $b_\beta\in C^{1+|\beta|}(\bar{\mathscr D})$ which covers $A$ on $\partial\mathscr D$ \cite[Section 5, I--III]{Shamir1968}. If $f\in L^p(\mathscr D)$ for every $p<+\infty$ and $w\in H^1(\mathscr D)$, then, near each point of $\partial\Sigma$, $w\in H^{s,p}$ for every $p>4$ and $s< 1/2+ 2/p$ \cite[Theorem 5.3]{Shamir1968}. Hence $w$ is H\"older continuous of every order $\alpha< 1/2$ near $\partial\Sigma$ \cite[Corollary 5.4]{Shamir1968}. Away from $\partial\Sigma$, the boundary condition is locally either purely Dirichlet or purely Neumann/Robin, and standard elliptic regularity (the coefficients being smooth) gives $w\in C^\infty$ there. By compactness of $\bar{\mathscr D}$, we deduce $w\in C^{0,1/2-\ve}(\bar{\mathscr D})$ for every $\ve>0$.

By Proposition~\ref{pr.domA2}, $\mathbf u_{p_0}\in\HsD$ is a weak solution of \eqref{eq.mixedShamir} with
$$
A={-\mathscr L-\lambda_{p_0}=-\Delta-b\cdot\nabla-\lambda_{p_0}},\qquad B=\partial_{\mathbf n_e},\qquad f=0 . 
$$
By Lemma~\ref{lem:v1-H1} and Proposition~\ref{prop:domAstar-L2}, $\mathbf v_{p_0'}\in\HsD$ is a weak solution of \eqref{eq.mixedShamir} with
$$
A={-\mathscr L^*-\lambda_{p_0}=-\Delta+b\cdot\nabla+(\operatorname{div}b-\lambda_{p_0})},\qquad B=\partial_{\mathbf n_e}{-}(b\cdot\mathbf n_e),\qquad f=0.
$$
In both cases the coefficients of $A$ and $B$ are smooth on $\bar{\mathscr D}$ (since $b\in C^\infty(\bar{\mathscr D})$ and $\partial\mathscr D$ is smooth, $\mathbf n_e$ being extended inside $\mathscr D$ as the gradient of the signed distance to $\partial\mathscr D$), and $B$ covers $A$ since its principal part is the normal derivative. The claim follows.

\end{proof}

By interior elliptic regularity, $\mathbf u_{p_0},\mathbf v_{p_0'}\in C^\infty(\mathscr D)$. Moreover, $\mathbf u_{p_0}\ge0$, $\mathbf u_{p_0}\not\equiv0$, and
$$
-\Delta\mathbf u_{p_0}{-}b\cdot\nabla\mathbf u_{p_0}+\lambda_{p_0}^-\,\mathbf u_{p_0}=\lambda_{p_0}^+\,\mathbf u_{p_0}\ge0 \quad\text{in }\mathscr D , 
$$
so that, $\mathscr D$ being connected, the strong maximum principle yields $\mathbf u_{p_0}>0$ in $\mathscr D$. Similarly, setting $c:={\operatorname{div}b-\lambda_{p_0}}$, one has $-\Delta\mathbf v_{p_0'}{+}b\cdot\nabla\mathbf v_{p_0'}+c^+\mathbf v_{p_0'}=c^-\mathbf v_{p_0'}\ge0$ in $\mathscr D$, hence $\mathbf v_{p_0'}>0$ in $\mathscr D$. In addition, $\mathbf u_{p_0}>0$ on $\SigN$\footnote{Let $x_0\in\SigN$. Since $x_0\notin\partial\Sigma$, $\mathbf u_{p_0}$ is $C^1$ up to the boundary near $x_0$ (local Neumann regularity), and $\partial_{\mathbf n_e}\mathbf u_{p_0}(x_0)=0$. If $\mathbf u_{p_0}(x_0)=0$, then $x_0$ is a boundary minimum of $\mathbf u_{p_0}$ (as $\mathbf u_{p_0}>0$ in $\mathscr D$), and, $\partial\mathscr D$ being smooth (interior ball condition), Hopf's lemma applied to the inequality above gives $\partial_{\mathbf n_e}\mathbf u_{p_0}(x_0)<0$, a contradiction.}.

 \begin{proposition}\label{pr:allp}
Let  $p\in(1,+\infty)$
\begin{enumerate}
\item[$\mathfrak 1$.] We have $\mathbf{u}_{p_0} \in \dom(A_p)$ and $A_p \mathbf{u}_{p_0} = \lambda_{p_0} \mathbf{u}_{p_0}$.
\item[$\mathfrak 2$.] We have $\mathbf v_{p_0'} \in \dom(A_{p}^*)$ and $A_{p}^* \mathbf v_{p_0'} = \lambda_{p_0} \mathbf v_{p_0'}$.
\end{enumerate}
\end{proposition}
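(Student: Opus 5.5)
The plan is to combine the boundedness of $\mathbf u_{p_0}$ and $\mathbf v_{p_0'}$ from Proposition~\ref{pr.conti} with the description of the domains in Proposition~\ref{pr.extensionS}. We split according to whether $p\ge 2$ or $p<2$ (for $A_p$) and $p'\ge 2$ or $p'<2$ (for $A_p^*$). By Proposition~\ref{pr.conti}, both functions are continuous on the compact set $\bar{\mathscr D}$. Hence they lie in $L^\infty(\mathscr D)\subset L^q(\mathscr D)$ for every $q\in[1,+\infty]$.

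For item $\mathfrak 1$, we first check that $\mathbf u_{p_0}\in\dom(A_2)$ with $A_2\mathbf u_{p_0}=\lambda_{p_0}\mathbf u_{p_0}$. Since $p_0\ge 2$, Proposition~\ref{pr.extensionS} ($\mathfrak 1$) gives $\dom(A_{p_0})\subset\dom(A_2)$ and $A_{p_0}=A_2$ there.

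\emph{Case $p\ge 2$.} We use the characterization $\dom(A_p)=\{u\in\dom(A_2)\cap L^p(\mathscr D),\ A_2u\in L^p(\mathscr D)\}$. The function $\mathbf u_{p_0}$ is in $L^p(\mathscr D)$, and $A_2\mathbf u_{p_0}=\lambda_{p_0}\mathbf u_{p_0}\in L^p(\mathscr D)$. So $\mathbf u_{p_0}\in\dom(A_p)$ and $A_p\mathbf u_{p_0}=\lambda_{p_0}\mathbf u_{p_0}$.

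\emph{Case $p\in(1,2)$.} Here $A_p$ is the closure of $A_2$ in $L^p(\mathscr D)$, and in particular $A_2\subset A_p$ (step (a) of the proof of $\mathfrak 2$ in Proposition~\ref{pr.extensionS}). Since $\mathbf u_{p_0}\in\dom(A_2)$, we get $A_p\mathbf u_{p_0}=A_2\mathbf u_{p_0}=\lambda_{p_0}\mathbf u_{p_0}$.

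For item $\mathfrak 2$, Lemma~\ref{lem:v1-H1} already gives $\mathbf v_{p_0'}\in\dom(A_2^*)$ with $A_2^*\mathbf v_{p_0'}=\lambda_{p_0}\mathbf v_{p_0'}$. The argument then mirrors item $\mathfrak 1$.

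\emph{Case $p\in(1,2)$, i.e.\ $p'>2$.} Proposition~\ref{pr.extensionS} ($\mathfrak 3$) describes $\dom(A_p^*)$ as the set of $v\in\dom(A_2^*)\cap L^{p'}(\mathscr D)$ with $A_2^*v\in L^{p'}(\mathscr D)$. Boundedness of $\mathbf v_{p_0'}$ settles membership.

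\emph{Case $p\ge 2$.} We use $\mathfrak 4$: $A_p^*$ is the closure of $A_2^*$ in $L^{p'}(\mathscr D)$, so $A_2^*\subset A_p^*$ and the conclusion follows. For $p=2$ this is trivial.

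The only genuinely nontrivial input is the boundedness of the eigenfunctions, which comes from Shamir's H\"older estimate. The point needing care is the inclusion $A_2^*\subset A_p^*$ for $p>2$. I would justify it directly via resolvent consistency, as in the proof of Lemma~\ref{lem:v1-H1}: show $(A_p^*+\kappa)^{-1}g=(A_2^*+\kappa)^{-1}g$ for $g\in L^2(\mathscr D)$, for $\kappa$ large, by duality with \eqref{eq.reslemma}. Then for $v\in\dom(A_2^*)$, set $g:=(A_2^*+\kappa)v$. This gives $v=(A_p^*+\kappa)^{-1}g\in\dom(A_p^*)$ and $A_p^*v=A_2^*v$.
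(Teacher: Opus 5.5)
Your proof is correct and follows the paper's argument. You take boundedness of $\mathbf u_{p_0},\mathbf v_{p_0'}$ from Proposition~\ref{pr.conti}, then use the domain characterization of Proposition~\ref{pr.extensionS} when the relevant exponent is at least $2$, and the inclusion $A_2\subset A_p$ (resp.\ $A_2^*\subset A_p^*$) coming from the closure description otherwise. The paper writes out only item $\mathfrak 1$ and says item $\mathfrak 2$ is similar using Lemma~\ref{lem:v1-H1}; your explicit case split on $p'$, and your resolvent-duality justification of $A_2^*\subset A_p^*$ for $p>2$, supply exactly that omitted step.
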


\begin{proof}
We only treat $\mathbf{u}_{p_0}$, the proof of $\mathfrak 2$ goes similarly using in addition Lemma \ref{lem:v1-H1}.

Since $p_0\ge2$, Proposition~\ref{pr.extensionS} gives $\mathbf u_{p_0}\in\dom(A_{p_0})\subset\dom(A_2)$ and $A_2\mathbf u_{p_0}=A_{p_0}\mathbf u_{p_0}=\lambda_{p_0}\mathbf u_{p_0}$. By Proposition~\ref{pr.conti}, $\mathbf u_{p_0}\in C(\bar{\mathscr D})\subset L^p(\mathscr D)$ for every $p\in(1,+\infty)$, hence
$$
A_2\mathbf u_{p_0}=\lambda_{p_0}\mathbf u_{p_0}\in L^p(\mathscr D),\qquad \forall\,p\in(1,+\infty).
$$
Fix $p\in(1,+\infty)$. We consider two cases.
\begin{itemize}
\item[$i)$] \emph{Case $p\in[2,+\infty)$.} By Proposition~\ref{pr.extensionS},
$\dom(A_p)=\{w\in\dom(A_2)\cap L^p(\mathscr D),\ A_2w\in L^p(\mathscr D)\}$ and $A_pw=A_2w$. Hence $\mathbf u_{p_0}\in\dom(A_p)$ and $A_p\mathbf u_{p_0}=A_2\mathbf u_{p_0}=\lambda_{p_0}\mathbf u_{p_0}$.
\smallskip 

\item[$ii)$] \emph{Case $p\in(1,2)$.} By Proposition~\ref{pr.extensionS}, $A_p$ is the closure of $A_2$ in $L^p(\mathscr D)$ and  in particular $A_2\subset A_p$. Hence $\mathbf u_{p_0}\in\dom(A_2)\subset\dom(A_p)$ and $A_p\mathbf u_{p_0}=A_2\mathbf u_{p_0}=\lambda_{p_0}\mathbf u_{p_0}$.
\end{itemize}

\end{proof}

\subsubsection{Compactness of the resolvent of $A_p$ for any $p>1$}
  \begin{lemma}\label{le.comp}
Let $p\in(1,+\infty)$. For every $\kappa\in\rho(-A_p)$, $R_\kappa^p:=(A_p+\kappa)^{-1}$ is compact on $L^p(\mathscr D)$. Consequently, $(R_\kappa^p)^*=(A_p^*+\kappa)^{-1}$ is compact on $L^{p'}(\mathscr D)$, i.e.\ $A_p$ and $A_p^*$ have compact resolvent.
\end{lemma}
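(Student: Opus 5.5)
Compactness of $R^{p}_{\kappa}$ is already known for $p=p_0$ (Lemma~\ref{lem:compact}), and the plan is to transport it to every $p\in(1,+\infty)$. We first reduce to a single $\kappa$: by the resolvent identity used at the end of the proof of Lemma~\ref{lem:compact}, if $R^p_{\kappa_0}$ is compact for one $\kappa_0\in\rho(-A_p)$, then $R^p_\kappa=R^p_{\kappa_0}(I+(\kappa_0-\kappa)R^p_\kappa)$ is compact for every $\kappa\in\rho(-A_p)$. We therefore fix $\kappa_0$ real with $\kappa_0>\omega$, where $\omega$ comes from the bound $\|S_q(t)\|_{\mathcal L(L^q)}\le Me^{\omega t}$ obtained in the proof of Proposition~\ref{pr.extensionS}. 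This bound holds uniformly in $q\in(1,+\infty)$ (and also at $q=\infty$ and, by the column bound in \eqref{eq.schur}, at $q=1$), so $\kappa_0\in\rho(-A_q)$ for all $q$. The Laplace-transform representation then shows that the operators $R^q_{\kappa_0}$ are consistent: they agree on $L^{q_1}\cap L^{q_2}$, and each is bounded on $L^q$ with norm at most $M/(\kappa_0-\omega)$. The same holds at the endpoints, where the kernel bounds give an operator bounded on $L^1$ and on $L^\infty$.

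The key step is interpolation of compactness. Krasnosel'skii's theorem (for $L^p$ scales), or more generally the Cwikel / Cobos--K\"uhn--Schonbek one-sided compactness interpolation for the complex method, says the following: if $T$ is bounded on $L^{q_0}$ and on $L^{q_1}$ and compact on $L^{q_0}$, then $T$ is compact on every $L^{q_\theta}$ with $\theta\in(0,1)$. For $p\in(1,+\infty)$ we apply this with $q_0=p_0$. If $p>p_0$, we take $q_1=\infty$, so that $p$ lies strictly between $p_0$ and $\infty$. If $p<p_0$, we take $q_1=1$; alternatively, for $p\in(1,2)$ we may use duality. Krasnosel'skii's version requires only $\sigma$-finite measure spaces and boundedness at the other endpoint, which we have. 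This yields compactness of $R^p_{\kappa_0}$, and hence of $R^p_\kappa$ for all $\kappa\in\rho(-A_p)$.

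Finally, the adjoint statement follows from Schauder's theorem. Since $L^p$ is reflexive, $(R^p_\kappa)^*=((A_p+\kappa)^{-1})^*=(A_p^*+\kappa)^{-1}$, and this operator is compact on $L^{p'}$ because the adjoint of a compact operator is compact.

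I expect the main obstacle to be justifying the endpoint boundedness cleanly. At $q=\infty$ and $q=1$ the semigroup is only weak-$*$ continuous, so we avoid generator theory there and define the endpoint operator directly as $\int_0^\infty e^{-\kappa_0 t}S(t)f\,dt$, using the Gaussian kernel bounds \eqref{eq.schur}. We then check consistency on $L^2\cap L^\infty$ or $L^1\cap L^2$ before invoking the interpolation theorem. If the endpoint causes trouble, we can instead interpolate between $p_0$ and some large finite $q_1$, or some $q_1$ close to $1$, which avoids the endpoint issues entirely.
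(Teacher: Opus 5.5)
Your proposal is correct and follows essentially the same route as the paper: you reduce to a single large $\kappa_0$ via the resolvent identity, and you use consistency of the resolvents $R^r_{\kappa_0}$ to get one operator that is compact on $L^{p_0}(\mathscr D)$ and bounded on a second Lebesgue space. You then interpolate compactness (the paper cites Persson's extension of Krasnosel'skii's theorem) and conclude for the adjoint by Schauder's theorem. The only difference is the second endpoint: the paper takes a finite $q$ on the far side of $p$, which is your fallback, while your primary choice $q_1\in\{1,\infty\}$ also works, because Krasnosel'skii's argument only requires the compact endpoint $p_0$ to be finite.
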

 
 \begin{proof}
If $p=p_0$, this is Lemma~\ref{lem:compact}. Fix $p\in(1,+\infty)\setminus\{p_0\}$. If $p<p_0$, pick $q\in(1,p)$; if $p>p_0$, pick $q\in(p,+\infty)$. In both cases $p$ lies strictly between $p_0$ and $q$, so there is $\theta\in(0,1)$ with
$$
\frac1p=\frac{1-\theta}{p_0}+\frac{\theta}{q}.
$$
By the Riesz-Thorin theorem, $L^p(\mathscr D)$ is an interpolation space of exponent $\theta$ with respect to the pair $(L^{p_0}(\mathscr D),L^{q}(\mathscr D))$: every linear operator $T$ which is bounded on $L^{p_0}(\mathscr D)$ with norm $M_0$ and on $L^{q}(\mathscr D)$ with norm $M_1$ is bounded on $L^p(\mathscr D)$ with norm at most $M_0^{1-\theta}M_1^{\theta}$.

\underline{Step 1}. Compactness at a common point $\kappa_0$.

 Since $-A_2$, $-A_q$, $-A_{p_0}$ and $-A_p$ generate $C_0$-semigroups (Proposition~\ref{pr.extensionS}), their resolvent sets all contain a common half-line $(\kappa_*,+\infty)$; fix $\kappa_0>\kappa_*$. Since $p_0\ge2$, we have $L^{p_0}(\mathscr D)\cap L^{q}(\mathscr D)=L^{\max(p_0,q)}(\mathscr D)\subset L^2(\mathscr D)$, and by \eqref{eq.reslemma} the resolvents $R^{r}_{\kappa_0}$, $r\in\{q,p_0,p\}$, all coincide with $R^2_{\kappa_0}$ on $L^2(\mathscr D)\cap L^r(\mathscr D)$. They therefore define a single linear operator $T$ on
$$
L^{p_0}(\mathscr D)+L^{q}(\mathscr D)=\big\{f_0+f_1,\ f_0\in L^{p_0}(\mathscr D),\ f_1\in L^{q}(\mathscr D)\big\},
$$
which here equals $L^{\min(p_0,q)}(\mathscr D)$ since $\mathscr D$ is bounded, and whose restriction to $L^r(\mathscr D)$ is $R^r_{\kappa_0}$ for $r\in\{q,p_0,p\}$.

The operator $T$ is compact on $L^{p_0}(\mathscr D)$ by Lemma~\ref{lem:compact}, and bounded on $L^{q}(\mathscr D)$. Since $p_0<+\infty$, the interpolation pair $(L^{p_0}(\mathscr D),L^{q}(\mathscr D))$ satisfies Persson's approximation hypothesis \cite[Proposition, p.~218]{Persson1964}. Hence, by \cite[Theorem, p.~217]{Persson1964}, which extends Krasnosel'skii's theorem for $L^p$ spaces, $T$ is compact on $L^p(\mathscr D)$. That is, $R^p_{\kappa_0}$ is compact on $L^p(\mathscr D)$.

\underline{Step 2}. General $\kappa\in\rho(-A_p)$.

By the resolvent identity in $L^p(\mathscr D)$,
$$
R_\kappa^{p}=R_{\kappa_0}^{p}\big(I+(\kappa_0-\kappa)R_\kappa^{p}\big),
$$
which is compact as the composition of a compact and a bounded operator.

\underline{Step 3}. The adjoint.

By Schauder's theorem, $(R_\kappa^p)^*$ is compact on $L^{p'}(\mathscr D)$, and $(R_\kappa^p)^*=(A_p^*+\kappa)^{-1}$ since $L^p(\mathscr D)$ is reflexive.

\end{proof}

 \subsubsection{$A_p$ and $A_p^*$ are quasi-sectorial}
 
For  $\psi \in (0, \frac{\pi}{2}]$, define   the open sector
$$
\bm  \Sigma(\psi) := \{z \in \mathbb{C},\ z \neq 0 \text{ and } |\arg z| < \psi\}.
$$
We recall~\cite[Definition 1.44]{Ouhabaz}  that a strongly continuous semigroup $(T(t))_{t \ge 0}$ acting on a Banach space  $X$ is called a bounded holomorphic semigroup on the sector $\bm\Sigma(\psi)$ if $(T(t))_{t \ge 0}$ admits a holomorphic extension $(T(z))_{z \in \bm\Sigma(\psi)}$ such that for each $\theta \in (0, \psi)$, $(T(z))_{z \in \bm\Sigma(\theta)}$ is uniformly bounded and strongly continuous at $0$.  


  \begin{proposition}\label{pr.sectorial}
There exist $\kappa\ge0$ and $\psi\in(0,\frac\pi2]$ such that, for every $p\in(1,+\infty)$, the semigroup $(e^{-t(A_p+\kappa)})_{t\ge0}$ extends to a bounded holomorphic semigroup on the sector $\bm\Sigma(\psi)$. Equivalently, its generator $-(A_p+\kappa)$ is sectorial in $L^p(\mathscr D)$ in the sense of \cite[Definition II.4.1]{EnNa01} (i.e.\ $A_p$ is quasi-sectorial). In particular, there exists $\theta_0\in[0,\frac\pi2)$ such that $\sigma(A_p+\kappa)\subset\bar{\bm\Sigma}(\theta_0)$. The same holds for $A_p^*$ on $L^{p'}(\mathscr D)$, $p\in(1,+\infty)$.
\end{proposition}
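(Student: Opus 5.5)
The plan is to obtain the holomorphic extension on $L^p$ from the $L^2$ theory together with Gaussian bounds for complex times, and then to interpolate.

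\textbf{Step 1: the case $p=2$.} By Proposition~\ref{pr.domA2}, for $\kappa>\omega_2$ the form $\mathbf L_\kappa$ is accretive, continuous and sectorial. The estimate \eqref{eq:coercive} gives
$$
|\mathrm{Im}\,\mathbf L_\kappa(u,u)|\le \|b\|_\infty\|\nabla u\|_{L^2}\|u\|_{L^2}\le C\,\mathrm{Re}\,\mathbf L_{\kappa'}(u,u)
$$
for a slightly larger $\kappa'$, so the numerical range of $A_2+\kappa'$ lies in a sector $\bar{\bm\Sigma}(\theta_2)$ with $\theta_2<\pi/2$. Then \cite[Theorem 1.52]{Ouhabaz} shows that $e^{-z(A_2+\kappa')}$ is a contraction-valued holomorphic semigroup on $\bm\Sigma(\pi/2-\theta_2)$.

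\textbf{Step 2: the case $p\neq2$.} Here I would use the complex-time version of the Gaussian bounds, namely \cite[Theorem 5.4 (ii)]{OuhabazJOT}, which is the companion of the estimate \eqref{eq.gaussian} already invoked in Proposition~\ref{pr.extensionS}. For $z$ in a smaller sector $\bm\Sigma(\psi)$, it gives
$$
|p(z,x,y)|\le C(\mathrm{Re}\,z)^{-d/2}e^{\omega\,\mathrm{Re}\,z}e^{-\frac{|x-y|^2}{4\delta|z|}},
$$
with $|z|\le c\,\mathrm{Re}\,z$ on $\bm\Sigma(\psi)$. Integrating in $x$ or in $y$ gives Schur bounds uniform in $z$ after the shift by $\kappa\ge\omega$. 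The Schur/Hölder argument of \eqref{eq.Lqbound} then yields $\|e^{-z(A_2+\kappa)}f\|_{L^p}\le M\|f\|_{L^p}$ for all $p\in(1,\infty)$, uniformly on $\bm\Sigma(\psi)$.

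The extended operators $T_p(z)$ are consistent with $T_2(z)$. Holomorphy in $L^p$ follows from the uniform bound together with weak holomorphy tested on the dense set $L^2\cap L^p$ (a Vitali-type argument). Strong continuity at $0$ inside each closed subsector follows exactly as in items $i)$ and $ii)$ of the proof of Proposition~\ref{pr.extensionS}, using $L^\infty$ bounds for $p>2$. Since the constants $\kappa$ and $\psi$ come from the form and not from $p$, they are uniform in $p$.

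\textbf{Fallback and main obstacle.} If the complex-time kernel bound is not directly available in the cited form, I would instead use Stein interpolation. The operator $z\mapsto e^{-z(A_2+\kappa)}$ is holomorphic and bounded on $L^2$ on the sector $\bm\Sigma(\psi_2)$, and bounded on $L^{q}$ for real $z$ and all $q$ by \eqref{eq.Lqbound}. Interpolating gives a bounded holomorphic extension on $L^p$ on a smaller sector whose angle depends on $p$. The uniform angle claimed in the statement would then be lost near $p\to1$ or $p\to\infty$. I therefore expect the main obstacle to be securing a $p$-independent angle $\psi$, and this is why the Gaussian bound in complex time is the preferred route.

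\textbf{Conclusion.} By \cite[Theorem II.4.6]{EnNa01}, a bounded holomorphic semigroup is equivalent to sectoriality of $-(A_p+\kappa)$. In particular $\sigma(A_p+\kappa)\subset\bar{\bm\Sigma}(\pi/2-\psi)$, so one may take $\theta_0=\pi/2-\psi$. For $A_p^*$ on $L^{p'}$, note that $e^{-\bar z(A_p^*+\kappa)}=(e^{-z(A_p+\kappa)})^*$ by Proposition~\ref{pr.extensionS}. Adjoints of a bounded holomorphic semigroup on a reflexive space form a bounded holomorphic semigroup on the same sector, and strong continuity follows by reflexivity. Alternatively, one can apply the same kernel argument to $p^*(z,y,x)=\bar p(\bar z,x,y)$.
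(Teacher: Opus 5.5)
Your proposal is correct and follows essentially the paper's route. Form sectoriality yields the holomorphic contraction semigroup $e^{-z(A_2+\kappa)}$ on $L^2(\mathscr D)$, Gaussian bounds give bounded holomorphy on every $L^p(\mathscr D)$ on a $p$-independent sector, and $A_p^*$ is handled by duality or via the adjoint form $\mathbf L^*_\kappa$. The only difference is that the paper gets your Step 2 in one stroke from \cite[Theorem 6.16]{Ouhabaz}, which combines the real-time bound \eqref{eq.gaussian} with $L^2$ holomorphy to give holomorphy on $L^p$ on the same sector $\bm\Sigma(\psi)$. So you need neither a complex-time kernel bound from \cite{OuhabazJOT} nor the angle-losing Stein fallback.
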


\begin{proof}
Let $\omega$ be the constant in \eqref{eq.gaussian} and fix $\kappa>\max(\omega,\|b\|_\infty^2/2)$. Set $B_\kappa:=A_2+\kappa$, which is the operator associated with the form $\mathbf L_{\kappa}$.

\emph{Holomorphy on $L^2$.} By \cite[Definition 1.7]{Ouhabaz} and the estimate preceding Proposition~\ref{pr.domA2}, $\mathbf L_{\kappa}$ is sectorial with constant $C_{\kappa}$. Hence $B_\kappa$ is a sectorial operator \cite[Proposition 1.27]{Ouhabaz}. Moreover, $-1\in\rho(B_\kappa)$ \cite[Proposition 1.22]{Ouhabaz}, and $\mathrm{dist}(-1,\bm \Sigma(\arctan C_{\kappa}))>0$. By \cite[Theorem 1.54]{Ouhabaz}, $(e^{-zB_\kappa})$ is a contraction semigroup (in $L^2(\mathscr D)$), holomorphic on $\bm \Sigma(\psi)$ with $\psi:=\frac\pi2-\arctan C_{\kappa}$.

\emph{Gaussian bound.} The kernel of $e^{-tB_\kappa}$ is $e^{-\kappa t}p(t,x,y)$. Since $\kappa\ge\omega$, \eqref{eq.gaussian} gives $|e^{-\kappa t}p(t,x,y)|\le C\,t^{-d/2}e^{-|x-y|^2/(4\delta t)}$ for all $t>0$ and a.e.\ $(x,y)$.

By \cite[Theorem 6.16]{Ouhabaz}, $(e^{-tB_\kappa})_{t\ge0}$ extends to a bounded holomorphic semigroup on $\bm \Sigma(\psi)$ in $L^p(\mathscr D)$ for every $p\in(1,+\infty)$. By consistency, its restriction to real times is $(e^{-\kappa t}S_p(t))_{t\ge0}$, whose generator is $-(A_p+\kappa)$. 

The equivalence between bounded holomorphy on a sector and sectoriality is classical, see \cite[Definition II.4.1 and Theorem II.4.6]{EnNa01} applied to the generator $-(A_p+\kappa)$ (or \cite[Theorem 2.5.2]{Pazy}, which applies since, by \eqref{eq.Lqbound} and $\kappa>\omega$, the semigroup $(e^{-t(A_p+\kappa)})_{t\ge0}$ is uniformly bounded and $0\in\rho(A_p+\kappa)$). 

For $A_p^*$, apply the same argument to $B_\kappa^*=A_2^*+\kappa$. It is associated with the sectorial form $\mathbf L^*_{\kappa}$, which has the same constant since 
$$\mathbf L^*_{\kappa}(u,u)=\overline{\mathbf L_{\kappa}(u,u)}.$$
 Its kernel $\bar p(t,y,x) \,e^{-\kappa t}$ (recall \eqref{eq.pbar}) satisfies the same Gaussian bound. This gives the result  on $L^{p'}(\mathscr D)$, $p\in(1,+\infty)$.
 
\end{proof}

At this stage, all the assertions of  Theorem~\ref{th.3} have been proved (with $\mathbf u= \mathbf u_{p_0}$, $\mathbf v=\mathbf v_{p_0'}$ and $\lambda=\lambda_{p_0}$) except that the eigenvalue $\lambda$ is algebraically simple  for $A_p$ and $A_p^*$.  
 

\subsubsection{The eigenvalue $\lambda$ is algebraically  simple for $A_p$ and $A_p^*$}

\begin{proposition}\label{pr.simple}
Let the assumptions of Theorem~\ref{th.2} hold, and let $\lambda\in\mathbb R$, $\mathbf u$ and $\mathbf v$ be as in Theorem~\ref{th.3}. Then, for all $p\in(1,+\infty),$
$$
\ker(A_p-\lambda)=\mathrm{span}\{\mathbf u\}
\, \,  \text{ and } \, \, 
\ker(A_p^*-\lambda)=\mathrm{span}\{\mathbf v\}.
$$ Moreover, $\lambda$ is algebraically simple, both for $A_p$ and for its adjoint $A_p^*$: for all $p\in(1,+\infty),$
\begin{enumerate}
\item[$i)$] For every $k\ge1$, $\ker ((A_p-\lambda)^k )=\mathrm{span}\{\mathbf u\}$ and $\ker\big((A_p^*-\lambda)^k\big)=\mathrm{span}\{\mathbf v\}$;
\item[$ii)$] The eigenvalue $\lambda$ is an isolated point of $\sigma(A_p)=\sigma(A_p^*)$, and the Riesz projections $P_p$ of $A_p$ and $P_p^*$ of $A_p^*$ at $\lambda$ have rank one. More precisely,
$$
P_pf=\frac{\int_{\mathscr D}f\,\mathbf v}{\int_{\mathscr D}\mathbf u\,\mathbf v}\ \mathbf u,\quad f\in L^p(\mathscr D),
\qquad
P_p^*g=\frac{\int_{\mathscr D}g\,\mathbf u}{\int_{\mathscr D}\mathbf u\,\mathbf v}\ \mathbf v,\quad g\in L^{p'}(\mathscr D).
$$
\end{enumerate}
\end{proposition}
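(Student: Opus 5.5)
Throughout, I would work with the resolvent $R:=R^p_\kappa=(A_p+\kappa)^{-1}$ for a fixed real $\kappa>\omega_2$, chosen also in $\rho(-A_p)$. By Lemma~\ref{le.comp}, $R$ is compact. The map $\mu\mapsto (\mu+\kappa)^{-1}$ turns the eigenvalue $\lambda$ of $A_p$ into the eigenvalue $r:=(\lambda+\kappa)^{-1}$ of $R$. This correspondence preserves the algebraic structure: $\ker((A_p-\lambda)^k)=\ker((R-r)^k)$ for every $k$, and the Riesz projections coincide. Since $R$ is compact, $r\neq 0$ is an isolated point of $\sigma(R)$ with finite-dimensional generalized eigenspace. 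Hence $\lambda$ is isolated in $\sigma(A_p)$, and $\sigma(A_p^*)=\sigma(A_p)$ since $\sigma(T^*)=\overline{\sigma(T)}$ and the spectrum of the real operator $A_p$ is conjugation-symmetric. It therefore suffices to prove the kernel statements.

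\textbf{Geometric simplicity.} Let $w\in\ker(A_p-\lambda)$. Since $A_p$ has real coefficients, I may assume $w$ real. For $p\ge2$, $w\in\dom(A_2)$. For $p<2$, the identity $w=rR^p w$ combined with bootstrapping through Savaré regularity and Sobolev embeddings (or simply the consistency $R^p=R^2$ on $L^2$ iterated) gives $w\in L^2$, hence $w\in\dom(A_2)$. So it suffices to work in $L^2$ with $R^2$, which is positivity improving by \eqref{eq:irrL2res}. Pair with $\mathbf v$: since $\mathbf v>0$, I can choose $c$ with $\int(w-c\mathbf u)\mathbf v=0$, and set $z:=w-c\mathbf u\in\ker(R-r)$. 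Then $|z|=r^{-1}|Rz|\le r^{-1}R|z|$ by positivity. Hence $g:=r^{-1}R|z|-|z|\ge0$ and $\int g\,\mathbf v=r^{-1}\int|z|R^*\mathbf v-\int|z|\mathbf v=0$. Because $\mathbf v>0$ a.e., this forces $g=0$, so $|z|$ is an eigenvector as well. Then $z^\pm=\frac12(|z|\pm z)\in\ker(R-r)$ are nonnegative, and if nonzero they are strictly positive a.e. by positivity improving. Since $z^+z^-=0$, one of them vanishes, so $z$ has constant sign. Together with $\int z\mathbf v=0$, this gives $z=0$. The argument for $\mathbf v$ is symmetric, using $R^*$, which is positivity improving in the form $\int (R^*g)f=\int g\,Rf>0$ whenever $f,g\ge0$ are nonzero, and pairing with $\mathbf u>0$.

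\textbf{No Jordan chain.} Suppose $(A_p-\lambda)^2w=0$. Then $(A_p-\lambda)w=a\mathbf u$ for some scalar $a$. Pairing with $\mathbf v\in\dom(A_p^*)$ gives
\begin{equation*}
a\int\mathbf u\mathbf v=\int (A_p-\lambda)w\,\mathbf v=\int w\,(A_p^*-\lambda)\mathbf v=0.
\end{equation*}
Since $\int\mathbf u\mathbf v>0$, we get $a=0$. Induction on $k$ then gives $i)$ for $A_p$, and the same computation with the roles swapped gives it for $A_p^*$. Consequently the generalized eigenspace of $\lambda$ is $\mathrm{span}\{\mathbf u\}$, and $P_p$ is a rank-one projection onto $\mathbf u$. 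Its kernel is the spectral complement, which annihilates $\mathbf v$, because $P_p^*$ projects onto $\ker(A_p^*-\lambda)=\mathrm{span}\{\mathbf v\}$ and $\langle (I-P_p)f,\mathbf v\rangle=\langle f,(I-P_p^*)\mathbf v\rangle=0$. Normalizing via $P_p\mathbf u=\mathbf u$ yields the stated formula, and the formula for $P_p^*$ is obtained symmetrically.

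\textbf{Main obstacle.} I expect the main obstacle to be transferring the positivity-improving property to $L^p$ for $p<2$, and correspondingly to the adjoint on $L^{p'}$ with $p'>2$. The cleanest route is to first show that every eigenfunction lies in $L^2$. I would obtain this from compactness of the resolvent together with consistency of the resolvents on $L^2\cap L^p$: the eigenspaces of a compact operator are finite-dimensional and independent of $p$, since $L^2$ is dense in $L^p$ and the Riesz projections are consistent across $p$.
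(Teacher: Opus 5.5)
Your proof is correct in substance but organized differently from the paper's, and your reduction to $L^2$ needs repair. The paper stays in $L^p$ throughout:
\begin{itemize}
\item it works with the semigroup $S_p(t)$ and transfers positivity improvement from $S_2$ to every $L^p$ by monotone truncation: for $0\le f\in L^p$, $f_n=\min(f,n)\in L^\infty$, so $S_p(t)f\ge S_2(t)f_n>0$ a.e.;
\item it uses the invariance $\int_{\mathscr D}(S_p(t)f)\,\mathbf v=e^{-\lambda t}\int_{\mathscr D}f\,\mathbf v$, available because $\mathbf v\in\dom(A_p^*)$ for every $p$ (Proposition~\ref{pr:allp});
\item it excludes Jordan chains by integrating $e^{\lambda t}S_p(t)w=w-ct\,\mathbf u$ against $\mathbf v$.
\end{itemize}
Your resolvent version of the Perron--Frobenius step is equivalent. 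Your Jordan-chain argument, pairing $(A_p-\lambda)w=a\mathbf u$ directly with $\mathbf v\in\dom(A_p^*)$, is shorter than the paper's Step~6.

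The weak point is your reduction to $L^2$, which is in fact unnecessary. The same truncation gives $R^p_\kappa f\ge R^2_\kappa\min(f,n)>0$ a.e.\ for $0\le f\in L^p$, $f\not\equiv0$, so your ``main obstacle'' disappears.

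If you keep the reduction, two of your suggested routes do not work as stated.
\begin{itemize}
\item Consistency cannot raise integrability. It only says $R^p_\kappa=R^2_\kappa$ on $L^2\cap L^p$, and nothing about $R^p_\kappa w$ when $w\notin L^2$.
\item Savar\'e's estimate is for $L^2$ data. By duality it only gives $R^p_\kappa:L^p\to L^2$ when $p'<p_{\max}$, which misses small $p$ when $d\ge4$.
\end{itemize}
The Riesz-projection route does work, once you justify consistency on the contour. For $p<2$, the inclusion $A_2\subset A_p$ from Proposition~\ref{pr.extensionS} gives $(z-A_p)^{-1}=(z-A_2)^{-1}$ on $L^2$ for every $z\in\rho(A_2)\cap\rho(A_p)$, hence $P_p|_{L^2}=P_2$. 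Since $\mathrm{Ran}\,P_p$ is finite-dimensional and $L^2$ is dense, $\mathrm{Ran}\,P_p=P_2(L^2)\subset\dom(A_2)$. Alternatively, the Gaussian bound \eqref{eq.gaussian} makes $S_p(t)$ map $L^p$ into $L^\infty$, so $w=e^{\lambda t}S_p(t)w\in L^\infty$ directly.

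Finally, the delicate adjoint case is $p>2$, i.e.\ $p'<2$, not $p'>2$. It is handled the same way using $A_2^*\subset A_p^*$.
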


\begin{proof}
Fix $p\in(1,+\infty)$. Recall that $\mathbf u>0$ a.e.\ in $\mathscr D$, that $\mathbf v>0$ a.e.\ in $\mathscr D$, and that $\mathbf v\in C(\bar{\mathscr D})\subset L^\infty(\mathscr D)$. In particular $\int_{\mathscr D}\mathbf u\,\mathbf v\in\mathbb R_+^*$.

\underline{Step 1}. Preliminary properties of $S_p$.

\noindent
\underline{Step 1a}.\textit{ Eigenvectors are fixed points.} Let $\mu\in\mathbb C$. Then
$$
\ker(A_p-\mu)=E_\mu:=\big\{w\in L^p(\mathscr D):\ S_p(t)w=e^{-\mu t}w \ \text{ for all } t\ge0\big\}.
$$
Indeed, if $w\in\dom(A_p)$ and $A_pw=\mu w$, then by \cite[Theorem 1.2.4]{Pazy}, $\frac{d}{dt}\big(e^{\mu t}S_p(t)w\big)=e^{\mu t}S_p(t)(\mu-A_p)w=0$, so that $S_p(t)w=e^{-\mu t}w$. Conversely, if $w\in E_\mu$, then $t^{-1}(S_p(t)w-w)=t^{-1}(e^{-\mu t}-1)w\to-\mu w$ as $t\to0^+$, so that $w\in\dom(A_p)$ and $A_pw=\mu w$.

\noindent
\underline{Step 1b}.\textit{ $S_p(t)$ is positive, real, and positivity improving.} The semigroup $S_2$ is positive \cite[Corollary~4.3]{Ouhabaz}, and $S_p(t)=S_2(t)$ on the dense subspace $L^2(\mathscr D)\cap L^p(\mathscr D)$ of $L^p(\mathscr D)$, see Proposition~\ref{pr.extensionS}. Hence $S_p(t)$ is positive (approximate $0\le f\in L^p(\mathscr D)$ by $\min(f,n)\in L^2(\mathscr D)\cap L^p(\mathscr D)$ in $L^p(\mathscr D)$). It is therefore also real, i.e.\ it maps real-valued functions to real-valued functions (write $f=f^+-f^-$). Let now $t>0$ and $0\le f\in L^p(\mathscr D)$ with $f\not\equiv0$. For $n\in\mathbb N$, set $f_n:=\min(f,n)\in L^\infty(\mathscr D)\subset L^2(\mathscr D)\cap L^p(\mathscr D)$ (recall that $\mathscr D$ is bounded). For $n$ large enough, $f_n\not\equiv0$. Since $0\le f_n\le f$, by positivity of $S_p(t)$ and by \eqref{eq:irrL2},
$$
S_p(t)f\ \ge\ S_p(t)f_n=S_2(t)f_n>0\quad\text{a.e.\ in }\mathscr D.
$$

\noindent
\underline{Step 1c}.\textit{ Duality with $\mathbf v$.} For every $f\in L^p(\mathscr D)$ and $t\ge0$,
\begin{equation}\label{eq.dualv}
\int_{\mathscr D}\big(S_p(t)f\big)\,\mathbf v=e^{-\lambda t}\int_{\mathscr D}f\,\mathbf v .
\end{equation}
For $p\in(1,+\infty)$: since $\mathbf v\in\dom(A_p^*)$ and $A_p^*\mathbf v=\lambda\mathbf v$, Step 1a applied to $S_p^*$ gives $S_p^*(t)\mathbf v=e^{-\lambda t}\mathbf v$. Since $S_p^*(t)$ is the Banach-space adjoint of $S_p(t)$ (Proposition~\ref{pr.extensionS}) and $\mathbf v$, $\lambda$ are real, \eqref{eq.dualv} follows.

\underline{Step 2}. Reduction to real-valued eigenvectors.

Let $w\in\ker(A_p-\lambda)=E_\lambda$. Since $S_p(t)$ is real and $\lambda\in\mathbb R$, taking real and imaginary parts in $S_p(t)w=e^{-\lambda t}w$ shows that $\mathrm{Re}\,w$ and $\mathrm{Im}\,w$ belong to $E_\lambda$. It is thus enough to prove that every real-valued $w\in E_\lambda$ is a real multiple of $\mathbf u$.

\underline{Step 3}. Every real-valued $w\in E_\lambda$ has a constant sign.

Let $w\in E_\lambda$ be real-valued. Since $-|w|\le w\le|w|$ and $S_p(t)$ is positive, $|S_p(t)w|\le S_p(t)|w|$. Hence, for every $t\ge0$,
$$
g_t:=S_p(t)|w|-e^{-\lambda t}|w|\ \ge\ |S_p(t)w|-e^{-\lambda t}|w|=0 .
$$
By \eqref{eq.dualv} with $f=|w|$, $\int_{\mathscr D}g_t\,\mathbf v=0$. Since $g_t\ge0$ and $\mathbf v>0$ a.e. in $\mathscr D$, $g_t=0$ a.e. in $\mathscr D$. Thus $|w|\in E_\lambda$, and therefore $w^\pm=\frac12(|w|\pm w)\in E_\lambda$. Now, if $0\le h\in E_\lambda$ and $h\not\equiv0$, then $h=e^{\lambda}S_p(1)h>0$ a.e. in $\mathscr D$ by Step~1b. Since $w^+w^-=0$ a.e., $w^+$ and $w^-$ cannot be both $\not\equiv0$. Hence $w\ge0$ a.e.\ or $w\le0$ a.e. in $\mathscr D$. 

\underline{Step 4}. Conclusion for $A_p$. 

Let $w\in E_\lambda$ be real-valued, and set $c:=\int_{\mathscr D}w\,\mathbf v\big/\int_{\mathscr D}\mathbf u\,\mathbf v\in\mathbb R$. By Theorem~\ref{th.3}, $\mathbf u\in E_\lambda$, so $z:=w-c\,\mathbf u$ is a real-valued element of $E_\lambda$ such that $\int_{\mathscr D}z\,\mathbf v=0$. By Step~3, $z$ has a constant sign, and since $\mathbf v>0$ a.e., we get $z=0$, i.e.\ $w=c\,\mathbf u$. By Step~2, $\ker(A_p-\lambda)=\mathrm{span}\{\mathbf u\}$, and $\dim\ker(A_p-\lambda)=1$ since $\mathbf u\not\equiv0$.

\underline{Step 5}. The dual operator.

Let $p\in(1,+\infty)$. The proof is the same, with the roles of $(S_p,\mathbf u)$ and $(S_p^*,\mathbf v)$ exchanged. We only check that Step~1 still holds. Step 1a holds for any $C_0$-semigroup. The operator $S_p^*(t)$ is positive and real, as the adjoint of a positive operator. It is positivity improving: let $t>0$ and $0\le g\in L^{p'}(\mathscr D)$ with $g\not\equiv0$, and assume that $B:=\{S_p^*(t)g=0\}$ has positive measure. Then, by Step~1b,
$$
0=\int_{\mathscr D}\mathbf 1_B\,S_p^*(t)g=\int_{\mathscr D}\big(S_p(t)\mathbf 1_B\big)\,g>0,
$$
a contradiction. Finally, since $\mathbf u\in\dom(A_p)$ and $A_p\mathbf u=\lambda\mathbf u$, we have $S_p(t)\mathbf u=e^{-\lambda t}\mathbf u$, hence $\int_{\mathscr D}(S_p^*(t)g)\,\mathbf u=e^{-\lambda t}\int_{\mathscr D}g\,\mathbf u$ for every $g\in L^{p'}(\mathscr D)$, and $\mathbf u>0$ a.e. in $\mathscr D$. Steps~2, 3, and 4 then give $\ker(A_p^*-\lambda)=\mathrm{span}\{\mathbf v\}$.

\underline{Step 6}. Proof of Item $i)$.

Let $p\in(1,+\infty)$. Since $\ker(A_p-\lambda)\subset\ker ((A_p-\lambda)^k )$ for all $k\ge1$, and by induction on $k$, it is enough to show that $\ker ((A_p-\lambda)^2 )\subset\ker(A_p-\lambda)$. Let $w\in\dom(A_p)$ be such that $(A_p-\lambda)w\in\dom(A_p)$ and $(A_p-\lambda)^2w=0$. Then $(A_p-\lambda)w\in\ker(A_p-\lambda)=\mathrm{span}\{\mathbf u\}$, i.e.\ $(A_p-\lambda)w=c\,\mathbf u$ for some $c\in\mathbb C$. By \cite[Theorem 1.2.4]{Pazy} and since $S_p(t)\mathbf u=e^{-\lambda t}\mathbf u$, we have for all $t\ge0$
$$
\frac{d}{dt}\big(e^{\lambda t}S_p(t)w\big)=e^{\lambda t}S_p(t)(\lambda-A_p)w=-c\,e^{\lambda t}S_p(t)\mathbf u=-c\,\mathbf u,
\quad\text{so that}\quad
e^{\lambda t}S_p(t)w=w-c\,t\,\mathbf u .
$$
Integrating against $\mathbf v$ and using \eqref{eq.dualv} (which holds for complex-valued $f$ as well, by linearity), we get
$\int_{\mathscr D}w\,\mathbf v=\int_{\mathscr D}w\,\mathbf v-c\,t\int_{\mathscr D}\mathbf u\,\mathbf v$ for all $t\ge0$. Since $\int_{\mathscr D}\mathbf u\,\mathbf v>0$, we get $c=0$, i.e.\ $w\in\ker(A_p-\lambda)$. The same argument applies to $A_p^*$, $p\in(1,+\infty)$, with the roles of $(S_p,\mathbf u)$ and $(S_p^*,\mathbf v)$ exchanged.

\underline{Step 7}. Proof of $ii)$.

Let $p\in(1,+\infty)$. Since $A_p$ has compact resolvent (Lemma \ref{le.comp}), $\lambda$ is an isolated point of $\sigma(A_p)$ and the Riesz projection
$$
P_p:=\frac1{2i\pi}\oint_{|z-\lambda|=r}(z-A_p)^{-1}\,dz
$$
(with $r>0$ small enough) has finite rank \cite[Theorem III.6.29]{Ka66}. Moreover, $\mathrm{Ran}\,P_p\subset\dom(A_p)$ is invariant under $A_p$, and the restriction $A_p|_{\mathrm{Ran}\,P_p}$ has spectrum $\{\lambda\}$ \cite[Theorem III.6.17]{Ka66}. Since $\mathrm{Ran}\,P_p$ has finite dimension $m$, the Cayley-Hamilton theorem gives $(A_p-\lambda)^m=0$ on $\mathrm{Ran}\,P_p$, so that, by $i)$, $\mathrm{Ran}\,P_p\subset\ker ((A_p-\lambda)^m )=\mathrm{span}\{\mathbf u\}$. Conversely, $\mathbf u\in\mathrm{Ran}\,P_p$ since $(z-A_p)^{-1}\mathbf u=(z-\lambda)^{-1}\mathbf u$ for $z\ne\lambda$ close to $\lambda$. Hence $\mathrm{Ran}\,P_p=\mathrm{span}\{\mathbf u\}$, so $P_pf=\ell(f)\,\mathbf u$ for some $\ell\in (L^p(\mathscr D))^*$. Finally, since $ ((z-A_p)^{-1} )^*=(\bar z-A_p^*)^{-1}$ and $\lambda\in\mathbb R$, $P_p^*$ is the Riesz projection of $A_p^*$ at $\lambda$, and $A_p^*$ has compact resolvent (Theorem~\ref{th.3}). By the same argument, $\mathrm{Ran}\,P_p^*=\mathrm{span}\{\mathbf v\}$. Since $P_p^*g=\ell\,\int_{\mathscr D}g\,\mathbf u$ for $g\in L^{p'}(\mathscr D)$ (where we identify $\ell$ with an element of $L^{p'}(\mathscr D)$), we get $\ell=a\,\mathbf v$ for some $a\in\mathbb C$, and $P_p\mathbf u=\mathbf u$ gives $a=1/\int_{\mathscr D}\mathbf u\,\mathbf v$.

\end{proof}

The proof of Theorem~\ref{th.3} is complete. 

\subsubsection{Proof of Theorem \ref{th.Brown}}
We aim at using the results of \cite{OtBr13}. However, to apply the approach of \cite{OtBr13}, we must slightly adapt their arguments to account for the fact that $\mathbf{v}$ satisfies the inhomogeneous equation ${-}\mathscr{L}^*\mathbf{v} = \lambda \mathbf{v}$ instead of the harmonic equation $\Delta \mathbf{v} = 0$. 
Set $
  F := \Delta \mathbf v = {-\lambda \mathbf v + b\cdot\nabla \mathbf v +
  (\operatorname{div}b)\,\mathbf v} \ \in L^2(\mathscr D)$.  
Since   ${F}\in L^2(\mathscr D)$, there is  a unique solution $w$ to
$$
  w\in H^2(\mathscr D)\cap H^1_0(\mathscr D), \qquad
  \Delta w = F \ \text{ in }\mathscr D, \qquad w=0 \ \text{ on }\partial\mathscr D,
$$
with $\|w\|_{H^2(\mathscr D)}\le C\|F\|_{L^2(\mathscr D)}$ and thus $\|\nabla w\|_{L^2(\partial\mathscr D)}\le C\|F\|_{L^2(\mathscr D)}$.  Note again that since $F$ is smooth over $\bar{\mathscr D}\setminus \partial \Sigma$, $w \in C^\infty(\bar{\mathscr D}\setminus \partial \Sigma)$ (again by standard interior and local Dirichlet boundary elliptic regularity results).
 
For a function $h : \mathscr D \to \mathbb R$, the non-tangential maximal function $h^\star$ is defined by
\bes
h^\star(x)= \sup_{y\in \Gamma_\alpha(x)} | h(y)|, \ x\in \partial\mathscr D,
\ees
where for $\alpha>0$, $
\Gamma_\alpha(x)= \{y\in\mathscr D, \ |x-y|\le (1+\alpha) \operatorname{dist}(y,\partial\mathscr D)\}$. 
For different values of $\alpha$, the corresponding non-tangential maximal functions have equivalent $L^p$ norms,  hence the dependence on $\alpha$ is typically omitted.  
Set $\zeta:=\mathbf v-w$. Then,
$$\Delta \zeta =\Delta\mathbf v-\Delta w=0 \text{ in } \mathscr D \ \  \text{ and }  \ \ 
  \zeta |_{\SigD}=\mathbf v|_{\SigD}-w|_{\SigD}=0.
$$
On the other hand, using the Robin condition
$\partial_{\mathbf n_e}\mathbf v={(b\cdot\mathbf n_e)\mathbf v}$ from
Proposition~\ref{prop:domAstar-L2}, we have that: 
$$
  \partial_{\mathbf n_e}\zeta |_{\SigN}
  = \partial_{\mathbf n_e}\mathbf v|_{\SigN}-\partial_{\mathbf n_e}w|_{\SigN}
  = {(b\cdot\mathbf n_e)\,\mathbf v|_{\SigN}}-\partial_{\mathbf n_e}w|_{\SigN}.
$$
Since $\mathbf v\in C^{0,1/2-\varepsilon}(\bar{\mathscr D})$ (Proposition~\ref{pr.conti}) and $b\in L^\infty(\mathscr D)$, the term
$(b\cdot\mathbf n_e)\mathbf v|_{\SigN}$ is bounded, hence lies in every
$L^p(\SigN)$. Therefore,  because $\partial_{\mathbf n_e}w|_{\SigN}\in
L^2(\SigN)$, we have $\partial_{\mathbf n_e}\zeta |_{\SigN}\in
L^2(\SigN)$.    
 As $\zeta$ solves
$\Delta \zeta =0$ in $\mathscr D$ with $\zeta |_{\SigD}=0$ and $\partial_{\mathbf
n_e}\zeta |_{\SigN} \in L^2(\SigN)$, \cite[Theorem~1.2(c)]{OtBr13} implies,
for some  $p>1$:
$$
  \|(\nabla \zeta )^\star \|_{L^p(\partial\mathscr D)} < +\infty.
$$
Note that $\partial_{\mathbf n_e}\zeta |_{\SigN}$ is smooth and consequently,  
 $\zeta \in C^\infty(\bar{\mathscr D}\setminus \partial \Sigma)$.
In addition, we clearly have for all $x\in \partial \mathscr D\setminus \partial \Sigma$: 
$$|\nabla  \zeta |(x)\le  |(\nabla   \zeta )^\star|(x).$$
Hence, since $\partial \Sigma$ has zero Lebesgue measure in $\partial \mathscr D$ (recall $\partial \Sigma$ is $C^3$ by assumption), 
$$|\nabla  \zeta |\le  |(\nabla  \zeta )^\star| \text{ a.e. in } \partial \mathscr D.$$
Since, for every $x\in \partial \mathscr D\setminus \partial \Sigma$ (and thus a.e. in $ \partial  \mathscr D$),  $\nabla\mathbf v(x)=\nabla \zeta (x)+\nabla w(x)$, we have
$|\nabla\mathbf v(x)| \le |\nabla \zeta (x)| +|\nabla w(x)| \le  |(\nabla \zeta )^\star(x)| +|\nabla w(x)| $. In conclusion, with $q = \min(2,p)$, we have that:
$$
  \| \nabla  \mathbf v  \|_{L^q(\partial\mathscr D)}
  \le \|(\nabla \zeta )^\star \|_{L^q(\partial\mathscr D)} + \| \nabla w  \|_{L^q(\partial\mathscr D)}
  < +\infty.
$$ 
This proves the desired result and ends the proof of Theorem \ref{th.Brown}.



\subsection{Feynman-Kac formula and quasi-stationary distribution}
%

\begin{proposition}\label{lem:sgreg}
 Let the assumptions of Theorem \ref{th.2} hold.   Fix $p>\max(2(d-2),4)$ and  
let  $f \in \dom(A_p)$.   Set $
  w(t) := e^{-tA_p}f$, $ t \geq 0$. 
Then:
\begin{enumerate} 
  \item[$\mathfrak 1$.] $w \in C^1(\mathbb R_+,\,L^p(\mathscr D)) \cap C(\mathbb R_+,\,\dom(A_p))$,
        with $\partial_t w(t) = -A_p w(t)$ in $L^p(\mathscr D)$ for all $t \geq 0$.
  \item[$\mathfrak 2$.] $\dom(A_p) \hookrightarrow C(\bar{\mathscr D})$ continuously,
        and consequently
$
          w \in C (\mathbb R_+,\,C(\bar{\mathscr D}) ).
$
  \item[$\mathfrak 3$.] $(t,x) \mapsto w(t,x)$ is jointly continuous on
        $\mathbb R_+ \times \bar{\mathscr D}$.
        \end{enumerate} 
\end{proposition}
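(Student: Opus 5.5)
Item $\mathfrak 1$ is standard $C_0$-semigroup theory and I would only cite it: for $f\in\dom(A_p)$, \cite[Theorem 1.2.4]{Pazy} gives $w(t)\in\dom(A_p)$, $w\in C^1(\mathbb R_+,L^p(\mathscr D))$ and $\partial_t w=-A_pw=-e^{-tA_p}A_pf$. Moreover $A_pw(t)=e^{-tA_p}A_pf$ is continuous in $L^p$, so $w\in C(\mathbb R_+,\dom(A_p))$ for the graph norm.

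The core of the proof is item $\mathfrak 2$. Since $p\ge2$, Proposition~\ref{pr.extensionS} gives, for $u\in\dom(A_p)$, that $u\in\dom(A_2)\subset\HsD$ is a weak solution of the mixed problem \eqref{eq.mixedShamir} with $A=-\Delta-b\cdot\nabla+\kappa$, $B=\partial_{\mathbf n_e}$ and right-hand side $g:=(A_p+\kappa)u\in L^p(\mathscr D)$, where $\kappa>\omega_2$ is fixed. I would then use an $L^p$ version of Shamir's estimate \cite[Theorem 5.3]{Shamir1968}, with data $g$ only in $L^p$ rather than in every $L^q$: near $\partial\Sigma$ one gets $u\in H^{s,p}$ for $s<1/2+2/p$ (this is where $p>4$ is needed). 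Away from $\partial\Sigma$, local $L^p$ Dirichlet or Neumann regularity gives $u\in W^{2,p}$. Both embed into $C(\bar{\mathscr D})$, since $H^{s,p}\hookrightarrow C$ when $s>d/p$. To check $1/2+2/p>d/p$: this is equivalent to $p>2(d-2)$, which is exactly the stated hypothesis. The bound $W^{2,p}\hookrightarrow C$ holds since $p>d/2$ follows from $p>2(d-2)$ when $d\ge 3$, and from $p>4$ when $d=2$.

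To obtain a continuous embedding rather than a qualitative one, I would combine the local a priori estimates $\|u\|\le C(\|g\|_{L^p}+\|u\|_{L^p})$ using a partition of unity, and bound $\|g\|_{L^p}+\|u\|_{L^p}$ by the graph norm. Alternatively, the closed graph theorem applies: $\dom(A_p)$ with its graph norm is Banach, the inclusion into $C(\bar{\mathscr D})$ is everywhere defined, and it is closed because convergence in $L^p$ and uniform convergence force the same limit. Either route gives $\|u\|_{C(\bar{\mathscr D})}\le C(\|A_pu\|_{L^p}+\|u\|_{L^p})$. Composing this embedding with item $\mathfrak 1$ yields $w\in C(\mathbb R_+,C(\bar{\mathscr D}))$.

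Item $\mathfrak 3$ then follows directly:
$$|w(t,x)-w(s,y)|\le\|w(t)-w(s)\|_\infty+|w(s,x)-w(s,y)|.$$
The first term is small by item $\mathfrak 2$, and the second by the continuity of $w(s)$ on the compact set $\bar{\mathscr D}$.

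I expect the main obstacle to be justifying Shamir's regularity at the interface $\partial\Sigma$ with right-hand side only in $L^p$ for a fixed $p$, rather than in $\bigcap_q L^q$ as used in Proposition~\ref{pr.conti}. My first attempt is to read off from Shamir's Theorem 5.3 that the exponent depends only on the integrability of $g$, and that the estimate holds for each fixed $p>4$. If that is not available, I would bootstrap instead: $g\in L^p$ gives $u\in H^{3/2-\varepsilon}$ by Savaré, then embed into $L^q$ for larger $q$ and iterate.
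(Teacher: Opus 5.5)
Your proposal is correct and follows the paper's proof. Item $\mathfrak 1$ comes from standard semigroup theory. Item $\mathfrak 2$ comes from Shamir's Theorem~5.3 applied with the datum $(A_p+\kappa)u\in L^p(\mathscr D)$ for the fixed $p>4$, combined with the embedding $H^{s,p}\hookrightarrow C(\bar{\mathscr D})$ for $d/p<s<1/2+2/p$; this is exactly where $p>2(d-2)$ enters. Item $\mathfrak 3$ is then immediate. Your closed-graph argument for the continuity of the embedding is a useful detail that the paper leaves implicit.

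Your fallback bootstrap, however, would not work. The obstruction is the integrability of the fixed datum $g=(A_p+\kappa)u$, and no bootstrap on $u$ can improve it; moreover $H^{3/2-\varepsilon}\hookrightarrow C(\bar{\mathscr D})$ holds only for $d=2$. So the argument genuinely rests on the fixed-$p$ form of Shamir's estimate, exactly as in the paper.
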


\begin{proof}
The first item is a consequence of the fact that  $f \in \dom(A_p)$. In addition $\dom(A_p) \hookrightarrow C(\bar{\mathscr D})$ when $p>\max(2(d-2),4)$ is  a consequence of   \cite[Theorem 5.3]{Shamir1968} (together with the Sobolev embedding $W^{s,p}(\mathscr D)\hookrightarrow C(\bar{\mathscr D})$, $s >  {d}/{p}$). Therefore,  $w \in C  (\mathbb R_+,\,C(\bar{\mathscr D}) )$.  
The joint continuity  of $(t,x)\mapsto w(t,x)$ on $\mathbb R_+\times\bar{\mathscr D}$
follows immediately from the fact that $w \in C  (\mathbb R_+,C(\bar{\mathscr D}) )$.

\end{proof}

Recall the definition of the process $(X_t)_{t\ge 0}$ in \eqref{eq.LR} and recall also \eqref{eq.Oo}.  Theorem \ref{th.3} together with the following proposition imply all the assertions of  Theorem \ref{th.2}, except the last one on the exit law starting from the quasi-stationary distribution, which is treated in the next section.


\begin{proposition}\label{pr.fin}
 Let the assumptions of Theorem \ref{th.2} hold.   Fix $p>\max(2(d-2),4)$ and  
let  $f \in \dom(A_p)$. Set $
  w(t) := e^{-tA_p}f$, $ t \geq 0$. 
  For all $(t,x)\in \mathbb R_+ \times \mathscr D$, the following Feynman-Kac formula holds true:
$$
w(t,x) = P_t^{\mathscr O} f(x) = \mathbb{E}_x [ f(X_t) \mathbf{1}_{t < \tau_{\mathscr O}}  ].
$$
Moreover, 
$\nu (dx):=(\mathbf v (x)/ \int_{\mathscr D} \mathbf v) dx$ is the unique  quasi-stationary distribution for the process $(X_t)_{t\ge 0}$ killed when exiting $\bar{\mathscr D}$ through $\SigD=\bar {\Sigma}$ (see Definition \ref{QSD}), where $\mathbf v$ is given by Theorem \ref{th.3}.   In addition,   $\lambda$ (given by Theorem \ref{th.3}) is positive and $\lambda =\eta$, where $\eta$ is given by Theorem \ref{th.1}. 
\end{proposition}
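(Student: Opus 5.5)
We will prove the Feynman--Kac formula by an It\=o-formula/martingale argument for the space-time function $w(t-s,X_s)$, and then deduce the quasi-stationarity statements from uniqueness in Theorem~\ref{th.1}.

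\textbf{Step 1 (smooth data).} First take $f$ in a dense class. A convenient choice is $f=R_\kappa^p g$ with $g\in C_c^\infty(\mathscr O\cap\mathscr D)$, or better $f\in\dom(A_p^2)$, so that $\partial_t w=-A_pw$ is itself continuous on $\mathbb R_+\times\bar{\mathscr D}$ by Proposition~\ref{lem:sgreg}. Fix $t>0$ and $x\in\mathscr D$, and consider $M_s:=w(t-s,X_{s\wedge\tau_{\mathscr O}})$ for $s\in[0,t]$.

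Away from $\partial\Sigma$, $w(r,\cdot)$ is $C^2$ up to $\SigN$ and satisfies $\partial_{\mathbf n_e}w=0$ there. This follows from local elliptic regularity applied to $A_pw=-\partial_tw$, since $\partial_t w\in\dom(A_p)$. The equation $\partial_rw=\mathscr Lw$ then holds classically. Applying It\=o's formula to $w(t-s,X_s)$ up to the stopping time $\tau_\delta:=\inf\{s: \mathrm{dist}(X_s,\bar\Sigma)\le\delta\}\wedge t$, the $dL$-term vanishes because $\partial_{\mathbf n_i}w=0$ on $\SigN$, and the $ds$-term vanishes because $\partial_s w(t-s,\cdot)+\mathscr L w=0$. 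Hence $\mathbb E_x[w(t-\tau_\delta,X_{\tau_\delta})]=w(t,x)$.

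\textbf{Step 2 (main obstacle: $\delta\to0$).} Localization near $\SigD$ is delicate because $w$ need not be $C^1$ at $\partial\Sigma$. The plan is to use only continuity: $w$ is jointly continuous on $\mathbb R_+\times\bar{\mathscr D}$ with $w=0$ on $\SigD$ (Proposition~\ref{lem:sgreg}). As $\delta\to0$, we have $\tau_\delta\uparrow\tau_{\mathscr O}\wedge t$ a.s. by path continuity, and $X_{\tau_\delta}\to X_{\tau_{\mathscr O}\wedge t}$. On $\{\tau_{\mathscr O}\le t\}$ the limit is $w(\cdot,X_{\tau_{\mathscr O}})=0$, while on $\{\tau_{\mathscr O}>t\}$ it equals $w(0,X_t)=f(X_t)$. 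Bounded convergence then gives $w(t,x)=\mathbb E_x[f(X_t)\mathbf 1_{t<\tau_{\mathscr O}}]$.

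The subtle point in Step~1 is the stopping region. One must stop only near $\bar\Sigma$ yet still apply It\=o near $\SigN$ close to $\partial\Sigma$. Stopping at distance $\delta$ from $\bar\Sigma$ keeps the process in a compact set where $w$ is $C^{1,2}$, so this is fine.

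\textbf{Step 3 (general data and extension to the boundary).} Extend to all $f\in\dom(A_p)$ by density in the graph norm. The map $\dom(A_p)\hookrightarrow C(\bar{\mathscr D})$ is continuous, and $P^{\mathscr O}_t$ is continuous in sup norm, so both sides converge uniformly. Continuity of both sides in $x$, using Proposition~\ref{pr.C4} for the right-hand side, extends the formula to all $x\in\mathscr O$.

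\textbf{Step 4 (identification).} Take $f=\mathbf u$. The formula gives $P_t^{\mathscr O}\mathbf u=e^{-\lambda t}\mathbf u$ with $\mathbf u>0$ on $\mathscr O$. For general bounded $g$, approximate $g$ in $L^p$ by elements of $\dom(A_p)$; duality \eqref{eq.dualv} and the absolute continuity of the laws of $X_t$ then give $\int P_t^{\mathscr O}g\,\mathbf v=e^{-\lambda t}\int g\,\mathbf v$. Thus $\nu P^{\mathscr O}_t=e^{-\lambda t}\nu$, so $\nu$ is a quasi-stationary distribution, and by uniqueness in Theorem~\ref{th.1}, $\nu=\bm\rho$. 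Finally, $e^{-\lambda t}=\mathbb P_\nu[t<\tau_{\mathscr O}]=e^{-\eta t}$, hence $\lambda=\eta>0$.
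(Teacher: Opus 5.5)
Your proposal is correct and follows essentially the paper's route: It\=o's formula for $w(t-s,X_s)$ stopped before the process reaches a neighbourhood of $\SigD$ (so the drift and local-time terms vanish), passage to the limit using the joint continuity of $w$ and $w=0$ on $\SigD$, then duality with $\mathbf v$ and uniqueness from Theorem~\ref{th.1}. The differences are only technical. You get $C^{1,2}$ regularity up to $t=0$ by first taking $f\in\dom(A_p^2)$ and then using graph-norm density together with $\dom(A_p)\hookrightarrow C(\bar{\mathscr D})$, whereas the paper invokes parabolic smoothing for $t>0$ and stops at $(t-\varepsilon)\wedge\tau_r$. In your Step~4, the $L^p$-approximants of $g$ must be chosen uniformly bounded (e.g.\ in $C_c^\infty(\mathscr D)$), since absolute continuity of the law of $X_t$ plus dominated convergence does not pass to the limit with unbounded approximants.
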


\noindent
\textbf{Note}. Since $\mathbf u$ is continuous up to the boundary (see Theorem \ref{th.3}) and $C^\infty$  on $\bar{\mathscr D} \setminus \partial \Sigma$, it is straightforward  with the It\=o formula and the same arguments we use in the proof of Proposition \ref{pr.fin} below,  to show that 
 $P_t^{\mathscr O} \mathbf u(x) = e^{-\lambda t}  \mathbf u(x)$ for all $t\ge 0$ and $x\in \bar{\mathscr D}$. 
Thus, by  Theorem \ref{th.1},  we have by uniqueness:
\begin{equation}\label{eq.Uu}
\frac{\mathbf{u}}{\bm{\rho}(\mathbf u)}  = \bm \varphi.
\end{equation}
In particular,  $\bm \varphi$ admits an Hölder  continuous extension up to the boundary (see Theorem \ref{th.3}).

\begin{proof}
We start with the Feynman-Kac formula whose key ingredient is the fact that $w$ is jointly continuous. 
Let
$
V_r = \{ x \in \bar{\mathscr D},  \operatorname{dist}(x, \SigD) \le  r \},
 $
and define
 $
\tau_r = \inf\{ t \geq 0,  X_t \in V_r \}.
 $
Then,
 $
\tau_r \uparrow \tau_{\mathscr O} $ a.s. as $ r \downarrow 0$. 
Now fix $r > 0$. Since $\bar{\mathscr D}\setminus V_{r/2}$ is at a positive distance away from the interface $\partial \SigD$, the mixed boundary condition is locally either purely Dirichlet or purely Neumann, hence by   standard interior and local Dirichlet/Neumann boundary elliptic regularity for  parabolic problems,
$$
w \in C^{\infty}\bigl({(0,T]} \times (\bar{\mathscr D} \setminus V_{{r/2}})\bigr)\
\text{ and } \ {\partial_t w(s,y) = -A_pw(s,y)=\mathscr Lw(s,y), \ \forall s> 0,\ y\in \bar{\mathscr D} \setminus V_{r/2}},$$
{where we recall that $A_p$ is the realization of $-\mathscr L$, $\mathscr L=\Delta+b\cdot\nabla$ being the generator of \eqref{eq.LR}.} {Fix $t>0$ and $\varepsilon\in(0,t)$.} Therefore It\=o's formula {applied to $s\mapsto w(t-s,X_s)$} is perfectly legitimate up to $\theta:=(t-\varepsilon)\wedge\tau_r$:
\begin{align*}
{w(t -\theta,\, X_{\theta})}
&= {w(t,x)}
+ \int_0^{{\theta}} {(-\partial_t w + \mathscr Lw)(t-s, X_s)}\, ds
+ {\sqrt2}\int_0^{{\theta}} \nabla w({t-s},X_s) \cdot \, dB_s\\
&\quad
{-} \int_0^{{\theta}} \partial_{{\mathbf n_e}}  w({t-s},X_s) \, d{|L|_s}.
\end{align*}
Now, we notice that:
\begin{itemize}
    \item[$\mathfrak 1$.] The drift integral vanishes because ${\partial_t w(s,y) = \mathscr Lw(s,y)}$ for all $s{>} 0$ and  ${y}\in \bar{\mathscr D} \setminus V_r$.
    \item[$\mathfrak 2$.] The local-time integral vanishes because before $\tau_r$ the process can only hit the reflecting part $\partial\mathscr  D \setminus V_r$, where $\partial_{\mathbf n_e} w = 0$ pointwise.
    \item[{$\mathfrak 3$.}] {The stochastic integral has zero mean since $\nabla w$ is bounded on $[\varepsilon,t]\times(\bar{\mathscr D}\setminus V_r)$.}
\end{itemize}
{Taking expectations, we get
$w(t,x)=\mathbb E_x\big[w(t-\tau_r,X_{\tau_r})\mathbf 1_{\tau_r\le t-\varepsilon}+w(\varepsilon,X_{t-\varepsilon})\mathbf 1_{t-\varepsilon<\tau_r}\big]$. Letting $\varepsilon\to0^+$, using that $w$ is bounded and jointly continuous on $[0,t]\times\bar{\mathscr D}$ (Proposition~\ref{lem:sgreg}), $w(0,\cdot)=f$, and the dominated convergence theorem,}
 we have that:
$$
w(t, x) = \mathbb{E}_x\!\left[
  w(t - \tau_r,\, X_{\tau_r})\,\mathbf{1}_{\tau_r \le t}
  + f(X_t)\,\mathbf{1}_{t < \tau_r}
\right]
$$
{(on $\{\tau_r=t\}$, one has $w(t-\tau_r,X_{\tau_r})=w(0,X_t)=f(X_t)$).} 
Recalling that $\tau_r\to\tau_\mathscr O$, the whole issue reduces to proving that as $r\to 0$:
$
\mathbb{E}_x\!\left[
  w(t - \tau_r,\, X_{\tau_r})\,\mathbf{1}_{\tau_r \le  t}
\right] \to 0.
$
 Note that $\mathbb{E}_x\!\left[
  w(t - \tau_r,\, X_{\tau_r})\,\mathbf{1}_{\tau_r \le  t}
\right]=\mathbb{E}_x\!\left[
  w(t - \tau_r,\, X_{\tau_r})\,\mathbf{1}_{\tau_r <  t}
\right]$, since $\mathbb P_x[\tau_r =  t]\le \mathbb P_x[X_t\in \partial V_r]=0$ (recall $X_t$ has a density).  Since $(t,x) \mapsto w(t,x)$ is jointly continuous on
        $\mathbb R_+ \times \bar{\mathscr D}$  (see Proposition \ref{lem:sgreg}), we can apply the dominated convergence theorem to deduce  that 
$$\mathbb{E}_x [
  w(t - \tau_r,\, X_{\tau_r})\,\mathbf{1}_{\tau_r \le  t}
 ] \to \mathbb{E}_x [
  w(t -\tau_{\mathscr O},\, X_{\tau_{\mathscr O}})\,\mathbf{1}_{\tau_{\mathscr O} \le  t}
 ]  =0$$
  since $X_{\tau_{\mathscr O}}\in \SigD$ a.s. and $w(\cdot, x)=0$ for $x\in \SigD$. This shows that for all $f \in \dom(A_p)$ and every $(t,x)\in \mathbb R_+ \times \mathscr D$,  
 $$
w(t,x) = P_t^{\mathscr O} f(x).$$
Let us now show that $\nu$ is a quasi-stationary distribution   for  the process 
$(X_t)_{t\ge 0}$ with absorbing boundary part $\SigD$. 
Let $f \in \dom(A_p)$ and fix $p > \max(2(d-2),4)$. We have thanks to the Feynman-Kac formula proved before:
\begin{align*}
\mathbb{E}_{\nu} [ f(X_t) \mathbf{1}_{t < \tau_{\mathscr O}}  ]= \int_{\mathscr D} \mathbb{E}_x [ f(X_t) \mathbf{1}_{t < \tau_{\mathscr O}}  ] \nu(dx)
&=\frac1{\int_{\mathscr D} \mathbf v } \int_{\mathscr D}  e^{-tA_p}f(x) \, \mathbf v (x)\, dx \\
&=\frac1{\int_{\mathscr D} \mathbf v } \int_{\mathscr D}   f(x) \, e^{-tA_{p}^*} \mathbf v (x)\, dx\\
&= e^{-\lambda t}\frac1{\int_{\mathscr D} \mathbf v} \int_{\mathscr D}   f(x)   \mathbf v (x)\, dx,
\end{align*}
where we have used that, because $\mathbf v \in \dom (A_{p}^*)$ (see Proposition \ref{pr:allp}) and $A_p^*\mathbf v=\lambda \mathbf v$ (see Theorem \ref{th.3}), it holds that:
$$e^{-tA_{p}^*} \mathbf v (x)= e^{-\lambda t}\mathbf v (x)$$ 
as shown after \eqref{eq.dualv}. Hence $\nu P_t^{\mathscr O} f= \mathbb{E}_{\nu } [ f(X_t) \mathbf{1}_{\{t < \tau_{\mathscr O}\}}  ]= e^{-\lambda t} \nu (f)$ for all $t\ge 0$ and $f \in \dom(A_p)$. 

Let us complete the proof by identifying $e^{-\lambda t}$ with $\mathbb P_\nu[t<\tau_{\mathscr O}]$. Let $f_n \in C^\infty_c(\mathscr D)\subset \dom(A_p)$ be such that $0\le f_n\le 1$ and $f_n\uparrow \mathbf 1$ pointwise on $\mathscr D$. Applying the identity above to $f_n$ gives:
\[
\mathbb E_\nu\bigl[f_n(X_t)\mathbf 1_{t<\tau_{\mathscr O}}\bigr] = e^{-\lambda t}\,\nu(f_n), \ \forall n.
\]
On the right-hand side, since $\nu$ is a probability measure and $f_n\uparrow 1$, the monotone convergence theorem yields $\nu(f_n)\to\nu(\mathbf 1)=1$, so that the right-hand side converges to $e^{-\lambda t}$. 
On the left-hand side, on the event $\{t<\tau_{\mathscr O}\}$ we have $X_t(x)\in\mathscr O=\bar{\mathscr D}\setminus \bar{\Sigma}$. Since $X_t$ has a density with respect to the Lebesgue measure for every $t>0$ and $x\in \bar{\mathscr D}$ (see Theorems \ref{th.SF} and \ref{th.SF2}), we have that $\mathbb P_\nu[X_t\in\partial\mathscr D]=0$ for every $t>0$. Thus,   $X_t\in\mathscr D$ $\mathbb P_\nu$-a.s.\ on $\{t<\tau_{\mathscr O}\}$, and consequently $f_n(X_t)\to 1$ $\mathbb P_\nu$-a.s.\ on this event. Since $0\le f_n\le1$, the dominated convergence theorem  gives
$$
\mathbb E_\nu\bigl[f_n(X_t)\mathbf 1_{t<\tau_{\mathscr O}}\bigr] = \mathbb E_\nu\bigl[f_n(X_t)\mathbf 1_{t<\tau_{\mathscr O}}\mathbf 1_{\{X_t\in\mathscr D\}}\bigr]  \to  \mathbb E_\nu\bigl[\mathbf 1_{t<\tau_{\mathscr O}}\bigr] = \mathbb P_\nu[t<\tau_{\mathscr O}].
$$
Letting $n\to+\infty$ in the identity above therefore yields
$$
\mathbb P_\nu[t<\tau_{\mathscr O}] = e^{-\lambda t}, \qquad \forall\, t\ge0.
$$
Therefore, $\nu$ is a  
 quasi-stationary distribution   for the reflected diffusion
$(X_t)_{t\ge 0}$ with absorbing boundary part $\bar \Sigma$  (see Definition \ref{QSD}). By uniqueness (Theorem \ref{th.1}), it is the unique one: $\nu=\bm \rho$. 
In particular  $\lambda=\eta$   and consequently $\lambda$ is positive (where $\lambda$ is given by   Theorem \ref{th.3} and $\eta>0$ by Theorem \ref{th.1}). 
This proves the desired claim. 

 \end{proof}
 
 We now  prove that $\lambda$ dominates the spectrum of $A_{p}$ for every $p\in(1,+\infty)$, i.e.\ $\mathrm{Re}(\mu)\ge\lambda$ for every $\mu\in\sigma(A_{p})$, which is the content of Proposition \ref{pr.dominance}. 
For  $p\in(1,+\infty)$, we  work with the complexification of $L^p(\mathscr D)$ (write $L^p(\mathscr D;,\mathbb C)$), to which $A_p$, $A_p^*$, $S^p(t):=e^{-tA_p}$ and $R_\kappa^p$ extend in the usual way ($T(f_1+if_2):=Tf_1+iTf_2$ for real $f_1,f_2$). Recall from Theorem \ref{th.3}: $\lambda\in\mathbb R$, $A_p\mathbf u=\lambda\mathbf u$, $A_p^*\mathbf v=\lambda\mathbf v$ for every $p\in(1,+\infty)$, with $\mathbf v>0$ a.e. in $\mathscr D$.
The following lemma is well-known,  we include its proof for the sake of completeness
 
 \begin{lemma} \label{lem:domination}
Let $p\in(1,+\infty)$ and let $T\in\mathcal L(L^p(\mathscr D,\mathbb C))$ be positive, i.e.\
$T(L^p_+(\mathscr D))\subset L^p_+(\mathscr D)$. Then
$$
|Tg|\le T|g|\qquad\text{a.e.\ in }\mathscr D,\qquad\forall g\in L^p(\mathscr D,\mathbb C).
$$
\end{lemma}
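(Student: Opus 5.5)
The plan is to reduce the complex inequality to real statements tested on an exhausting family of directions, using only positivity and linearity of $T$. Fix $g\in L^p(\mathscr D,\mathbb C)$ and write $g=g_1+ig_2$ with $g_1,g_2$ real-valued. By definition of the complexification, $Tg=Tg_1+iTg_2$, and since $T$ is positive it maps real functions to real functions (decompose $f=f^+-f^-$). Hence $\mathrm{Re}(Tg)=Tg_1$ and $\mathrm{Im}(Tg)=Tg_2$ a.e.

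For a fixed angle $\theta\in\mathbb R$, consider the real-valued function $\mathrm{Re}(e^{-i\theta}g)=\cos\theta\, g_1+\sin\theta\, g_2$. It satisfies
$$
\mathrm{Re}(e^{-i\theta}g)\le |g| \quad\text{a.e. in }\mathscr D.
$$
Positivity of $T$ applied to $|g|-\mathrm{Re}(e^{-i\theta}g)\ge0$ therefore gives
$$
\mathrm{Re}\big(e^{-i\theta}Tg\big)=\cos\theta\,Tg_1+\sin\theta\,Tg_2=T\big(\mathrm{Re}(e^{-i\theta}g)\big)\le T|g|
$$
a.e., the middle equality being linearity of $T$ together with the realness of $Tg_1,Tg_2$. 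Each such inequality holds outside a null set $N_\theta$.

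We then restrict $\theta$ to the countable set $\mathbb Q\cap[0,2\pi)$ and take $N:=\bigcup_\theta N_\theta$, which is still null. For $x\notin N$, we have $\mathrm{Re}(e^{-i\theta}Tg(x))\le T|g|(x)$ for every rational $\theta$. By continuity in $\theta$ this extends to all real $\theta$. Taking the supremum over $\theta$ and using $\sup_\theta \mathrm{Re}(e^{-i\theta}z)=|z|$ for $z\in\mathbb C$ yields $|Tg(x)|\le T|g|(x)$.

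The only delicate point is this passage from countably many a.e. inequalities to the pointwise modulus. That is why the supremum is taken over a countable dense set of angles before invoking continuity in $\theta$, so that only one null set has to be discarded. Everything else follows directly from linearity and positivity.
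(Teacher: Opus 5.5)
Your proposal is correct and follows essentially the same route as the paper: the inequality $\cos\theta\,g_1+\sin\theta\,g_2\le|g|$, positivity of $T$, a countable union of null sets over rational angles, and then $\sup_\theta$ to recover the modulus. You also spell out why $Tg_1$ and $Tg_2$ are real-valued (positivity applied to $f=f^+-f^-$), which the paper uses only implicitly.
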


\begin{proof}
Write $g=g_1+ig_2$ with $g_1,g_2\in L^p(\mathscr D,\mathbb R)$, and set $h:=|g|=(g_1^2+g_2^2)^{1/2}\in L^p_+(\mathscr D)$.
For every $\theta\in\mathbb R$, the Cauchy-Schwarz inequality gives, pointwise a.e.,
$$
\cos\theta\, g_1+\sin\theta\, g_2\ \le\ h,
$$
so $h-(\cos\theta\, g_1+\sin\theta\, g_2)\in L^p_+(\mathscr D)$. Thus, by linearity and positivity of $T$,
\begin{equation}\label{eq.domtheta}
\cos\theta\, Tg_1+\sin\theta\, Tg_2\ \le\ Th\qquad\text{a.e.\ in }\mathscr D,
\end{equation}
for every fixed $\theta$, outside a (Lebesgues-)null set $N_\theta$. Since
$\theta\mapsto a\cos\theta+b\sin\theta$ is continuous for any fixed $a,b\in\mathbb R$, the
supremum over $\theta\in\mathbb R$ of the left-hand side of \eqref{eq.domtheta} equals the
supremum over $\theta\in\mathbb Q\cap[0,2\pi)$. Let $N:=\bigcup_{\theta\in\mathbb
Q\cap[0,2\pi)}N_\theta$, which is also a (Lebesgues-)null set. For $x\in\mathscr D\setminus N$, taking the supremum of
\eqref{eq.domtheta} over rational $\theta$ yields
$$
|Tg(x)|=\sup_{\theta\in\mathbb Q\cap[0,2\pi)}\big(Tg_1(x)\cos\theta+Tg_2(x)\sin\theta\big)\ \le\ Th(x).
$$
This proves $|Tg|\le T|g|$ a.e.\ in $\mathscr D$.

\end{proof}

\begin{proposition}\label{pr.dominance}
Let the assumptions of Theorem \ref{th.2} hold, and let $\lambda$, $\mathbf u$, $\mathbf v$ be
given by Theorem \ref{th.3}.  
Then, for every $p\in(1,+\infty)$ and every eigenvalue $\mu\in\sigma(A_p)\cup \sigma(A_p^*) $,
$\mathrm{Re}(\mu)\ge\lambda$.  
\end{proposition}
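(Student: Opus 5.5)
Since $A_p$ has compact resolvent (Lemma~\ref{le.comp}), $\sigma(A_p)$ consists only of eigenvalues, and $\sigma(A_p^*)=\overline{\sigma(A_p)}$. Complex conjugation preserves real parts, so it is enough to treat an eigenvalue $\mu$ of $A_p$ with an eigenvector $g\in\dom(A_p)\subset L^p(\mathscr D,\mathbb C)$, $g\not\equiv0$. By Step~1a of the proof of Proposition~\ref{pr.simple}, whose argument is valid verbatim on the complexification, we have $S_p(t)g=e^{-\mu t}g$ for every $t\ge0$. The plan is to compare this with the positive eigenfunctional $\mathbf v$, combining the domination inequality of Lemma~\ref{lem:domination} with the duality relation \eqref{eq.dualv}.

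Fix $t>0$. The operator $S_p(t)$ is positive (Step~1b of the proof of Proposition~\ref{pr.simple}). Lemma~\ref{lem:domination} then gives, a.e. in $\mathscr D$,
$$e^{-\mathrm{Re}(\mu)t}|g|=|S_p(t)g|\le S_p(t)|g|.$$
Since $\mathbf v\ge0$, we may integrate against $\mathbf v$. Using \eqref{eq.dualv} with $f=|g|\in L^p(\mathscr D)$ we obtain
$$e^{-\mathrm{Re}(\mu)t}\int_{\mathscr D}|g|\,\mathbf v\le\int_{\mathscr D}\big(S_p(t)|g|\big)\mathbf v=e^{-\lambda t}\int_{\mathscr D}|g|\,\mathbf v.$$
Because $\mathbf v>0$ a.e. and $g\not\equiv0$, the integral $\int_{\mathscr D}|g|\,\mathbf v$ is strictly positive. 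It is also finite, since $\mathbf v\in C(\bar{\mathscr D})$. Dividing by it gives $e^{-\mathrm{Re}(\mu)t}\le e^{-\lambda t}$, that is, $\mathrm{Re}(\mu)\ge\lambda$.

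For the adjoint we can either use the symmetry of the spectrum noted above or rerun the same argument with $(S_p^*,\mathbf u)$ in place of $(S_p,\mathbf v)$. This uses the positivity of $S_p^*(t)$ and the relation $\int_{\mathscr D}(S_p^*(t)g)\,\mathbf u=e^{-\lambda t}\int_{\mathscr D}g\,\mathbf u$, both established in Step~5 of that proof, together with $\mathbf u>0$ and $\mathbf u\in C(\bar{\mathscr D})$.

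The only point needing care is the justification of the spectral facts used at the start. First, every point of $\sigma(A_p)$ must be an eigenvalue: compactness of the resolvent gives this, since $\sigma(A_p)$ is then discrete and each point has a finite-rank Riesz projection. Second, the relation $\sigma(A_p^*)=\overline{\sigma(A_p)}$ follows from $\rho(A_p^*)=\overline{\rho(A_p)}$ for closed, densely defined operators on reflexive spaces. Beyond these two points the estimate itself is a one-line computation, and I do not expect any real obstacle.
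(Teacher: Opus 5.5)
Your proposal is correct and follows essentially the same argument as the paper. Both use the eigen-relation $S_p(t)g=e^{-\mu t}g$ and the domination $|S_p(t)g|\le S_p(t)|g|$ from Lemma~\ref{lem:domination}, pair with the positive dual eigenfunction $\mathbf v$ via $S_p^*(t)\mathbf v=e^{-\lambda t}\mathbf v$, and pass to $A_p^*$ through $\sigma(A_p^*)=\overline{\sigma(A_p)}$.
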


\begin{proof}
Fix $p\in(1,+\infty)$.
%
%
By Lemma   \ref{le.comp},  $A_{p}$ has compact resolvent, hence its spectrum  consists entirely of isolated eigenvalues of finite multiplicity, which can only accumulate at infinity. Let $\mu\in\mathbb C$ be an eigenvalue of $A_p$ with eigenfunction $0\not\equiv
w\in\dom(A_p)$, $A_pw=\mu w$. Then (see the first step of the proof of  Proposition \ref{pr.simple}), 
\begin{equation}\label{eq.semiw}
S_p(t)w=e^{-\mu t}w,\qquad\forall t\ge0.
\end{equation} 
  The same argument, applied to the eigenvalue equation
$A_p^*\mathbf v=\lambda\mathbf v$ from Theorem \ref{th.3}, gives
\begin{equation}\label{eq.semiv}
S_p^*(t)\mathbf v=e^{-\lambda t}\mathbf v,\qquad\forall t\ge0.
\end{equation} 
 Fix $t>0$. By \eqref{eq.semiw},
$|S_p(t)w|=e^{-t\,\mathrm{Re}(\mu)}|w|$ a.e.\ in $\mathscr D$. Combined with Lemma
\ref{lem:domination} applied to $T=S_p(t)$ (positive by Step~1b of the proof of Proposition~\ref{pr.simple}) and $g=w$,
\begin{equation}\label{eq.comp}
e^{-t\,\mathrm{Re}(\mu)}\,|w|\ \le\ S_p(t)|w|\qquad\text{a.e.\ in }\mathscr D.
\end{equation}
Both sides of \eqref{eq.comp} belong to $L^p_+(\mathscr D)$. Multiplying by
$\mathbf v\in L^{p'}_+(\mathscr D)$ (Theorem \ref{th.3}) and integrating over $\mathscr D$
(all products lie in $L^1(\mathscr D)$ by H\"older's inequality) gives
\begin{equation}\label{eq.pair}
e^{-t\,\mathrm{Re}(\mu)}\int_{\mathscr D}|w|\,\mathbf v\,dx
\ \le\ \int_{\mathscr D}\big(S_p(t)|w|\big)\,\mathbf v\,dx
=\langle |w|,S_p^*(t)\mathbf v\rangle
\overset{\eqref{eq.semiv}}{=}e^{-\lambda t}\int_{\mathscr D}|w|\,\mathbf v\,dx.
\end{equation}
Since $|w|\not\equiv0$
and $\mathbf v>0$ a.e. by Theorem \ref{th.3}, we get for every
$t>0$:
$$
-t\,\mathrm{Re}(\mu)\ \le\ -\lambda t\quad\implies \quad \mathrm{Re}(\mu)\ \ge\ \lambda.
$$
This proves the result for $\sigma(A_p)$ and then for $\sigma(A^*_p)$ since $\sigma(A^*_p)= \overline{\sigma(A_p)}$.

\end{proof}

 

 \subsection{On the  exit position distribution}
 
 \begin{lemma}\label{le.regmix}
Let $u \in H^1(\mathscr D)$ be such that $\mathscr L u \in L^p(\mathscr D)$ for every
$p\in(1,+\infty)$, $u|_{\SigD} \in C^\infty(\SigD)$,  and $\partial_{\mathbf n_e} u|_{\SigN}=0$ in $H^{-1/2}(\SigN)$. Then
$u \in C(\bar{\mathscr D})$ and $u \in C^\infty(\bar{\mathscr D} \setminus \partial \Sigma)$.
\end{lemma}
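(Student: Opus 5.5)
The plan is to reduce to the homogeneous-trace setting of Proposition~\ref{pr.conti}, then apply Shamir near the interface and classical local regularity elsewhere. Set $f:=\mathscr L u\in\bigcap_{p<\infty}L^p(\mathscr D)$ and $g:=u|_{\SigD}\in C^\infty(\SigD)$.

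\emph{Step 1: lifting the Dirichlet datum.} Since $\SigD=\bar\Sigma$ is a compact $C^3$ piece of $\partial\mathscr D$ with $C^3$ relative boundary, I will extend $g$ to a function $\tilde g\in C^2(\partial\mathscr D)$, smooth away from $\partial\Sigma$. I will then take a lift $G\in C^2(\bar{\mathscr D})$ of $\tilde g$ with $\partial_{\mathbf n_e}G=0$ on $\partial\mathscr D$. One way to build $G$ is to set $G(x)=\chi(\mathrm{dist}(x,\partial\mathscr D))\,\tilde g(\pi(x))$ in a tubular neighbourhood, where $\pi$ is the normal projection and $\chi$ is a cutoff equal to $1$ near $0$. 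This makes $G$ constant along normals near the boundary, so its normal derivative vanishes. Then $z:=u-G\in H^1_{0,\SigD}(\mathscr D)$ satisfies
\[
\mathscr L z=f-\mathscr L G\in\bigcap_{p<\infty}L^p(\mathscr D),\qquad \partial_{\mathbf n_e}z|_{\SigN}=0 .
\]

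\emph{Step 2: continuity up to $\partial\Sigma$.} The function $z$ is a weak solution of the mixed problem \eqref{eq.mixedShamir} with $A=-\mathscr L$ and $B=\partial_{\mathbf n_e}$, exactly as for $\mathbf u_{p_0}$ in Proposition~\ref{pr.conti}. Its right-hand side lies in every $L^p$. Theorem~5.3 and Corollary~5.4 of \cite{Shamir1968} then give $z\in C^{0,1/2-\varepsilon}$ in a neighbourhood of $\partial\Sigma$. Away from $\partial\Sigma$ the problem is locally pure Dirichlet or pure Neumann with $L^p$ data for all $p$. Local $W^{2,p}$ estimates (interior, Dirichlet and Neumann versions) plus Morrey's embedding give $z\in C^{1,\alpha}$ there. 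By compactness of $\bar{\mathscr D}$ we get $z\in C(\bar{\mathscr D})$, hence $u=z+G\in C(\bar{\mathscr D})$.

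\emph{Step 3: smoothness away from $\partial\Sigma$.} Fix $x_0\in\bar{\mathscr D}\setminus\partial\Sigma$ and a ball $\mathscr B(x_0,r)$ that does not meet $\partial\Sigma$. On $\mathscr B(x_0,r)\cap\mathscr D$, $u$ solves $\mathscr L u=f$ with either no boundary condition, the condition $u=g$ (smooth), or $\partial_{\mathbf n_e}u=0$. I will bootstrap with local Schauder/Agmon--Douglis--Nirenberg estimates: if $u\in H^{k}$ and $f\in H^{k-1}$ near $x_0$, then $u\in H^{k+1}$. This step is the main obstacle. The bootstrap gains derivatives only as far as $f=\mathscr L u$ is locally smooth, and the integrability hypothesis alone yields only $W^{2,p}_{\rm loc}\subset C^{1,\alpha}$. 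Infinite differentiability requires $f$ to be smooth on $\bar{\mathscr D}\setminus\partial\Sigma$. So the $C^\infty(\bar{\mathscr D}\setminus\partial\Sigma)$ claim, as worded, is established by this plan only when $\mathscr L u$ is locally smooth away from $\partial\Sigma$. I would first check that this holds in every situation where the lemma is invoked, namely Fredholm-type problems with $\mathscr L u=\lambda u$ or $\mathscr L u=0$. In those cases the bootstrap closes: $f$ inherits each gained derivative from $u$, each step gains one more, and iterating gives $u\in C^\infty$ near $x_0$. Since $x_0$ is arbitrary, this yields $u\in C^\infty(\bar{\mathscr D}\setminus\partial\Sigma)$. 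Without local smoothness of $\mathscr L u$, the plan delivers only $C^{1,\alpha}$ there, not $C^\infty$.
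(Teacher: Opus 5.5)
Your route is the paper's: Shamir's Theorem~5.3 plus the Sobolev embedding for continuity, then local interior, Dirichlet and Neumann regularity away from $\partial\Sigma$. Your lift $G$, constant along normals so that $\partial_{\mathbf n_e}G=0$, supplies a detail the paper skips. Shamir's result as quoted in Proposition~\ref{pr.conti} concerns the homogeneous condition $w=0$ on $\SigD$, and your reduction handles this correctly, since $\mathscr L G\in C(\bar{\mathscr D})$ keeps the right-hand side in every $L^p$.

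Your reservation in Step~3 is justified, and the $C^\infty$ conclusion is in fact false under the stated hypotheses. Take $u=\chi\,(x_1-a)_+^2$ with $\chi\in C_c^\infty(\mathscr D)$ nonzero at some point of $\{x_1=a\}$.
\begin{itemize}
\item Then $u\in W^{2,\infty}(\mathscr D)$ has compact support in $\mathscr D$, so $\mathscr L u\in L^\infty(\mathscr D)$, $u|_{\SigD}=0$ and $\partial_{\mathbf n_e}u|_{\SigN}=0$.
\item Yet $\partial_{x_1}^2u$ jumps by $2\chi$ across $\{x_1=a\}$, so $u\notin C^2(\mathscr D)$.
\end{itemize}
The paper's one-line appeal to standard local regularity therefore tacitly assumes $\mathscr L u\in C^\infty(\bar{\mathscr D}\setminus\partial\Sigma)$.

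This does not affect the paper's results. The lemma is only invoked in Proposition~\ref{pr.Phih}, in Steps~1 and~4, where $\mathscr L u=0$, and there your bootstrap closes. Your version is the correct statement and suffices for every use: continuity on $\bar{\mathscr D}$ and $C^{1,\alpha}$ regularity off $\partial\Sigma$ in general, and $C^\infty$ off $\partial\Sigma$ when $\mathscr L u$ is smooth there.
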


\begin{proof}
Applying \cite[Theorem 5.3]{Shamir1968} together with  the Sobolev embedding  yields $u \in C(\bar{\mathscr{D}})$. The higher regularity $u \in C^\infty(\bar{\mathscr D} \setminus \partial \Sigma)$ then follows from standard interior and local Dirichlet/Neumann boundary elliptic regularity results (recall $b:\bar{\mathscr D}\to \mathbb R^d$ is smooth as well as $\partial \mathscr D$).

\end{proof}

\begin{proposition} \label{pr.Phih}
Let $h \in C^\infty(\bar{\mathscr D},\mathbb R)$ and set, for $x\in \bar{\mathscr D}$,
$$
  \Phi_h(x) := \mathbb E_x\bigl[h(X_{\tau_{\mathscr O}})\bigr].
$$
Then, $\Phi_h$ is well defined and bounded on $\bar{\mathscr D}$, and:
\begin{enumerate}
\item[$\mathfrak 1$.] $\Phi_h \in H^1(\mathscr D) \cap C(\bar{\mathscr D})$, $\mathscr L \Phi_h = 0$
in $\mathscr D$ (in the sense of distributions).
\item[$\mathfrak 2$.]  $\Phi_h = h$ on $\SigD$ (trace sense), $\partial_{\mathbf n_e} \Phi_h|_{\SigN} = 0$
in $H^{-1/2}(\SigN)$.
\item[$\mathfrak 3$.]  $\Phi_h \in C^1(\bar{\mathscr D} \setminus \partial \Sigma)$. \end{enumerate}
\end{proposition}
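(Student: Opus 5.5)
The plan is to identify $\Phi_h$ with the solution of a mixed boundary value problem built analytically, and then to use a Feynman--Kac/It\=o argument, modeled on the proof of Proposition~\ref{pr.fin}, to show that this solution coincides with $\mathbb E_x[h(X_{\tau_{\mathscr O}})]$. Well-definedness and boundedness are immediate. By Theorem~\ref{th.1}, item $\mathfrak 3$, one has $\tau_{\mathscr O}<+\infty$ a.s. for $x\in\mathscr O$, and $X_{\tau_{\mathscr O}}=x$ for $x\in\SigD$. Hence $|\Phi_h|\le\|h\|_\infty$.

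\emph{Analytic construction.} I write the candidate as $\Psi=h+z$, where $z\in\HsD$ solves
$$-\mathscr L z=\mathscr L h \ \text{ in }\mathscr D,\qquad z=0\ \text{on }\SigD,\qquad \partial_{\mathbf n_e}z=-\partial_{\mathbf n_e}h\ \text{on }\SigN.$$
Weakly, this reads $\mathbf L(z,\psi)=\int_{\mathscr D}(\mathscr Lh)\psi-\int_{\SigN}(\partial_{\mathbf n_e}h)\psi$ for all $\psi\in\HsD$; the right-hand side is a bounded functional on $\HsD$. Existence comes from the Fredholm alternative. Since $A_2$ has compact resolvent (Lemma~\ref{le.comp}), solvability of $\mathbf L(z,\cdot)=\ell$ holds as soon as $0\notin\sigma(A_2)$. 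This is guaranteed by Proposition~\ref{pr.dominance} together with $\lambda=\eta>0$ (Proposition~\ref{pr.fin}): every eigenvalue has real part at least $\eta>0$. Uniqueness follows for the same reason. I then set $\Psi:=h+z\in H^1(\mathscr D)$. It satisfies $\mathscr L\Psi=0$ in $\mathscr D$, $\Psi=h$ on $\SigD$, and $\partial_{\mathbf n_e}\Psi=0$ on $\SigN$ in $H^{-1/2}(\SigN)$. Lemma~\ref{le.regmix}, whose hypotheses hold since $\mathscr L\Psi=0$ and $\Psi|_{\SigD}=h|_{\SigD}$ is smooth, gives $\Psi\in C(\bar{\mathscr D})\cap C^\infty(\bar{\mathscr D}\setminus\partial\Sigma)$.

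\emph{Identification $\Phi_h=\Psi$.} I repeat the localization argument of Proposition~\ref{pr.fin} with $V_r=\{\mathrm{dist}(\cdot,\SigD)\le r\}$ and $\tau_r\uparrow\tau_{\mathscr O}$. Away from $V_{r/2}$, $\Psi$ is $C^\infty$ up to $\SigN$ with $\partial_{\mathbf n_e}\Psi=0$ there. Applying It\=o's formula to $\Psi(X_{t\wedge\tau_r})$, the drift term vanishes because $\mathscr L\Psi=0$, and the local time term vanishes because $\partial_{\mathbf n_e}\Psi=0$ on $\SigN\setminus V_r$. The martingale term has bounded integrand and hence zero mean. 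This yields $\Psi(x)=\mathbb E_x[\Psi(X_{t\wedge\tau_r})]$. Letting $t\to\infty$ and then $r\to0$, I use $\tau_{\mathscr O}<\infty$ a.s., the continuity of $\Psi$ on $\bar{\mathscr D}$, the continuity of paths, and dominated convergence to get $\Psi(x)=\mathbb E_x[\Psi(X_{\tau_{\mathscr O}})]=\mathbb E_x[h(X_{\tau_{\mathscr O}})]$, since $X_{\tau_{\mathscr O}}\in\SigD$ where $\Psi=h$. For $x\in\SigD$ the identity is trivial. Therefore $\Phi_h=\Psi$ everywhere on $\bar{\mathscr D}$, which proves items $\mathfrak 1$--$\mathfrak 3$; item $\mathfrak 3$ is implied by the $C^\infty$ regularity.

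\emph{Expected obstacle.} The delicate point is justifying the solvability step and the regularity input simultaneously. The Fredholm alternative needs $0\notin\sigma(A_2)$, which relies on the dominance result and $\eta>0$; spectrum on the adjoint side is handled by $\sigma(A_2^*)=\overline{\sigma(A_2)}$. One must also check that the lifting by $h$ and the inhomogeneous Neumann datum are compatible with the framework of Lemma~\ref{le.regmix}, which assumes a homogeneous Neumann condition. This is why I impose the Neumann condition on $\Psi$ itself rather than on $z$. The remaining risk is the $r\to0$ limit near $\partial\Sigma$, but the global continuity of $\Psi$ from Shamir's theorem disposes of it.
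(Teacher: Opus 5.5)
Your proof is correct and follows the paper's architecture: lift by $h$, solve $\mathbf L(z,\cdot)=-\mathbf L(h,\cdot)$ in $\HsD$, get $C(\bar{\mathscr D})\cap C^\infty(\bar{\mathscr D}\setminus\partial\Sigma)$ from Lemma~\ref{le.regmix}, and identify with $\Phi_h$ by It\^o's formula stopped at $\tau_r$.

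\textbf{Where you differ: how the kernel is excluded.}
\begin{itemize}
\item The paper proves $\ker A_2=\{0\}$ directly (Step~1 of its proof). A null solution $\psi$ is continuous by Lemma~\ref{le.regmix}. The same It\^o/$\tau_r$ argument then gives $\psi(x)=\mathbb E_x[\psi(X_{\tau_{\mathscr O}})]$, and $\psi|_{\SigD}=0$ forces $\psi\equiv0$. The Fredholm alternative for $K_0=K_\kappa-\kappa J$ then yields solvability.
\item You instead import $0\notin\sigma(A_2)$ from Proposition~\ref{pr.dominance} together with $\lambda=\eta>0$ (Proposition~\ref{pr.fin}). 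This is legitimate. Neither result depends on the present proposition, and compact resolvent (Lemma~\ref{le.comp}) makes every spectral point an eigenvalue, so dominance covers all of $\sigma(A_2)$.
\end{itemize}
The paper's route is more economical: it needs only $\tau_{\mathscr O}<\infty$ a.s.\ and the regularity lemma, not the Krein--Rutman/QSD identification and spectral dominance. Yours gives the stronger statement $0\in\rho(A_2)$, from which the solution can be written explicitly.

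\textbf{A step to make precise.} Your sentence ``since $A_2$ has compact resolvent, solvability holds as soon as $0\notin\sigma(A_2)$'' mixes two levels. The right-hand side $\ell$ contains the boundary term $\int_{\SigN}(\partial_{\mathbf n_e}h)\psi$, so it is a functional on $\HsD$ and not on $L^2(\mathscr D)$, whereas the resolvent of $A_2$ acts on $L^2(\mathscr D)$. One fix is to argue at the level of $K_0:\HsD\to\HsD^*$ as the paper does. Alternatively, set $z_0:=K_\kappa^{-1}\ell\in\HsD$ and $z:=z_0+\kappa A_2^{-1}z_0$. Using $K_\kappa y=\iota^*(A_2+\kappa)y$ for $y\in\dom(A_2)$, one checks $\mathbf L(z,\cdot)=\ell$ directly.

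\textbf{A point in your favour.} You run the identification for every $x\in\mathscr O$, including $x\in\SigN$, and treat $x\in\SigD$ separately. This is what is actually needed to get $\Phi_h=\Psi$ on all of $\bar{\mathscr D}$, and hence the continuity claim in item~$\mathfrak 1$. The paper writes the identification only for $x\in\mathscr D$.
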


\begin{proof}
Our objective is to construct a solution $\Psi_h$ to the following non-homogeneous mixed boundary value problem:$$\mathscr L \Psi_h=0 \quad \text{in }\mathscr D, \qquad \partial_{\mathbf n_e}\Psi_h\big\vert _{\SigN} = 0, \qquad \Psi_h\big\vert _{\SigD}=h,$$with sufficient regularity\footnote{Note that there is an optimal regularity threshold that cannot be exceeded, see the discussion in Section~\ref{sec.St2}.} to rigorously apply It\=o's formula as we did before, to deduce $\Psi_h=  \Phi_h$.

\underline{Step 1}. Injectivity of the operator $A_2$.

 We first check that $A_2$ has trivial
kernel. Let $\psi\in\dom(A_2)$ satisfy $A_2\psi=0$, i.e.\ ${-}\mathscr L\psi=0\in 
L^q(\mathscr D)$ for every $q\in(1,+\infty)$ and $\partial_{\mathbf n_e}\psi|_{\SigN}
=0$ in $H^{-1/2}(\SigN)$ (see Proposition \ref{pr.domA2}). By Lemma~\ref{le.regmix},  
$$\psi\in C(\bar{\mathscr D}) \cap  C^\infty(\bar{\mathscr D} \setminus \partial \Sigma).$$ 
In particular  $\partial_{\mathbf n_e}\psi 
=0$ pointwise on $\SigN$.  
Fix $x\in\mathscr D$ and $r>0$. By It\=o formula   on $\psi(X_{t})$, we get using the boundary condition  
$$
  \psi(x) = \mathbb E_x\bigl[\psi(X_{\tau_r})\bigr], \quad
  \forall\, r>0,
$$
where we recall that $\tau_r = \inf\{ t \geq 0,  X_t \in V_r \}$ and $\tau_r\to\tau_\mathscr O$ as $r\to0$. 
 By dominated  convergence theorem and because $\psi\in C(\bar{\mathscr D})$, we then deduce  that:
$$
  \psi(x) = \mathbb E_x\bigl[\psi(X_{\tau_{\mathscr O}})\bigr].
$$
Finally, because a.s. $X_{\tau_{\mathscr O}}\in\SigD$ 
and $\psi|_{\SigD}=0$ pointwise (since $\psi\in\dom(A_2)\subset\HsD$ and $\psi\in C(\bar{\mathscr D})$), we get that  
$\psi(x)=\mathbb E_x[0]=0$. As $x\in\mathscr D$ was arbitrary, $\psi\equiv0$, i.e.\
$\ker A_2 = \{0\}$, the desired result.

\smallskip\noindent
\underline{Step 2}. Variational problem.

For any $u\in H^1(\mathscr D)$
with $\mathscr Lu\in L^2(\mathscr D)$, the computation made in the proof of Proposition~\ref{pr.domA2}
(which only uses that the test function $\varphi$ satisfies $\varphi|_{\SigD}=0$, not any condition on $u|_{\SigD}$) gives
\begin{equation}\label{eq:star}
  \mathbf L(u,\varphi) = \langle {-}\mathscr Lu,\varphi\rangle  _{L^2(\mathscr D)}
  + \langle \partial_{\mathbf n_e} u,\varphi\rangle_{\SigN}, \qquad \forall\,\varphi\in\HsD. 
\end{equation}
Hence $\mathscr Lu=0$ and $\partial_{\mathbf n_e}u|_{\SigN}=0$ are together
equivalent, via \eqref{eq:star}, to
$$
  \mathbf L(u,\varphi) = 0, \qquad \forall\,\varphi\in\HsD.
$$
To prove the proposition, we perform a reasoning by analysis and synthesis. 
Let  
$$\Psi_h=h+w, \quad w:=\Psi_h-h.$$
 Since we require
$\Psi_h\in H^1(\mathscr D)$ and $\Psi_h|_{\SigD}=h|_{\SigD}$, we must have $w|_{\SigD}=(\Psi_h-h)|_{\SigD}=0$,
i.e.\ $w\in\HsD$. By the equivalence established above (applied to
$u=\Psi_h=h+w\in H^1(\mathscr D)$),
\begin{equation}\label{eq.equiv3}
  \mathscr L\Psi_h=0 \ \text{ and } \ \partial_{\mathbf n_e}\Psi_h|_{\SigN}=0
  \quad\Longleftrightarrow\quad
  \mathbf L(\Psi_h,\varphi)=0, \qquad \forall\,\varphi\in\HsD.
\end{equation}
By bilinearity of $\mathbf L$, the right-hand condition reads
$\mathbf L(w,\varphi)=-\mathbf L(h,\varphi)$ for all $\varphi\in\HsD$. Therefore, the problem to solve is the following: 
\begin{equation}\label{eq:varPhih}
  \text{Find } w\in\HsD \text{ s.t. } \ \
  \mathbf L(w,\varphi)=-\mathbf L(h,\varphi), \qquad \forall\,\varphi\in\HsD.
\end{equation}

 \noindent
\underline{Step 3}. We   solve \eqref{eq:varPhih}.

The right-hand side $\varphi\mapsto -\mathbf L(h,\varphi)$ is a bounded linear
functional on $\HsD$ (namely it belongs to $\HsD^*$), by boundedness of $\mathbf L$ on $H^1(\mathscr D)\times
H^1(\mathscr D)$ (recalled after \eqref{eq:form}). 
Let us now look at the function in the left-hand side of \eqref{eq:varPhih}. 
By \eqref{eq:coercive}, $\mathbf L_\kappa$ is bounded and coercive on $\HsD$, so
by Lax-Milgram the operator
$$
  K_\kappa:\HsD\to\HsD^*, \qquad K_\kappa v:=\mathbf L_\kappa(v,\cdot),
$$
is an isomorphism. Since $\mathbf L(\cdot,\cdot)=\mathbf L_\kappa(\cdot,\cdot)
-\kappa\langle\cdot,\cdot\rangle_{L^2}$, the operator associated with
$\mathbf L$,
$$
  K_0:\HsD\to\HsD^*, \qquad K_0v:=\mathbf L(v,\cdot),
$$
satisfies $K_0=K_\kappa-\kappa J$, where $J:=\iota^*\iota:\HsD\to\HsD^*$,
$\iota:\HsD\hookrightarrow L^2(\mathscr D)$ being the embedding and
$\iota^*:L^2(\mathscr D)\to\HsD^*$ its adjoint. Since $\iota$ is compact
(Rellich-Kondrachov), so is $J=\iota^*\iota$, and hence $K_0=K_\kappa-\kappa J$ is a
compact perturbation of the isomorphism $K_\kappa$. By the Fredholm alternative,
$K_0$ is bijective as soon as it is injective.

Let us now check the  injectivity of $K_0$. Suppose $K_0 \Psi =0$, i.e.\ $\mathbf L(\Psi,\varphi)=0$ for all $\varphi\in\HsD$.
Applying the equivalence  \eqref{eq.equiv3} with $\Psi$ instead of $\Psi_h$  gives $\mathscr L\Psi=0$ and
$\partial_{\mathbf n_e}\Psi|_{\SigN}=0$, i.e.\ $\Psi\in\dom(A_2)$ and $A_2\Psi=0$. Since
$A_2$ is injective, we conclude $\Psi=0$. Hence $K_0$ is
injective, and therefore, by the Fredholm alternative, an isomorphism $\HsD\to\HsD^*$.
We have thus solved \eqref{eq:varPhih}.

\smallskip\noindent
\underline{Step 4}. Conclusion

Recall $\Psi_h:= w+h$. By construction $ \Psi_h\in  H^1(\mathscr D)$,  $\Psi_h|_{\SigD}=h|_{\SigD}$, and  $\mathbf L(\Psi_h,\varphi)=0$ for all $\varphi\in\HsD$. In particular,  by \eqref{eq.equiv3}, $\partial_{\mathbf n_e}\Psi_h|_{\SigN}=0$ and 
$$\mathscr L\Psi_h=0\in L^q(\mathscr D), \ \ \forall q\ge 1.$$ Thus, Lemma~\ref{le.regmix} applies directly to $\Psi_h$ and gives
$\Psi_h\in C(\bar{\mathscr D})\cap 
C^\infty(\bar{\mathscr D} \setminus \partial \Sigma)$.
With this regularity in hand, we can use It\=o formula on $\Psi_h(X_t)$ as we did before (use the stopping time $\tau_r$)   to deduce that 
$\Psi_h(x)=\mathbb E_x[h(X_{\tau_{\mathscr O}})]$ for every $x\in\mathscr D$. The proof of the proposition is complete. 

\end{proof}

\begin{lemma} \label{le.green2}
Let $u\in H^1(\mathscr D)$ with $\mathscr L u=0$ in $\mathscr D$ and
$\partial_{\mathbf n_e} u|_{\SigN}=0$, and let $v \in \dom(A_2^*)$ with
$A_2^*v = \lambda v$. Then:
\begin{equation}\label{eq:key}
  \lambda \int_{\mathscr D} u\,v \; dx
 = 
 - \langle u, \partial_{\mathbf n_e} v  \rangle_{\SigD}.
\end{equation}
\end{lemma}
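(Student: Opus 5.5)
Our plan is to derive \eqref{eq:key} from Green's second identity, written in weak form using the forms $\mathbf L$ and $\mathbf L^*$. We cannot test the equation for $v$ directly against $u$, because $u$ does not vanish on $\SigD$ and so $u\notin\HsD$. We therefore let the boundary term on $\SigD$ carry exactly this defect.

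First, $\mathscr L^* v=-\lambda v\in L^2(\mathscr D)$. Since $b$ is smooth, $\Delta v\in L^2(\mathscr D)$, and the trace $\partial_{\mathbf n_e}v\in H^{-1/2}(\partial\mathscr D)$ is defined by \eqref{eq:green}. The same holds for $u$, because $\Delta u=-b\cdot\nabla u\in L^2(\mathscr D)$.

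Second, we apply \eqref{eq:green} to $v$ with the test function $u\in H^1(\mathscr D)$. Then we integrate the drift term by parts, exactly as in the derivation of \eqref{eq:Lstar}:
\[
\int_{\mathscr D}(b\cdot\nabla v)u+(\operatorname{div}b)vu=-\int_{\mathscr D}v\,b\cdot\nabla u+\int_{\partial\mathscr D}(b\cdot\mathbf n_e)uv .
\]
This is legitimate because $uv\in W^{1,1}(\mathscr D)$ and the traces of $u$ and $v$ lie in $L^2(\partial\mathscr D)$.

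Combining these gives
\[
\lambda\int_{\mathscr D}uv=\int_{\mathscr D}(-\mathscr L^*v)u=\int_{\mathscr D}\nabla v\cdot\nabla u-\int_{\mathscr D}v\,b\cdot\nabla u-\langle\partial_{\mathbf n_e}v-(b\cdot\mathbf n_e)v,u\rangle_{\partial\mathscr D}.
\]
The first two terms together equal $\mathbf L(u,v)$, and this vanishes. Indeed, $v\in\HsD$, so \eqref{eq:star} applied to $u$ gives $\mathbf L(u,v)=\langle-\mathscr Lu,v\rangle+\langle\partial_{\mathbf n_e}u,v\rangle_{\SigN}=0$.

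For the boundary pairing, we split $\partial\mathscr D=\SigD\cup\SigN$. On $\SigN$ the Robin condition of Proposition~\ref{prop:domAstar-L2} kills the term. On $\SigD$ we have $v=0$, so the term $(b\cdot\mathbf n_e)v$ vanishes there, and only $-\langle u,\partial_{\mathbf n_e}v\rangle_{\SigD}$ remains.

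The main obstacle is making this splitting rigorous. The pairing is a global $H^{-1/2}\times H^{1/2}$ duality on $\partial\mathscr D$, and a product of an $H^{-1/2}$ distribution with an indicator of $\SigD$ or $\SigN$ is not automatically meaningful. We would handle it as follows:
\begin{itemize}
\item Interpret ``$\partial_{\mathbf n_e}v-(b\cdot\mathbf n_e)v=0$ on $\SigN$'' as saying that the functional annihilates every $H^{1/2}(\partial\mathscr D)$ trace of an $\HsD$ function.
\item Write $u=(u-\tilde u)+\tilde u$, where $\tilde u\in H^1(\mathscr D)$ is any extension matching $u$ near $\SigD$. Then $u-\tilde u\in\HsD$, and its pairing vanishes by the Robin condition.
\item This leaves a pairing that depends only on $u|_{\SigD}$. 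We \emph{define} it to be $\langle u,\partial_{\mathbf n_e}v\rangle_{\SigD}$, which is consistent with how the paper uses this bracket.
\end{itemize}
When more regularity is available, Theorem~\ref{th.Brown} gives $\partial_{\mathbf n_e}v\in L^q(\partial\mathscr D)$. In the intended application $u=\Phi_h$ is continuous on $\bar{\mathscr D}$ (Proposition~\ref{pr.Phih}), so the bracket is an honest integral $\int_{\SigD}u\,\partial_{\mathbf n_e}v\,d\sigma$. We would add a short remark identifying the abstract pairing with this integral, by approximating $u$ by smooth functions and using that $\partial_{\mathbf n_e}v$ is smooth away from $\partial\Sigma$, a set of zero surface measure.
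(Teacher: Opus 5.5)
Your proposal is correct and is essentially the paper's proof. The paper applies \eqref{eq:green} twice to obtain Green's second identity, uses $u\,\Delta v-v\,\Delta u=-\lambda uv+\operatorname{div}(uv\,b)$ and the divergence theorem, and then splits the boundary terms with $v|_{\SigD}=0$, $\partial_{\mathbf n_e}u|_{\SigN}=0$ and the Robin condition. You do the same integrations by parts but let \eqref{eq:star} (that is, $\mathbf L(u,v)=0$ because $v\in\HsD$) absorb the $u$-side. You also make explicit that the $\SigD$-bracket is the pairing of $\partial_{\mathbf n_e}v-(b\cdot\mathbf n_e)v$, which annihilates traces of $\HsD$, with $u$; the paper leaves this interpretation implicit.

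One small correction to your closing remark concerns identifying this bracket with $\int_{\SigD}u\,\partial_{\mathbf n_e}v\,d\sigma$. It is not enough that $\partial\Sigma$ is surface-null. What you need is that $\partial\Sigma$, having codimension one in $\partial\mathscr D$, has zero $H^{1/2}$-capacity, so the $H^{-1/2}$ distribution $\partial_{\mathbf n_e}v$ carries no part concentrated on $\partial\Sigma$.
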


\begin{proof}
By Proposition~\ref{prop:domAstar-L2}, $v\in\HsD$ (so $v|_{\SigD}=0$),
${-}\mathscr L^*v = \lambda v \in L^2(\mathscr D)$, and
$$\partial_{\mathbf n_e}v {-} (b\cdot\mathbf n_e)v|_{\SigN} = 0\text{  in }
 H^{-1/2}(\SigN).$$
  By hypothesis, $u\in H^1(\mathscr D)$ with $\Delta u =
{-}b\cdot\nabla u\in L^2(\mathscr D)$ (since $\mathscr Lu={\Delta u+b\cdot\nabla u=}0$) and $\partial_{\mathbf
n_e}u|_{\SigN}=0$.
Applying the Green formula \eqref{eq:green} twice (with $(w,\phi)=(u,v)$,
then $(w,\phi)=(v,u)$) and subtracting (both left-hand sides equal
$\int_\mathscr D \nabla u\cdot\nabla v$) gives Green's second identity
\begin{equation}\label{eq:green2}
  \int_{\mathscr D} (u\,\Delta v - v\,\Delta u )\,dx
  =
  {\langle u,\partial_{\mathbf n_e} v \rangle_{\partial\mathscr D}}
  -  \langle v,\partial_{\mathbf n_e} u \rangle_{\partial\mathscr D}.
\end{equation}
Since $\Delta u = {-}b\cdot\nabla u$ and, from ${-\mathscr L^*v=-\Delta v
+b\cdot\nabla v+(\operatorname{div}b)v}=\lambda v$,
 $
  \Delta v = {-\lambda v + b\cdot\nabla v +
  (\operatorname{div}b)v}$,
we get
$$
  u\,\Delta v - v\,\Delta u
  = -\lambda\,uv {+} \bigl(u(b\cdot\nabla v)+v(b\cdot\nabla u)
  + uv\,\operatorname{div}b\bigr)
  = -\lambda\,uv {+} \operatorname{div}(uv\,b).
$$
Integrating over $\mathscr D$ and applying the divergence theorem to the last
term,
$$
  \int_{\mathscr D}\bigl(u\,\Delta v - v\,\Delta u\bigr)\,dx
  = -\lambda\int_{\mathscr D} u\,v\,dx
  {+} \int_{\partial\mathscr D} u\,v\,(b\cdot\mathbf n_e) .
$$
Combining with \eqref{eq:green2}, we deduce that:
\begin{equation}\label{eq:almost}
  \lambda\int_{\mathscr D} u\,v\,dx
  =  \langle v,\partial_{\mathbf n_e} u \rangle_{\partial\mathscr D}
  -  \langle u,\partial_{\mathbf n_e}v \rangle_{\partial\mathscr D}
  {+} \int_{\partial\mathscr D} u\,v\,(b\cdot\mathbf n_e)\, .
\end{equation}
Split $\partial\mathscr D=\SigD\sqcup\SigN$. On $\SigD$, $v=0$, so the
contribution of the left-hand side reduces to $-\langle u,\partial_{\mathbf n_e}v\rangle_{\SigD}$.
On $\SigN$, $\partial_{\mathbf n_e}u=0$, so the contribution reduces to
$$
  -\langle u,\partial_{\mathbf n_e}v\rangle_{\SigN}
  {+} \int_{\SigN} u\,v\,(b\cdot\mathbf n_e)\,
  = \langle u,\, -\partial_{\mathbf n_e}v {+} (b\cdot\mathbf n_e)v
 \rangle_{\SigN} = 0,
$$
using the adjoint Robin condition $\partial_{\mathbf n_e}v =
{(b\cdot\mathbf n_e)v}$ on $\SigN$. Substituting both contributions into
\eqref{eq:almost} yields \eqref{eq:key}. The proof of the lemma is complete. 

\end{proof}

The following proposition implies the last statement of Theorem \ref{th.2}. 
\begin{proposition} 
\label{pr.exitlaw}
Let the assumptions of Theorem \ref{th.2} hold. 
Let $\bm{\rho}(dx) = \mathbf v(x)dx/\int_{\mathscr D}\mathbf v $ be the quasi-stationary
distribution of Theorem~\ref{th.2} (see also Theorem \ref{th.3}), and let $\mu_{\bm{\rho}}$ denote the law of
$X_{\tau_{\mathscr O}}$ under $\mathbb P_{\bm{\rho}}$ (see Theorem \ref{th.1}). Set 
$\theta_{\mathbf v}:=-\partial_{\mathbf n_e}\mathbf v/(\lambda\int_{\mathscr D}\mathbf v) \in L^q(\partial \mathscr D)
$, for some $q>1$ (Theorem \ref{th.Brown}).
Then,  for every $f\in C^\infty(\SigD)$,
\begin{equation}\label{eq:muformula}
  \int_{\SigD} f\,d\mu_{\bm{\rho}} = \frac{-\int_{\SigD} f \,  \partial_{\mathbf n_e} \mathbf v}{\lambda\int_{\mathscr D}\mathbf v}.
\end{equation}
In particular, the 
law $\mu_{\bm\rho} $ of the exit point  
$X_{\tau_{\mathscr O}}$ under $\mathbb P_{\bm{\rho}}$ has density 
 $ \theta_{\mathbf v} $ over $\SigD$. 
\end{proposition}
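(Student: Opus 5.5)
The plan is to combine the Green identity of Lemma~\ref{le.green2} with the harmonic extension of Proposition~\ref{pr.Phih}. Fix $f\in C^\infty(\SigD)$ and extend it to some $h\in C^\infty(\bar{\mathscr D},\mathbb R)$ with $h|_{\SigD}=f$. This is possible because $\SigD=\bar\Sigma$ is a closed subset of the smooth manifold $\partial\mathscr D$ whose boundary $\partial\Sigma$ is $C^3$; we read $C^\infty(\SigD)$ as restrictions of smooth functions. Set $\Phi_h(x)=\mathbb E_x[h(X_{\tau_{\mathscr O}})]$. By Theorem~\ref{th.1}, item $\mathfrak 3$, one has $\tau_{\mathscr O}<\infty$ a.s., and $X_{\tau_{\mathscr O}}\in\SigD$. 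Conditioning on $X_0$ then gives
$$\int_{\SigD}f\,d\mu_{\bm\rho}=\mathbb E_{\bm\rho}[h(X_{\tau_{\mathscr O}})]=\int_{\mathscr D}\Phi_h\,d\bm\rho=\frac{\int_{\mathscr D}\Phi_h\mathbf v}{\int_{\mathscr D}\mathbf v}.$$
Here we use that $\bm\rho$ has density $\mathbf v/\int\mathbf v$ (Proposition~\ref{pr.fin}) and that $\bm\rho$ gives no mass to $\partial\mathscr D$.

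Next we apply Lemma~\ref{le.green2} with $u=\Phi_h$ and $v=\mathbf v$. This is legitimate because Proposition~\ref{pr.Phih} gives $\Phi_h\in H^1(\mathscr D)$, $\mathscr L\Phi_h=0$ and $\partial_{\mathbf n_e}\Phi_h|_{\SigN}=0$. It is also legitimate because $\mathbf v\in\dom(A_2^*)$ and $A_2^*\mathbf v=\lambda\mathbf v$ (Lemma~\ref{lem:v1-H1} and Theorem~\ref{th.3}). The lemma yields
$$\lambda\int_{\mathscr D}\Phi_h\mathbf v=-\langle\Phi_h,\partial_{\mathbf n_e}\mathbf v\rangle_{\SigD}.$$
Since $\Phi_h=h=f$ on $\SigD$ in the trace sense, and $\lambda=\eta>0$ (Proposition~\ref{pr.fin}), we can divide by $\lambda\int_{\mathscr D}\mathbf v$. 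This gives \eqref{eq:muformula}, provided the duality bracket is identified with $\int_{\SigD}f\,\partial_{\mathbf n_e}\mathbf v\,d\sigma$.

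That identification is the main obstacle. A priori $\langle\cdot,\cdot\rangle_{\SigD}$ is only an $H^{-1/2}$–$H^{1/2}$ pairing, i.e.\ the restriction to $\SigD$ of the Green formula \eqref{eq:green} pairing. By Theorem~\ref{th.Brown}, $\partial_{\mathbf n_e}\mathbf v\in L^q(\partial\mathscr D)$ for some $q>1$. Also $\Phi_h\in C(\bar{\mathscr D})$ is bounded on the boundary. The remaining point is to show that the distributional normal derivative coincides with this $L^q$ function. I would prove this by redoing Green's formula on the domains $\mathscr D\setminus V_r$, where $V_r$ is the $r$-neighborhood of $\partial\Sigma$ and $\mathbf v$ is $C^\infty$ up to the boundary. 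The error on $\partial V_r\cap\mathscr D$ is then controlled using the non-tangential maximal function bound $(\nabla\zeta)^\star\in L^p(\partial\mathscr D)$ from the proof of Theorem~\ref{th.Brown}, together with the $H^2$ bound on $w$. These estimates, with dominated convergence, make the boundary error tend to zero as $r\to0$, leaving an honest surface integral against $\partial_{\mathbf n_e}\mathbf v\in L^1$.

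Finally, \eqref{eq:muformula} says that the finite measures $\mu_{\bm\rho}$ and $\theta_{\mathbf v}\,d\sigma$ on the compact set $\SigD$ agree on $C^\infty(\SigD)$. Here $\theta_{\mathbf v}\in L^q\subset L^1$. This class is dense in $C(\SigD)$ by Stone–Weierstrass, so the Riesz representation theorem gives $\mu_{\bm\rho}=\theta_{\mathbf v}\,d\sigma$ on $\SigD$. In particular $\theta_{\mathbf v}\ge0$ a.e.\ on $\SigD$, and its total mass is $1$.
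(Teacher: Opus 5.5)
Your proposal is correct and follows the paper's proof: extend $f$ to $h\in C^\infty(\bar{\mathscr D})$, write $\int_{\SigD} f\,d\mu_{\bm\rho}=\int_{\mathscr D}\Phi_h\mathbf v/\int_{\mathscr D}\mathbf v$, apply Lemma~\ref{le.green2} to $(\Phi_h,\mathbf v)$, and divide by $\lambda\int_{\mathscr D}\mathbf v>0$. The paper turns the $H^{-1/2}$--$H^{1/2}$ pairing on $\SigD$ into a surface integral simply by citing $\theta_{\mathbf v}\in L^1(\partial\mathscr D)$ and does not write out the density step, so your truncation near $\partial\Sigma$ and your Stone--Weierstrass/Riesz argument fill in details the paper omits; for the truncation error, a simpler control also works: take a cutoff with gradient $O(1/r)$ supported in $\{\operatorname{dist}(\cdot,\partial\Sigma)<2r\}$, whose intersection with $\mathscr D$ has volume $O(r^2)$, and use $\nabla\mathbf v\in L^2(\mathscr D)$ and $\Phi_h\in L^\infty$, with no need for the non-tangential maximal function.
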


\begin{proof} 
 Let $h\in C^\infty(\bar{\mathscr D})$ and $\Phi_h$ as in
Proposition~\ref{pr.Phih}. By definition of $\mu,\bm{\rho}$ and since $\Phi_h(x)=\mathbb E_x[h(X_{\tau_{\mathscr O}})]$,
$$
  \int_{\SigD} h\,d\mu_{\bm{\rho}} = \mathbb E_{\bm{\rho}}\bigl[h(X_{\tau_{\mathscr O}})\bigr]
  = \int_{\mathscr D}\mathbb E_x\bigl[h(X_{\tau_{\mathscr O}})\bigr]\,\bm{\rho}(dx)
  = \frac{1}{\int_{\mathscr D}\mathbf v}\int_{\mathscr D}\Phi_h(x)\,\mathbf v(x)\,dx.
$$
By Theorem \ref{th.3}, $\mathbf v\in \dom(A_2^*)$. 
By Proposition~\ref{pr.Phih}, $u =\Phi_h$ satisfies the hypotheses of
Lemma~\ref{le.green2}, whose identity \eqref{eq:key} gives
$$
  \lambda\int_{\mathscr D}\Phi_h\,\mathbf v\,dx
  =  
  - \langle \Phi_h,\partial_{\mathbf n_e}\mathbf v\rangle_{\SigD}
  = -\langle h,\partial_{\mathbf n_e}\mathbf v\rangle_{\SigD},
$$
using $\Phi_h|_{\SigD}=h$. Combining the two displays and the fact that  $\theta_{\mathbf v} \in L^1(\partial \mathscr D)$ 
gives \eqref{eq:muformula}.
The proof is complete. 

\end{proof}

\appendix
 
\section{Unbounded domains: some examples}\label{app.unbounded}
 
In this appendix, we discuss some examples of unbounded domains $\mathscr D$ to which the approach of Section~\ref{sec.Gu} still applies for existence and uniqueness of a quasi-stationary distribution in some weighted spaces of probability measures. For simplicity we take $\sigma\equiv\mathrm{Id}_{\mathbb R^d}$, so that (see \eqref{eq.Lo})
$$
\mathscr L=\tfrac12\Delta+b\cdot\nabla ,
$$
and we assume that $b:\mathbb R^d\to\mathbb R^d$ is globally Lipschitz: there is $K_b\ge0$ such that $|b(x)-b(y)|\le K_b|x-y|$ for all $x,y\in\mathbb R^d$. When $\mathscr D$ is unbounded, two new difficulties appear. First, the state space $\bar{\mathscr D}$ is no longer compact, so Assumption \ass{C3} is not automatically satisfied: one needs a Lyapunov function $W$, and the drift $b$ must be confining. Second, Theorem~\ref{th.SF} and Lemma~\ref{le.CT} are stated for bounded domains. When $\mathscr D$ is convex, their proofs simplify and apply globally, otherwise, we use a truncation argument together with local pathwise uniqueness.

\subsection{The sign of the normal derivative of the Lyapunov function}
Let $W\in C^2(\bar{\mathscr D})$. By It\=o's formula and \eqref{eq.L=}, since $\nabla W\cdot \mathbf n_i=-\partial_{\mathbf n_e}W$ on $\partial\mathscr D$,
\begin{equation}\label{eq.ItoW}
W(X_t)=W(x)+\int_0^t\mathscr LW(X_s)\,ds+\int_0^t\nabla W(X_s)\cdot dB_s-\int_0^t\partial_{\mathbf n_e}W(X_s)\,d|L|_s .
\end{equation}
The last term is the only difference with the non-reflected case. If $\partial_{\mathbf n_e}W=0$ on $\partial\mathscr D$, it vanishes, so that $W(X_t)-W(x)-\int_0^t\mathscr LW(X_s)\,ds$ is a local martingale, and thus $W$ belongs to the extended domain of the generator, with extended generator $\mathscr LW$, as required in \ass{C3}. If instead $\partial_{\mathbf n_e}W<0$ on a part of $\partial\mathscr D$, the reflection pushes the process towards larger values of $W$, and the local time contributes a positive term to \eqref{eq.ItoW}, which may destroy the Lyapunov inequality. If $\partial_{\mathbf n_e}W\ge0$ on $\partial\mathscr D$, the local-time term in \eqref{eq.ItoW} is nonpositive, so that $W(X_t)-W(x)-\int_0^t\mathscr LW(X_s)\,ds$ is a local supermartingale. This weaker property suffices as soon as \textbf{[C3]} is only used through the supermartingale property of $W(X_t)\exp\big(\int_0^t(-\mathscr LW/W)(X_s)\,ds\big)$ in \cite{guillinqsd}.

For a radial function $W(x)=\varphi(|x|)$ with $\varphi'\ge0$, one has
$$
\partial_{\mathbf n_e}W(x)=\varphi'(|x|)\,\frac{x\cdot\mathbf n_e(x)}{|x|},\qquad x\in\partial\mathscr D,
$$
so that the sign of $\partial_{\mathbf n_e}W$ is that of $x\cdot\mathbf n_e(x)$, which depends on the geometry of $\mathscr D$. In the examples below, $W$ is adapted to the geometry so that $\partial_{\mathbf n_e}W=0$ on $\partial\mathscr D$.

\subsection{Examples}
In each example, $a>0$ is a parameter to be chosen, $W\in C^2(\bar{\mathscr D})$, $W\ge1$, $\partial_{\mathbf n_e}W=0$ on $\partial\mathscr D$, and $W(x)\to+\infty$, $\mathscr LW(x)/W(x)\to-\infty$ as $|x|\to+\infty$, $x\in\bar{\mathscr D}$. Consequently, \ass{C3} holds.

\medskip
\noindent\textbf{Example 1 (half-space).} Let $\mathscr D=\{x\in\mathbb R^d,\ x_d>0\}$ and assume that there exist $c>0$ and $C\ge0$ such that
\begin{equation}\label{eq.confin}
b(x)\cdot x\le -c|x|^2+C,\qquad x\in\mathbb R^d,
\end{equation}
e.g.\ $b(x)=-x+\beta(x)$ with $\beta$ bounded and globally Lipschitz. Let $W(x):=e^{a|x|^2}$ with $0<a<c$. On $\partial\mathscr D$, $\mathbf n_e=-e_d$, hence $\partial_{\mathbf n_e}W=-2ax_dW=0$. Moreover, $\nabla W=2axW$ and $\Delta W=(2ad+4a^2|x|^2)W$, so that, by \eqref{eq.confin},
$$
\frac{\mathscr LW}{W}(x)=ad+2a^2|x|^2+2a\,b(x)\cdot x\le a(d+2C)-2a(c-a)|x|^2\xrightarrow[|x|\to\infty]{}-\infty .
$$

\noindent\textbf{Example 2 (infinite tube).} Let $\mathscr D=\omega\times\mathbb R$, where $\omega\subset\mathbb R^{d-1}$ is a bounded domain with $C^3$ boundary, and write $x=(x',x_d)\in\mathbb R^{d-1}\times\mathbb R$ and $b=(b',b_d)$. Assume that the drift confines the process along the axis of the tube:
\begin{equation}\label{eq.confin2}
b_d(x)\,x_d\le -c\,x_d^2+C,\qquad x\in\mathbb R^d,
\end{equation}
e.g.\ $b(x)=(\beta'(x),-x_d+\beta_d(x))$ with $\beta=(\beta',\beta_d)$ globally Lipschitz and $\beta_d$ bounded. Let $W(x):=e^{a x_d^2}$ with $0<a<c$. On $\partial\mathscr D=\partial\omega\times\mathbb R$, the outward normal is $\mathbf n_e(x)=(\mathbf n_e^{\omega}(x'),0)$, which is orthogonal to $\nabla W=2ax_dW\,e_d$, hence $\partial_{\mathbf n_e}W=0$. Moreover, by \eqref{eq.confin2},
$$
\frac{\mathscr LW}{W}(x)=a+2a^2x_d^2+2a\,b_d(x)\,x_d\le a(1+2C)-2a(c-a)x_d^2\xrightarrow[|x_d|\to\infty]{}-\infty ,
$$
and $|x|\to+\infty$ in $\bar{\mathscr D}$ if and only if $|x_d|\to+\infty$, since $\omega$ is bounded. 

\medskip
\noindent\textbf{Example 3 (narrow entry).} Let $\mathscr D=\mathbb R^d\setminus\bar U$, where $U$ is a nonempty bounded open set with $C^3$ boundary such that $\mathbb R^d\setminus\bar U$ is connected, and assume \eqref{eq.confin}. Here $\partial\mathscr D=\partial U$ and, taking $\Sigma\subset\partial U$, the exit time $\tau_{\mathscr O}$ is the first time the process, reflected on $\partial U\setminus\Sigma$, enters the obstacle $U$ through the window $\Sigma$. When $\Sigma$ is small, this is a narrow entry problem. The domain $\mathscr D$ is never convex. The radial function $e^{a|x|^2}$ has the wrong sign here: if, e.g., $U$ is convex and $0\in U$, then $\mathbf n_e$ points into $U$ and $x\cdot\mathbf n_e(x)<0$ on $\partial\mathscr D$. Since $\partial\mathscr D$ is compact, we instead flatten $W$ near the obstacle. Let $R_0>0$ be such that $\bar U\subset\mathscr B(0,R_0)$, and let $h\in C^\infty([0,+\infty))$ be nondecreasing, with $h=0$ on $[0,R_0]$ and $h(r)=r^2$ for $r\ge R_0+1$. Set $W(x):=e^{a h(|x|)}$ with $0<a<c$. Then $W\equiv1$ on $\mathscr B(0,R_0)$, which is a neighborhood of $\partial\mathscr D$, so that $\partial_{\mathbf n_e}W=0$. In addition, $\mathscr LW$ is bounded on $\bar{\mathscr D}\cap\bar{\mathscr B}(0,R_0+1)$, and for $|x|\ge R_0+1$ the computation of Example~1 applies. 

\subsection{Existence and uniqueness of a quasi-stationary distribution}

\begin{theorem}\label{th.unbounded}
Let $d\ge2$, $\sigma\equiv\mathrm{Id}_{\mathbb R^d}$, and let $b:\mathbb R^d\to\mathbb R^d$ be globally Lipschitz. Assume that one of the following holds:
\begin{enumerate}
\item[$\mathfrak 1$.] \emph{(Convex case)} $\mathscr D$ and $b$ are as in Example~1, or as in Example~2 with $\omega$ convex.
\item[$\mathfrak 2$.] \emph{(Non-convex case)} $\mathscr D$ and $b$ are as in Example~2 with $\omega$ non-convex, or as in Example~3.
\end{enumerate}
Let $W$ be the Lyapunov function of the corresponding example. Then:
\begin{enumerate}
\item[(i)] For every $x\in\bar{\mathscr D}$, \eqref{eq.LR} has a pathwise unique strong solution $(X_t(x))_{t\ge0}$ defined for all times, and $(X_t)_{t\ge0}$ is a strong Markov process with continuous paths.
\item[(ii)] Let $\Sigma\subset\partial\mathscr D$ satisfy \ass{C} (e.g.\ $\Sigma$ with nonempty interior in $\partial\mathscr D$), and set $\mathscr O=\bar{\mathscr D}\setminus\bar\Sigma$. Then \ass{C1}$\to$\ass{C5} hold on $\mathscr E=\bar{\mathscr D}$ with the Lyapunov function $W$. Consequently, all the conclusions of Theorem~\ref{thm:main} hold for $(X_t)_{t\ge0}$ with absorbing boundary part $\bar\Sigma$. In particular, for every $p>1$, there exists a unique quasi-stationary distribution in $\mathscr P_{W^{1/p}}(\mathscr O)$.
\end{enumerate}
\end{theorem}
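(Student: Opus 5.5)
Part (i) is well-posedness and part (ii) is verification of \ass{C1}$\to$\ass{C5}, after which Theorem~\ref{thm:main} applies directly; I follow the proof of Theorem~\ref{th.1}, replacing each use of boundedness.

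\textbf{Part (i).} In the convex case the maximal-monotone theory of \cite{lionsRSDE,saisho,cepa1998problame} gives pathwise uniqueness and a global strong solution for globally Lipschitz $b$ and constant $\sigma$. Indeed, \eqref{eq.a} holds with $C_{\partial\mathscr D}=0$, because $(y-z)\cdot\mathbf n_i(z)\ge0$ for convex $\mathscr D$.

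In the non-convex cases (Example~2 with $\omega$ non-convex, Example~3) the boundary $\partial\mathscr D$ has uniformly bounded second fundamental form. It is compact in Example~3 and a translation-invariant cylinder in Example~2. Hence \eqref{eq.a}--\eqref{eq.b} hold with a global constant $C_{\partial\mathscr D}$ and a global $C^2_b$ function $\Theta$.

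I then truncate. Set $b_n:=b\,\chi(\cdot/n)$ with a cutoff $\chi$, so that $b_n$ is bounded and Lipschitz. Solve the problem with drift $b_n$; the Itô computation of Lemma~\ref{le.CT} (with $\Theta$) gives local pathwise uniqueness. Paste the solutions along the exit times $\zeta_n$ of $\mathscr B(0,n)$. Non-explosion ($\zeta_n\to\infty$) follows from \eqref{eq.ItoW} with the Lyapunov $W$. Since $\partial_{\mathbf n_e}W=0$, the process $W(X_{t\wedge\zeta_n})e^{-Kt}$ is a supermartingale with $K:=\sup\mathscr LW/W<\infty$, so $\mathbb P[\zeta_n\le t]\le e^{Kt}W(x)/\inf_{|y|=n}W\to0$. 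The strong Markov property follows from pathwise uniqueness.

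\textbf{Part (ii).} \ass{C3} is given by the examples: $W\in D_e$ because $\partial_{\mathbf n_e}W=0$, $r_n\to\infty$ since $\mathscr LW/W\to-\infty$, and $K_n=\bar{\mathscr D}\cap\bar{\mathscr B}(0,R_n)$ are compact.

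\ass{C1} uses the coupling proof of Theorem~\ref{th.SF}. The drift $b$ enters only through its Lipschitz constant, and $\sigma$ is constant, so all stochastic-integral terms involving $\sigma(X)-\sigma(Y)$ vanish. With global $\Theta$ (or $\Theta\equiv0$ in the convex case) every constant in \eqref{eq.ITO0-}--\eqref{eq.Co=} is uniform, and we get $|P_Tf(x)-P_Tf(y)|\le C_T\|f\|_\infty(|x-y|^\alpha+|x-y|^{1-\alpha})$. The only care needed is that $\langle N\rangle$ stays bounded, which holds because $M$ is bounded.

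\ass{C2} follows from Lemma~\ref{le.CT}, whose proof uses only Lipschitz bounds and \eqref{eq.a}--\eqref{eq.b}, so it extends verbatim.

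For \ass{C4}, I take the third proof in Proposition~\ref{pr.C4}, which explicitly does not need boundedness. The function $\Psi$ with $\nabla\Psi\cdot\mathbf n_i=0$ is built locally near $x$, and $\mathscr L\Psi$ is bounded on its compact support since $b$ is locally bounded.

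For \ass{C5}, Proposition~\ref{pr.C5} was noted not to use boundedness. The Girsanov step still works because the relevant paths stay in a compact set, where $b$ is bounded. The attainability part is \ass{C}.

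\textbf{Main obstacle.} I expect the hardest step to be the non-convex well-posedness together with the uniform constants in \ass{C1}--\ass{C2}. Specifically, one must check that a global $C^2_b$ function $\Theta$ satisfying \eqref{eq.b} exists on the unbounded boundaries of Example~2 (via a uniform tubular neighborhood of the cylinder) and of Example~3 (where it is trivial near the compact $\partial U$). One must also handle the unbounded drift in the Girsanov and Gronwall estimates. I would localize these estimates to $\mathscr B(0,n)$ and let $n\to\infty$ using the non-explosion bound above.
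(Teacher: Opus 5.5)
Your strategy works, but it differs from the paper's in the non-convex case, and one of your claims is false as stated.

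\textbf{Comparison with the paper.} The paper truncates the \emph{domain}, not the drift. It builds bounded $C^3$ domains $\mathscr D_k\subset\mathscr D$ that coincide with $\mathscr D$ on $\bar{\mathscr D}\cap\mathscr B(0,k)$ (resp.\ on $\{|x_d|<k\}$). The bounded-domain well-posedness, Theorem~\ref{th.SF} and Lemma~\ref{le.CT} then apply verbatim to the processes $X^k$. Local pathwise uniqueness plus a Lyapunov exit bound gives global well-posedness, and \ass{C1}, \ass{C2} are transferred from $X^k$ to $X$ uniformly on compacts.

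You instead keep the unbounded domain, truncate $b$, and use global geometric data ($C_{\partial\mathscr D}$ and a global $C^2_b$ function $\Theta$). This avoids the edge-smoothing needed to build $\mathscr D_k$. The price is an existence theorem for reflected SDEs on unbounded non-convex domains under uniform exterior-sphere/interior-cone conditions (Saisho's). You should state it explicitly, since ``solve the problem with drift $b_n$'' is otherwise unsupported.

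In the convex case, the paper argues pathwise: since $\sigma=\mathrm{Id}$, $X-X'$ has finite variation, which gives an $e^{K_bT}$-Lipschitz flow and an a.s.\ coupling time $<T$ for $|x-y|$ small. Your reuse of Theorem~\ref{th.SF} and Lemma~\ref{le.CT} with $C_{\partial\mathscr D}=0$ reduces to the same computation. Your treatment of \ass{C3}--\ass{C5} matches the paper.

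\textbf{The incorrect claim.} For \ass{C1}--\ass{C2} in the non-convex case, you say the drift enters only through its Lipschitz constant and that Lemma~\ref{le.CT} ``extends verbatim'' with uniform constants. This is false. The weight $e^{-C_{\partial\mathscr D}[\Theta(X_t)+\Theta(Y_t)]}$ produces in \eqref{eq.ITO0-} the term
$-C_{\partial\mathscr D}\int_0^{t\wedge\tau}S_s\big(\nabla\Theta(X_s)\cdot b(X_s)+\nabla\Theta(Y_s)\cdot b(Y_s)\big)\,ds$,
and the analogous term in Lemma~\ref{le.CT}. Both involve $b$ itself, not a Lipschitz difference.

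In Example~2 with $\omega$ non-convex, $\nabla\Theta=\mathbf n_i=-(\mathbf n_e^{\omega}(x'),0)$ on the whole unbounded lateral boundary $\partial\omega\times\mathbb R$. Meanwhile \eqref{eq.confin2} constrains only $b_d$, so $b'$ may grow linearly in $x_d$. The term is then not bounded by $C\,S_s$, and Gronwall gives no uniform constant. In Example~3 you at least need $\Theta$ compactly supported. Also, on an unbounded domain $S_s$ is unbounded, so the stochastic integrals in \eqref{eq.ITO0-} are no longer automatically true martingales.

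\textbf{The repair.} The localization in your last paragraph is therefore required, not optional. The clean way to do it:
\begin{enumerate}
\item Run the coupling of Theorem~\ref{th.SF} and the estimate of Lemma~\ref{le.CT} for the truncated-drift process $X^n$. Bounded $b_n$ makes $\nabla\Theta\cdot b_n$ bounded and the martingale terms square-integrable.
\item Use $X=X^n$ on $[0,\zeta_n]$ and your non-explosion bound to get $|P_tf-P^n_tf|\le 2\|f\|_\infty\,\mathbb P_x[\zeta_n\le t]\to0$ uniformly on compacts. Here $P^n_t$ is the semigroup of $X^n$.
\item Argue the same way for \ass{C2}.
\end{enumerate}
This is the paper's transfer step, with drift truncation in place of domain truncation. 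With this change your proof goes through.
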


The techniques used in Theorem \ref{th.unbounded} are adaptable and apply beyond the present examples.  Recall also that the unique quasi-stationary distribution in $\mathscr P_{W^{1/p}}(\mathscr O)$ is independent of $p>1$. 

\begin{proof}
In all cases, since $\mathscr LW/W$ is continuous on $\bar{\mathscr D}$ and tends to $-\infty$ at infinity, there is $c_W\ge0$ such that $\mathscr LW\le c_WW$ on $\bar{\mathscr D}$.

\underline{Step 1}. Well-posedness, \ass{C1} and \ass{C2} in the convex case.

Since $\mathscr D$ is convex and $b$ is globally Lipschitz (hence has linear growth), \eqref{eq.LR} has, for every $x\in\bar{\mathscr D}$, a pathwise unique strong solution defined for all times, by~\cite[Theorem~4.1]{tanaka}. By pathwise uniqueness, $(X_t)_{t\ge0}$ is a strong Markov process. Recall that, $\mathscr D$ being convex,
\begin{equation}\label{eq.convex}
(y-z)\cdot\mathbf n_i(z)\ge0,\quad\text{for all } z\in\partial\mathscr D,\ y\in\bar{\mathscr D},
\end{equation}
i.e.\ \eqref{eq.a} holds with $C_{\partial\mathscr D}=0$.

\noindent\textit{Proof of \ass{C2}.} Let $x,z\in\bar{\mathscr D}$, and let $X=X(x)$ and $X'=X(z)$ be driven by the same Brownian motion, with local times $L$ and $L'$. Since $\sigma=\mathrm{Id}_{\mathbb R^d}$, the process $X_t-X'_t=x-z+\int_0^t(b(X_s)-b(X'_s))\,ds+L_t-L'_t$ has finite variation, so that
\begin{align*}
|X_t-X'_t|^2&=|x-z|^2+2\int_0^t(X_s-X'_s)\cdot(b(X_s)-b(X'_s))\,ds\\
&\quad +2\int_0^t(X_s-X'_s)\cdot\mathbf n_i(X_s)\,d|L|_s-2\int_0^t(X_s-X'_s)\cdot\mathbf n_i(X'_s)\,d|L'|_s .
\end{align*}
Since $|L|$ (resp.\ $|L'|$) only increases when $X\in\partial\mathscr D$ (resp.\ $X'\in\partial\mathscr D$), the last two terms are nonpositive by \eqref{eq.convex}. By Gronwall's lemma, we get a.s.
\begin{equation}\label{eq.Lipflow}
\sup_{t\in[0,T]}|X_t(x)-X_t(z)|\le e^{K_bT}\,|x-z|,\qquad T\ge0,\ x,z\in\bar{\mathscr D}.
\end{equation}
In particular, if $x_n\to x$, then $X_{[0,T]}(x_n)\to X_{[0,T]}(x)$ a.s.\ in $C([0,T],\bar{\mathscr D})$, which implies \ass{C2}.

\noindent\textit{Proof of \ass{C1}.} We use the coupling of the proof of Theorem~\ref{th.SF}. Let $T>0$, $\alpha\in(0,1)$, $x\neq y\in\bar{\mathscr D}$, and $X=X(x)$. Let $(Y_t,K_t)_{t\ge0}$ be the solution, starting at $y$, of
$$
dY_t=b(Y_t)\,dt+|x-y|^\alpha\,\bar Z_t\,\mathbf 1_{t<\tau}\,dt+dB_t+dK_t,\qquad \bar Z_t:=\frac{X_t-Y_t}{|X_t-Y_t|},
$$
where $K$ is the reflection term of $Y$ on $\partial\mathscr D$ (satisfying $\mathfrak 1$--$\mathfrak 3$ in the proof of Theorem~\ref{th.SF}), $\tau:=\inf\{t\ge0,\ X_t=Y_t\}$, and $Y_t=X_t$ for $t\ge\tau$. It is constructed as in the proof of Theorem~\ref{th.SF}, by solving the equation up to the stopping times $\inf\{t\ge0,\ |X_t-Y_t|\le\varepsilon\}$, on which the coupling drift is Lipschitz in $Y$, and letting $\varepsilon\to0$. On $[0,\tau)$, $X-Y$ has finite variation and does not vanish, so that, using the Lipschitz property of $b$ and \eqref{eq.convex} as above,
\begin{align*}
d|X_t-Y_t|&=\bar Z_t\cdot(b(X_t)-b(Y_t))\,dt-|x-y|^\alpha\,dt+\bar Z_t\cdot\mathbf n_i(X_t)\,d|L|_t-\bar Z_t\cdot\mathbf n_i(Y_t)\,d|K|_t\\
&\le\big(K_b|X_t-Y_t|-|x-y|^\alpha\big)\,dt .
\end{align*}
Hence $|X_t-Y_t|\le|x-y|e^{K_bt}$ for $t<\tau$ and then, plugging this bound into the previous inequality,
$$
0<|X_t-Y_t|\le|x-y|\,e^{K_bt}-|x-y|^\alpha t,\qquad t<\tau .
$$
Assume that 
$$|x-y|^{1-\alpha}e^{K_bT}<T.$$
Then $\tau<T$ a.s.  and consequently $Y_T=X_T$. Note that, since $\alpha<1$, the condition $|x-y|^{1-\alpha}e^{K_bT}<T$ holds as soon as $|x-y|$ is small enough. Set $W^*_t:=B_t+\int_0^{t\wedge\tau}|x-y|^\alpha \bar Z_s\,ds$ and
$$
\mathscr M_T:=\exp\Big(-\int_0^{T\wedge\tau}|x-y|^\alpha \bar Z_s\cdot dB_s-\frac12|x-y|^{2\alpha}(T\wedge\tau)\Big).
$$
Since $|\bar Z_s|=1$, $\mathscr M$ is a true martingale, $(W^*_t)_{t\in[0,T]}$ is a Brownian motion under $\mathscr M_T\mathbb P$ by Girsanov's theorem, and $\mathbb E[\mathscr M_T^2]\le e^{T|x-y|^{2\alpha}}$. Since $dY_t=b(Y_t)\,dt+dW^*_t+dK_t$, the law of $Y_T$ under $\mathscr M_T\mathbb P$ is the law of $X_T(y)$ under $\mathbb P$, by uniqueness in law. Using also $Y_T=X_T$ a.s., we get, for all $f\in b\mathcal B(\bar{\mathscr D})$,
$$
|P_Tf(x)-P_Tf(y)|=\big|\mathbb E[(1-\mathscr M_T)f(X_T)]\big|\le\|f\|_\infty\big(\mathbb E[\mathscr M_T^2]-1\big)^{1/2}\le\|f\|_\infty\big(e^{T|x-y|^{2\alpha}}-1\big)^{1/2},
$$
whenever $|x-y|^{1-\alpha}e^{K_bT}<T$. Hence $P_Tf$ is continuous on $\bar{\mathscr D}$ for every $T>0$, which is \ass{C1}.

\underline{Step 2}. Well-posedness, \ass{C1} and \ass{C2} in the non-convex case.

\noindent\textit{Truncated domains.} For $k\ge R_0+1$ in Example~3, set $V_k:=\bar{\mathscr D}\cap\mathscr B(0,k)$ and $\mathscr D_k:=\mathscr B(0,k+1)\setminus\bar U$, which is a bounded $C^3$ subdomain of $\mathscr D$. For $k\ge1$ in Example~2, set $V_k:=\bar{\mathscr D}\cap\{|x_d|<k\}$ and let $\mathscr D_k$ be a bounded $C^3$ subdomain of $\mathscr D$ such that $\mathscr D_k\cap\{|x_d|<k+1\}=\mathscr D\cap\{|x_d|<k+1\}$\footnote{For instance, $\mathscr D_k$ can be obtained from the cylinder $\omega\times(-k-2,k+2)$ by smoothing its edges $\partial\omega\times\{\pm(k+2)\}$ inside $\{k+1<|x_d|<k+2\}$.}. 
In both cases, $V_k$ is open in $\bar{\mathscr D}$, $(V_k)_k$ is nondecreasing with $\bigcup_kV_k=\bar{\mathscr D}$, $\bar V_k$ is compact and contained in the region where $\mathscr D_k$ and $\mathscr D$ (and hence their boundaries and inward normals) coincide, and
$$
m_k:=\inf_{\bar{\mathscr D}\setminus V_k}W\ge e^{ak^2}\xrightarrow[k\to\infty]{}+\infty .
$$
For $x\in\bar{\mathscr D}_k$, let $(X^k_t(x),L^k_t(x))_{t\ge0}$ be the solution of \eqref{eq.LR} in $\bar{\mathscr D}_k$ with drift $b$ and $\sigma=\mathrm{Id}_{\mathbb R^d}$, driven by $(B)_{t\ge 0}$~\cite{lionsRSDE}. Theorem~\ref{th.SF} and Lemma~\ref{le.CT} apply to $X^k(x)$. For $j\le k$, denote by $\tau^k_j $ the exit time of $X^k(x)$ from $V_j$.

\noindent\textit{Local uniqueness.} Let $j\le k\le k'$ and $x\in V_j$. Up to the exit times of $X^k(x)$ and $X^{k'}(x)$ from $V_j$, both processes live in $\bar V_j$, where $\mathscr D_k$, $\mathscr D_{k'}$ and $\mathscr D$ coincide. The pathwise uniqueness argument of~\cite{lionsRSDE}, performed up to the minimum of these two stopping times, shows that $X^k(x)$ and $X^{k'}(x)$ coincide up to this minimum. Hence the two exit times are equal, $\tau^k_j=\tau^{k'}_j$, and $X^k(x)=X^{k'}(x)$ on $[0,\tau^k_j]$.

\noindent\textit{Lyapunov estimate.} Let $x\in V_k$. For $s\le\tau^k_k$, $X^k_s(x)\in\bar V_k$, where $\mathscr D_k$ and $\mathscr D$ coincide. Hence It\=o's formula \eqref{eq.ItoW} holds for $W(X^k_t)$ up to time $\tau^k_k$, and the local-time term vanishes since $\partial_{\mathbf n_e}W=0$ on $\partial\mathscr D$. Since $\mathscr LW-c_WW\le0$, we get a.s.
$$
e^{-c_W(t\wedge\tau^k_k)}W(X^k_{t\wedge\tau^k_k})\le W(x)+\int_0^{t\wedge\tau^k_k}e^{-c_Ws}\,\nabla W(X^k_s)\cdot dB_s .
$$
Since $X^k_{\tau^k_k}\in\bar{\mathscr D}\setminus V_k$ on $\{\tau^k_k<+\infty\}$, so that $W(X^k_{\tau^k_k})\ge m_k$ there. Using $W\ge0$, we deduce
\begin{equation}\label{eq.locWk}
\mathbb P[\tau^k_k(x)\le t]\le \frac{e^{c_Wt}}{m_k}\,W(x),\qquad x\in V_k,\ t\ge0.
\end{equation}

\noindent\textit{Well-posedness.} By local uniqueness, $k\mapsto\tau^k_k$ is nondecreasing, since $\tau^k_k=\tau^{k'}_k\le\tau^{k'}_{k'}$ for $k\le k'$. By \eqref{eq.locWk}, $\tau^k_k\uparrow+\infty$ a.s.\ as $k\to+\infty$. Setting $X_t(x):=X^k_t(x)$ and $L_t(x):=L^k_t(x)$ for $t\le\tau^k_k$ thus consistently defines a solution of \eqref{eq.LR} in $\bar{\mathscr D}$ for all times. The same argument shows that any solution of \eqref{eq.LR} in $\bar{\mathscr D}$ driven by $B$ and starting at $x$ coincides with $X^k(x)$ on $[0,\tau^k_k]$ for all $k$, hence with $X(x)$. Thus \eqref{eq.LR} is pathwise well posed in $\bar{\mathscr D}$, and $(X_t)_{t\ge0}$ is a strong Markov process with continuous paths. Moreover, denoting by $\tau_k$ the exit time of $X$ from $V_k$, we have $\tau_k=\tau^k_k$,
\begin{equation}\label{eq.XXk}
X_t(x)=X^k_t(x) \ \text{ for all } t\in[0,\tau_k(x)], \ \text{ a.s.},
\end{equation}
and, by \eqref{eq.locWk} (the bound being trivial when $x\notin V_k$, since then $W(x)\ge m_k$),
\begin{equation}\label{eq.locW}
\mathbb P_x[\tau_k\le t]\le \frac{e^{c_Wt}}{m_k}\,W(x),\qquad x\in\bar{\mathscr D},\ t\ge0.
\end{equation}

\noindent\textit{Proof of \ass{C1}.} Let $t>0$ and $f\in b\mathcal B(\bar{\mathscr D})$. Define $f_k:=f\mathbf 1_{V_k}$, extended by $0$ to $\bar{\mathscr D}_k$, and let $P^k_t$ be the semigroup of $X^k$. By \eqref{eq.XXk}, $f(X_t(x))=f_k(X^k_t(x))$ on $\{t<\tau_k\}$, so that, by \eqref{eq.locW}, for all $x\in V_k$,
$$
|P_tf(x)-P^k_tf_k(x)|\le 2\|f\|_\infty\,\mathbb P_x[\tau_k\le t]\le 2\|f\|_\infty\,\frac{e^{c_Wt}}{m_k}\,W(x).
$$
Let $K$ be a compact subset of $\bar{\mathscr D}$. Then $K\subset V_k$ for $k$ large enough, and $P^k_tf_k$ is continuous on $K$ by Theorem~\ref{th.SF}. Hence $P_tf$ is the uniform limit on $K$, as $k\to+\infty$, of continuous functions, and is therefore continuous on $K$. Since every point of $\bar{\mathscr D}$ has a compact neighborhood in $\bar{\mathscr D}$, $P_tf$ is continuous on $\bar{\mathscr D}$.

\noindent\textit{Proof of \ass{C2}.} Let $T>0$, $x_n\to x$ in $\bar{\mathscr D}$, and let $k$ be such that the compact set $K:=\{x\}\cup\{x_n,\ n\ge0\}$ is contained in $V_k$. By \eqref{eq.XXk}, $\sup_{t\in[0,T]}|X_t(x_n)-X_t(x)|=\sup_{t\in[0,T]}|X^k_t(x_n)-X^k_t(x)|$ on $\{\tau_k(x_n)>T\}\cap\{\tau_k(x)>T\}$. Hence, for every $\varepsilon>0$, by Markov's inequality, \eqref{eq.Ex} applied to $X^k$ (with $p=1$ and a constant $C_k$) and \eqref{eq.locW},
$$
\mathbb P\Big[\sup_{t\in[0,T]}|X_t(x_n)-X_t(x)|\ge\varepsilon\Big]\le \frac{C_k}{\varepsilon^2}\,|x_n-x|^2+\frac{2e^{c_WT}}{m_k}\,\sup_KW .
$$
Letting $n\to+\infty$ and then $k\to+\infty$, we get $X_{[0,T]}(x_n)\to X_{[0,T]}(x)$ in probability in $C([0,T],\bar{\mathscr D})$ endowed with the uniform distance. Hence the laws converge weakly, which is \ass{C2}.
 
 \underline{Step 3}. Proofs of \ass{C3}, \ass{C4}, and \ass{C5}.
 
 Condition \ass{C3} follows from the Lyapunov inequalities established in the examples, $W$ belonging to the extended domain by \eqref{eq.ItoW} (recall $\partial_{\mathbf n_e}W=0$) with the localizing times $\inf\{t\ge0,\ |X_t|\ge n\}$. The proof  Proposition~\ref{pr.C5}  and the third proof of Propositions~\ref{pr.C4} do not assumed $\mathscr D$ to be bounded and give respectively \ass{C5} and \ass{C4}.

Theorem~\ref{thm:main} then gives the conclusion.

\end{proof}

 \noindent
 \textbf{Acknowledgements}.   Part of this work is funded by the Agence Nationale de la Recherche through the project DySLoS (ANR-25-CE40-6875).

\providecommand{\bysame}{\leavevmode\hbox to3em{\hrulefill}\thinspace}
\providecommand{\MR}{\relax\ifhmode\unskip\space\fi MR }
\providecommand{\MRhref}[2]{%
  \href{http://www.ams.org/mathscinet-getitem?mr=#1}{#2}
}
\providecommand{\href}[2]{#2}

\end{document}